\documentclass[letterpaper,11pt,reqno]{article}
%\documentclass[letterpaper,11pt,reqno,oneside]{amsart}

%\usepackage{$HOME/work/LaTeX/clrpacks}
%$

\usepackage[usenames,dvipsnames]{xcolor}
\usepackage{dynkin-diagrams}
\usepackage{amssymb,amsmath,amsthm,mathrsfs,xspace,multicol,stmaryrd,mathtools}
\usepackage[mathscr]{eucal}
\usepackage{amscd}
\usepackage[all]{xy}
\usepackage[shortlabels]{enumitem}
\usepackage{tikz}
\usepackage{tikz-3dplot}
\usepackage{tikz-cd}

\usepackage{xspace}

\usepackage[normalem]{ulem} %strikeout sout command
\usepackage{framed}
\usepackage[textsize=scriptsize,textwidth=2cm]{todonotes}
\usepackage{tabularx}
\usepackage{changepage}
\usepackage{wasysym}
\usepackage[super]{nth}
\usepackage{faktor}

\providecommand{\customgenericname}{}
\newcommand{\newcustomtheorem}[2]{%
  \newenvironment{#1}[1]
  {%
   \renewcommand\customgenericname{#2}%
   \renewcommand\theinnercustomgeneric{##1}%
   \innercustomgeneric
  }
  {\endinnercustomgeneric}
}

\newcustomtheorem{customthm}{Theorem}
\newcustomtheorem{customlemma}{Lemma}

%%%% PRINT FILE NAME and FORMAT for \today %%% 
\usepackage{currfile}
\usepackage{datetime}
\newdateformat{mydate}{{\THEDAY}  \monthname[\THEMONTH] \THEYEAR}
\mydate
%\renewcommand{\dateseparator}{--}
%%%%%%%

%\usepackage[breaklinks=true,colorlinks=true,citecolor=black,urlcolor=black,linkcolor=black]{hyperref}

%\usepackage[draft]{hyperref}
\usepackage[breaklinks=true,colorlinks=true,citecolor=blue,urlcolor=MidnightBlue,linkcolor=black]{hyperref}

%\usepackage[breaklinks=true,colorlinks=true,citecolor=blue,urlcolor=MidnightBlue,linkcolor=MidnightBlue
%]{hyperref}

%\usepackage[T1]{fontenc}
%\usepackage{tgtermes}
\usepackage[margin=1.5in]{geometry}% big spacing for "draft mode"

\usepgflibrary{arrows}
\usetikzlibrary{matrix}
\usetikzlibrary{calc}
\usetikzlibrary{positioning}
\usetikzlibrary{backgrounds}

%%% TikZ stuff

\tikzset{%
  operad style/.style={fill=white,inner sep=2.5pt},
  baseline=(current  bounding  box.center),
  ampersand replacement=\&, row sep=2em,column sep=2em,
  std/.style={->,font=\scriptsize}
}

\tikzset{%
  clr style/.style={fill=white,inner sep=2.5pt},
  baseline=(current  bounding  box.center),
  ampersand replacement=\&, row sep=2em,column sep=2em,
  std/.style={->,font=\scriptsize}
}

\usetikzlibrary{decorations.pathreplacing,decorations.markings}
\usepgflibrary{shapes.geometric}
\usetikzlibrary{shapes.geometric}

\tikzset{middlearrow/.style={
        decoration={markings,
            mark= at position 0.5 with {\arrow{#1}} ,
        },
        postaction={decorate}
    }
}

\newenvironment{tikzdiag}[2]
{
\begin{tikzpicture}[clr style]
\matrix (m) [matrix of math nodes, row sep=#1em, column sep=#2em]
}
{
\end{tikzpicture}
}

%SYNTAX: \pbdiag[.75]

\let\Sec\S

%%% This releases macros in text
\def\release#1{\let#1\undefined}

%SYNTAX: \release{\mymacro}

%Hiragana yo
\usepackage[utf8]{inputenc}
\DeclareFontFamily{U}{min}{}
\DeclareFontShape{U}{min}{m}{n}{<-> udmj30}{}
\newcommand\yo{\!\text{\usefont{U}{min}{m}{n}\symbol{'207}}\!}

%-------------CLR's MACROS-----------------

% FORMATTING

\newcommand{\myindent}{\hspace{.5cm}}
\newcommand{\mind}{\myindent}

\newcommand{\myspace}{\vspace{.25cm}}

\newcommand{\df}[1]{\textbf{\textit{#1}}}

% better formatting for enumerated/itemized lists in AMS proof environments
% USEAGE: e.g. \begin{pfitems}[leftmargin=20pt]....\end{pfitems}

\newlist{pfitems}{itemize}{1}
\setlist[pfitems]{label=--}

% FOR PSETS, EXAMS:

%%%% GRADING %%%%%%%%%%%%%%

\newcounter{grtotal}
\setcounter{grtotal}{0}
\newcounter{grmax}
\setcounter{grmax}{0}

% For adding stuff to TOC

%%% SMALL INTEGRATION SYMBOL

\makeatletter
\providecommand{\leftsquigarrow}{%
  \mathrel{\mathpalette\reflect@squig\relax}%
}
\newcommand{\reflect@squig}[2]{%
  \reflectbox{$\m@th#1\rightsquigarrow$}%
}
\makeatother

%%%

%%% GND RINGS/FIELDS
\newcommand{\N}{\mathbb{N}}
\newcommand{\R}{\mathbb{R}}

\newcommand{\Z}{\mathbb{Z}}

\newcommand{\kk}{\Bbbk}

%%%

%%% LIE THEORY
\newcommand{\g}{\mathfrak{g}}

\newcommand{\gl}{\mathfrak{gl}}

\newcommand{\brac}{[ \cdot, \cdot ]}
\newcommand{\bbrac}[2]{\left \llbracket #1,#2 \right \rrbracket}

\newcommand{\CE}{\mathrm{CE}}

\DeclareMathOperator{\GL}{\mathrm{GL}}

%%%

%%% CATEGORIES %%%
\DeclareMathOperator{\Ho}{\mathrm{Ho}}

\DeclareMathOperator{\Hom}{\mathrm{Hom}}

\DeclareMathOperator{\id}{\mathrm{id}}

\DeclareMathOperator{\ft}{\mathrm{fin}} %use for sub-cat of "finite-type objects"

\newcommand{\op}{\mathrm{op}}

\newcommand{\cat}[1]{\mathsf{#1}}
\newcommand{\catC}{\cat{C}}
\newcommand{\Gpd}{\cat{Gpd}}

\newcommand{\Grp}{\cat{Grp}}

\newcommand{\LnA}[1]{\mathsf{Lie}_{#1}\mathsf{Alg}}

\newcommand{\dgcocom}{\mathsf{dgCoCom}_{\geq 0}}

\newcommand{\Ch}{\mathsf{Ch}}
\newcommand{\Chain}{\mathsf{Ch}_{\geq 0}}

\newcommand{\Mfd}{\mathsf{Mfd}}

\newcommand{\sSet}{s\mathsf{Set}}

\DeclareMathOperator{\cdga}{\mathsf{cdga}}
\DeclareMathOperator{\Vect}{\mathsf{Vect}}
%%%%

%%% ARROWS

\newcommand{\maps}{\colon}

\newcommand{\xto}[1]{\xrightarrow{#1}}

\newcommand{\Eq}{\rightrightarrows}

\newcommand{\fib}{\twoheadrightarrow}
\newcommand{\weq}{\xrightarrow{\sim}}

\newcommand{\emb}{\hookrightarrow}

\newcommand{\adj}{\dashv}
\newcommand{\adjunct}{\rightleftarrows}

 % arguments are source, morphism, target 
%%%

%%% GREEK LETTERS AND OTHER VARIABLES
\newcommand{\al}{\alpha}
\newcommand{\be}{\beta}

\newcommand{\ga}{\gamma}
\newcommand{\Del}{\Delta}
\newcommand{\del}{\delta}

\newcommand{\io}{\iota}
\newcommand{\lam}{\lambda}
\newcommand{\Lam}{\Lambda}
\newcommand{\ro}{\rho}

\newcommand{\tha}{\theta}
\newcommand{\vtha}{\vartheta}
\newcommand{\ph}{\phi}

\newcommand{\vph}{\varphi}
\newcommand{\vphi}{\varphi}
\newcommand{\eps}{\epsilon}

\newcommand{\el}{\ell}
\newcommand{\si}{\sigma}

\newcommand{\jm}{\jmath}
%%%

%%% VARIABLE DECORATIONS
\newcommand{\bl}{\bullet}

\newcommand{\ba}[1]{\bar{#1}}
\newcommand{\ti}[1]{\tilde{#1}}

\newcommand{\wh}[1]{\widehat{#1}}
\newcommand{\wti}[1]{\widetilde{#1}}
\newcommand{\und}[1]{{\underline{#1}}}
\newcommand{\ov}[1]{{\overline{#1}}}
\newcommand{\bul}{\bullet}

%%%

%%% "FANCY" VARIABLES
\newcommand{\cU}{\mathcal{U}}

\newcommand{\cG}{\mathcal{G}}
\newcommand{\cC}{\mathcal{C}}

%%%

%%% SET THEORY

\newcommand{\cc}{\circ}
\newcommand{\iso}{\cong}

\newcommand{\sse}{\subseteq}

\newcommand{\abs}[1]{\left \lvert #1 \right \rvert}

\newcommand{\st}{~ \left. \right \vert ~} %"such that" or "|" for set builder notation

\DeclareMathOperator{\diag}{\mathrm{diag}}

%%%

%%% LINEAR ALG

\newcommand{\gen}[1]{\bigl \langle #1 \bigr\rangle}

\newcommand{\spann}{\mathrm{span}}
\newcommand{\tensor}{\otimes}

\newcommand{\dsum}{\oplus}

\newcommand{\ev}{\mathrm{ev}}
\newcommand{\pr}{\mathrm{pr}}

\DeclareMathOperator{\im}{\mathrm{im}}

%%%

%%%OPERAD

 %any of the weirdo operad composites

\DeclareMathOperator{\Cobar}{\mathrm{Cobar}}

%%%

%%%%DG STUFF
\renewcommand{\deg}[1]{\left \lvert #1 \right \rvert}
\newcommand{\sgn}[1]{ (-1)^{\frac{#1(#1-1)}{2}}}
\newcommand{\vv}{\vee}

\newcommand{\Sh}{\mathrm{Sh}}

\renewcommand{\S}{\bar{S}}

\newcommand{\pa}{\partial}
 %as in automorphism of ORDered set                
                 
\newcommand{\bs}{\mathbf{s}}

%classical bar/cobar construction

%\newcommand{\gen}[1]{\bigl \langle #1  \bigr \rangle}

%%%

%%%%% ALGEBRA

\newcommand{\ideal}{\trianglelefteq}

\newcommand{\semiop}{\ltimes}

%%%%% DEF THY

%%%%

%%% SIMP HMTPY and RHT STUFF

\DeclareMathOperator{\Path}{\mathrm{Path}}

\newcommand{\one}[1]{\mathbf{1}_{#1}}

\DeclareMathOperator{\sk}{sk}

\newcommand{\plim}{\varprojlim}

%%%

%%%GEOM
\renewcommand{\O}{\mathcal{O}}

\newcommand{\mm}{\mathfrak{m}}

\renewcommand{\vv}[1]{\vec{#1}}

\newcommand{\cO}{\mathcal{O}}

%%%

%%%MISC

%%%

%\usepackage{$HOME/work/LaTeX/clr-symbol-tikz-macros}
%$

%\usepackage{refcheck}

\DeclareFontFamily{U}{mathx}{}
\DeclareFontShape{U}{mathx}{m}{n}{<-> mathx10}{}
\DeclareSymbolFont{mathx}{U}{mathx}{m}{n}
\DeclareMathAccent{\widecheck}{0}{mathx}{"71}

\theoremstyle{plain}
\newtheorem{theorem}{Theorem}[section]
\newtheorem*{theorem*}{Theorem}
\newtheorem*{theorem1}{Theorem 1}

\newtheorem{proposition}[theorem]{Proposition}
\newtheorem*{proposition*}{Proposition}
\newtheorem{lemma}[theorem]{Lemma}
\newtheorem*{lemma*}{Lemma}
\newtheorem{corollary}[theorem]{Corollary}

\theoremstyle{definition}
\newtheorem{definition}[theorem]{Definition}

\newtheorem*{ass*}{Assumption}
\newtheorem*{question}{Question}

\newtheorem{construction}[theorem]{Construction}
\newtheorem{notation}[theorem]{Notation}
\newtheorem*{notation*}{Notation}

\newtheorem{claim}{Claim}
\newtheorem{lemclaim}[theorem]{Claim}

\newtheorem{remark}[theorem]{Remark}
\newtheorem{example}[theorem]{Example}

\numberwithin{equation}{section}

%%%%%%%%%%%%%%%%%%%%%%
%NEW COMMANDS
%%%%%%%%%%%%%%%%%%%%%%

\newcommand{\cVect}{c\cat{Vect}}
\newcommand{\cpVect}{c^+\cat{Vect}}
\newcommand{\spFDVect}{s^+\mathsf{Vect}^{\fd}}

\newcommand{\Com}{\mathrm{alg}}
\newcommand{\Compp}{\mathrm{alg}^{+}}
\newcommand{\cCom}{c\cat{Com}}
\newcommand{\cpCom}{c^{+}\cat{Com}}

\newcommand{\crCom}{c\cat{Com}^{\mathrm{fg}}_{0}}

\newcommand{\crComfg}{c\cat{Com}^{(\mathrm{fg})}_{0}}
\newcommand{\crpComfg}{c^+\cat{Com}^{(\mathrm{fg})}_{0}}
\newcommand{\cprCom}{c^+\cat{Com}^{\mathrm{fg}}_{0}}
\newcommand{\cCh}{\Ch^{\ast \geq 0}}
\newcommand{\rcdgafg}{\cdga^{(\mathrm{fg})}_{0}}
\newcommand{\rcgafg}{\cat{cga}^{(\mathrm{fg})}_{0}}
\newcommand{\rcdga}{\cdga^{\mathrm{fg}}_{0}}
\newcommand{\rcga}{\cat{cga}^{\mathrm{fg}}_{0}}

\newcommand{\EM}{\mathrm{EM}}

\newcommand{\Nas}{N^{\ast}}
\newcommand{\Kb}{K^{\bl}}

\newcommand{\sVect}{s\mathsf{Vect}}

\newcommand{\cD}{\mathcal{D}}

\newcommand{\cI}{\mathcal{I}}

\DeclareMathOperator{\Sym}{Sym}
\DeclareMathOperator{\Diff}{Diff}
\DeclareMathOperator{\FDiff}{FmlDiff}

\newcommand{\gVect}{\cat{grVect}}
\newcommand{\cga}{\cat{cga}}
\newcommand{\LinfGpd}{\mathsf{Lie_{\infty}Gpd}^{\fd}}
\newcommand{\LinfGrp}{\mathsf{Lie_{\infty}Grp}^{\fd}}

\renewcommand{\Sym}{\mathrm{Sym}}
\newcommand{\dgSym}{{\Sym}^{\ast}}
\newcommand{\gSym}{{\Sym}^{\ast}_{+}}
\newcommand{\cSym}{{\Sym}^{\bl}}
\newcommand{\cpSym}{{\Sym}^{\bl}_{+}}
\newcommand{\cphSym}{{\wh{\Sym}}^{\bl}_{+}}

\newcommand{\coSym}{\mathrm{Sym}^\coalg}
\newcommand{\cosSym}{\mathrm{Sym}^{\coalg}_{\bl}}
\newcommand{\cospSym}{\mathrm{Sym}^{\coalg\,+}_{\bl}}
\newcommand{\codgSym}{\mathrm{Sym}^{\coalg}_{\ast}}
\newcommand{\cogrSym}{\mathrm{Sym}^{\coalg \, +}_{\ast}}

\newcommand{\inhom}{\mathrm{inh}}

\newcommand{\Vb}{V^\bl}
\newcommand{\VVb}{W^\bl}

\newcommand{\cX}{\mathscr{X}}
\newcommand{\cY}{\mathscr{Y}}

\newcommand{\cont}{\mathrm{cont}}

\newcommand{\ccot}[1]{\cot^{\ast}_{#1}}

\newcommand{\linf}{\LnA{\infty}^{\fft}}
\release{\sgn}
\DeclareMathOperator{\sgn}{sgn}

\newcommand{\sLie}{s\cat{Lie}}

\newcommand{\PP}{(+)}
\newcommand{\cqga}{\cat{c(d)ga}}
\newcommand{\Comp}{\cat{Com^{+}}}

\newcommand{\coeq}{\mathrm{coeq}}
\newcommand{\Da}{D^\ast}
\newcommand{\Dap}{D^\ast_{+}}
\newcommand{\DPP}{D^\ast_{\PP}}

%%%%%%%%%%%%%%%%%%%%%%
\usepackage{xparse}
\usepackage{tikz}
\usetikzlibrary{matrix,arrows.meta}
\usepackage{xparse}

% — horizontal edges: right‐arrow shifted up, left‐arrow shifted down
%   Optional args: [〈opts for right‐label〉][〈opts for left‐label〉]
\NewDocumentCommand{\AdjRLh}{ O{above} O{below} m m m m }{%
  \draw[->,>=Stealth,transform canvas={yshift=1.95pt}]
    (#3) -- node[#1] {$#5$} (#4);
  \draw[<-,>=Stealth,transform canvas={yshift=-1.95pt}]
    (#3) -- node[#2] {$#6$} (#4);
}

% — vertical edges: right‐arrow shifted left, left‐arrow shifted right
%   Optional args: [〈opts for right‐label〉][〈opts for left‐label〉]
\NewDocumentCommand{\AdjRLv}{ O{left} O{right} m m m m }{%
  \draw[->,>=Stealth,transform canvas={xshift=-1.95pt}]
    (#3) -- node[#1] {$#5$} (#4);
  \draw[<-,>=Stealth,transform canvas={xshift=1.95pt}]
    (#3) -- node[#2] {$#6$} (#4);
}

\definecolor{mygray}{gray}{0.4}
\newcommand{\tc}[1]{\textcolor{mygray}{#1}}

%%%%%%%%%%%%%%%%%%%%%%

\DeclareMathOperator*{\bightensor}{\wh{\bigotimes}}
\DeclareMathOperator{\sig}{sig}

\newcommand{\fft}{\mathrm{ft}}
\newcommand{\cH}{\mathscr{H}}
\newcommand{\cK}{\mathscr{K}}
\newcommand{\DK}{\mathrm{DK}}

\newcommand{\htensor}{\wh{\tensor}}
\newcommand{\hA}{\wh{A}}

\newcommand{\clnAlg}{\wh{\cat{Alg}}^{\rm fg,loc}_{/ \kk}}
\newcommand{\rclnAlg}{\wh{\cat{Alg}}^{\rm fg,loc}_{/ \kk , \, 0}}
\newcommand{\cclnAlg}{c\wh{\cat{Alg}}^{\rm fg,loc}_{/ \kk}}
\newcommand{\cpclnAlg}{c^+\wh{\cat{Alg}}^{\rm fg,loc}_{/ \kk}}

\newcommand{\cast}{\circledast}

\newcommand{\cW}{\mathscr{W}}
\newcommand{\fG}{\mathscr{G}}
\newcommand{\Wb}{\ov{\cW}}
\newcommand{\Wbin}{\ov{\cW}^{\inhom}}
\newcommand{\Wbar}[1]{\ov{\mathscr{W}}_{\! \bl}\, #1}
\newcommand{\Wbarin}[1]{\ov{\mathscr{W}}^{\inhom}_{\bl} #1}
\renewcommand{\cU}{\mathcal{U}}

\newcommand{\scocom}{s\cat{CoCom}}

\newcommand{\pbw}{\mathrm{pbw}}

\newcommand{\gb}{\g_{\bl}}
\newcommand{\Ng}{N_\ast \g_\bl}
\newcommand{\Nag}{N_\ast \g}
\newcommand{\Kcoalg}{K^{\coalg}}
\newcommand{\Kcoalgp}{K^{\coalg, +}}
\newcommand{\dE}{d^{E\cU}}
\newcommand{\tvp}{\ti{\vph}}

\newcommand{\spf}{\mathrm{Spf}}
\newcommand{\dgspf}{\mathrm{Spf_\ast}}
\newcommand{\gspf}{\mathrm{Spf^{+}_{\ast}}}

\renewcommand{\cC}{\catC}

\newcommand{\catGrp}{\cat{Grp}(s\cC)}

\newcommand{\gcocom}{\cat{grCoCom}_{\geq 0}}
\newcommand{\rgcocom}{\cat{grCoCom}^{{\fft}}_{0}}
\newcommand{\rdgcocom}{\cat{dgCoCom^{\fft}_{0}}}
\renewcommand{\dgcocom}{\cat{dgCoCom_{\geq 0}}}
\newcommand{\scoccom}{s\cat{CoCom}_{\mathrm{conil}}}
\newcommand{\conil}{\cat{CoCom}^{\fft}_{\mathrm{conil}}}
\newcommand{\rconil}{\cat{CoCom}^{\fft}_{\mathrm{conil,\, 0}}}
\newcommand{\conilsm}{\cat{FmlMfd_\ast}}
\newcommand{\sconil}{s\conil}
\newcommand{\spconil}{s^+\conil}
\newcommand{\prim}{\mathrm{Prim}}
\newcommand{\coalg}{\mathrm{co}}
\newcommand{\fd}{\mathrm{fd}}
\renewcommand{\cD}{\cat{D}}
\newcommand{\cS}{\cat{S}}
\newcommand{\fs}{\mathit{fs}}
\renewcommand{\Mfd}{\cat{Mfd}^{\fd}}
\newcommand{\spVect}{s^{+}\Vect}

\newcommand{\gFDVect}{\gVect^{\fd}}
\newcommand{\FDVect}{\Vect^{\fd}}
\newcommand{\sFDVect}{s\mathsf{Vect}^{\fd}}
\newcommand{\FG}{\cat{FmlGrp_\infty}}
\newcommand{\FGpd}{\cat{FmlGrpd^{\mathrm{pt}}_\infty}}
\newcommand{\Dist}{\mathrm{Dist}}

\newcommand{\kan}{\mathrm{Kan}}
\newcommand{\acyc}{\mathrm{Acyc}}
\newcommand{\mix}{\mathit{mix}}
\newcommand{\SC}{\cS_{\cC}}
\newcommand{\SD}{\cS_{\cD}}
\newcommand{\grC}{\cat{grC}}

\newcommand{\sur}{\mathscr{D}}
\renewcommand{\df}[1]{{\bf #1}}
\newcommand{\Brac}{\bbrac{\cdot}{\cdot}}

\newcommand{\infcatW}[2]{\mathbf{#1}[{#2}^{-1}]}
\newcommand{\infcat}[1]{\mathbf{#1}}

\newcommand{\const}{\mathrm{const}}

%%%END_PREAMBLE%%%

\usepackage{float}

% \usepackage{showlabels}
% \renewcommand*\showlabelfont{\scriptsize\ttfamily}

% --- Watermark setup ---------------------------------------
% \usepackage{draftwatermark}     % loads xcolor internally
% \SetWatermarkText{DRAFT}        % text to print
% \SetWatermarkScale{2}           % 1 = line height; adjust size
% \SetWatermarkAngle{45}          % degrees (positive = anticlockwise)
% \SetWatermarkColor[gray]{0.95}
% %[rgb]{0.8,0.0,0.0} % faint reddish; any xcolor spec
% %   Instead of rgb you can use [gray]{0.9} for very light grey
% % -----------------------------------------------------------

\usepackage{scalerel}

\makeatletter
\newcommand*\mdot{\mathpalette\mdot@{.5}}
\newcommand*\mdot@[2]{\mathbin{\vcenter{\hbox{\scalebox{#2}{$\m@th#1\bullet$}}}}}
\makeatother

\usepackage[nottoc]{tocbibind}

\usepackage{titlesec}

\titleclass{\subsubsubsection}{straight}[\subsection]

\newcounter{subsubsubsection}[subsubsection]
\renewcommand\thesubsubsubsection{\thesubsubsection.\arabic{subsubsubsection}}
 % optional; useful if paragraphs are to be numbered

\titleformat{\subsubsubsection}
  {\normalfont\normalsize\bfseries}{\thesubsubsubsection}{1em}{}
\titlespacing*{\subsubsubsection}
{0pt}{3.25ex plus 1ex minus .2ex}{1.5ex plus .2ex}

\makeatletter
\renewcommand\paragraph{\@startsection{paragraph}{5}{\z@}%
  {3.25ex \@plus1ex \@minus.2ex}%
  {-1em}%
  {\normalfont\normalsize\bfseries}}
\renewcommand\subparagraph{\@startsection{subparagraph}{6}{\parindent}%
  {3.25ex \@plus1ex \@minus .2ex}%
  {-1em}%
  {\normalfont\normalsize\bfseries}}
\def\toclevel@subsubsubsection{4}
\def\toclevel@paragraph{5}
\def\toclevel@subparagraph{6}
\def\l@subsubsubsection{\@dottedtocline{4}{7em}{4em}}
\def\l@paragraph{\@dottedtocline{5}{10em}{5em}}
\def\l@subparagraph{\@dottedtocline{6}{14em}{6em}}
\makeatother

\setcounter{secnumdepth}{4}
\setcounter{tocdepth}{2}

\title{Higher differentiation via higher formal groupoids}
\author{Christopher L.\ Rogers \thanks{Department of Mathematics \& Statistics, University of Nevada,  Reno. \url{chrisrogers@unr.edu}}
}
%\date{Last update: \today}
\date{\today}

\begin{document}

\maketitle

\begin{abstract}
We solve the differentiation problem for Lie $\infty$-groups. 
Our approach builds on a classical version of Cartier duality which canonically 
identifies the Hopf algebra of point distributions supported at the identity of a Lie group with the universal enveloping algebra of its Lie algebra. Hence, for Lie $\infty$-groups, we consider simplicial coalgebras of point distributions. To do this properly, we first develop the homotopy theory of pointed formal $\infty$-groupoids within K.\ Behrend  and E.\ Getzler's framework for higher geometric stacks. These objects are ``higher'' but  not ``derived'', which is an important distinction for the geometric applications in mind. The second part of our construction relies on a careful analysis of J.\ Pridham's variation of the Dold-Kan adjunction for cosimplicial algebras. Our main result is a differentiation functor at the level of 1-categories from finite-dimensional Lie $\infty$-groups to finite-type Lie $\infty$-algebras that is homotopically well-behaved, coordinate-free, and explicit yet tractable. In particular, if $G_\bl$ is a simplicial Lie group with Lie algebra $\g_\bl$, we prove that the differentiation of its classifying space $\Wbar{G}$ is canonically isomorphic % as a Lie $\infty$-algebra
to the dg Lie algebra of normalized chains $N_\ast(\g_\bl)$.  

\end{abstract}

\newpage

\tableofcontents

\section{Introduction}
Lie $\infty$-groups are reduced simplicial manifolds that satisfy a smooth geometric analog of the horn filling conditions for Kan simplicial sets. Classical examples include the nerve of any Lie group $N_{\bl}G$, and, more generally, the classifying space $\Wbar{G}$ of a simplicial Lie group $G_\bl$. In general, Lie $\infty$-groups can be understood as the ``higher'' or ``stacky'' analogue of Lie groups. More interesting examples were introduced by A.\ Henriques in  \cite{Hen} within the context of simplicial Banach manifolds in order to geometrically model the full Whitehead tower of a Lie group. However, in this paper, we only consider Lie $\infty$-groups that are level-wise finite-dimensional. This is, of course, the ideal situation geometrically, and we lose nothing by doing so. Indeed, by our recent joint work with J.\ Wolfson \cite{Lie3}, all known examples of interest, including those considered in \cite{Hen}, are weakly equivalent to such finite-dimensional Lie $\infty$-groups. Let us also emphasize right away that all of the homotopical machinery considered in the present paper, including the aforementioned notion of weak equivalence, lives at the 1-categorical level.       
No $\infty$-category theory is required, although our constructions immediately pass to the correct underlying $\infty$-categories, if one wishes to do so.

The infinitesimal analogues of finite-dimensional Lie $\infty$-groups are finite-type Lie $\infty$-algebras, i.e.\ degree-wise finite-dimensional, homologically and non-negatively graded $L_\infty$-algebras. They are the source of the above examples, and the main result of \cite{Lie3} can be characterized as a finite-dimensional integration procedure, i.e., Lie's 3rd Theorem for Lie $\infty$-algebras. The focus of this paper is the reverse process: differentiation.

A long-standing unfulfilled desideratum in ``higher geometry'' is a lift of the classical differentiation procedure for Lie groups to the world of Lie $\infty$-groups which satisfies the following criteria:  
\begin{itemize}
\item {\bf functoriality} at the 1-categorical level, from finite-dimensional Lie $\infty$-groups to finite-type Lie $\infty$-algebras;

\item {\bf exactness}, i.e.\  maximal homotopical compatibility between the respective homotopy theories for Lie $\infty$-groups and Lie $\infty$-algebras;

\item {\bf canonical identifications} with classical examples, i.e.\ a natural isomorphism between the differentiation of $\Wbar{G}$ of a simplicial Lie group $G_\bl$, and the dg Lie algebra of normalized chains on the simplicial Lie algebra of $G_\bl$; 

\item {\bf tractability}, i.e.\ in the general case, the underlying complex of the differentiation should coincide with the shifted tangent complex of the Lie $\infty$-group, and the higher $L_\infty$ brackets of the differentiation should be computationally accessible.
\end{itemize}
Partial results along these lines were given in 2006 by P.\ \v{S}evera \cite{Sevu} (see also \cite[\Sec 8.3]{Li}) using the language of supermanifolds.
But further progress in this direction appears to have stalled, and nothing resembling a general theory has been proposed using this framework.

In this paper, we solve the above differentiation problem for Lie $\infty$-groups. In contrast with \cite{Sevu}, our approach stems from the following classical result attributed to P.\ Cartier\footnote{See also \cite[\Sec 3.7]{Cartier} and references within.} and proved in detail by J.-P.\ Serre in \cite[\Sec 6]{Serre:Lie}: If $G$ is a Lie group with Lie algebra $\g$, then there is a canonical isomorphism of Hopf algebras
\[
\cU(\g) \xto{\cong} \Dist(G),
\] 
where $\cU(\g)$ is the universal enveloping algebra, and $\Dist(G)$ is the Hopf algebra of distributions on $G$ supported at the identity.
Hence, $\g$ is recovered canonically as the Lie algebra of primitives of $\Dist(G)$. More generally, one can functorially assign to any pointed manifold the cocommutative coalgebra of distributions supported at the basepoint. Applying this level-wise to a Lie $\infty$-group $\cG_\bl$ produces a reduced Kan simplicial object $\Dist_\bl(\cG)$ in the category conilpotent cocommutative coalgebras. In other words, $\Dist_\bl(\cG)$ is the formal $\infty$-group associated to $\cG_\bl$.     

Composing $\Dist_{\bl}(-)$ with the linear dual of J.\ Pridham's variation of the Dold-Kan adjunction for cosimplicial algebras
\cite{JP:Def, JP:Mixed,JP:Poisson},
we obtain a functor
\[
\Diff(-) \maps \LinfGrp \to \linf
\] 
between the 1-categories of Lie $\infty$-groups and finite-type Lie $\infty$-algebras. Our functor satisfies all of the design criteria listed above. More precisely:   

\begin{theorem1}[Thm.\ \ref{thm:Diff}]\label{thm:main}
The differentiation functor $\Diff(-) \maps \LinfGrp \to \linf$ has the following properties:
\begin{enumerate}
\item The underlying complex of the Lie $\infty$-algebra $\Diff(\cG_\bl) =(L,\el_1,\el_2,\el_3, \cdots)$ is isomorphic to the shifted tangent complex of $\cG_{\bl}$, i.e.,
\[
(L,\el_1) \cong N_{\ast}(T_{\bl}\cG)[-1].
\]
%In particular, if $\cG_\bl$ is a Lie $n$-group, then $\Diff(\cG_\bl)$ is a Lie $n$-algebra.

\item Let $G_\bl$ be a simplicial Lie group with Lie algebra $\gb$. There is a \und{canonical} natural isomorphism of $L_\infty$-algebras
\[
\Phi_{G_\bl} \maps N_\ast \gb \xto{\cong} \Diff(\Wbar{G}).
\]

\item The functor $\Diff(-)$ is exact in the following sense: It sends weak equivalences and Kan fibrations between Lie $\infty$-groups to $L_\infty$-quasi-isomorphisms and fibrations between Lie $\infty$-algebras, respectively, and it preserves pullbacks along Kan fibrations. 
\end{enumerate}
\end{theorem1}
Note that the morphisms in the category $\linf$ are taken to be the so-called ``$\infty$-morphisms'' or ``weak'' $L_\infty$-morphisms. That is, we implicitly consider $\linf \sse \cat{dgCoCom}$ as a full subcategory of conilpotent dg cocommutative coalgebras.

\subsection{Tractability}
Let us describe how the theorem addresses the fourth design criterion. As explained, for example, in \cite[\Sec 4.4]{Lie3} every finite-type Lie $\infty$-algebra $L=(L,\el_1,\el_2,\el_3,\cdots)$ admits a Postnikov decomposition $ \cdots \fib \tau_{\leq 2}L \fib \tau_{\leq 1}L \fib \tau_{\leq 0}L=H_0(L)$ in which the zeroth stage is a finite-dimensional Lie algebra, and the $n$th stage is presented as the pullback along a $k$-invariant. Such a $k$-invariant is a morphism $\tau_{\leq n-1}L \to \gl(V) \semiop V[n]$ in $\linf$ where $V$ is a finite-dimensional vector space, $V[n]$ is a shifted copy, and  $\gl(V) \semiop V[n]$ is the usual semi-direct product. A crucial observation made in \cite{Lie3} is that this $k$-invariant, roughly speaking, integrates to a $k$-invariant for the finite-dimensional Lie $\infty$-group $\cG_\bl(L)$ integrating $L$. This global $k$-invariant corresponds to a map into the classifying space of a simplicial Lie group $\Wbar{\bigl(K(V,n)//\wti{\GL}(V) \bigr)}$. Statements (2) and (3) of Theorem \ref{thm:main} above guarantee that the differentiation of the global $k$-invariant recovers the infinitesimal $k$-invariant that we started with. This observation plays an important role in our joint work in progress \cite{Lie2} with Wolfson, which establishes Lie's 2nd Theorem for Lie $\infty$-algebras.

\subsection{The $\infty$-categorical/formal moduli point of view} \label{sec:fmp}
Given the appearance of formal $\infty$-groups and $L_\infty$-algebras in this work, it is not surprising that a comparison can be made with the theory of formal moduli stacks and derived deformation theory. From this point of view, the difficulty of the differentiation problem for Lie $\infty$-groups is analogous to the difficulty in extracting an explicit dg Lie algebra directly from a given derived Schlessinger functor for a deformation problem.

Recall that the Lurie-Pridham Theorem provides an equivalence 
$\infcatW{dgLie}{W} \simeq \infcat{FMP}$ between the $\infty$-category of unbounded dg Lie algebras over a field of characteristic zero localized with respect to the quasi-isomorphisms, and the $\infty$-category of formal moduli problems\footnote{See, for example, Def.\ 1.4 in the exposition \cite{CG:DDT}.}. While the technical aspects of our work rely heavily on the ideas developed by Pridham in his proof \cite{JP:Def} of the theorem, the conceptual aspects are perhaps closer in spirit to the formalism developed by D.\ Gaitsgory and N.\ Rozenblyum in \cite{GR2}. 

In addition to these conceptual relationships, one can say something precise. As we recall in Sec.\ \ref{sec:canonical-iCFO}, it follows from 
a theorem of K.\ Behrend and E.\ Getzler \cite{BG} that $\infty$-groupoids in $\Sh(\Mfd)$ form a category of fibrant objects for a homotopy theory. Our Prop.\ \ref{prop:icfo-Mfd} implies that the category of pointed Lie $\infty$-groupoids canonically inherits, via the Yoneda embedding, the structure of an ``incomplete category of fibrant objects''. This, in turn, makes the full subcategory of Lie $\infty$-groups a relative category $(\LinfGrp,W)$ with some extra structure. In particular, \cite[Prop.\ 6.7]{RZ} implies that a morphism in $\LinfGrp$ is a weak equivalence if and only if it is a weak equivalence of simplicial sheaves in Joyal's model structure \cite{Joyal:1984}. As a result, localizing with respect to these weak equivalences gives the correct  $\infty$-category $\infcatW{Lie_{\infty}Grp^{\fd}}{W}$, as expected. On the infinitesimal side, 
we showed in \cite{R} that $\linf$ admits the structure of a category of fibrant objects, in which the weak equivalences $W \sse \linf$ are precisely the $L_\infty$ quasi-isomorphisms. V.\ Hinich proved in \cite{Hinich} that the category of unbounded conilpotent dg cocommutative coalgebras admits a model structure for which the cobar functor $\cat{dgCoCom} \to \cat{dgLie}$ is a left Quillen equivalence. Furthermore, it follows from a result of Pridham \cite[Cor.\ 4.56]{JP:Def} (see also \cite[Prop.\ 4.5]{Vallette}) that a morphism in $\linf$ is an $L_\infty$ quasi-isomorphism if and only if it is a weak equivalence in $\cat{dgCoCom}$ if and only if its image under $\Cobar \maps \linf \to \cat{dgLie}$ is a quasi-isomorphism of dg Lie algebras. Hence, by Theorem \ref{thm:main}, differentiation induces an $\infty$-functor
\[
\infcatW{Lie_{\infty}Grp^{\fd}}{W} \xto{\mathbf{\Cobar \cc \Diff(-)}} \infcatW{dgLie}{W}.
\]       
Note that the above $\infty$-functor, in a sense,  ``derives away'' the classical geometry. The resulting dg Lie algebra is quasi-free and, in general, degree-wise infinite dimensional. This makes it incompatible (on the point-set level) with the integration theory developed in \cite{Hen} and \cite{Lie3}. 

\subsection{Extending to algebraic $\kk$-schemes}
Although we focus on reduced Kan simplicial objects in finite-dimensional manifolds, all of the results of Theorem \ref{thm:main} extend straightforwardly if we define a Lie $\infty$-group to be a reduced Kan simplicial object in regular algebraic $\kk$-schemes \cite[\href{https://stacks.math.columbia.edu/tag/06LF}{Tag 06LF}]{stacks-project} over a field of characteristic zero, and take covers in Sec.\ \ref{sec:covers} to be surjective smooth maps. Indeed, the starting point for our construction is the following well-known fact, e.g.\ \cite[\Sec 5]{Serre:Lie}: Given a manifold $X$ with $\cO_X$ its sheaf of smooth functions, and $p \in X$, the completion $\plim_n {\cO_{X,\, p}}/\mm^n_p$ of the corresponding ring of germs is a regular complete local Noetherian $\R$-algebra.

\newcommand{\myparagraph}[1]{\noindent {\bf #1}~}

\subsection{Summary of the paper}
This section provides an overview of the key ideas and technical machinery needed to construct the functor $\Diff(-)$ and prove Theorem \ref{thm:main}. Throughout, $\kk$ denotes a field of characteristic zero.

\subsubsection{Summary of Section \ref{sec:FG}}
\paragraph{Formal submersions and formal manifolds} In Sec.\ \ref{sec:conil}, we begin by considering the category $\conil$ of {\it finite-type} conilpotent counital cocommutative coalgebras. By finite type, we mean that the space of primitives of such a coalgebra is finite dimensional over $\kk$. We recall in Prop.\ \ref{prop:dual} the equivalence 
\begin{equation} \label{eq:intro-equiv}
\kk[-] \maps \conil \overset{\simeq}{\longleftrightarrow} \bigl(\clnAlg\bigr)^\op \maps \spf(-)
\end{equation}
where $\clnAlg$ denotes the category of complete local Noetherian $\kk$-algebras with residue field $\kk$. We summarize the basic properties of these algebras in Appendix \ref{sec:comalg}. The functor $\kk[-]$ sends a coalgebra to its $\kk$-linear dual, while $\spf(-)$ sends an algebra to its continuous dual with respect to the adic topology. We consider the coalgebra $\spf(A)$ as the pointed affine formal $\kk$-scheme whose algebra of functions is $\kk[\spf(A)] = A$. 

We recall in Example \ref{ex:mfd-coalg} that given a pointed manifold $(X,x_0)$, the $\mm$-adic completion $\hA_{X}:=\plim_n \cO_{X}\vert_{x_0}/\mm^n_{x_0}$ described earlier is an object in $\wh{\cat{Alg}}^{\rm fg,loc}_{/ \R}$. Its dual $\spf(\hA_X)$ is the coalgebra of {\it point distributions} $\Dist(X)$ supported at $x_0$. In Sec.\ \ref{sec:formsub}, we define a {\it formal submersion} $C \to C'$ in $\conil$ to be a coalgebra morphism whose dual morphism in $\clnAlg$ is formally smooth in the sense of classical commutative algebra. We borrow the terminology of \cite[\Sec 3]{Kont:Formal} and say $C \in \conil$ is a {\it pointed formal manifold} if its counit is a formal submersion.

\paragraph{Descent categories} In Def.\ \ref{def:descent}, we recall from \cite{BG} the notion of a {\it descent category} $\cD$ equipped with a distinguished subcategory $\cS \sse \cD$ of covers. We prove in Prop.\ \ref{prop:descent} that $\conil$ equipped with the subcategory $\cS_{fs}$ of formal submersions is a descent category. Furthermore, we show that, as expected, every pointed formal manifold $\cX$ is isomorphic to the cofree coalgebra $\coSym(\prim(\cX))$ cogenerated by its primitives. % and that the functor $\Dist \maps \Mfd_\ast \to \conil$ sends surjective submerssions to formal submersions. 
In Sec.\ \ref{sec:simpdesc}, we recall the definitions of $\infty$-groupoid and $\infty$-group objects in a descent category, along with the notions of Kan fibrations, covering fibrations, and hypercovers. 

\paragraph{Categories with covers}
Section \ref{sec:covers} begins with the definition of a {\it category of covers} $(\cC,\cS)$, introduced in \cite{W} and further developed in \cite{Lie3}. 
The axioms are designed to capture the correct notion of a ``smooth object'' in a descent category. The category of manifolds $\Mfd$ with surjective submersions $\cS_{ss}$ as covers is the standard example. We show that the full subcategory $\conilsm \sse \conil$ of pointed formal manifolds with formal submersions as covers provides another example. The notion of $\infty$-groupoid, $\infty$-group, Kan fibration, hypercover etc.\ extend in the obvious way from descent categories to categories with covers. 

\paragraph{Pointed formal $\infty$-groupoids}
We define in Def.\ \ref{def:form-inf-gpd} the category of {\it pointed formal $\infty$-groupoids} $\FGpd:=\Gpd_\infty(\conilsm)$ as the full subcategory of simplicial coalgebras whose objects are $\infty$-groupoids in $\conilsm$. The category of {\it formal $\infty$-groups} is then full subcategory $\FG$ of reduced objects in $\FGpd$. The classifying space $\Wbar{H}$ of a simplicial Hopf algebra $H_\bl \in \Grp(\sconil)$ is a formal $\infty$-group, as expected. (See Example \ref{ex:WG}.)

\paragraph{Geometric functors}
We introduce in Sec.\ \ref{sec:geo-func} a useful technical tool. We say a functor $F \maps (\cC,\cS_\cC) \to (\cD,\cS_\cD)$ between a category with covers and a descent category is a {\it geometric functor} if it preserves covers and pullbacks of covers. Proposition \ref{prop:geo-func} gives the following key results:

First, we show that 
every geometric functor induces a functor $F_\bl \maps \Gpd_\infty(\cC) \to \Grp_\infty(\cD)$ between $\infty$-groupoids which preserves Kan fibrations, hypercovers, and pullbacks along covering fibrations. Furthermore, if $F$ {\it reflects covers}, i.e.\ $F(f) \in \cS_{\cD} \Leftrightarrow  f \in \cS_{\cC}$, then $F_\bl$ reflects Kan fibrations, covering fibrations, and hypercovers. 
Finally, if $F$ is geometric, then it sends simplicial group objects in $\cC$ to simplicial group objects in $\cD$, and this is compatible with taking classifying spaces, i.e.\  there is a natural isomorphism 
\begin{equation} \label{eq:intro-geo-Wb}
F_\bl \cc \Wbar{(-)} \cong \Wbar{(-)} \cc F_\bl.   
\end{equation}
Corollary \ref{cor:geo-func-ex} implies that the functor of point distributions 
\begin{equation} \label{eq:intro-geo-point-dist}
\Dist \maps (\Mfd_\ast,\cS_{ss}) \to (\conil,\cS_{fs}) 
\end{equation}
is a geometric functor. In particular, every surjective submersion between pointed manifolds induces a formal submersion between their coalgebras of point distributions. Other important examples include, respectively, the inclusion of pointed formal manifolds and the primitives functor: 
\begin{eqnarray} 
&(\conilsm,\cS_{fs} ) \emb (\conil,\cS_{fs}) \label{eq:intro-geo-func-emb}\\
&(\conilsm,\cS_{fs}) \xto{\prim} (\FDVect,\cS_{epi}). \label{eq:intro-geo-func-prim}
\end{eqnarray}
Both of these are geometric functors that reflect covers. Also, in Sec.\ \ref{sec:canonical-iCFO}, we show that the category of sheaves $\Sh(\cC)$  
on a category with covers admits the structure of a descent category
$(\Sh(\cC),\cS_{\mix})$ where $\cS_{\mix}$ is the subcategory of so-called ``mixed covers'' (Def.\ \ref{def:mixed}). In this way, the Yoneda embedding
\begin{equation} \label{eq:intro-yon}
\yo \maps (\cC,\SC) \to (\Sh(\cC),\cS_{\mix}) 
\end{equation}
is promoted to a geometric functor.

\paragraph{Homotopy theory via CFOs and iCFOs}
We begin Sec.\ \ref{sec:hmtpy} by recalling the definition of a {\it category of fibrant objects} (CFO) \cite{Brown}, and the notion of an {\it exact functor} between such categories. We recall in Thm.\ \ref{thm:descent-CFO} one of the main results of \cite{BG}: The category $\Gpd_\infty(\cD)$ of $\infty$-groupoid objects in any descent category $(\cD,\cS)$ admits the structure of a CFO in which the fibrations are the Kan fibrations, and the acyclic fibrations are the hypercovers. To obtain the analogous statement for $\infty$-groupoid objects in a category with covers, we need a slightly weaker structure, due to the lack of finite limits. We recall in Def.\ \ref{def:icfo} the notion of an {\it incomplete category of fibrant objects} (iCFO) introduced in \cite{RZ}. Every CFO is an iCFO, but in general for the latter, we only require the existence of pullbacks of fibrations if the fibration is acyclic. However, if the pullback of a fibration does exist, then we require that it be a fibration. The analogue of an exact functor between iCFOs $\cC$ and $\cD$ is an {\it exact functor with respect to $S$} (Def.\ \ref{def:iCFO-exact}), where $S$ is a subclass of fibrations in $\cC$ containing all acyclic fibrations and all morphisms to the terminal object. 

\paragraph{The canonical iCFO structure on $\Gpd_\infty(\cC)$}
Proposition \ref{prop:icfo-exist} allows us to transfer the Behrend-Getzler CFO structure on $\infty$-groupoids in a descent category to an iCFO structure on $\infty$-groupoids in a category with covers. 
More precisely, if $F \maps (\cC,\SC) \to (\cD,\SD)$ is a geometric functor that reflects covers, then the category $\Gpd_\infty(\cC)$ admits the structure of an iCFO with functorial path objects, in which the fibrations are the Kan fibrations, and in which a morphism in $\Gpd_{\infty}(\cC)$ is  a weak equivalence if and only if its image under $F_\bl$ is a weak equivalence in $\Gpd_{\infty}(\cD)$. In particular, the acyclic fibrations in $\Gpd_\infty(\cC)$ are exactly the hypercovers. We call any iCFO structure on $\Gpd_\infty(\cC)$  with these fibrations and acyclic fibrations the {\it canonical iCFO structure}. 

Proposition \ref{prop:icfo-exact-geo} implies that if $\Gpd_\infty(\cC)$ admits the canonical iCFO structure, then any geometric functor $F \maps \cC \to \cD$ induces an exact functor $F_\bl \maps \Gpd_\infty(\cC) \to \Gpd_\infty(\cD)$ with respect to covering fibrations. Moreover, $F_\bl$ preserves all fibrations and path objects. 

Note that Prop.\ \ref{prop:icfo-exist}, when combined with Prop.\ \ref{prop:icfo-Mfd} discussed below, subsumes one of the main results of our previous work \cite{RZ} with C.\ Zhu. Furthermore, in contrast to \cite{RZ}, our results avoid the use of points for Grothendieck topologies, and ``stalk-wise'' constructions for sheaves.

\paragraph{Homotopy theory in $\LinfGpd$ and $\FGpd$} 
Using the geometric functors \eqref{eq:intro-geo-point-dist}, 
\eqref{eq:intro-geo-func-emb}, and \eqref{eq:intro-yon}, we show in 
Prop.\ \ref{prop:icfo-Mfd} and Thm.\ \ref{thm:formal-hmtpy} that  
the categories of Lie $\infty$-groupoids $\LinfGpd$, pointed Lie $\infty$-groupoids $\LinfGpd_\ast$, and pointed formal $\infty$-groupoids $\FGpd$
all admit the canonical iCFO structure. 

We then use the geometric functors \eqref{eq:intro-geo-func-prim} and
\eqref{eq:intro-geo-point-dist} in Thm.\ \ref{thm:formal-hmtpy} to show that the simplicial primitives functor
\begin{equation} \label{eq:intro-prim-exact}
\prim_\bl \maps \FGpd \to \sFDVect 
\end{equation}
is exact with respect to covering fibrations and reflects weak equivalences and fibrations. Furthermore, the simplicial point distributions functor $\Dist_\bl \maps \LinfGpd_\ast \to \FGpd$
is exact with respect to covering fibrations. 

\paragraph{The formal $\infty$-group of a Lie $\infty$-group}
In Sec.\ \ref{sec:LG-FG} we consider the restriction of functor $\Dist_{\bl}(-)$ to the category of Lie $\infty$-groups:
\begin{equation} \label{eq:intro-Dist-exact}
\Dist_\bl \maps \LinfGrp \to \FG.
\end{equation}
This functor preserves weak equivalences, all fibrations, and all pullbacks of fibrations, since every fibration between reduced objects is a covering fibration. 

Given a simplicial Lie group $G_\bl$ with Lie algebra $\gb$, we show in Cor.\ \ref{cor:WG-Ug} that classical Cartier duality combined with the isomorphism \eqref{eq:intro-geo-Wb} produces a natural isomorphism of formal $\infty$-groups 
\begin{equation} \label{eq:intro-WG-Ug}
\Dist_\bl(\Wb G) \cong \Wbar{\cU(\g)}.
\end{equation}

\paragraph{Every formal $\infty$-group admits a PBW basis}
We recall in Sec.\ \ref{sec:almost} the definition of the category $s^{+}\cC$ of {\it almost simplicial objects} in a category $\cC$. Such an object may be thought of as a simplicial object without a zeroth face map. Denote by $U^{+}_{\bl} \maps s\cC \to s^{+}\cC$ the obvious forgetful functor. 

Next, we consider in Sec.\ \ref{sec:Wbar} the {\it inhomogeneous bar construction} $\Wbarin{G}$ of a simplicial group object $G_\bl$, and we recall from \cite{JP:Dmod} the natural isomorphism between $\Wbarin{G}$ and the classical $\Wbar{G}$ as in \cite{EM}. From the definition \eqref{eq:Winh}, it is clear that the group structure of $G_\bl$ is completely encoded in the zeroth face map of $\Wbarin{G}$. As a result, given a simplicial Lie algebra $\gb$, Prop.\ \ref{prop:Winhom} shows that the classical PBW theorem yields a canonical isomorphism of almost simplicial coalgebras 
\begin{equation} \label{eq:intro-PBW-classic}
U^{+}_{\bl} \coSym \bigl(\Wbar {\g} \bigr) \cong U^{+}_{\bl}\Wbar{\cU(\g)}
\end{equation}
Here, the simplicial vector space $\Wbar{{\g}}$ is the classifying space of the underlying additive simplicial group of $\gb$, and $\coSym(\Wbar{{\g}})$ is the cofree conilpotent cocommutative simplicial coalgebra cogenerated by $\Wbar{{\g}}$. 

We generalize \eqref{eq:intro-PBW-classic} to all pointed formal $\infty$-groupoids 
by promoting it to a definition (Def.\ \ref{def:PBW}). A {\it PBW basis  for a pointed formal $\infty$-groupoid $\cX_\bl$} is an isomorphism of almost simplicial coalgebras 
\begin{equation} \label{eq:intro-PBW}
U^+_{\bl}\coSym(\prim_\bl(\cX)) \iso U^+_{\bl}(\cX).
\end{equation}
Theorem \ref{thm:pbw} asserts that every pointed formal $\infty$-groupoid admits a PBW basis. This is a key result, since it will imply that the differentiation of a Lie $\infty$-group is not just a dg coalgebra, but the Chevalley-Eilenberg coalgebra of a Lie $\infty$-algebra.

\subsubsection{Summary of Section \ref{sec:FD}} \label{sec:intro-sec3}
The main result of this section is the construction of a formal differentiation functor from formal $\infty$-groups to Lie $\infty$-algebras.

\paragraph{Dold-Kan for simplicial coalgebras} After recalling the simplicial and almost simplicial Dold-Kan correspondence in Sec.\ \ref{sec:simp-DK}, we observe in Lemma \ref{lem:coalg-adjoint} that the inverse of the normalized chains functor restricts to a functor $K^{\coalg}_\bl \maps \dgcocom \to \scocom$ between conilpotent counital dg cocommutative coalgebras and simplicial cocommutative coalgebras.

\paragraph{Chevalley-Eilenberg functor vs.\ $\Wbar{}$-construction} 
Let $\gb$ be a simplicial Lie algebra and $\cU(\gb)$ its simplicial universal enveloping algebra. In Sec.\ \ref{sec:TheMap}, we compare the formal $\infty$-group $\Wbar{\cU(\g)}$ with the simplicial coalgebra $K^{\coalg}_\bl(\CE(N_\ast \g))$. Here, $\CE(N_\ast \g)$ is the Chevalley-Eilenberg coalgebra of the normalized chains $\Ng$, which is a dg Lie algebra. The main result of this subsection is Thm.\ \ref{thm:themap} which exhibits a natural map of simplicial coalgebras
\begin{equation} \label{eq:intro-CE1}
K^{\coalg}_\bl(\CE(N_\ast \g)) \to \Wbar{\cU(\g)}. 
\end{equation}
Corollary \ref{cor:prim-map} implies that this map induces an isomorphism of normalized complexes after taking primitives:
\begin{equation} \label{eq:intro-CE2}
N_{\ast}(\gb)[1] \cong N_\ast\bigl(\prim_\bl\Wbar{\cU(\g)}\bigr),
\end{equation}
where $N_{\ast}(\gb)[1]_{k} = N_{\ast}(\gb)_{k-1}$.
This result is crucial for establishing statement (2) of Theorem \ref{thm:main} above. The proof of Thm.\ \ref{thm:themap} incorporates D.\ Quillen's theory of principal dg coalgebra bundles \cite{Quillen:RHT} along with ideas borrowed from the work of H.\ Cartan \cite{Cartan} and J.\ Moore \cite{Moore1,Moore2} on the classical $\Wbar{}$-construction.

\paragraph{Cosimplicial Dold-Kan adjunction for commutative algebras}
In Sec.\ \ref{sec:cosimDK}, we consider the Dold-Kan correspondences  $N^\ast \adj K^\bl$ and $N^\ast_{+} \adj K^\bl_{+}$ for cosimplicial and almost cosimplicial objects, respectively. The right adjoints $\Kb$ and $\Kb_{+}$ restrict to functors $\Kb_{\Com} \maps \cdga \to \cCom$ and $\Kb_{\Com,+} \maps \cga \to \cpCom$ on the categories of commutative dg algebras, and commutative graded algebras, respectively. We explain in Rmk.\ \ref{rmk:JP} that the left adjoint $\Da \maps \cCom \to \cdga$ to $\Kb_{\Com}$ is featured throughout Pridham's work in derived deformation theory. 

In Thm.\ \ref{thm:D-exists} and Prop.\ \ref{prop:reduced} we provide a careful analysis of $\Da$ and its almost cosimplicial analogue $\Dap \maps \cpCom \to \cga$ via the adjoint lifting theorem for monadic functors. This yields several key results. First, the adjoint pairs $\Da \adj \Kb_{\Com}$ and $\Dap \adj \Kb_{\Com, +}$ induce adjunctions
%\begin{equation} %\label{eq:intro-cosimp-adj}
\[
\Da \maps c\rclnAlg \adjunct \rcdga \maps \Kb_{\Com}, \qquad 
\Dap \maps c^{+}\rclnAlg \to \rcga \maps \Kb_{\Com, +}.
\]
%\end{equation} 
Above, $c\rclnAlg$ is the category of {\it reduced} cosimplicial commutative algebras that are level-wise complete local Noetherian $\kk$-algebras. As usual, a cosimplicial $\kk$-algebra $A^\bl$ is reduced if $A^0 =\kk$. The 
almost cosimplicial analogue $c^{+}\rclnAlg$ is defined similarly. On the other side, $\rcdga$ denotes the category of {\it reduced} and finitely-generated cdgas.
A graded algebra $A^\ast$ is reduced if $A^0 = \kk$. The category $\rcga$ of reduced finitely-generated graded commutative algebras is defined similarly.    

Second, if $W^\bl$ is an almost cosimplicial vector space, then \eqref{eq:D-exists-Sym} and \eqref{eq:prop-reduced2} imply that 
there is a natural isomorphism of graded algebras 
\begin{equation} \label{eq:intro-sym}
\Dap(\cphSym(W)) \cong \gSym(\Nas_+(W))
\end{equation}
Here $\cphSym(W)$ is the completion of the free almost cosimplicial commutative algebra generated by $W^{\bl}$, and $\gSym(\Nas_+(W))$ is the free graded commutative algebra generated by the normalized graded vector space $\Nas_+(W)$. 

Third, if $U^\bl_+ \maps \cCom \to \cpCom$ and $U^\ast_+ \maps \cdga \to \cga$ denote the functors that forget, respectively, the zeroth coface map and differential, then we have an isomorphism of graded algebras 
\begin{equation} \label{eq:intro-underlying}
U^*_{+} \Da(A^\bl)  \cong \Dap U^\bl_{+}(A^\bl), 
\end{equation}
natural in the cosimplicial algebra $A^\bl$.

\paragraph{Lie $\infty$-algebras} We establish our conventions for Lie $\infty$-algebras in Sec \ref{sec:Linf}, following \cite[\Sec 3]{R}, with one minor yet important difference. Denote by $\rdgcocom \sse \dgcocom$ the full subcategory whose objects are {\it reduced finite-type} dg coalgebras. By definition, $C_\ast$ is reduced if $C_0 =\kk$, and $C_\ast$ is finite-type if its complex of primitives is degree-wise finite-dimensional. In contrast with \Sec 3.1 of \cite{R}, we take as our category of Lie $\infty$-algebras to be the full subcategory $\linf \sse \rdgcocom$ whose objects are dg coalgebras $(C_\ast,\del)$ such that there exists an isomorphism of graded coalgebras 
\[
C_\ast \cong \cogrSym(L) \quad \text{in $\rgcocom$}
\]
for some positively graded vector space $L$. Here $\cogrSym(L)$ is the usual graded symmetric coalgebra cogenerated by $L$. In the early derived algebraic geometry literature, e.g.\ \cite[Prop.\ 1.2]{Kap}, objects in $\linf$ are called ``weakly isomorphic weak Lie algebras''. 

It follows that every object $(C_\ast,\del)$ in $\linf$ is non-canonically isomorphic to the Chevalley-Eilenberg coalgebra of a Lie $\infty$-algebra of the form $(L[-1],\el_1,\el_2,\el_3, \cdots)$. Moreover, the chain complex $(L[-1],\el_1)$ is isomorphic to the {\it tangent complex} of $(C_\ast,\del)$:
\[
\tan_\ast(C):=\prim_\ast(C)[-1],
\]
i.e., the shifted complex of its primitives. Since $\linf$ is the essential image of the embedding $\LnA{\infty}^{\ft} \emb \dgcocom$ of the category of finite-type Lie $\infty$-algebras considered in \cite{R}, we have $\linf \simeq \LnA{\infty}^{\ft}$. Hence, $\linf$ admits the usual CFO structure (Thm.\ \ref{thm:Linf-CFO}) in which a morphism in $\linf$ is a weak equivalence or a fibration if and only if the induced morphism on tangent complexes is, respectively, a quasi-isomorphism or a surjection in all positive degrees.

\paragraph{The formal differentiation functor}
The above equivalence \eqref{eq:intro-equiv} between coalgebras and algebras extends to the reduced (almost) simplicial and dg/graded contexts, e.g.,
\[
\begin{split}
\kk[-]^{\bl} \maps s\rconil &\overset{\simeq}{\longleftrightarrow} \bigl(c\rclnAlg \bigr)^\op \maps \spf_{\bl}(-)\\
\kk[-]^{\ast} \maps \rdgcocom &\overset{\simeq}{\longleftrightarrow} \bigl(\rcdga)^\op \maps \dgspf(-).
\end{split}
\] 
In Thm.\ \ref{thm:FDiff}, we introduce the {\it formal differentiation functor}
\begin{equation} \label{eq:intro-FDiff}
\FDiff(-) \maps \FG \to \linf
\end{equation}
as the composition
\[
\FG \emb s\rconil \xto{\kk[-]^{\bl}} \bigl(c\rclnAlg \bigr)^{\op} \xto{{D^\ast}^{\op}} \bigl(\rcdga \bigr)^{\op} \xto{\dgspf} \rdgcocom,
\]
and we prove the following key statements concerning its properties.

First, we verify that $\FDiff(\fG_\bl)$ is indeed a Lie $\infty$-algebra by using the isomorphisms \eqref{eq:intro-sym} and \eqref{eq:intro-underlying}, along with the existence of a PBW basis \eqref{eq:intro-PBW} for the formal $\infty$-group $\fG_\bl$.

Second, we prove that the underlying tangent complex of the Lie $\infty$-algebra $\FDiff(\fG_{\bl})$ is isomorphic to the shifted normalized complex of primitives of $\fG_\bl$:
%\begin{equation} %\label{eq:intro-tan-shift}
\[
\tan_\ast \bigl( \FDiff(\fG_\bl) \bigr) \cong N_\ast\bigl( \prim_\bl(\fG)\bigr)[-1].
\]
%\end{equation} 
This statement follows from combining the (not necessarily canonical) isomorphism of graded coalgebras $U^+_\ast \FDiff(\fG_\bl) \cong U^+_\ast \codgSym(N_\ast \prim_\bl(\fG))$, along with Lemma \ref{lem:G1} which exhibits a \und{natural} isomorphism of chain complexes
\begin{equation} \label{eq:intro-prim}
N_\ast \prim_\bl(\fG) \xto{\cong} \prim_\ast\bigl(\FDiff(\fG_\bl) \bigr).
\end{equation}   

Third, if $\fG_\bl=\Wbar{G}$, where $G_\bl$ is a simplicial Lie group, then we can improve the previous results by exhibiting a \und{canonical} natural isomorphism of Lie $\infty$-algebras
\begin{equation} \label{eq:intro-WG-iso}
\Ng \xto{\cong} \FDiff( \Wbar \cU(\gb)). 
\end{equation}
Its existence is due to the simplicial coalgebra morphism \eqref{eq:intro-CE1} discussed above. That it is an isomorphism follows from \eqref{eq:intro-CE2}, along with the well known fact that the tangent complex functor for $L_\infty$-algebras reflects isomorphisms.

The last assertion in Thm.\ \ref{thm:FDiff} is that the functor $\FDiff(-)$ preserves all limits, preserves and reflects weak equivalences, and preserves and reflects fibrations. These statements follow from combining the natural isomorphism \eqref{eq:intro-prim} with the fact that the primitives functor \eqref{eq:intro-prim-exact} preserves and reflects both weak equivalences and fibrations between formal $\infty$-groups.  

\subsubsection{Summary of Section \ref{sec:LieDiff}}
Finally, we define the {\it differentiation functor} for Lie $\infty$-groups
\[
\Diff(-) \maps \LinfGrp \to \linf
\]
as the composition of the formal differentiation functor \eqref{eq:intro-FDiff} with the simplicial point distribution functor \eqref{eq:intro-Dist-exact}:
\[
\LinfGrp \xto{\Dist_\bl} \FG \xto{\FDiff} \linf.
\]
We then restate our main theorem, Thm.\ \ref{thm:main}, as Thm.\ \ref{thm:Diff} and give the proof. Let us sketch the key arguments. 

For the proof of statement (1) of Thm.\ \ref{thm:main}, first denote by $T_{\bl}(-) \maps \LinfGrp \to s\FDVect$  the tangent functor which takes the level-wise tangent space at the canonical basepoint. There is a natural isomorphism of simplicial vector spaces $T_{\bl}(\cG) \cong \prim_\bl \bigl(\Dist(\cG)\bigr)$. Then (1) follows from the fact that the formal differentiation functor takes values in Lie $\infty$-algebras along with the isomorphism \eqref{eq:intro-prim}. Statement (2) follows from \eqref{eq:intro-WG-iso} along
with the isomorphism \eqref{eq:intro-WG-Ug}  between $\Dist_\bl(\Wb G)$ and  $\Wbar{\cU(\g)}$. Statement (3) follows from the exactness properties of $\FDiff(-)$ summarized above, along with the exactness properties of the simplicial point distributions functor \eqref{eq:intro-Dist-exact}.

\subsection{Notation and conventions} %\label{sec:Note-Conv}
From Sec.\ \ref{sec:FG} onward, we adhere to the notations and conventions described below. 
\paragraph{Graded and dg objects}
We follow the conventions of \cite[Sec.\ 2.1]{R}, in general, for graded linear algebra. There is, however, one exception. In contrast with \cite[Sec.\ 2.1]{R}, given a homologically graded vector space $V_\ast$ and $n \in \Z$, we denote by $V[n]$ the graded vector space
\[
V[n]:=\bs^nV \qquad V[n]_{k}:= V_{k-n}.
\]
This agrees with our conventions in \cite{Lie3}. In particular, 
any reference to a ``shifted tangent complex'' $T[-1]$ in this paper will have the underlying graded vector space
\[
T[-1]_{i} = T_{i+1}.
\]
This convention aligns with some of the derived geometry literature, e.g. \cite{Nuiten}.

When the grading of an object is indicated using subscripts, e.g. $C_\ast$, then this always implies that it is homologically graded. The analogous statement 
holds for superscripts, e.g.\ $A^\ast$, and cohomological grading.

In order to keep the notation under control, we will often introduce a graded object as, for example, $C_\ast$ or $A^\ast$, and then reference it later on without subscripts or superscripts, e.g. $C$ or $A$.   

When we wish to emphasize a graded object in contrast with a dg object, we will frequently use the notation
\[
C^{+}_{\ast}
\] 
if $C$ is homologically graded, and 
\[
C^{\ast}_{+}
\] 
if $C$ is cohomologically graded.

All chain complexes are homologically graded and concentrated in non-negative degrees. All cochain complexes are cohomologically graded and concentrated in non-negative degrees, with one exception: the underlying cochain complex of the cdga 
\[
\Lam^{n,\ast}
\] 
defined in \eqref{eq:lam-def} is cohomologically graded and concentrated in degrees $-(n+1),\ldots,0$.

With the exception of Sec.\ \ref{sec:fmp} above, all (differential) graded Lie algebras are homologically graded and concentrated in non-negative degrees. By definition, a Lie $\infty$-algebra in this paper is a homologically graded $L_\infty$-algebra concentrated in non-negative degrees.

Any notion of ``tangent complex'' in this paper will have an underlying homologically graded chain complex.  

All (differential) graded coalgebras have underlying chain complexes, and hence are homologically graded. All (differential) graded algebras have underlying cochain complexes and are therefore cohomologically graded.  

Given a homologically graded vector space $V_\ast$, we denote by $\bs V$ the suspension of $V$, and we denote by $\bs^{-1} V$ the desuspension of $V$. That is: 
\[
(\bs V)_{i}:=V_{i-1} \qquad (\bs^{-1} V_{i}):=V_{i+1}.
\]

\paragraph{Linear duals}
Given an object $V$ with an underlying vector space over $\kk$, the notation  
$(V)^{\vee}$ denotes the usual (i.e.\ discrete) $\kk$-linear dual $\hom_{\Vect}(V,\kk)$. In particular: 
If $V_\bl$ is a (almost) simplicial vector space, then $(V^{\vee})^{\bl}$ denotes the dual cosimplicial vector space $(V^{\vee})^{n} = \hom_{\kk}(V_n,\kk)$. The dual of a (almost) cosimplicial vector space $W_\bl$ is the simplicial vector space $(W^{\vee})_\bl$ defined in an analogous way.   

If $(V_\ast,d)$ is chain complex (or, more generally, a homologically graded vector space) then its dual $\big((V^{\vee})^\ast,\del \bigr)$ is the cochain complex $(V^{\vee})^n:= \hom_{\kk}(V_n,\kk)$ with differential 
\[
\delta(f)(x):=(-1)^{\deg{f}}f(dx).
\]
If $(W^\ast,\delta)$ is a cochain complex, then its dual chain complex  
$\big((W^{\vee})_\ast,d \bigr)$ is defined in the analogous way.

We emphasize that the formal spectrum functor $\spf(-)$ in \eqref{eq:dual} and its simplicial and graded analogues $\spf_{\bl}$ and $\spf_{\ast}$ are defined using the continuous linear dual $\hom^{\cont}_{\kk}(-,\kk)$ rather than $(-)^{\vee}$. Of course, the two operations agree on finite-dimensional objects.     

\paragraph{Cofree coalgebras and free algebras}
In both graded and (almost) co/simplicial contexts: 
\begin{itemize}
\item $\coSym(V)$, and in some places $S(V)$, denote
the cofree counital conilpotent cocommutative coalgebra cogenerated by $V$, i.e., the ``symmetric coalgebra''. 
\item $\Sym(V)$ denotes the free unital commutative algebra generated by $V$.
\end{itemize}

\paragraph{Notation for (co)simplicial and almost (co)simplicial objects}
In order to minimize notation, we will often introduce a simplicial or cosimplicial object as, for example, $C_\bl$ or $A^\bl$, and then reference it later on without subscripts or superscripts, e.g. $C$ or $A$.   

In Sections \ref{sec:almost} and Sec.\ \ref{sec:alm-cosim} we introduce almost simplicial  and almost cosimplicial objects, respectively.
When we wish to emphasize an almost simplicial object in contrast with a simplicial object, we will frequently use the notation
\[
C^{+}_{\bl}
\] 
Similarly, when considering almost cosimplicial objects in contrast with cosimplicial objects, we will use
\[
C^{\bl}_{+}.
\] 

\paragraph{A remark on exposition} We attempt to write for a somewhat broad audience that includes homotopy theorists of different stripes, as well as geometers, both differential and algebraic. We aim to give a self-contained presentation and/or precise references to the literature, introductory or otherwise, whenever possible. In particular, we assume no familiarity with derived geometric machinery, and instead appeal to basic arguments in classical commutative algebra whenever it is reasonable to do so.

\subsection{Acknowledgments}
This paper arose as an offshoot of an ongoing collaboration with Jesse Wolfson, and the author thanks him for many substantial conversations and helpful advice. The results in Section \ref{sec:cosimpDKalgebras} owe a significant intellectual debt to the work of Jon Pridham, in particular, \cite{JP:Def} and  \cite{JP:Poisson}.   
The author was supported by NSF Grant DMS-2305407.

\section{Formal $\infty$-groups}\label{sec:FG} 

\subsection{Conilpotent coalgebras and complete local algebras}
\label{sec:conil}
We first consider non-graded conilpotent counital cocommutative coalgebras over $\kk$. These are equivalently connected graded coalgebras in the sense of
\cite[\Sec B.3]{Quillen:RHT} concentrated in degree 0. By definition, such a coalgebra $C$ admits a canonical coaugmentation $\kk \to C$. Denote by $\prim(C) \in \Vect$ the linear subspace of primitive elements of $C$. Let $\conil$ denote the category of \df{finite-type} conilpotent counital cocommutative coalgebras. 
By finite-type, we mean that the space of primitives is finite-dimensional. We have an adjoint pair
\begin{equation} \label{eq:prim-adj}
\kk \semiop^{\coalg}(-) \maps \FDVect \adjunct \conil \maps \prim 
\end{equation}
where the comultiplication on $\kk \semiop^{\coalg} (V)$ is determined by assignment $\Del(v):= v \tensor 1 + 1 \tensor v$ for all $v \in V$. The category $\conil$ has all finite limits. Indeed, products are given by $\kk$-linear tensor products $C \tensor C'$, as usual, and pullbacks are constructed as in \cite[Sec.\ 2.4.4]{Grunen-Pare}. 

Let $\clnAlg$ denote the category of complete local Noetherian $\kk$-algebras with residue field $\kk$ as described Appendix \ref{sec:comalg}.
The following proposition and its corollary are classical, e.g. \cite[\Sec 37.2]{Haze}. 
\begin{proposition} \label{prop:dual}
The assignment 
\[
C \mapsto \kk[C]:=C^{\vee}
\]
of a coalgebra $C$ to its $\kk$-linear dual induces an equivalence of categories
\begin{equation} \label{eq:dual}
\kk[-] \maps \conil \overset{\simeq}{\longleftrightarrow} \bigl(\clnAlg\bigr)^\op \maps \spf(-)
\end{equation}
whose inverse is the functor 
\[
\spf(A):= \hom^{\cont}_\kk(A,\kk)
\]
which assigns an algebra $A$ to its continuous\footnote{Here $\kk$ is equipped with the discrete topology.} linear dual. 
\end{proposition}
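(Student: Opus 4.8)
The statement is classical (cf.\ \cite[\Sec 37.2]{Haze}), and the plan is to reduce everything to the perfect duality between \emph{finite-dimensional} commutative algebras and \emph{finite-dimensional} cocommutative coalgebras, and then pass to suitable (co)limits. First I would recall that for any counital cocommutative coalgebra $C$ the discrete dual $C^\vee$ is a commutative algebra: the multiplication is the transpose of the comultiplication composed with the canonical map $C^\vee \otimes C^\vee \to (C \otimes C)^\vee$, the unit is the transpose of the counit, and associativity, commutativity and unitality are the transposes of coassociativity, cocommutativity and counitality. The coaugmentation $\kk \to C$ dualizes to an augmentation $\varepsilon \maps C^\vee \to \kk$ (evaluation at the group-like $1 \in C$), whose kernel $\mm$ is the candidate maximal ideal.

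Next I would verify that $\kk[-]$ lands in $\clnAlg$. The crucial input is the coradical (conilpotent) filtration $F_0 C \subseteq F_1 C \subseteq \cdots$ with $C = \bigcup_n F_n C$; finite-type (finite-dimensional primitives) together with conilpotence and $\chark \kk = 0$ guarantees that each $F_n C$ is finite-dimensional (e.g.\ via the embedding of $C$ into the cofree coalgebra on $\prim(C)$). I would then show that under dualization this filtration is dual to the $\mm$-adic filtration, in the sense that $\mm^{n+1} = (F_n C)^{\perp}$, so that $C^\vee/\mm^{n+1} \iso (F_n C)^\vee$. Consequently $C^\vee \iso \plim_n (F_n C)^\vee \iso \plim_n C^\vee/\mm^{n+1}$, whence $C^\vee$ is complete local; the residue field is $\kk$ because the group-like is $\kk$-rational; and the cotangent space $\mm/\mm^2 \iso \prim(C)^\vee$ is finite-dimensional, so $C^\vee$ is Noetherian by the structure theory recalled in Appendix \ref{sec:comalg} (a complete local $\kk$-algebra with finite-dimensional cotangent space is a quotient of a formal power series ring in finitely many variables).

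Dually, I would check that $\spf(-)$ lands in $\conil$. Since $A$ is Noetherian complete local with residue field $\kk$, each quotient $A/\mm^{n}$ is a finite-dimensional local $\kk$-algebra, and $\spf(A) = \hom^{\cont}_\kk(A,\kk) = \bigcup_n (A/\mm^{n})^\vee$, the continuous functionals being exactly those factoring through some $A/\mm^n$. The dual of a finite-dimensional local commutative algebra is a finite-dimensional connected, hence conilpotent, cocommutative counital coalgebra, and the union of this increasing chain is therefore an object of $\conil$; its primitives are $(\mm/\mm^2)^\vee$, which is finite-dimensional, so $\spf(A)$ is of finite type.

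Finally, the two natural isomorphisms $\spf \cc \kk[-] \iso \id_{\conil}$ and $\kk[-] \cc \spf \iso \id_{(\clnAlg)^{\op}}$ both reduce to the finite-dimensional double-dual isomorphism, compatibly with the matched filtrations: the continuous dual of $C^\vee \iso \plim_n (F_n C)^\vee$ recovers $\colim_n F_n C = C$, while the discrete dual of $\spf(A) \iso \colim_n (A/\mm^n)^\vee$ recovers $\plim_n A/\mm^n = A$ by completeness. The main obstacle, and the only place where the passage is not a formality, is precisely the infinite-dimensional mismatch between the discrete and continuous duals: the naive dual of an algebra is too large to be a coalgebra, so one must verify that the coradical filtration on $C$ corresponds \emph{exactly} to the $\mm$-adic filtration on $C^\vee$ and that finite-type translates to Noetherian. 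Once these filtrations are identified, functoriality and the inverse relations follow from finite-dimensional linear duality level by level.
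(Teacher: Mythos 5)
The paper offers no proof of this proposition at all: it is recorded as classical with a pointer to \cite[\Sec 37.2]{Haze}, and your argument is a correct reconstruction of that classical proof, matching the coradical filtration on $C$ with the $\mm$-adic filtration on $C^{\vee}$ via $\mm^{n+1}=(F_nC)^{\perp}$ and reducing everything to level-wise finite-dimensional linear duality. The one step worth a word more than you give it is the reverse inclusion $(F_nC)^{\perp}\subseteq \mm^{n+1}$: it follows from your embedding $C\hookrightarrow \coSym(\prim(C))$, since dualizing that injection gives a surjection from a formal power series ring which carries the $(n{+}1)$-st power of its maximal ideal onto both $(F_nC)^{\perp}$ and $\mm^{n+1}$ simultaneously; with that observation supplied, the proposal is complete.
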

It is conceptually useful to borrow the language of pointed formal schemes over $\kk$, and consider $\kk[C]$ as the ring of functions on its formal spectrum, i.e.\ the coalgebra $C= \spf(\kk[C])$.

\begin{corollary}\label{cor:prim-cot}
Let $C \in \conil$ and $\mm \ideal \kk[C]$ the unique maximal ideal. There is an isomorphism of finite-dimensional vector spaces
\[
\prim(C)^\vee \xto{\cong} \mm/\mm^2
\] 
natural in $C$.
\end{corollary}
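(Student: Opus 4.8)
Throughout, write $A:=\kk[C]=C^\vee$, which by Proposition \ref{prop:dual} is an object of $\clnAlg$. Its unit $1_A$ is the counit of $C$, and its augmentation $A \to \kk$ is dual to the coaugmentation $\kk \to C$, whose image is the grouplike element $1_C \in C$; hence the maximal ideal is $\mm = \{f \in C^\vee : f(1_C)=0\}$. The plan is to produce the natural restriction map
\[
\rho \maps \mm \to \prim(C)^\vee, \qquad f \mapsto f\vert_{\prim(C)},
\]
to show it descends to an isomorphism $\bar\rho \maps \mm/\mm^2 \xto{\cong} \prim(C)^\vee$, and to take the asserted isomorphism to be $\bar\rho^{-1}$. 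Naturality in $C$ is then immediate from the functoriality of $\prim(-)$ and of restriction, together with the fact that a coalgebra morphism $\psi \maps C \to C'$ sends $1_C$ to $1_{C'}$ and $\prim(C)$ into $\prim(C')$, so that $\kk[\psi]=\psi^\vee$ is a local homomorphism.

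First the two routine steps. That $\mm^2 \subseteq \ker\rho$ follows by dualizing the multiplication: for $f,g \in \mm$ and $c \in \prim(C)$ we have $(fg)(c)=(f\tensor g)(\Delta c)=f(c)g(1_C)+f(1_C)g(c)=0$, since $f(1_C)=g(1_C)=0$. Thus $\rho$ descends to $\bar\rho$. Surjectivity of $\bar\rho$ is equally easy: any functional on the subspace $\prim(C)\subseteq C$ extends to an element of $C^\vee=A$, and, since $1_A$ restricts to $0$ on $\prim(C)$, we may subtract a multiple of $1_A$ to arrange that it vanishes at $1_C$, i.e.\ lies in $\mm$, without changing its restriction to primitives.

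The substance of the argument is the injectivity of $\bar\rho$, i.e.\ that $\ker\rho \subseteq \mm^2$. I would prove the transposed statement. The pairing identifies $(\mm/\mm^2)^\vee$ with the space of point derivations $\{D \in A^\vee : D(fg)=f(1_C)D(g)+g(1_C)D(f)\}$: any such $D$ automatically kills $1_A$ and $\mm^2$, hence is $\mm$-adically continuous and so lies in $\spf(A)=C$. Dualizing its defining identity gives $(f\tensor g)(\Delta D)=(f\tensor g)(D\tensor 1_C + 1_C \tensor D)$ for all $f,g \in A$. Since $C$ is finite-type conilpotent, its coradical filtration has finite-dimensional layers, so $C$ is a union of finite-dimensional subcoalgebras and $A\tensor A$ separates the points of $C\tensor C$; therefore $\Delta D = D\tensor 1_C + 1_C\tensor D$, i.e.\ $D \in \prim(C)$. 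Conversely $\ev_c$ is a point derivation for every $c \in \prim(C)$, and these evaluations separate the points of $\prim(C)$, so $c \mapsto \ev_c$ is an isomorphism $\prim(C)\xto{\cong}(\mm/\mm^2)^\vee$. As $\prim(C)$ is finite-dimensional (finite type) and $\mm/\mm^2$ is finite-dimensional ($A$ Noetherian), this map is precisely the transpose of $\bar\rho$, whence $\bar\rho$ is an isomorphism.

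The main obstacle is exactly this transposed claim that point derivations are primitive. Its crux is that the coalgebra/algebra duality of Proposition \ref{prop:dual} is compatible with tensor products---equivalently, that the $\mm$-adic filtration on $A$ is dual to the coradical filtration on $C$, so that $\mm^2 = (F_1 C)^\perp$ with $F_1 C = \kk 1_C \oplus \prim(C)$. Once this non-degeneracy of the induced pairing on $C \tensor C$ is in hand---for which the finite-dimensionality of the coradical layers, guaranteed by the finite-type hypothesis, is essential---the remaining bookkeeping is routine.
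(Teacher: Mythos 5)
Your proof is correct. The paper does not actually prove this corollary---it is stated immediately after Prop.~\ref{prop:dual} with only the remark that both results are classical (citing Hazewinkel, \S 37.2)---so there is no in-text argument to compare against; your write-up supplies the standard missing proof, namely the identification of $(\mm/\mm^2)^\vee$ with point derivations at the augmentation and of point derivations with primitives of $C = \spf(A)$. All the individual steps check out: $\mm^2 \sse \ker\rho$ follows from $\Delta c = c\tensor 1_C + 1_C \tensor c$ and $f(1_C)=g(1_C)=0$; surjectivity of $\bar\rho$ uses that the counit $1_A=\eps_C$ kills primitives; and the transposed injectivity argument is sound, since a point derivation kills $\mm^2$ and hence is $\mm$-adically continuous, so it lies in $C$, and the convolution product on $A=C^\vee$ dualizes its Leibniz identity to primitivity of the corresponding element. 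One small remark: the appeal to finite-dimensionality of the coradical layers is not needed for the separation step---$C^\vee \tensor C^\vee$ separates the points of $C\tensor C$ for an arbitrary vector space $C$ (choose $f$ dual to a linearly independent presentation of the tensor)---though the finite-type hypothesis is of course what makes both sides of the asserted isomorphism finite-dimensional. The naturality claim is also fine, since a coalgebra morphism preserves the grouplike element and primitives, so its dual is local and the relevant square commutes by inspection.
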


\begin{example}\label{ex:mfd-coalg}
Set $\kk=\R$. Given a pointed manifold $X \in \Mfd_\ast$ of dimension $n$,  denote its sheaf of smooth functions by $\cO_X$, its localization at the base point by $A_X:=(\cO_{X,0},\mm)$, and its ${\mm}$-adic completion by $\hA_X:=(\wh{\cO}_{X,0},\wh{\mm})$. Then, by Hadamard's Lemma, $\hA_X$ is a regular complete local Noetherian $\kk$-algebra of dimension $n$. In particular, there are non-canonical generators $x_1,\ldots,x_n \in \wh{\mm} $ such that $\hA_X= \R[[x_1,\ldots,x_n]]$. This induces a functor 
\begin{equation} \label{eq:dist}
\Dist \maps \Mfd_\ast \to \conil, \qquad \Dist(X):=\spf(\hA_X),
\end{equation}
which sends a pointed manifold to its coalgebra of \df{point distributions} \cite[\Sec 5]{Serre:Lie}.
\end{example}

\subsection{Formal submersions as covers} \label{sec:formsub}

\begin{definition} %\label{def:fsub}
A morphism $f \maps C \to D$ in $\conil$ is a \df{formal submersion} if the corresponding morphism 
\[
f^\vee \maps \kk[D] \to \kk[C]
\]
in $\clnAlg$ is formally smooth, e.g.\ \cite[\href{https://stacks.math.columbia.edu/tag/0DYF}{Tag 0DYF}]{stacks-project}. We say $C$ is a \df{pointed formal manifold} if the counit $C \to \kk$ is a formal submersion. We denote by $\conilsm \sse \conil$ the full subcategory of pointed formal manifolds.
\end{definition}

\begin{definition}[\cite{BG}; \Sec 5.1 \cite{Lie3}] \label{def:descent}
A \df{descent category}\footnote{Also called a ``subcanonical'' descent category.} $(\cD,\cS)$ is a category $\cD$ equipped with a subcategory $\cS$ of \df{covers} which satisfies the following axioms:
\begin{enumerate}[label=(\roman*)]
\item \label{ax1} $\cD$ has all finite limits,
\item \label{ax2} The unique map from the terminal object to itself is in $\cS$,
\item \label{ax3} $\cS$ is closed under base change
\item \label{ax4} If $f$ and $g\circ f$ are in $\cS$, then $g$ is in $\cS$,
\item \label{ax5} Every morphism in $\cS$ is an effective epimorphism.
\end{enumerate}
\end{definition}

Denote by $\cS_{\fs} \sse \conil$ the subcategory whose morphisms are formal submersions.

\begin{proposition} \label{prop:descent}
\mbox{}
\begin{enumerate}

\item $(\conil, \cS_\fs)$ is a descent category.

\item If $f \maps C \to D$ is a formal submersion, then $\prim(f)$ is a surjection.

\item $C$ is a formal manifold if and only if there exists $V \in \FDVect$
such that $C$ is isomorphic to the coalgebra $\coSym(V)$.
\item If $f \maps C \to D$ is a morphism between formal manifolds, and $\prim(f)$ is a surjection, then $f$ is a formal submersion. \label{item:surj-formal}

\end{enumerate}
\end{proposition}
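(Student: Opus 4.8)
The plan is to transport everything to commutative algebra via the duality of Proposition \ref{prop:dual} and then produce explicit coordinates. Write $A := \kk[D]$ and $B := \kk[C]$, and let $\phi := f^\vee \maps A \to B$ be the dual local homomorphism. By definition $f$ is a formal submersion precisely when $\phi$ is formally smooth, so the goal becomes the following purely algebraic assertion: given that $A$ and $B$ are regular complete local Noetherian $\kk$-algebras with residue field $\kk$ (this is what the characterization of formal manifolds in part (3), together with Proposition \ref{prop:dual}, gives us, so that $A \cong \kk[[y_1,\ldots,y_m]]$ and $B \cong \kk[[x_1,\ldots,x_n]]$), and that the cotangent map induced by $\phi$ is injective, conclude that $\phi$ is formally smooth.

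First I would translate the primitive hypothesis into cotangent-space language. Applying $(-)^\vee$ to the surjection $\prim(f) \maps \prim(C) \to \prim(D)$ yields an injection $\prim(f)^\vee$ of finite-dimensional vector spaces. By the naturality of the isomorphism $\prim(-)^\vee \cong \mm/\mm^2$ of Corollary \ref{cor:prim-cot}, this injection is identified with the map $\mm_A/\mm_A^2 \to \mm_B/\mm_B^2$ induced by $\phi$. Hence the hypothesis says exactly that $\phi$ is injective on cotangent spaces, i.e.\ that the classes of $\phi(y_1),\ldots,\phi(y_m)$ are linearly independent in $\mm_B/\mm_B^2$.

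Next comes the heart of the argument: producing coordinates. Since $B$ is regular with $\dim_\kk \mm_B/\mm_B^2 = n = \dim B$, I would extend the independent classes of $\phi(y_1),\ldots,\phi(y_m)$ to a $\kk$-basis by choosing $z_{m+1},\ldots,z_n \in \mm_B$ whose classes complete the basis; then $\phi(y_1),\ldots,\phi(y_m),z_{m+1},\ldots,z_n$ is a regular system of parameters for $B$. By Cohen structure theory for equicharacteristic regular complete local $\kk$-algebras with residue field $\kk$ (every regular system of parameters presents the ring as a power series algebra), the continuous $\kk$-algebra map
\[
A[[w_{m+1},\ldots,w_n]] = \kk[[y_1,\ldots,y_m,w_{m+1},\ldots,w_n]] \longrightarrow B, \qquad y_i \mapsto \phi(y_i),\ w_j \mapsto z_j,
\]
is surjective, since its variable-images generate $\mm_B$ modulo $\mm_B^2$ and $B$ is $\mm_B$-adically complete. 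As both source and target are $n$-dimensional complete local domains, the surjection is an isomorphism. By construction its restriction to $A$ is $\phi$, so $\phi$ exhibits $B$ as a power series algebra over $A$; power series extensions are formally smooth, whence $\phi$ is formally smooth and $f$ is a formal submersion.

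The main obstacle is this coordinate step: justifying both that one may extend $\phi(y_1),\ldots,\phi(y_m)$ to a regular system of parameters, and that the resulting map $A[[w]] \to B$ is an isomorphism. The first uses only the injectivity of the cotangent map together with regularity of $B$; the second is the standard fact that a surjection between complete local Noetherian domains of equal Krull dimension is injective, since its kernel is then a minimal prime and hence $(0)$ in a domain. Everything else is formal once this power-series presentation of $\phi$ is in hand.
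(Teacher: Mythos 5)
Your argument for item \eqref{item:surj-formal} is correct and follows essentially the same route as the paper: dualize via Prop.\ \ref{prop:dual}, translate the surjectivity of $\prim(f)$ into injectivity on cotangent spaces via Cor.\ \ref{cor:prim-cot}, extend the images of a regular system of parameters of $A$ to one of $B$, and recognize $\phi$ as a power series extension $\kk[[y_1,\ldots,y_m]] \to \kk[[y_1,\ldots,y_m,w_{m+1},\ldots,w_n]]$, hence formally smooth. (The paper packages the "basis of $\mm/\mm^2$ implies power series presentation" step as Lemma \ref{lem:gen} and deduces formal smoothness from base change of $\kk \to \kk[[w]]$ along $A$, rather than your dimension-count-on-domains argument, but these are interchangeable.)

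The genuine gap is that the statement has four items and you have proved only the fourth. Items (1)--(3) are not addressed at all, and they carry most of the work. Item (1) requires verifying the five descent-category axioms for $(\conil,\cS_\fs)$; the nontrivial ones are: stability of formal submersions under base change, which requires knowing that the pullback in $\conil$ dualizes to the completed tensor product $A \htensor_B A'$ and that formal smoothness survives completion; the left-cancellation axiom (iv), which the paper handles by identifying formally smooth local maps of complete Noetherian rings with regular maps and invoking faithful flatness of $f^\vee$ to descend regularity of $f^\vee \cc g^\vee$ to $g^\vee$; and the effective-epimorphism axiom (v), which requires constructing a section of $f^\vee$ as an inverse limit of liftings against the square-zero-by-stages tower $A_D/\mm^{n}$. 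Item (2) needs the same lifting property to split the cotangent map, and item (3) needs the fact that $\kk \to A$ formally smooth forces $A$ to be regular, hence a power series ring --- a fact your proof of (4) silently relies on when it assumes $A \cong \kk[[y_1,\ldots,y_m]]$ and $B \cong \kk[[x_1,\ldots,x_n]]$. As written, the proposal is a complete proof of one quarter of the proposition and presupposes another quarter.
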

\begin{proof}
To keep the notation under control throughout the proof, given $C \in \conil$ 
we set $A_C:=\kk[C]$.
\begin{enumerate}
\item Axioms \ref{ax1} and \ref{ax2} are clear. For axiom \ref{ax3}, given any morphism $B \to A$ in $\clnAlg$ and a formally smooth morphism $B \to A'$, the induced morphism $A \to A \tensor_{B} A'$ is formally smooth \cite[\href{https://stacks.math.columbia.edu/tag/00TJ}{Tag 00TJ}]{stacks-project}. Hence, its completion $A \to A \htensor_{B} A'$, as defined in \eqref{eq:ideal-of-def}, is formally smooth by \cite[\href{https://stacks.math.columbia.edu/tag/07ED}{Tag 07ED}]{stacks-project}.

\mind For axiom \ref{ax4}, let $\ph \maps A \to B$ in $\clnAlg$. Since $\ph$ is a local morphism between complete local Noetherian rings, $\ph$ is formally smooth if and only if $\ph$ is regular, and $\ph$ regular implies that $\ph$ is flat (hence faithfully flat) by \cite[\href{https://stacks.math.columbia.edu/tag/07PM}{Tag 07PM}]{stacks-project}. Now consider composable morphisms $C \xto{f} D \xto{g} E$ in $\conil$ with $f$ and $g \circ f$ formal submersions. Hence, we have      
$A_E \xto{g^\vee} A_D \xto{f^\vee} A_C$ with $f^\vee \cc g^\vee$ regular, and $f^\vee$ faithfully flat. It then follows from \cite[\href{https://stacks.math.columbia.edu/tag/07NT}{Tag 07NT}]{stacks-project} that $g^\vee$ is regular, hence formally smooth.  

\mind For axiom \ref{ax5}, since all morphisms in $\conil$ have kernel pairs, it suffices to show that every formal submersion $f \maps C \to D$ is a split epimorphism. Since $f^\vee$ is formally smooth, for any $n > 1$ there exists, by induction, a solid commutative diagram along with an extension $\si_{n}$:
\[
\begin{tikzdiag}{2}{3}
{
A_C   \& A_D / \mm^{n-1} \\
A_D  \& A_D / \mm^{n} \\
};

\path[->,font=\scriptsize]
(m-1-1) edge node[auto] {$\si_{n-1}$} (m-1-2)
(m-2-1) edge node[auto] {$f^\vee$} (m-1-1)
(m-2-1) edge node[auto] {$$} (m-2-2)
(m-2-2) edge node[auto] {$$} (m-1-2)
;
\path[->,font=\scriptsize,dashed]
(m-1-1) edge node[auto] {$\si_{n}$} (m-2-2)
;
\end{tikzdiag}
\]
Then $\plim \si_{n}$ is the desired section of $f^\vee$.

\item By Cor.\ \ref{cor:prim-cot} it suffices to show that the map on cotangent spaces $\mm_D/\mm_D^2 \to  \mm_C/\mm_C^2$ induced by $f^\vee \maps A_D \to A_C$ is injective. As in the verification of axiom \ref{ax5} above, there exists an algebra morphism $\si \maps A_C \to A_D/\mm^2_D$ such that $\si \cc f^\vee \maps A_D \to A_D/\mm^2_D$ is the canonical surjection. Hence, we have a left inverse to  $\mm_D/\mm_D^2 \to  \mm_C/\mm_C^2$.

\item If $\kk \to A$ is formally smooth, then $A$ is a regular complete local Noetherian $\kk$-algebra \cite[\href{https://stacks.math.columbia.edu/tag/07EI}{Tag 07EI}]{stacks-project}. Hence, $A$ is isomorphic to a ring of formal power series $\kk[[x_1,\ldots,x_n]]$, for some $n\geq 0$, and therefore $\hom^\cont_\kk(A,\kk) \cong  
\coSym(V)$ with $\dim_\kk V =n$.

\item Dualizing the hypotheses on $f \maps C \to D$ consider a morphism $f^\vee \maps A_D \to A_C $ such that the induced map on cotangent spaces $\mm_{D}/\mm^2_{D} \to \mm_{C}/\mm^2_{C}$ is injective. Write $A_D = \kk[[x_1,\ldots,x_m]]$. By extending the elements $f^\vee(x_i) + \mm^2_C$ to a basis for $\mm_{C}/\mm^2_{C}$, and applying Lemma \ref{lem:gen} to $A_C$, we deduce that $f^\vee$ is isomorphic to an inclusion of the form $\kk[[x_1,\ldots,x_m]] \to 
\kk[[x_1,\ldots,x_m,y_1,\ldots,y_d]]$. Since the codomain is isomorphic to  
\[
\kk[[x_1,\ldots,x_m]] \htensor_\kk \kk[[y_1,\ldots,y_d]],
\]
 and $\kk \to \kk[[y_1,\ldots,y_d]]$ is formally smooth, it follows from axiom \ref{ax3} 
that $f^\vee$ is formally smooth as well.
\end{enumerate}
\end{proof}

\begin{example}\label{ex:mfd-fsub}
Keeping in mind Example \ref{ex:mfd-coalg}, if $X \in \Mfd_\ast$, then 
Prop.\ \ref{prop:descent}(3) implies that the coalgebra $\Dist(X)$ is a pointed formal manifold. Let $\phi \maps X \to Y$ be a surjective submersion between pointed manifolds. Then $\phi$ admits a section in a neighborhood of the base point. Hence, the local morphism of $\R$-algebras $\wh{\ph}^\ast \maps \hA_Y \to \hA_X$ induces an injective map $\mm_Y/\mm^2_Y \to \mm_X/\mm_X^2$ between cotangent spaces. Therefore, Prop.\ \ref{prop:descent} \eqref{item:surj-formal} implies that
\[
\wh{\phi}_\ast \maps \Dist(X) \to \Dist(Y) 
\]
is a formal submersion.    

\end{example}

\subsection{Simplicial objects in descent categories} \label{sec:simpdesc}

\begin{notation}[Matching objects] \label{note:hom}
Let $\cC$ be a category with a terminal object, and $X_\bl \in s\cC$ a pointed simplicial object. Given $S \in \sSet$, we denote by 
\[
X^S \in \cC 
\]
the pointed object representing the equalizer\footnote{Described explicitly, for example, in \cite[Prop.\ 4.5]{RZ}.} $\hom(S,X_\bl)\in \cC$ whenever it exists.  
We also use the notation 
\[
X_{n,k}: = \hom(\Lam^{n}_k,X_\bl)
\]
when it is convenient to do so.   
\end{notation} 

Let us recall some key definitions.

\begin{definition} \label{def:kan}
Let $\cD$ be a descent category. Let $f \maps X_\bl \to Y_\bl$ be a morphism in $s\cD$.
\begin{enumerate}
\item Let $n \geq 1$ and $0 \leq k \leq n$. We say $f$  satisfies the \df{Kan condition} $\kan(n,k)$ if the canonical morphism
\[
X_n \to X^{\Lambda^{n}_{k}} \times_{Y^{\Lambda^{n}_{k}}} Y_n
\]   
is a cover. We say $f$ is a \df{Kan fibration} if $\kan(n,k)$ is satisfied for all $n \geq 1$ and all $0\leq k \leq n$.

\item  We say $f$ is a \df{covering fibration} \cite[Def.\ 2.6(4)]{Lie3} if $f$ is a Kan fibration and $f_0$ is a cover.

\item Let $m \geq 0$. We say $f$  satisfies the \df{acyclic Kan condition} $\acyc(m)$ if the canonical morphism    
\[
X_m \xto{\mu_m(f)} X^{\pa \Del^m} \times_{Y^{\pa \Del^m}} Y_m
\] 
is a cover. We say $f$ is a \df{hypercover} if $\acyc(m)$ is satisfied for all $m \geq 0$. 
\end{enumerate}
\end{definition}

A simplicial object $X_\bl$ in a descent category is an \df{$\infty$-groupoid} object if $X_\bl \to \ast$ is a Kan fibration. We denote by $\Gpd_\infty(\cD) \sse s\cD$ the full subcategory of $\infty$-groupoids in $\cD$.

\subsection{Categories with covers and geometric functors} \label{sec:covers}
We recall that a \df{category with covers}\footnote{Also called a ``subcanonical'' category with covers.}  $(\cC, \cS)$ is a category $\cC$ equipped with a subcategory $\cS$ of covers which satisfies the following axioms:
\begin{enumerate}[label=(\alph*)]
\item \label{axC1} $\cC$ has a terminal object $\ast$ and for all $X \in \cC$, the map $X \to \ast$ is a cover,
\item \label{axC2} pullbacks of covers exist in $\cC$, and $\cS$ is closed under base change,
\item if $f$ and $ g\circ f $ are in $\cS$, then so is $g$,
\item every cover is an effective epimorphism.
\end{enumerate}
 
In addition to the examples given in \cite[Sec.\ 3.3]{RZ}, \cite[Sec.\ 2]{W},
\cite[Sec.\ 2]{W2}, the following simple variation will be useful: 
\begin{example}\label{ex:CWC}
Let $(\cC,\cS)$ be a category with covers, $A \in \cC$, and $\cC_A:={}^{A/}\cC$ 
the slice category under $A$. Then the forgetful functor $\cC_A \to \cC$ reflects effective epimorphisms, and therefore induces the structure of a category of covers on $\cC_A$ in the obvious way. Clearly, the analogous result holds for descent categories as well.   
\end{example}

Recall, e.g.\ \cite{W}, that if $\cC$ is a category with covers, then, by using the same definitions as in the descent category case \eqref{def:kan}, one may speak of \df{$\infty$-groupoids}, \df{(covering) Kan fibrations}, and \df{hypercovers} in $s\cC$. There is, however, an additional implicit requirement that the specific limits appearing in the definitions must exist in $\cC$.

\begin{remark}%\label{rmk:cover-fib}
The class of covering fibrations in $\Gpd_{\infty}(\cC)$ includes all hypercovers (hence all isomorphisms) and terminal morphisms $X_\bl \to \ast$. 
\end{remark}

\begin{lemma}\label{lem:life-saver}
Let $(\cC,\cS)$ be a category with covers. Let $f \maps X_\bl \to Z_{\bl}$ be any morphism in $\Gpd_{\infty}(\cC)$ and $g \maps Y_{\bl} \fib Z_{\bl}$  a Kan fibration. 
\begin{enumerate}[leftmargin=15pt]
\item %\label{it:life-saver1} 
The pullback of the diagram in $\Gpd_\infty(\cC)$
\begin{equation} \label{diag:life-saver}
\begin{tikzdiag}{2}{2}
{
X_{\bl}\& Z_{\bl} \& Y_{\bl}\\
};
\path[->,font=\scriptsize]
(m-1-1) edge node[auto] {$f$} (m-1-2)
;
\path[->>,font=\scriptsize]
(m-1-3) edge node[auto,swap] {$g$} (m-1-2)
;
\end{tikzdiag}
\end{equation}
exists if and only if the pullback of $X_0 \xto{f_0} Z_{0} \xleftarrow{g_0} Y_0$ exists in $\cC$.  

\item \label{it:life-saver2} If the pullback of \eqref{diag:life-saver} exists, then the induced morphism
$\ti{g} \maps X_{\bl} \times_{Z_{\bl}} Y_{\bl} \to X_{\bl}$ is a Kan fibration.

\item %\label{it:life-saver3} 
If $g \maps Y_\bl \fib Z_{\bl}$ is a covering fibration, then the pullback of the diagram  \eqref{diag:life-saver} exists and $\ti{g}$ is a covering fibration.

\item \label{it:life-saver4} If $g \maps Y_\bl \fib Z_{\bl}$ is a hypercover, then the pullback of the diagram  \eqref{diag:life-saver} exists and $\ti{g}$ is a hypercover.

\end{enumerate}
\end{lemma}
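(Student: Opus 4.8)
The plan is to build the pullback degreewise, relying on the only limits guaranteed by the axioms of a category with covers: pullbacks along covers. The engine of every step is the matching-object form of the Kan conditions. Since $X_\bl,Y_\bl,Z_\bl$ are $\infty$-groupoids, the horn and boundary matching objects $X^{\Lambda^n_k}$, $X^{\pa\Del^m}$, and their analogues for $Y$ and $Z$, all exist, and I will use repeatedly that for the Kan fibration $g\maps Y_\bl\to Z_\bl$ the induced matching maps $g^S\maps Y^S\to Z^S$ are covers for $S=\Lambda^n_k$ and $S=\pa\Del^m$. This is the standard consequence of the Kan conditions in the Behrend--Getzler framework \cite{BG} (cf.\ \cite{Lie3}): one attaches the nondegenerate faces of $S$ one at a time, and each attachment exhibits the partial matching map as a base change of a Kan-condition cover.

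For (1), the forward implication is immediate, since a pullback in $\Gpd_\infty(\cC)\sse s\cC$ is computed degreewise and so produces $X_0\times_{Z_0}Y_0$ in degree $0$. For the converse I would show that $X_n\times_{Z_n}Y_n$ exists automatically for every $n\geq 1$, so that only degree $0$ is at issue. Fix $n\geq 1$ and set $P_n:=Y^{\Lambda^n_0}\times_{Z^{\Lambda^n_0}}Z_n$; this pullback exists because $g^{\Lambda^n_0}$ is a cover, and then $P_n\to Z_n$ is itself a cover (a base change of $g^{\Lambda^n_0}$), so $X_n\times_{Z_n}P_n$ exists. The Kan condition $\kan(n,0)$ of Def.\ \ref{def:kan} says $Y_n\to P_n$ is a cover, so $(X_n\times_{Z_n}P_n)\times_{P_n}Y_n$ exists as well, and by the pasting law for pullbacks it is $X_n\times_{Z_n}Y_n$. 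Thus the degreewise pullback $W_\bl:=X_\bl\times_{Z_\bl}Y_\bl$ exists exactly when $X_0\times_{Z_0}Y_0$ does; part (2) then certifies that it lies in $\Gpd_\infty(\cC)$.

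For (2), I would verify $\kan(n,k)$ for $\ti g\maps W_\bl\to X_\bl$ directly. Matching objects commute with the levelwise pullback $W_\bl$, so $W^{\Lambda^n_k}\cong X^{\Lambda^n_k}\times_{Z^{\Lambda^n_k}}Y^{\Lambda^n_k}$, and a short computation identifies the relative matching target $W^{\Lambda^n_k}\times_{X^{\Lambda^n_k}}X_n$ with $X_n\times_{Z^{\Lambda^n_k}}Y^{\Lambda^n_k}$ (this pullback exists because $W^{\Lambda^n_k}\to X^{\Lambda^n_k}$ is a cover, being a base change of $g^{\Lambda^n_k}$). A diagram chase then identifies the canonical map $W_n\to W^{\Lambda^n_k}\times_{X^{\Lambda^n_k}}X_n$ with the base change along $f_n$ of the Kan-condition cover $Y_n\to Y^{\Lambda^n_k}\times_{Z^{\Lambda^n_k}}Z_n$ of $g$; since covers are stable under base change, it is a cover, so $\ti g$ is a Kan fibration. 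Composing $\ti g$ with $X_\bl\to\ast$ shows $W_\bl$ is an $\infty$-groupoid, which finishes (1). Statement (3) follows at once: if $g$ is a covering fibration then $g_0$ is a cover, so $W_0=X_0\times_{Z_0}Y_0$ exists as a base change of $g_0$ (giving the pullback via (1)), and the same base change shows $\ti g_0\maps W_0\to X_0$ is a cover; with (2) this makes $\ti g$ a covering fibration.

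For (4), every hypercover is a covering fibration (the remark preceding the lemma), so (3) already yields the pullback and that $\ti g$ is a covering fibration; it remains to check the acyclic conditions $\acyc(m)$ for $\ti g$. This is the boundary analogue of (2): the target $W^{\pa\Del^m}\times_{X^{\pa\Del^m}}X_m$ exists because $W^{\pa\Del^m}\to X^{\pa\Del^m}$ is a cover (a base change of $g^{\pa\Del^m}$) and is identified with $X_m\times_{Z^{\pa\Del^m}}Y^{\pa\Del^m}$, while the canonical map $W_m\to W^{\pa\Del^m}\times_{X^{\pa\Del^m}}X_m$ is the base change along $f_m$ of the acyclic cover $Y_m\to Y^{\pa\Del^m}\times_{Z^{\pa\Del^m}}Z_m$ of $g$, hence a cover. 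I expect the only real difficulty to be bookkeeping rather than conceptual: because general finite limits are unavailable, one must certify at each step that every intermediate object is a genuine pullback of a cover before using it, which is precisely why the matching-map-cover fact and the base-change stability of covers are invoked at every turn.
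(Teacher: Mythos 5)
There is a genuine gap, and it sits at the foundation of your argument: the claim that for a Kan fibration $g \maps Y_\bl \to Z_\bl$ the \emph{absolute} matching maps $g^{S} \maps Y^{S} \to Z^{S}$ are covers for $S = \Lam^n_k$. The Kan conditions only make the \emph{relative} matching maps $Y_n \to Y^{\Lam^n_k} \times_{Z^{\Lam^n_k}} Z_n$ covers. The induction you sketch (attaching the cells of $S$ one horn at a time) does express each partial relative matching map as a base change of a Kan-condition cover, but to conclude that $Y^{S} \to Z^{S}$ is itself a cover you must also supply the base case $S = \Del^0$, i.e.\ that $g_0 \maps Y_0 \to Z_0$ is a cover --- and that is exactly the extra hypothesis separating a covering fibration from a mere Kan fibration. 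Concretely, in $(\Mfd,\cS_{ss})$ take $Z_\bl$ constant on a manifold $M$ and $Y_\bl$ constant on a proper open subset $U \subsetneq M$: the inclusion is a Kan fibration (every relative matching map is an isomorphism), yet $Y^{\Lam^n_k} = U \to M = Z^{\Lam^n_k}$ is not a cover. Taking $X_\bl$ constant on a point of $M \setminus U$ shows moreover that your intermediate conclusion --- that $X_n \times_{Z_n} Y_n$ exists automatically for every $n \geq 1$ --- is false: none of these pullbacks exist. So the converse direction of (1), and the existence steps in (2) and (4) that invoke ``$W^{S} \to X^{S}$ is a base change of the cover $g^{S}$,'' do not go through as written.

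The correct mechanism --- the one the paper outsources to \cite[Thm.\ 2.17(4); Thm.\ 2.12]{W} --- is an induction on skeleta in which the existence of $X^{S} \times_{Z^{S}} Y^{S}$ for collapsible $S$ is deduced from the \emph{already-established} lower-degree pullbacks $X_m \times_{Z_m} Y_m$, seeded by the degree-$0$ pullback that part (1) assumes; only relative matching maps are ever asserted to be covers. Your identification of the relative matching map of $\ti{g}$ with the base change along $f$ of the relative matching map of $g$ is the right idea and is exactly how that induction closes, so the repair is to replace every appeal to ``$g^{S}$ is a cover'' with this bootstrapping (for part (3) your argument then works verbatim, since there $g_0$ really is a cover, and for part (4) the absolute matching maps of a hypercover genuinely are covers because the base case holds). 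Two smaller points: your ``forward implication is immediate because pullbacks are computed degreewise'' assumes what must be shown --- a limit in a functor category over a non-complete base need not be pointwise; the paper extracts the degree-$0$ pullback from the adjunction $\const_\bl(-) \dashv \ev_0$, whose right adjoint preserves whatever limits exist.
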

\begin{proof}
\begin{enumerate}[leftmargin=15pt]
\item If the pullback of $X_0 \to Z_{0} \leftarrow Y_0$ exists in $\cC$, then, since $g$ is a Kan fibration, $X_{\bl} \times_{Z_{\bl}} Y_{\bl}$ exists in $\Gpd_\infty(\cC)$ by \cite[Thm.\ 2.17(4)]{W}. For the converse, suppose the pullback of
\eqref{diag:life-saver} exists. Recall that there is an adjunction 
\[
\const_{\bl}(-) \maps \cC \adjunct \Gpd_{\infty}(\cC) \maps \ev_0.
\]
The left adjoint sends $Z \in \cC$ to the constant $\infty$-groupoid $\const_{n}(Z):=Z$, while the right adjoint sends an $\infty$-groupoid $X_\bl$ to its object of zero simplices $\ev_0(X):=X_0$. Hence, the pullback of $X_0 \to Z_{0} \leftarrow Y_0$ exists.

\item Follows from (1) and \cite[Thm.\ 2.17(4)]{W}.

\item Since $g_0 \maps Y_0 \to Z_0$ is a cover, the statement follows from (1) and (2) above, along with Axiom \eqref{axC2} for a category with covers. 

\item This is \cite[Thm.\ 2.12]{W}.
\end{enumerate}
\end{proof}

We record for later use in Sec.\ \ref{sec:hmtpy} the following proposition concerning factorization of the diagonal. For the descent category case, the result is implied by \cite[Thm.\ 3.21]{BG}. For a category with covers, it follows from \cite[Prop.\ 7.2]{RZ}.
 
\begin{proposition}\label{prop:path-obj}
Let $X_\bl$ be an $\infty$-groupoid object in either a descent category or a category with covers. 
\begin{enumerate}[leftmargin=15pt]
\item \label{it:path-1} For each $n \geq 0$, the limit $\hom(\Delta^n \times \Delta^1,X_\bl)$ exists and is canonically isomorphic to the iterated pullback of covers
\[
X_{n+1} ~ \prescript{}{d_1}\times_{d_1} ~ X_{n+1}
~ \prescript{}{d_2}\times_{d_2} \cdots
\prescript{}{d_i}\times_{d_{i}} ~ X_{n+1} ~ \prescript{}{d_{i+1}}
\times_{d_{i+1}} \cdots \prescript{}{d_n} \times_{d_n} ~ X_{n+1}.
\]
The resulting simplicial object $X^I_\bl:=\hom(\Delta^\bl \times \Delta^1,X_\bl)$ is an $\infty$-groupoid.
\item \label{it:path-2}  The cofaces $\Del^0 \rightrightarrows \Del^1$ and codegeneracy $\Delta^1 \to \Del^0$ induce canonical morphisms
\[
X_{\bl} \xto{s^0_X} X^I_\bl \xto{(d^0_X,d^1_X)} X_\bl \times X_\bl,
\]  
natural in $X_\bl$, whose composition is the diagonal of $X_\bl$.
\end{enumerate}
\end{proposition}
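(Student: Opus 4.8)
The plan is to reduce the entire statement to the combinatorics of the prism $\Delta^n\times\Delta^1$ together with the cover axioms, treating the descent-category and category-with-covers cases uniformly. For part (1), I would first recall the shuffle decomposition: $\Delta^n\times\Delta^1$ is the union of its $n+1$ nondegenerate top simplices $\sigma_0,\ldots,\sigma_n\cong\Delta^{n+1}$, indexed by the monotone lattice paths from $(0,0)$ to $(n,1)$. A direct vertex count shows that two consecutive shuffles $\sigma_{i-1}$ and $\sigma_i$ meet precisely along their common $i$-th face $\delta^i(\Delta^n)$, and that $\sigma_i\cap\sigma_j$ for $\abs{i-j}>1$ is already contained in these faces. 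Hence $\Delta^n\times\Delta^1$ is the iterated pushout $\sigma_0\sqcup_{\Delta^n}\sigma_1\sqcup_{\Delta^n}\cdots\sqcup_{\Delta^n}\sigma_n$ along single top faces. Applying the contravariant matching-object functor $\hom(-,X_\bl)$, which sends $\Delta^m\mapsto X_m$ by Yoneda and carries colimits of simplicial sets to limits in $\cC$, turns this pushout into exactly the iterated fibre product $X_{n+1}\times_{X_n}\cdots\times_{X_n}X_{n+1}$ of the statement, the gluing legs being the face maps $d_i$ identified above.

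The crux, and what I expect to be the main obstacle, is verifying that this limit exists in $\cC$, which in the category-with-covers case is not automatic. For this I would first show that every face map $d_i\maps X_{n+1}\to X_n$ of an $\infty$-groupoid is a cover. Fixing $k\neq i$, the morphism $X_{n+1}\to X^{\Lambda^{n+1}_k}$ is a cover by the Kan condition $\kan(n+1,k)$, while the $i$-th face inclusion $\Delta^n\hookrightarrow\Lambda^{n+1}_k$ is anodyne (built by filling the remaining horns), so restriction along it, $X^{\Lambda^{n+1}_k}\to X_n$, is a cover; the composite is $d_i$, a cover by closure under composition. Granting this, each successive pullback in the iterated fibre product is the base change of a cover, hence exists and is again a cover by closure of covers under base change, and induction produces the whole limit together with the canonical isomorphism. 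In a descent category the existence is free, since such a category has all finite limits; this is the content of \cite[Thm.\ 3.21]{BG}, and the category-with-covers refinement is \cite[Prop.\ 7.2]{RZ}, which I would cite for the anodyne/cover bookkeeping rather than reprove in full.

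To see that $X^I_\bl$ is an $\infty$-groupoid, I would invoke the cotensor identity $\hom(S,X^I_\bl)\cong\hom(S\times\Delta^1,X_\bl)$, natural in the simplicial set $S$. Taking $S=\Lambda^n_k$ and $S=\Delta^n$ identifies the horn map $X^I_n\to (X^I)^{\Lambda^n_k}$ with $\hom(-,X_\bl)$ applied to the inclusion $\Lambda^n_k\times\Delta^1\hookrightarrow\Delta^n\times\Delta^1$. This inclusion is anodyne, since the product of an anodyne map with any simplicial set is anodyne, so restriction along it is a cover for the Kan object $X_\bl$; this is precisely the Kan condition for $X^I_\bl$.

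Part (2) is then formal. The cofaces $d^0,d^1\maps\Delta^0\to\Delta^1$ and the codegeneracy $s^0\maps\Delta^1\to\Delta^0$ induce, by functoriality of $\hom(\Delta^\bl\times-,X_\bl)$ in the $\Delta^1$-slot together with the identification $\hom(\Delta^\bl\times\Delta^0,X_\bl)=X_\bl$, the morphisms $s^0_X\maps X_\bl\to X^I_\bl$ and $(d^0_X,d^1_X)\maps X^I_\bl\to X_\bl\times X_\bl$, which are natural in $X_\bl$ by construction. The cosimplicial identities $s^0 d^0 = s^0 d^1 = \id_{\Delta^0}$ give $d^\epsilon_X\circ s^0_X=\id_{X_\bl}$ for $\epsilon=0,1$, so that $(d^0_X,d^1_X)\circ s^0_X=(\id_{X_\bl},\id_{X_\bl})$ is the diagonal, as required.
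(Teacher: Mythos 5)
Your proposal is correct and follows essentially the same route as the paper, which gives no independent argument here but simply defers to \cite[Thm.\ 3.21]{BG} and \cite[Prop.\ 7.2]{RZ}; the prism decomposition of $\Delta^n\times\Delta^1$ into $n+1$ shuffles glued along the faces $d_i$, plus the fact that face maps of a Kan object are covers, is precisely the content of those citations. The one point to tighten: in a category with covers, ``anodyne'' is not by itself enough to conclude that restriction along an inclusion is a cover --- one needs the inclusion to be an \emph{expansion}, i.e.\ filtered by horn attachments (covers are closed under base change and composition but not obviously under retracts), which does hold for both $\Delta^n\hookrightarrow\Lambda^{n+1}_k$ and $\Lambda^n_k\times\Delta^1\hookrightarrow\Delta^n\times\Delta^1$ and is exactly the bookkeeping carried out in \cite[Prop.\ 7.2]{RZ}, so your deferral to that reference covers the gap.
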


As mentioned in the introduction, categories with covers are closely related to the notion of descent category. See \cite[Rmk.\ 5.5]{Lie3} for further discussion on this point. In particular:
\begin{lemma}[Lemma 5.6 \cite{Lie3}]\label{lem:CWC-descent}
Let $(\cD,\cS)$ be a descent category, and let $\cC \sse \cD$ denote the full subcategory of objects $X$ for which $X \to \ast$ is a cover. Then $(\cC, \cS \cap \cC)$ is a category with covers. 
\end{lemma}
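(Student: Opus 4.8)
The plan is to verify the four axioms \ref{axC1}--(d) of a category with covers for $(\cC, \cS \cap \cC)$ directly from the five descent axioms \ref{ax1}--\ref{ax5} for $(\cD,\cS)$, exploiting throughout the two structural features of the setup: that $\cC$ is a \emph{full} subcategory of $\cD$, and that $\cS$ is a \emph{subcategory} of $\cD$ and hence closed under composition. First I would dispatch the terminal-object axiom. By descent axiom \ref{ax2} the identity $\ast \to \ast$ lies in $\cS$, so $\ast \in \cC$; since $\cC$ is full and contains $\ast$, the object $\ast$ remains terminal in $\cC$. Axiom \ref{axC1} is then immediate, because membership of an object $X$ in $\cC$ is by definition the assertion that $X \to \ast$ is a cover, and this cover lies in $\cS \cap \cC$ since both $X$ and $\ast$ are in $\cC$.

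The load-bearing step is a single closure observation that I would isolate as a sublemma: given a cover $f \maps X \to Z$ in $\cS$ and \emph{any} morphism $g \maps Y \to Z$ with $X,Y,Z \in \cC$, the pullback $P := X \times_Z Y$ formed in $\cD$ (which exists by \ref{ax1}) already lies in $\cC$, and its projection $P \to Y$ is a cover in $\cS \cap \cC$. Indeed, $P \to Y$ is the base change of $f$, hence in $\cS$ by \ref{ax3}; composing with $Y \to \ast \in \cS$ and using that $\cS$ is closed under composition shows $P \to \ast \in \cS$, i.e.\ $P \in \cC$. Fullness of $\cC$ then upgrades the universal property of $P$ in $\cD$ to one in $\cC$: any competing cone from an object of $\cC$ is a cone in $\cD$, and the induced mediating map lands in $\cC$ by fullness. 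This establishes axiom \ref{axC2}, namely existence of pullbacks of covers together with closure of $\cS \cap \cC$ under base change.

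Axiom (c) is then essentially formal: for composable $X \xto{f} Y \xto{g} W$ in $\cC$ with $f, g\circ f \in \cS$, all three objects lie in $\cC$, so $g$ is a morphism of $\cC$, and $g \in \cS$ by descent axiom \ref{ax4}; hence $g \in \cS \cap \cC$. It remains to verify that covers are effective epimorphisms in $\cC$ (axiom (d)), which I expect to be the only genuinely subtle point. A cover $f \maps X \to Y$ in $\cS \cap \cC$ is an effective epimorphism in $\cD$ by \ref{ax5}, i.e.\ $f$ is the coequalizer of its kernel pair $X \times_Y X \rightrightarrows X$ formed in $\cD$. The two projections of this kernel pair are base changes of $f$, so by the sublemma above the object $X \times_Y X$ lies in $\cC$ and the whole diagram $X \times_Y X \rightrightarrows X \to Y$ lives in $\cC$. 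Because $\cC$ is full and $Y \in \cC$, the coequalizer universal property tested against objects of $\cD$ restricts to one tested against objects of $\cC$, with mediating maps staying in $\cC$; thus $f$ is the coequalizer of its kernel pair in $\cC$ as well.

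The main conceptual content, and the thing to state carefully, is precisely this interplay: all the relevant finite limits and the effective-epimorphism coequalizers computed in $\cD$ happen to land in $\cC$, thanks to closure of covers under composition and base change, and are therefore inherited by the full subcategory $\cC$. Once the sublemma is in place, both the limiting property needed for \ref{axC2} and the colimiting property needed for axiom (d) follow from fullness, so the only place requiring real attention is confirming that the objects involved do not leave $\cC$.
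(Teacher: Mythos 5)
Your proof is correct. Note that the paper itself does not prove this statement; it is imported by citation from \cite[Lem.\ 5.6]{Lie3}, so there is no in-text argument to compare against, but your write-up supplies exactly the argument one would expect. The one genuinely load-bearing point is the sublemma you isolate --- that the pullback of a cover along any map between objects of $\cC$ stays in $\cC$ because $\cS$, being a subcategory, is closed under composition with $Y \to \ast$ --- and you use it correctly in both places where it is needed: to get axiom \ref{axC2}, and, less obviously, to guarantee that the kernel pair $X \times_Y X$ of a cover exists in $\cC$ and agrees with the one computed in $\cD$, so that ``effective epimorphism'' transfers from $\cD$ to the full subcategory $\cC$ via the same fullness argument that transfers the pullback universal property. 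The remaining axioms are, as you say, formal consequences of fullness and the descent axioms \ref{ax2}--\ref{ax5}.
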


Let $\cS^{\mathrm{mfd}}_{fs}:=\conilsm \cap \cS_{fs}$ denote the subcategory whose morphisms are formal submersions between pointed formal manifolds. Then $(\conilsm, \cS^{\mathrm{mfd}}_{fs})$ is a category with covers by the above lemma. \begin{definition} \label{def:form-inf-gpd}
The category $\FGpd:=\Gpd_\infty(\conilsm)$ of \df{pointed formal $\infty$-groupoids} over $\kk$ is the full subcategory of $\sconil$ consisting of $\infty$-groupoid objects in pointed formal manifolds. The category $\FG \sse \FGpd$ of \df{formal $\infty$-groups} over $\kk$ is the full subcategory of reduced formal $\infty$-groupoids, i.e.\ $C_0 = \kk$.
\end{definition}

\begin{example}\label{ex:WG}
Let $H_\bl$ be a group object in $\sconil$. Hence, $H_\bl$ is a simplicial (connected) cocommutative Hopf algebra. Let $\Wbar{H} \in \sconil$
be the classifying space of $H_\bl$ built via the $\Wb$-construction\footnote{We recall the precise definition in Sec.\ \ref{sec:Wbar}. See also \cite[Sec.\ 2.3]{Priddy:1970}.} of Eilenberg and Mac Lane \cite{EM}. By the Milnor-Moore theorem, $H_\bl$ is isomorphic to the enveloping algebra $\cU(\g_\bl)$ of its simplicial Lie algebra of primitives $\g_\bl = \prim_\bl(H)$. Furthermore, for each $n \geq 0$, the PBW theorem provides an isomorphism of coalgebras  $H_n \cong \coSym(\g_n)$. Hence, $H_\bl$ is group object $s\conilsm$. It then follows from \cite[Sec.\ 2.3; Rmk.\ 5.5]{Lie3} that $\Wbar{H}$ is a formal $\infty$-group.   
\end{example}

\subsubsection{Geometric functors} \label{sec:geo-func}
Suppose $i \maps (\cC, \cS \cap \cC) \to (\cD, \cS)$ is the inclusion of a category of covers into a descent category as in Lem.\ \ref{lem:CWC-descent}. By comparing the respective axioms, it is clear that $i$ preserves covers, pullbacks along covers, and, in particular, all products. It will be useful for us to axiomatize functors with such properties, in rough analogy with morphisms between sites in topos theory.

\begin{definition} \label{def:geo-func}
Let $(\cC,\cS_{\cC})$ be a category with covers, and $(\cD,\cS_{\cD})$ a descent category. A functor $F \maps \cC \to \cD$ is \df{geometric} if $F$ sends covers to covers, and preserves pullbacks of covers. If, in addition, $F(f) \in \SD$ implies $f \in \SC$, then we say $F$ \df{reflects covers}.
\end{definition}
Note that axioms \ref{axC1} and \ref{axC2} above imply that a geometric functor preserves finite products.

\begin{proposition} \label{prop:geo-func} 
Let $(\cC,\cS_{\cC})$ be a category with covers, $(\cD,\cS_{\cD})$ a descent category, and  $F \maps \cC \to \cD$ a geometric functor. Let $F_\bl \maps s\cC \to s\cD$ denote the induced functor between simplicial objects.
\begin{enumerate}
\item \label{it:gf-1} Restriction induces a functor $F_\bl \maps \Gpd_\infty(\cC) \to \Grp_\infty(\cD)$ between categories of $\infty$-groupoid objects which sends Kan fibrations to Kan fibrations and covering fibrations to covering fibrations. Furthermore, if $F$ reflects covers, then $F_\bl$ reflects Kan fibrations and covering fibrations. 

\item \label{it:gf-2} $F_\bl \maps \Gpd_\infty(\cC) \to \Grp_\infty(\cD)$ preserves hypercovers.
Furthermore, if $F$ reflects covers, then $F_\bl$ reflects hypercovers. 

\item %\label{it:gf-3} 
$F_\bl \maps \Gpd_\infty(\cC) \to \Grp_\infty(\cD)$ preserves pullbacks along covering fibrations.

\item \label{it:grp-obj} Restriction induces functors $F \maps \Grp(\cC) \to \Grp(\cD)$ 
and $F_\bl \maps \Grp(s\cC) \to \Grp(s\cD)$ between categories of group objects. 
\item %\label{it:Wbar} 
There is a natural isomorphism $F_\bl \cc \Wbar(-) \cong \Wbar(-) \cc F_\bl$ between functors from $\Grp(s\cC)$ to $\Gpd_\infty(\cD)$, where $\Wbar(-)$ denotes the classical $\ov{\cW}$-construction \cite{EM}. 
\end{enumerate}
\end{proposition}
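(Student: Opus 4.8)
The plan is to handle parts (1)--(3) uniformly from a single structural lemma about matching objects, and to settle (4)--(5) using only that a geometric functor preserves finite products and the terminal object, as recorded after Definition~\ref{def:geo-func}.

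\textbf{The key lemma.} First I would prove: for an $\infty$-groupoid $X_\bl$ in $\cC$ and $S$ one of the finite simplicial sets $\Lambda^n_k$ or $\pa\Del^m$, the matching object $X^S$ exists in $\cC$ and, by the structure theory of $\infty$-groupoids in a category with covers \cite[Thm.\ 2.17]{W} (and \cite{BG} in the descent-category case), is presented as a finite iterated pullback whose legs are covers, together with finite products. For a Kan fibration $f\maps X_\bl\to Y_\bl$ the relative matching objects $X^{\Lambda^n_k}\times_{Y^{\Lambda^n_k}}Y_n$ and $X^{\pa\Del^m}\times_{Y^{\pa\Del^m}}Y_m$ admit the same kind of presentation: for the horn this is immediate, since $Y_n\to Y^{\Lambda^n_k}$ is a cover ($Y_\bl$ being an $\infty$-groupoid), so axiom~(b) applies; for the boundary one invokes \cite[Thm.\ 2.12]{W}. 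Because $F$ preserves finite products and pullbacks of covers, and because the \emph{same} limit formula computes the corresponding matching objects of $F_\bl X$ in the descent category $\cD$ (which has all finite limits), the canonical comparison maps
\[
F(X^S)\xto{\cong}(F_\bl X)^S,\qquad F\bigl(X^S\times_{Y^S}Y_n\bigr)\xto{\cong}(F_\bl X)^S\times_{(F_\bl Y)^S}(F_\bl Y)_n
\]
are isomorphisms, natural in $X_\bl$ and in $f$. There is no circularity here: the pullbacks-of-covers presentation is used only on the $\cC$-side, where it rests on the given $\infty$-groupoid hypothesis, while on the $\cD$-side the identical diagram computes a genuine finite limit.

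\textbf{Parts (1)--(3).} Granting the lemma, each is formal. For (1), the defining map of the Kan condition $\kan(n,k)$ for $F_\bl f$ is, under the above isomorphisms, the $F$-image of the corresponding map for $f$; since $F$ sends covers to covers, $F_\bl f$ is a Kan fibration whenever $f$ is. Taking $Y_\bl=\ast$ and using $F(\ast)=\ast$ shows $F_\bl$ carries $\infty$-groupoids to $\infty$-groupoids, so $F_\bl$ indeed lands in $\Gpd_\infty(\cD)$; covering fibrations are preserved because in addition $F(f_0)$ is a cover. If $F$ reflects covers, then $F_\bl f$ being a Kan (resp.\ covering) fibration forces each relevant $F$-image to be a cover, whence---reflecting---the original map is a cover, giving the reflection statements. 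Part (2) is identical, with $\acyc(m)$ and $\pa\Del^m$ replacing $\kan(n,k)$ and $\Lambda^n_k$. For (3), given a covering fibration $g\maps Y_\bl\fib Z_\bl$ and any $f\maps X_\bl\to Z_\bl$, Lemma~\ref{lem:life-saver}(3) produces the pullback in $\Gpd_\infty(\cC)$, computed level-wise as $X_n\times_{Z_n}Y_n$ along the covers $g_n$ (a covering fibration being level-wise a cover \cite[Thm.\ 2.17]{W}); since $F_\bl g$ is again a covering fibration by~(1), the pullback exists level-wise in $\Gpd_\infty(\cD)$, and preservation of pullbacks of covers yields $F_\bl(X_\bl\times_{Z_\bl}Y_\bl)\cong F_\bl X_\bl\times_{F_\bl Z_\bl}F_\bl Y_\bl$.

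\textbf{Parts (4)--(5), and the main obstacle.} Part (4) needs only that $F$ preserves finite products and $\ast$: a group object is a system of structure maps $m\maps G\times G\to G$, $e\maps\ast\to G$, $\iota\maps G\to G$ subject to diagrams in finite powers of $G$, and applying $F$ (with $F(G\times G)\cong F(G)\times F(G)$, $F(\ast)=\ast$) carries these to a group object $F(G)$, functorially; doing so level-wise gives $F_\bl\maps\Grp(s\cC)\to\Grp(s\cD)$. For (5), recall that the Eilenberg--Mac Lane construction has $(\Wbar{G})_n=G_{n-1}\times\cdots\times G_0$ with faces and degeneracies assembled from projections, the simplicial operators of $G_\bl$, the unit, and the multiplication $m$; since $F$ preserves products and, by~(4), the group structure, the level-wise isomorphisms $F\bigl((\Wbar{G})_n\bigr)\cong(\Wbar{F_\bl G})_n$ assemble into a natural isomorphism $F_\bl\Wbar{G}\cong\Wbar{F_\bl G}$, and both sides are $\infty$-groupoids by~(1) and Example~\ref{ex:WG}, so the functors land in $\Gpd_\infty(\cD)$. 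The crux of the whole argument is the \emph{relative} matching objects in the key lemma: unlike the horn case, the leg $Y_m\to Y^{\pa\Del^m}$ need not be a cover, so neither their existence in $\cC$ nor their preservation by $F$ follows from the axioms alone, and one must genuinely exploit the Kan-fibration/hypercover structure of \cite[Thm.\ 2.12]{W} to present them via pullbacks of covers. Once that is secured, the rest is bookkeeping with products and covers.
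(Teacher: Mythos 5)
Your plan for parts (1), (3), (4), and (5) is essentially the paper's: the horn comparison $F(X^{\Lambda^n_k})\cong F_\bl(X)^{\Lambda^n_k}$ obtained from an iterated-pullback-of-covers presentation (the paper packages this as Lemma \ref{lem:mono-collapse} on monotonically collapsible simplicial sets, with a dimension induction you are implicitly reproducing), the observation that $Y_n\to Y^{\Lambda^n_k}$ is a cover so the relative horn matching object is a pullback along a cover and hence exists and is preserved, level-wise covers for covering fibrations in (3), and preservation of finite products for (4)--(5). Those parts are correct and match the paper's proof.

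The gap is in the boundary clause of your key lemma, and hence in part (2). You assert that for a \emph{Kan fibration} $f\maps X_\bl\to Y_\bl$ the relative matching object $X^{\pa\Del^m}\times_{Y^{\pa\Del^m}}Y_m$ exists in $\cC$ with a pullbacks-of-covers presentation, invoking \cite[Thm.\ 2.12]{W}. This is false as stated: for a mere Kan fibration neither leg over $Y^{\pa\Del^m}$ need be a cover, and the limit need not exist in a category with covers (already at $m=1$ in $\Mfd$, the pullback of $X_0\times X_0\to Y_0\times Y_0\leftarrow Y_1$ can fail to exist, since $Y_1\to Y_0\times Y_0$ is not a cover in general); moreover \cite[Thm.\ 2.12]{W} concerns pullbacks of hypercovers, not matching objects. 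The correct hypothesis --- the one in the paper's Lemma \ref{lem:represent}, i.e.\ \cite[Lem.\ 2.10]{W} --- is that $f$ satisfies $\acyc(n)$ for all $n\leq m$: existence and $F$-preservation of the relative matching object at level $m+1$ are consequences of acyclicity in lower degrees, not of the Kan conditions. So (2) is not ``identical with $\pa\Del^m$ replacing $\Lambda^n_k$''; it must be run as an induction on $m$ in which the conditions $\acyc(n)$, $n\leq m$ --- already established for $f$ in the preservation direction, and transferred degree by degree from $F(f)$ via reflection of covers in the reflection direction --- license the next comparison square $F(\mu_{m+1}(f))\cong\mu_{m+1}(F(f))$. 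You do flag this as ``the crux'' in your closing paragraph, but your lemma as stated would fail before that induction could start, and you never actually set it up.
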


To prove the proposition, we will use the following lemmas.
\begin{lemma}\label{lem:mono-collapse}
Let $F \maps (\cC, \SC) \to (\cD,\SD)$ be a geometric functor, and $X_\bl \in \Gpd_{\infty}(\cC)$. Let $S$ be a finitely generated simplicial set of dimension $d \geq  0$ that is  ``monotonically collapsible'', i.e.\ $S$ admits a filtration of the form
\[
\Del^0= S_0 \sse S_1 \sse \cdots \sse S_m = S,
\]  
for which there exists an increasing sequence $0=n_0 \leq n_1 \leq n_2 \leq \cdots \leq n_m \leq d$ such that for all $i \geq 1$ we have $S_i = S_{i-1} \bigcup_{\Lam^{n_i}_{k_i}} \Del^{n_i}$ for some $0 \leq k_i \leq n_i$. Then there is an isomorphism $F\bigl(X_\bl^S \bigr) \cong F_\bl(X)^S$ natural in $X_\bl$. 
\end{lemma}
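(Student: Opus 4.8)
The plan is to run an induction along the filtration of $S$, turning each attaching pushout into a pullback of covers and using that a geometric functor preserves such pullbacks while commuting with the representable pieces. Two elementary facts drive everything. First, $X^{\Del^n} \cong X_n$ by the Yoneda lemma, and since $F_\bl$ is defined levelwise we have canonically $F(X_n) = F_\bl(X)_n$; thus $F$ automatically commutes with the top-dimensional building block at each stage. Second, because $X_\bl \to \ast$ is a Kan fibration, the horn projection $X_n \to X^{\Lam^n_k}$ is a cover for every $n \ge 1$ and $0 \le k \le n$. On the target side $F_\bl(X)$ is again an $\infty$-groupoid by Prop.\ \ref{prop:geo-func}\eqref{it:gf-1} and $\cD$ has all finite limits, so $F_\bl(X)^S$ exists; on the source side $X^S$ exists by monotone collapsibility together with $X_\bl \in \Gpd_\infty(\cC)$, as in \cite{W} (compare the special case $S = \Del^\bl \times \Del^1$ treated in Prop.\ \ref{prop:path-obj}).

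I would argue by strong induction on the pair $(\dim S, m)$ ordered lexicographically, where $m$ is the length of the filtration. The base case $S = \Del^0$ is the identity $F(X_0) = F_\bl(X)_0$. For the inductive step, apply the contravariant functor $\hom(-, X_\bl)$ to the attaching pushout $S_m = S_{m-1} \cup_{\Lam^{n_m}_{k_m}} \Del^{n_m}$ to obtain a pullback
\[
X^{S_m} \cong X^{S_{m-1}} \times_{X^{\Lam^{n_m}_{k_m}}} X_{n_m},
\]
whose leg $X_{n_m} \to X^{\Lam^{n_m}_{k_m}}$ is a cover. Since $F$ is geometric it preserves this pullback of a cover, giving $F(X^{S_m}) \cong F(X^{S_{m-1}}) \times_{F(X^{\Lam^{n_m}_{k_m}})} F(X_{n_m})$. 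The inductive hypothesis applies to $S_{m-1}$ (same dimension, fewer cells) and to $\Lam^{n_m}_{k_m}$ (dimension $n_m - 1 < \dim S$), and $F(X_{n_m}) = F_\bl(X)_{n_m}$; substituting, the right-hand side is exactly the pullback computing $F_\bl(X)^{S_m}$ in $\cD$. As every identification is assembled from the levelwise equalities $F(X_n) = F_\bl(X)_n$ and the canonical comparison maps for pullbacks, the resulting isomorphism is natural in $X_\bl$.

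The one genuine obstacle is closing the induction at the horns: the argument relies on each $\Lam^{n_m}_{k_m}$ being itself monotonically collapsible of strictly smaller dimension, so that the inductive hypothesis delivers $F(X^{\Lam^{n_m}_{k_m}}) \cong F_\bl(X)^{\Lam^{n_m}_{k_m}}$. I would dispatch this by a direct combinatorial construction---attaching the non-degenerate faces of $\Lam^n_k$ in order of increasing dimension, each glued along a sub-horn of its boundary---which is precisely the collapsibility input underpinning the existence of the matching objects $X^{\Lam^n_k}$ in \cite{W} and \cite{BG}. Granting this, the remaining verifications (pushouts become pullbacks, horn projections are covers, and $F$ transports the whole iterated pullback) are routine consequences of $F$ being geometric.
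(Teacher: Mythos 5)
Your proposal is correct and follows essentially the same route as the paper: an induction on dimension (the paper runs an outer induction on $\dim S$ with an inner recursion over the filtration, which your lexicographic induction on $(\dim S, m)$ repackages), converting each attaching pushout into a pullback along the cover $X_{n_m} \to X^{\Lam^{n_m}_{k_m}}$, which $F$ preserves, and closing the loop via the observation that each horn $\Lam^{n}_{k}$ is itself monotonically collapsible of dimension $n-1$. The paper likewise records this last combinatorial fact without proof and invokes left exactness of $\hom(-,F_\bl(X))$ to identify the resulting pullback with $F_\bl(X)^{S_m}$, exactly as you do.
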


\begin{proof}
First, we observe that any horn $\Lam^{\el}_k$ is a monotonically collapsible simplicial set of dimension $ \el-1$. 
Given $S$ as in the statement, we proceed by induction on the dimension $d=\dim S$. For $d=0$, we have $S= \coprod\Del^0$, and the statement follows, since a geometric functor preserves products. Now suppose $d \geq 1$, and that the statement holds for all monotonically collapsible $S'$ of dimension $< d$. In particular, since $n_i-1 <  d$ for all $i$, the induction hypothesis implies that $F(X_{n_i,k_i}) \cong F(X)_{n_i,k_i}$. 

Let $r \geq 1$ be the largest $r$ such that $n_{r-1} < d$. Then $F(X^{S_{r-1}}) \cong F(X)^{S_{r-1}}$.
We then verify recursively that $F(X^{S_{\el}}) \cong F(X)^{S_{\el}}$ for $r \leq \el \leq m$ in the following way. Assuming $F(X^{S_{\el-1}}) \cong F(X)^{S_{\el-1}}$ observe that the pullback diagram 
\[
\begin{tikzdiag}{3}{3}
{
{X^{S_{\el}}} \& X_{n_{\el}}  \& \\
{X^{S_{\el-1}}}\& X_{n_{\el}, k_{\el}} \& \\
};

\path[->,font=\scriptsize]
(m-1-1.-10) edge node[auto] {$$} (m-1-2)
(m-1-1) edge node[auto,swap] {$$} (m-2-1)
(m-1-2) edge node[auto,swap] {$$} (m-2-2)
(m-2-1.-8) edge node[auto,swap] {$$} (m-2-2)
;
\begin{scope}[shift=($(m-1-1)!.4!(m-2-2)$)]
\draw +(-0.25,0) -- +(0,0) -- + (0,0.25);
\end{scope}
\end{tikzdiag}
\]
exists in $\cC$, since the right vertical morphism is a cover. Furthermore, since $F$ preserves pullbacks of covers, we have
\[
F(X^{S_{\el}}) \cong F(X^{S_{\el-1}}) \times_{F(X_{n_{\el}, k_{\el}})}F(X_{n_\el}) 
\cong F(X)^{S_{\el-1}} \times_{F(X)_{n_{\el}, k_{\el}}}F(X)_{n_\el}. 
\]
On the other hand, $\cD$ has all finite limits, so the functor $\hom(-,F_\bl(X))$ is left exact. Hence,
\[
F(X)^{S_\el} \cong \hom\Bigl( \bigl( S_{\el -1} \cup_{\Lam^{n_\el}_{k_\el}} \Del^{n_\el}\bigr), F_\bl(X) \Bigr) \cong F(X)^{S_{\el-1}} \times_{F(X)_{n_{\el}, k_{\el}}}F(X)_{n_\el},
\]
and this completes the proof.
\end{proof}

\begin{lemma}\label{lem:represent}
Let $F \maps (\cC, \SC) \to (\cD,\SD)$ be a geometric functor and $f \maps X_\bl \to Y_\bl$ a morphism in $\Gpd_\infty(\cC)$. 
Let $m \geq 0$. Suppose $ S \emb T$ is a monomorphism of finite simplicial sets with $\dim S =m$, and   
for all $n \leq m$, $f$  satisfies the condition $\acyc(n)$ in Def.\ \ref{def:kan}.
\begin{enumerate}
\item If $Y^{T}$ exists in $\cC$, then the limit $X^{S} \times_{Y^{S}} Y^{T}$ exists in $\cC$.
\item If $Y^{T}$ exists in $\cC$ and $F(Y^{T}) \cong F_{\bl}(Y)^{T}$ is a natural isomorphism, then there is a natural isomorphism  $F \bigl(X^{S} \times_{Y^{S}} Y^{T}) \cong F_\bl(X)^{S} \times_{F_\bl(Y)^{S}} F_\bl(Y)^{T}$.
\end{enumerate} 
\end{lemma}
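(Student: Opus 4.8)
The plan is to prove both statements by building $S$ out of $\emptyset$ one nondegenerate simplex at a time, thereby reducing the whole lemma to the single case of a boundary inclusion $\pa\Del^k \emb \Del^k$, where the hypothesis $\acyc(k)$ supplies a cover. Fix a cellular filtration $\emptyset = S_{(0)} \emb S_{(1)} \emb \cdots \emb S_{(N)} = S$ in which $S_{(j)} = S_{(j-1)} \cup_{\pa\Del^{k_j}} \Del^{k_j}$ is obtained by attaching the $j$-th nondegenerate simplex of $S$ along its boundary, the simplices being ordered by nondecreasing dimension so that every $k_j \leq \dim S = m$. Composing with $S \emb T$, I set $P_j := X^{S_{(j)}} \times_{Y^{S_{(j)}}} Y^T$, so that $P_0 = Y^T$ and $P_N = X^S \times_{Y^S} Y^T$.

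The computational heart of the argument is the pullback decomposition
\[
P_j \cong P_{j-1} \times_{W_j} X_{k_j}, \qquad W_j := X^{\pa\Del^{k_j}} \times_{Y^{\pa\Del^{k_j}}} Y_{k_j},
\]
in which $X_{k_j} \to W_j$ is the canonical morphism $\mu_{k_j}(f)$ of Def.\ \ref{def:kan}, and $P_{j-1} \to W_j$ is restriction of the $X$-component along the attaching map $\pa\Del^{k_j} \to S_{(j-1)}$ together with restriction of the $Y^T$-component along $\Del^{k_j} \emb T$. I would establish this isomorphism by a functor-of-points computation: applying the pushout $S_{(j)} = S_{(j-1)} \cup_{\pa\Del^{k_j}} \Del^{k_j}$ to the contravariant functors $\hom(-,X_\bl)$ and $\hom(-,Y_\bl)$ and reshuffling the resulting iterated limit. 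The reason to work with the relative objects $P_j$ rather than the absolute matching objects $X^{S_{(j)}}$ (which need not exist in $\cC$) is that only $P_{j-1}$, $W_j$, and $X_{k_j}$ must be representable; the maps into $W_j$ are then pinned down by the universal property of $W_j$ together with the evaluation-at-simplex maps out of $P_{j-1}$, both of which make sense without $X^{S_{(j)}}$ existing.

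For part (1), $\acyc(k_j)$ (available since $k_j \leq m$) says precisely that $\mu_{k_j}(f) \maps X_{k_j} \to W_j$ is a cover, and it carries the implicit assertion that $W_j$ exists. Since $P_0 = Y^T$ exists by hypothesis, an induction on $j$ using the decomposition exhibits each $P_j$ as a pullback of a cover along a morphism of $\cC$, which exists by Axiom \ref{axC2}; hence $P_N = X^S \times_{Y^S} Y^T$ exists.

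For part (2), I would induct on $m = \dim S$, the base case $m=0$ being immediate since then every $W_j = Y_0$. The key observation for the inductive step is that each correction term $W_j = X^{\pa\Del^{k_j}} \times_{Y^{\pa\Del^{k_j}}} Y_{k_j}$ is itself the relative matching object of the boundary inclusion $\pa\Del^{k_j} \emb \Del^{k_j}$, a monomorphism with $\dim \pa\Del^{k_j} = k_j - 1 < m$; thus the inductive hypothesis (part (2) at dimension $k_j-1$, whose own $T$-hypothesis $F(Y_{k_j}) \cong F_\bl(Y)_{k_j}$ holds by definition of the levelwise functor $F_\bl$) yields a natural isomorphism $F(W_j) \cong F_\bl(X)^{\pa\Del^{k_j}} \times_{F_\bl(Y)^{\pa\Del^{k_j}}} F_\bl(Y)_{k_j}$. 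Applying the geometric functor $F$ to the decomposition above, and using that $F$ preserves pullbacks of covers together with $F(X_{k_j}) = F_\bl(X)_{k_j}$ and the hypothesis $F(Y^T) \cong F_\bl(Y)^T$, a second (inner) induction on $j$ identifies $F(P_j)$ with the corresponding relative matching object for $F_\bl(f)$ in $\cD$, which is assembled by the same formal decomposition since $\cD$ has all finite limits. At $j=N$ this gives $F(X^S \times_{Y^S} Y^T) \cong F_\bl(X)^S \times_{F_\bl(Y)^S} F_\bl(Y)^T$. The main obstacle, and the reason a bare appeal to Lemma \ref{lem:mono-collapse} does not suffice, is precisely that the boundaries $\pa\Del^k$ are \emph{not} monotonically collapsible; this forces the self-referential induction on dimension, with the boundary-inclusion relative matching objects playing the role of the strictly smaller instances.
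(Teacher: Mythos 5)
Your argument is correct and is essentially the argument the paper intends: the paper simply cites \cite[Lemma 2.10]{W} for statement (1) and says statement (2) follows ``via induction exactly as in the proof of \cite[Lemma 2.10]{W}'' using that $F$ preserves covers and pullbacks of covers, which is precisely the cell-attachment induction $P_j \cong P_{j-1}\times_{W_j} X_{k_j}$ you write out in full. Your observation that the boundaries $\pa\Del^k$ are not monotonically collapsible, so that the $\acyc(n)$ hypotheses (rather than Lemma \ref{lem:mono-collapse}) are what make the relative matching objects representable, correctly identifies why this lemma needs its own induction.
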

\begin{proof}
The first statement is \cite[Lemma 2.10]{W}. For the second statement, proceed via induction exactly as in the proof of 
\cite[Lemma 2.10]{W} using the fact that $F$ preserves covers and pullbacks along covers. 
\end{proof}

\begin{proof}[{\bf Proof of Proposition \ref{prop:geo-func}}]
\mbox{}
\begin{enumerate}
\item The first assertion follows directly from Lemma \ref{lem:mono-collapse}, since
$F$ sends covers to covers and every horn is monotonically collapsible. More generally, if $f \maps X_\bl \to Y_\bl$ is any morphism in $\Gpd_\infty(\cC)$ then, for all $n \geq 1$ and $ 0 \leq k \leq n$,
$X_{n,k} \times_{Y_{n,k}} Y_n$ is a pullback along the cover $Y_n \to Y_{n,k}$ and therefore is preserved by $F$. Combining this with the first assertion, we conclude $F\big(X_{n,k} \times_{Y_{n,k}} Y_n\bigr) \cong F(X)_{n,k} \times_{F(Y)_{n,k}} F(Y)_n$. Hence, if $f$ is a Kan fibration, then $F_\bl(f)$ is as well, since $F$ preserves covers. Moreover, if $F$ reflects covers and $F_\bl(f)$ is a Kan fibration, then the same argument implies that $f$ is Kan as well. The analogous statements for covering fibrations clearly hold. 

\item Follows from Lemma \ref{lem:represent}. Indeed, suppose $f \maps X_\bl \to Y_\bl$ is a hypercover in $\Gpd_\infty(\cC)$. Proceed by induction. For the base case, $f_0$ being a cover implies $F(f_0) \maps F(X)_0 \to F(Y)_0$ is a cover. Let $m >0$ and suppose for all $n \leq m$ $F(f)$ satisfies $\acyc(n)$ in Def.\ \ref{def:kan}. Set $S= \pa \Del^{m+1}$ and $T=\Del^{m+1}$ in Lemma \ref{lem:represent}. Clearly $F(Y_{m+1})=F(Y)_{m+1}$. Therefore, by the lemma there is a commutative diagram  
\begin{equation} \label{diag:hyper-induct}
\begin{tikzdiag}{2}{3}
{
F(X_{m+1})\&  \&F\bigl(X^{\pa \Del^{m+1}} \times_{Y^{\pa \Del^{m+1}}} Y_{m+1})  \\
F(X)_{m+1}\&  \& F(X)^{\pa \Del^{m+1}} \times_{F(Y)^{\pa \Del^{m+1}}} F(Y)_{m+1} \\
};
\path[->,font=\scriptsize]
(m-1-1) edge node[auto] {$F(\mu_{m+1}(f))$} (m-1-3)
(m-1-1) edge node[auto,sloped,swap] {$=$} (m-2-1)
(m-1-3) edge node[auto,sloped] {$\cong$} (m-2-3)
(m-2-1) edge node[auto] {$\mu_{m+1}(F(f))$} (m-2-3)
;
\end{tikzdiag}
\end{equation}
Since $F(\mu_{m+1}(f))$ is a cover, $\mu_{m+1}(F(f))$ is a cover. This completes the inductive step. Hence, $F(f)$ is a hypercover.

Next, assume $F$ reflects covers and $f \maps X_\bl \to Y_{\bl}$ is a morphism such that $F(f)$ is a hypercover. Hence $f \maps X_0 \to Y_0$ is a cover. Again proceed by induction. Let $m >0$ and suppose for all $n \leq m$ $f$ satisfies $\acyc(n)$.  
Set $S= \pa \Del^{m+1}$ and $T=\Del^{m+1}$ in Lemma \ref{lem:represent}. Then $X^{\pa \Del^{m+1}} \times_{Y^{\pa \Del^{m+1}}} Y_{m+1}$ exists by the first statement of the Lemma, and the second statement implies that the above diagram \eqref{diag:hyper-induct} exists.  
Since $\mu_{m+1}(F(f))$ is a cover, $F(\mu_{m+1}(f))$ is a cover, and hence $\mu_{m+1}(f)$ is as well.

\item If $f \maps X_\bl \to Y_\bl$ is a covering fibration, then \cite[Lem. 2.10]{Lie3} implies that $f_n \maps X_n \to Y_n$ is a cover for all $n \geq 0$. Therefore, since $F$ is a geometric functor, $F_\bl$ preserves the pullback of $f$ along any morphism.

\item Since $X \to \ast$ is a cover for all $X \in \cC$, $F$ preserves all finite products, and therefore group objects. Hence, $F_\bl$ does as well.

\item This follows from statement (3), along with the fact that $F$ preserves products, and the explicit description (e.g.\ \cite[Sec.\ 2.3]{Lie3}) for $\Wbar(G)$  when $G_\bl$ is a simplicial group object in a category with covers.
\end{enumerate}
\end{proof}

Fix $n \in \N$ and an $\infty$-groupoid object $X_\bl$ in either a descent category or a category with covers. Recall that $X$ is an \df{$n$-groupoid} if the covers
%\begin{equation} \label{eq:ngpd}
\[
X_m \to X^{\Lambda^{m}_k}
\]
%\end{equation}
are isomorphisms for all $m > n$ and for all $0 \leq k \leq  m$. Prop.\ \ref{prop:geo-func} immediately implies the following corollary.
\begin{corollary}\label{cor:n-grpd}
Let $F \maps \cC \to \cD$ be a geometric functor as in Prop.\ \ref{prop:geo-func}, and $F_\bl \maps \Gpd_\infty(\cC) \to \Gpd_\infty(\cD)$ the induced functor between categories of $\infty$-groupoids. If $X_\bl$ is an $n$-groupoid in $\cC$, then $F_\bl(X)$ is an $n$-groupoid in $\cD$.
\end{corollary}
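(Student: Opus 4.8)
The plan is to reduce everything to Lemma \ref{lem:mono-collapse}. Recall that for each $m$ and each $0 \leq k \leq m$ the canonical comparison map $X_m \to X^{\Lambda^m_k}$ is induced by the horn inclusion $\Lambda^m_k \emb \Del^m$ under the representability isomorphism $X^{\Del^m} \cong X_m$. By definition, $X_\bl$ is an $n$-groupoid precisely when this map is an isomorphism for every $m > n$ and every $0 \leq k \leq m$; the goal is to establish the same for $F_\bl(X)$, which is already known to be an $\infty$-groupoid since $F_\bl$ carries the Kan fibration $X_\bl \to \ast$ to a Kan fibration by Prop.\ \ref{prop:geo-func}\eqref{it:gf-1}.

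First I would identify the source and target of the comparison map for $F_\bl(X)$ with the images under $F$ of the source and target for $X_\bl$. On the source side, $F(X_m) = F_\bl(X)_m = F_\bl(X)^{\Del^m}$ directly from the definition of the level-wise functor $F_\bl$ together with representability. On the target side, every horn $\Lambda^m_k$ is monotonically collapsible --- this is the first observation in the proof of Lemma \ref{lem:mono-collapse} --- so that lemma supplies a natural isomorphism $F(X^{\Lambda^m_k}) \cong F_\bl(X)^{\Lambda^m_k}$.

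Next I would apply $F$ to the comparison map itself. Since $X_\bl$ is an $n$-groupoid, for $m > n$ the map $X_m \to X^{\Lambda^m_k}$ is an isomorphism in $\cC$, hence $F$ carries it to an isomorphism $F(X_m) \to F(X^{\Lambda^m_k})$ in $\cD$. Transporting along the two identifications above yields an isomorphism $F_\bl(X)_m \to F_\bl(X)^{\Lambda^m_k}$, and it remains only to check that this transported map is the \emph{canonical} comparison map for $F_\bl(X)$. Granting this, the covers $F_\bl(X)_m \to F_\bl(X)^{\Lambda^m_k}$ are isomorphisms for all $m > n$ and all $0 \leq k \leq m$, so $F_\bl(X)$ is an $n$-groupoid.

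The one point requiring care --- and the only real obstacle --- is this last compatibility: that $F$ of the canonical map agrees, under the isomorphisms of Lemma \ref{lem:mono-collapse}, with the canonical map for $F_\bl(X)$. Since Lemma \ref{lem:mono-collapse} asserts naturality only in $X_\bl$, I would verify the needed naturality in the simplicial variable by tracing through the inductive construction in its proof, where $X^{\Lambda^m_k}$ is built as an iterated pullback of the covers $X_{n_i} \to X_{n_i,k_i}$ and the comparison isomorphism is assembled from the fact that $F$ preserves these pullbacks. Because the horn inclusion $\Lambda^m_k \emb \Del^m$ is a map of (collapsible) simplicial sets, the induced restriction maps on matching objects are compatible with these pullback presentations at each stage of the filtration, so the identification of maps propagates through the induction and the corollary follows.
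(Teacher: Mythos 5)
Your proof is correct and follows essentially the same route as the paper, which simply notes that the corollary is immediate from Prop.\ \ref{prop:geo-func}: the key point in both cases is that Lemma \ref{lem:mono-collapse} identifies $F(X^{\Lambda^m_k})$ with $F_\bl(X)^{\Lambda^m_k}$ compatibly with the canonical comparison maps, so that $F$ carries the isomorphisms $X_m \to X^{\Lambda^m_k}$ for $m>n$ to the corresponding isomorphisms for $F_\bl(X)$. Your extra attention to the compatibility of the transported map with the canonical one is the right point to flag, and your sketch of how it propagates through the inductive pullback construction is the correct justification.
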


\subsubsection{Examples of geometric functors} \label{sec:geo-func-ex} 
We conclude this section with examples of geometric functors, and some corollaries. Let $ \cS_{ss} \sse \Mfd$ denote the subcategory of finite dimensional manifolds whose morphisms consist of surjective submersions. Recall that $(\Mfd,\cS_{ss})$ is a category of covers. From Example \ref{ex:CWC}, it then follows that the category of pointed manifolds $(\Mfd_\ast,{\cS_\ast}_{ss})$ is also a category of covers.  Finally, let $\cS_{epi} \sse \FDVect$ denote the subcategory of finite-dimensional vector spaces whose morphisms are surjections. Then $(\FDVect,\cS_{epi})$ is a descent category. 

\begin{corollary} \label{cor:geo-func-ex}
\mbox{}
\begin{enumerate}
\item The inclusion $(\conilsm, \cS^{\mathrm{mfd}}_{fs}) \to (\conil,\cS_{fs})$, and the forgetful functor $(\Mfd_\ast,{\cS_\ast}_{ss}) \to (\Mfd,\cS_{ss})$ are geometric functors that reflect covers.

\item \label{it:dist} The assignment \eqref{eq:dist} of a pointed manifold to its coalgebra of point distributions  
defines a geometric functor $\Dist \maps (\Mfd_\ast,{\cS_\ast}_{ss}) \to (\conil,\cS_{fs})$ 
\item \label{it:prim} The assignment \eqref{eq:prim-adj} of a formal manifold to its space of primitives defines a geometric functor $\prim \maps (\conilsm, \cS^{\mathrm{mfd}}) \to (\FDVect, \cS_{epi})$ that reflects covers.
\end{enumerate}
\end{corollary}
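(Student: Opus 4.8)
The plan is to check, functor by functor, the two conditions of Definition~\ref{def:geo-func}---covers to covers and preservation of pullbacks of covers---together with the reflecting property where it is asserted, reducing each point to a result already in hand. Statement~(1) is essentially bookkeeping. The inclusion $(\conilsm,\cS^{\mathrm{mfd}}_{fs}) \to (\conil,\cS_{fs})$ is exactly the inclusion of a category of covers into a descent category discussed just before Definition~\ref{def:geo-func}, where it was already observed that such an inclusion preserves covers and pullbacks of covers; hence it is geometric. It reflects covers because $\cS^{\mathrm{mfd}}_{fs}=\cS_{fs}\cap\conilsm$ by construction, so any morphism of pointed formal manifolds that is a formal submersion in $\conil$ already lies in $\cS^{\mathrm{mfd}}_{fs}$. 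For the forgetful functor $(\Mfd_\ast,{\cS_\ast}_{ss}) \to (\Mfd,\cS_{ss})$ I would invoke Example~\ref{ex:CWC}: the covers of the coslice category $\Mfd_\ast$ are by definition the pointed maps whose underlying map is a surjective submersion, so the functor both preserves and reflects covers, and the forgetful functor of a coslice category preserves pullbacks, which are connected limits.

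For statement~(2), that $\Dist$ sends covers to covers is precisely Example~\ref{ex:mfd-fsub}. The substance is preservation of pullbacks of covers, and this is the main obstacle. Given a pullback square in $\Mfd_\ast$ built from $X \to Z \xleftarrow{\phi} Y$ with $\phi$ a surjective submersion (such pullbacks exist by axiom~\ref{axC2}), I would pass through the equivalence $\kk[-]\maps\conil \simeq (\clnAlg)^{\op}$ of Proposition~\ref{prop:dual}, which converts the desired isomorphism $\Dist(X\times_Z Y) \cong \Dist(X)\times_{\Dist(Z)}\Dist(Y)$ into the claim that the canonical map of complete local $\kk$-algebras $\hA_X \htensor_{\hA_Z}\hA_Y \to \hA_{X\times_Z Y}$ is an isomorphism, using that the completed tensor product computes the pushout in $\clnAlg$ just as in the proof of Proposition~\ref{prop:descent}. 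To establish the isomorphism I would use the local structure of a submersion: choosing adapted coordinates gives $\hA_Z=\kk[[z]]$ and $\hA_Y=\kk[[z,w]]$ with $\hA_Z\to\hA_Y$ the evident inclusion, while $\hA_X=\kk[[x]]$ with $z_i\mapsto \phi^\ast z_i$; since $\phi$ is a submersion, $X\times_Z Y$ is a manifold whose completed local ring is $\kk[[x,w]]$ by Hadamard's Lemma (as in Example~\ref{ex:mfd-coalg}), and a direct comparison identifies this with $\kk[[x]]\htensor_{\kk[[z]]}\kk[[z,w]] \cong \kk[[x]]\htensor_\kk\kk[[w]]$. This coordinate computation, the only place where genuine differential geometry enters, is where I expect the real work to lie.

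Statement~(3) is formal once the adjunction is exploited. That $\prim$ carries formal submersions to surjections is Proposition~\ref{prop:descent}(2), so it sends covers to covers; conversely, a morphism of formal manifolds inducing a surjection on primitives is a formal submersion by Proposition~\ref{prop:descent}\eqref{item:surj-formal}, so $\prim$ reflects covers. For pullbacks, I would observe that $\prim\maps\conil\to\FDVect$ is the right adjoint in the adjunction~\eqref{eq:prim-adj} and therefore preserves every limit that exists in $\conil$; since a pullback of a cover in $\conilsm$ is computed as in $\conil$ (the subcategory being closed under such pullbacks, by Lemma~\ref{lem:CWC-descent}), $\prim$ preserves it, and the finite-type hypothesis ensures the result again lies in $\FDVect$.
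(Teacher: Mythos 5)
Your proposal is correct and follows essentially the same route as the paper: part (1) is the same bookkeeping the paper dismisses as obvious, part (2) is the identical reduction to the local model of a submersion followed by the Hadamard/formal-power-series computation $\kk[[x]]\htensor_{\kk[[z]]}\kk[[z,w]]\cong\kk[[x,w]]$ (you merely swap which leg of the cospan carries the adapted coordinates), and part (3) cites the same two statements of Prop.~\ref{prop:descent}. Your only addition is making explicit that $\prim$ preserves pullbacks of covers because it is a right adjoint in \eqref{eq:prim-adj} and the relevant pullbacks in $\conilsm$ are computed in $\conil$ --- a point the paper leaves implicit --- which is a welcome bit of extra care rather than a different argument.
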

\begin{proof}
\mbox{}
\begin{enumerate}
\item Obvious
\item The functor $\Dist$ preserves covers as shown in Example \ref{ex:mfd-fsub}.
It remains to verify that it preserves pullbacks of pointed surjective submersions.
Let $X,Y,Z$ denote pointed manifolds, $Y \xto{\ph} Z$ a smooth pointed map, and $X \fib Z$ a submersion at the base point. We wish to show that 
the canonical map in $\clnAlg$ induces an isomorphism  $\hA_X \htensor_{\hA_Z} \hA_Y \cong \hA_P$, where $P$ is the pullback of the diagram $X \to Z \xleftarrow{\ph} Y$. 

It suffices to verify the statement for the local model. Let $X=\R^{d + d_Z}$, $Z=\R^{d_Z}$, and $Y=\R^{d_Y}$.
Let $\ph \maps Y \to Z$ be a smooth function with coordinates $\ph_i(\vv{y}) = \pr^Z_i \circ \phi(y_1,\ldots, y_{d_Y})$, $i=1,\ldots,d_Z$, 
and consider the pullback diagram
\[
\begin{tikzdiag}{2}{3}
{
P\& Y  \\
X \& Z  \\
};

\path[->,font=\scriptsize]
(m-1-1) edge node[auto,swap] {$\ti{\ph}$} (m-2-1)
(m-1-1) edge node[auto] {$\pr^Y$} (m-1-2)
(m-1-2) edge node[auto] {$\ph$} (m-2-2)
(m-2-1) edge node[auto] {$\pr^Z$} (m-2-2)
;
\begin{scope}[shift=($(m-1-1)!.4!(m-2-2)$)]
\draw +(-0.25,0) -- +(0,0) -- + (0,0.25);
\end{scope}
\end{tikzdiag}
\]
where $P = \R^{d + d_Y}$, and 
$\ti{\ph}(\vv{x},\vv{y}) =(x_1,\ldots,x_d, \ph_1(\vv{y}),\ldots, \ph_{d_Z}(\vv{y}))$. The universal property of the tensor product in $\clnAlg$ yields the unique morphism
%\begin{equation} \label{eq:tensor1}
\[
\hA_X \htensor_{\hA_Z} \hA_Y \xto{\psi} \hA_P .
\]
%\end{equation}
Hadamard's Lemma gives the equalities
\[
\begin{split}
\hA_X = \R[[x_1,\ldots,x_{d},z_1,\ldots,z_{d_Z}]], & \quad  \hA_Y = \R[[y_1,\ldots,y_{d_Y}]],\\
\hA_Z = \R[[z_1,\ldots,z_{d_Z}]], & \quad  \hA_P  = \R[[x_1,\ldots,x_d,y_1,\ldots,y_{d_Y}]].
\end{split}
\]
In terms of these identifications, $\psi$ is the identity on the generators $x_1 \htensor 1,\ldots,x_d \htensor 1$ and $1 \htensor y_1, \ldots, 1\htensor y_{d_Y}$. Furthermore, $\psi$ maps the generator $z_i$ to the $\infty$-jet of $\ph_i$ at $0$. Via Example \ref{ex:powerseries}, we obtain the natural isomorphisms
\[
\begin{split}
\hA_X \htensor_{\hA_Z} \hA_Y & \cong \bigl( \R[[x_1, \ldots, x_d]] \htensor_\R \hA_Z \bigr) \htensor_{\hA_Z} \hA_{Y}\\
& \cong \R[[x_1, \ldots, x_d]] \htensor_\R \hA_{Y} \cong \R[[x_1\ldots,x_d, y_1,\ldots,y_{d_Y}]].
\end{split}
\]
Hence, $\psi$ is also an isomorphism.

\item Follows from the second and fourth statements of Prop.\ \ref{prop:descent}.
\end{enumerate}
\end{proof}

\subsection{Homotopy theory of $\infty$-groupoids} \label{sec:hmtpy}
We recall the definition of a category of fibrant objects for a homotopy theory \cite{Brown}. Let  $\cC$ be a category with finite products, with terminal object $\ast \in \cC$, which is equipped with two wide subcategories:
\df{weak equivalences} and \df{fibrations}. A morphism which
is both a weak equivalence and a fibration is called an
\df{acyclic fibration}. Then $\cC$ is a
\df{category of fibrant objects (CFO)} if:
\begin{enumerate}
\item{Every isomorphism in $\cC$ is an acyclic fibration.}

\item{The class of weak equivalences satisfies  ``2 out of 3''. That is, if
    $f$ and $g$ are composable morphisms in $\cC$ and any two of $f,g, g
    \circ f$ are weak equivalences, then so is the third.}

\item{The pullback of a fibration/acyclic fibration exists, and is a fibration/acyclic fibration.
That is, if $Y \xto{g} Z \xleftarrow{f} X$ is a diagram in $\cC$ with $f$
    a fibration/acyclic fibration, then the pullback $X \times_{Z} Y$ exists, and
   the induced projection $X \times_{Z} Y \to Y$ is a  fibration/acyclic fibration.}

% \item{The pullback of an acyclic fibration exists, and is an acyclic fibration.
% % That is, if $Y \xto{g} Z \xleftarrow{f} X$ is a diagram in $\C$ with $f$
% %     an acyclic fibration, then the pullback $X \times_{Z} Y$ exists, and
% %    the induced projection $X \times_{Z} Y \to Y$ is an acyclic  fibration.
%  }
\item{For any object $X \in \cC$ there exists a (not necessarily
    functorial) \df{path object}, that is, an object
    $X^{I}$ equipped with morphisms
\[
X \xto{s} X^{I} \xto{(d_0,d_1)} X \times X,
\]
such that $s$ is a weak equivalence, $(d_0,d_1)$ is a fibration, and their
composite is the diagonal map.}
\item{All objects of $\cC$ are ``fibrant''. That is, for any $X \in \cC$ the unique map $ X \to \ast$ is a fibration.}
\end{enumerate}

Also, we recall that a functor $F \maps \cat{C} \to \cat{D}$ between categories of fibrant objects is \df{exact} if $F$ preserves the terminal object, fibrations, and acyclic fibrations, and $F$ maps any pullback square in $\cat{C}$ of the form
\[
\begin{tikzpicture}[descr/.style={fill=white,inner sep=2.5pt},baseline=(current  bounding  box.center)]
\matrix (m) [matrix of math nodes, row sep=2em,column sep=3em,
  ampersand replacement=\&]
  {  
P \& X \\
Z \& Y \\
}
; 
  \path[->,font=\scriptsize] 
   (m-1-1) edge node[auto] {$$} (m-1-2)
   (m-1-1) edge node[auto,swap] {$$} (m-2-1)
   (m-2-1) edge node[auto] {$$} (m-2-2)
  ;
  \path[->>,font=\scriptsize] 
   (m-1-2) edge node[auto] {$f$} (m-2-2)
;
%begin pullback symbol%
  \begin{scope}[shift=($(m-1-1)!.4!(m-2-2)$)]
  \draw +(-0.25,0) -- +(0,0)  -- +(0,0.25);
  \end{scope}
  %end pullback symbol%
\end{tikzpicture}
\]
in which $f$ is a fibration, to a pullback square in $\cat{D}$. It follows from these axioms that exact functors preserve finite products and weak equivalences.

\begin{remark}\label{rmk:no-weq}
In a CFO $\cC$ with functorial path objects $\Path(-) \maps \cC \to \cC$, the weak equivalences are uniquely determined by the acyclic fibrations appearing in the functorial factorization. This follows from Brown's factorization lemma, as emphasized in \cite[Def.\ 3.20]{BG}. Indeed, given a morphism $f \maps X \to Y$ we obtain a functorial span of fibrations
\begin{equation} \label{eq:factor}
\begin{tikzdiag}{2}{2}
{
\& X \times_Y \Path(Y) \& \\
X \&  \& Y \\
};

\path[->>,font=\scriptsize]
(m-1-2) edge node[auto] {$p_f$} (m-2-3)
(m-1-2) edge node[above,sloped,swap] {$\sim$} (m-2-1)
;
\path[->,font=\scriptsize]
(m-2-1) edge[bend left=30] node[auto] {$i_f$} node[auto,sloped, below]{$\sim$} (m-1-2)
;
% \path[->,font=\scriptsize]
% ;
\end{tikzdiag}
\end{equation}
in which the left leg is acyclic with a canonical right inverse $i_f$ such that $f=p_f \cc i_f$. 
Hence, $f$ is a weak equivalence if and only if $p_f$ is also acyclic. 
\end{remark}

The homotopy theory of $\infty$-groupoid objects in a descent category is characterized by a CFO structure. We call this the \df{canonical CFO structure on $\infty$-groupoids}.
\begin{theorem}[Thm.\ 3.6 \cite{BG}]\label{thm:descent-CFO}
Let $\cD$ be a descent category. The category $\Gpd_\infty(\cD)$ admits the structure of a category of fibrant objects with  functorial path objects in which the fibrations are the Kan fibrations, and the acyclic fibrations are the hypercovers.
\end{theorem}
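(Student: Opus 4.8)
The plan is to take the Kan fibrations and hypercovers as the primitive data and to \emph{define} the weak equivalences afterwards via Brown's factorization lemma \cite{Brown}, exactly as indicated in Remark~\ref{rmk:no-weq}. Two of the five CFO axioms are then essentially free: every $\infty$-groupoid $X_\bl$ is fibrant because $X_\bl \to \ast$ is a Kan fibration by definition, and functorial path objects are supplied directly by Proposition~\ref{prop:path-obj}, which produces $X^I_\bl = \hom(\Del^\bl \times \Del^1, X_\bl)$ together with the functorial span $X_\bl \xto{s^0_X} X^I_\bl \xto{(d^0_X,d^1_X)} X_\bl \times X_\bl$ whose composite is the diagonal. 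The entire argument leans on descent axiom~\ref{ax1}: since $\cD$ has all finite limits, every matching object $X^S$ and every pullback in $s\cD$ exists, so there are none of the existence subtleties that force the weaker iCFO notion in the category-with-covers setting.

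First I would establish the closure properties of the two distinguished classes. Kan fibrations and hypercovers are each closed under composition: writing the defining maps of Definition~\ref{def:kan} for a composite and using that $\cS$ is a subcategory closed under base change, the relative matching maps of a composite factor as composites of base changes of covers, hence are again covers. Stability under base change proceeds by the same mechanism as Lemma~\ref{lem:life-saver}, now with all pullbacks automatically present: because $\hom(S,-)$ is left exact, matching objects of a levelwise pullback are pullbacks of matching objects, so relative matching maps base-change to relative matching maps, and covers are stable under base change by axiom~\ref{ax3}. Thus the pullback along a Kan fibration is again an $\infty$-groupoid with Kan projection, acyclic when the original was a hypercover. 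Finally, isomorphisms are covers in $\cD$, and the relative matching maps of an isomorphism are isomorphisms, so isomorphisms are hypercovers; this yields CFO axiom~(1).

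Next I would verify the homotopical content of the path object. The combinatorial core is that the endpoint evaluation $(d^0_X,d^1_X)\maps X^I_\bl \to X_\bl \times X_\bl$ is a Kan fibration and that each single endpoint $d^i_X \maps X^I_\bl \to X_\bl$ is a hypercover; both reduce, via the iterated-pullback-of-covers description in Proposition~\ref{prop:path-obj}, to the anodyne/pushout-product combinatorics of the inclusions $\pa\Del^n \times \Del^1 \cup \Del^n \times \pa\Del^1 \emb \Del^n \times \Del^1$, together with the closure properties just established. Since $d^0_X \cc s^0_X = \id$, the section $s^0_X$ is a right inverse to an acyclic fibration. With this in hand I would define, following Remark~\ref{rmk:no-weq}, a morphism $f$ to be a weak equivalence precisely when the fibration $p_f \maps X_\bl \times_{Y_\bl} Y^I_\bl \to Y_\bl$ is a hypercover; its companion leg is automatically a hypercover, being the base change of the acyclic fibration $d^1_Y$, so the construction is internally consistent and functorial.

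The main obstacle is CFO axiom~(2): proving that this class of weak equivalences satisfies $2$-out-of-$3$. This is precisely Brown's factorization lemma transported to the present setting, and it is where the genuine work lies. The argument runs by using functoriality of the path-object factorization to compare the factorizations of $f$, $g$, and $g\cc f$, repeatedly invoking that hypercovers are stable under base change and composition, together with the consistency statement that a Kan fibration is a weak equivalence if and only if it is a hypercover. Once $2$-out-of-$3$ is secured, axioms~(3)--(5) follow from the closure and path-object results above --- in particular $s^0_X$ is a weak equivalence because $d^0_X$ is an acyclic fibration with $d^0_X\cc s^0_X=\id$ --- completing the verification that $(\Gpd_\infty(\cD),\text{Kan fib},\text{hypercover})$ is a category of fibrant objects as in \cite{BG}. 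I expect the combinatorial lemma that $(d^0_X,d^1_X)$ is a Kan fibration and the $2$-out-of-$3$ property to be the two genuinely nontrivial steps; everything else is bookkeeping with the descent axioms and left exactness of matching objects.
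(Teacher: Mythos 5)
First, a point of comparison: the paper does not prove this statement at all --- it is imported verbatim from Behrend--Getzler (\cite{BG}, Thm.\ 3.6), and the only in-paper content is the citation together with Remark \ref{rmk:no-weq} explaining that the weak equivalences are determined by the hypercovers via the functorial factorization. Your outline is a faithful reconstruction of the Behrend--Getzler strategy --- take Kan fibrations and hypercovers as primitive, define weak equivalences through the path-object factorization, and reduce everything to closure properties of covers --- and the parts you actually carry out (closure of both classes under composition and base change, via left exactness of $\hom(S,-)$ and descent axioms \ref{ax3} and \ref{ax4}) are correct.

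As a proof, however, the proposal has genuine gaps at exactly the two points you defer. (1) The claim that $(d^0_X,d^1_X)\maps X^I_\bl \to X_\bl\times X_\bl$ is a Kan fibration and that each $d^i_X$ is a hypercover cannot be settled by ``anodyne/pushout-product combinatorics'' in the usual sense: in a descent category there are no lifting properties available, only the axioms that covers are closed under base change and left cancellation. One must show directly that the relevant relative matching maps --- $X^{\Del^n\times\Del^1}\to X^S$ for $S=(\Lam^n_k\times\Del^1)\cup(\Del^n\times\pa\Del^1)$ and $S=(\pa\Del^n\times\Del^1)\cup(\Del^n\times\{i\})$ --- are composites of base changes of the structural covers $X_m\to X^{\Lam^m_j}$, which requires exhibiting the corresponding simplicial inclusions as expansions (iterated horn attachments); this prism-decomposition combinatorics occupies a substantial part of \cite{BG} and is the same collapsibility mechanism isolated in Lemma \ref{lem:mono-collapse}. (2) Two-out-of-three is not ``Brown's factorization lemma transported'': Brown's lemma presupposes a CFO and produces the factorization, whereas here the weak equivalences are \emph{defined} by the factorization and two-out-of-three must be proved from scratch. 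A naive comparison of the factorizations of $f$, $g$ and $g\cc f$ does not close up, because $p_{g\cc f}$ is not a base change or composite of $p_f$ and $p_g$ in any evident way; one needs auxiliary cancellation lemmas for hypercovers among Kan fibrations (e.g.\ that a Kan fibration admitting a section which is a weak equivalence is a hypercover) to run the argument. Until these two items are supplied, what you have is a correct plan rather than a proof; since the paper itself delegates both to \cite{BG}, the honest conclusion is that your proposal reproduces the cited proof's architecture while leaving its load-bearing steps as black boxes.
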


\begin{remark}\label{rmk:path-obj}
The functorial path objects appearing in Thm.\ \ref{thm:descent-CFO} are constructed via Prop.\ \ref{prop:path-obj}. 
% as given in , is the internal analog of the familiar one used for Kan simplicial sets. Namely, for $X_\bl\in \Gpd_\infty(\cD)$, we have $\Path(X)_n:=\hom(\Delta^n \times \Delta^1,X_\bl)$.  % As shown in the proof of \cite[Prop.\ 7.2]{RZ}, this path object can be exhibited level-wise by taking iterated pullbacks along covers, namely the face maps of the $\infty$-groupoid $X$.    
\end{remark}

Turning now to the $\infty$-groupoid objects in a category with covers,  a slightly weaker homotopical structure is needed, due to the lack of finite limits.

\begin{definition}[Def.\ 2.1 \cite{RZ}] \label{def:icfo}
Let $\cC$ be a category with finite products and terminal object $\ast
\in \cC$ equipped with two wide subcategories: 
\df{weak equivalences} and \df{fibrations}. A morphism which
is both a weak equivalence and a fibration is called an
\df{acyclic fibration}. We say $\cC$ is an
{\bf incomplete category of fibrant objects (iCFO)} if:
\begin{enumerate}
\item\label{it:icfo-1}{Every isomorphism in $\cC$ is an acyclic fibration.}

\item\label{it:icfo-2}{The class of weak equivalences satisfies  ``2 out of 3''.}

\item \label{it:icfo-3}{If the pullback of a fibration {\it exists}, then it is a fibration.}

\item\label{it:icfo-4}{The pullback of {\it any} acyclic fibration exists, and is an acyclic fibration.}

\item\label{it:icfo-5}{For any object $X \in \cC$ there exists a (not necessarily
    functorial) path object. }

\item\label{it:icfo-6}{For all $X \in \cC$ the unique map $ X \to \ast$ is a fibration.}
\end{enumerate}
\end{definition}

\begin{remark}\label{rmk:icfo-good}
An iCFO retains many of the good homotopical properties of a CFO. In particular, when the iCFO has functorial path objects, the factorization properties 
described in Rmk.\ \ref{rmk:no-weq} for CFOs also hold for in an iCFO.  We refer the reader to \cite[Sec.\ 2]{RZ} for additional details.
\end{remark}

\begin{definition}[Def.\ 2.8 \cite{RZ}] \label{def:iCFO-exact}
Let $\cC$ be a iCFO and $S \sse \cC$ a subcategory of fibrations containing all 
acyclic fibrations and morphisms to the terminal object. A functor $F \maps \cC \to \cC'$ between iCFOs is \df{exact with respect to $S$} if: 
\begin{enumerate}
\item $F$ preserves the terminal object and acyclic fibrations, 
\item $F$ maps every fibration in $S$ to a fibration in $\cC'$, and
\item any pullback square in $\cC$ of the form
\[
\begin{tikzpicture}[descr/.style={fill=white,inner sep=2.5pt},baseline=(current  bounding  box.center)]
\matrix (m) [matrix of math nodes, row sep=2em,column sep=3em,
  ampersand replacement=\&]
  {  
P \& X \\
Z \& Y \\
}
; 
  \path[->,font=\scriptsize] 
   (m-1-1) edge node[auto] {$$} (m-1-2)
   (m-1-1) edge node[auto,swap] {$$} (m-2-1)
   (m-1-2) edge node[auto] {$f$} (m-2-2)
   (m-2-1) edge node[auto] {$$} (m-2-2)
  ;

%begin pullback symbol%
  \begin{scope}[shift=($(m-1-1)!.4!(m-2-2)$)]
  \draw +(-0.25,0) -- +(0,0)  -- +(0,0.25);
  \end{scope}
  %end pullback symbol%
\end{tikzpicture}
\]
with $f$ a fibration in $S$, is mapped by $F$ to a pullback square in $\cC'$.
\end{enumerate}
For the case when $S$ consists of all fibrations in $\cC$,  we simply say $F$ is \df{exact}.
\end{definition}
As in the CFO case, functors between iCFOs which are exact with respect to a class of fibrations necessarily preserve finite products and weak equivalences \cite[Lem.\ 2.9]{RZ}.

\subsubsection{iCFOs and exact functors via geometric functors} \label{sec:icfo-geo-func} 

The framework of geometric functors \eqref{def:geo-func} provides a useful construction of iCFOs and exact functors. In what follows, we equip the category of $\infty$-groupoid objects in a descent category $\cD$ with the CFO structure given in Thm.\ \ref{thm:descent-CFO}.

\begin{proposition}\label{prop:icfo-exist}
Let $F \maps (\cC,\SC) \to (\cD,\SD)$ be a geometric functor that reflects covers. Define $f \maps X_\bl \to Y_\bl$ in $\Gpd_\infty(\cC)$ to be a weak equivalence if and only if $F_\bl(f)$ is a weak equivalence in $\Gpd_{\infty}(\cD)$.
Define $f \maps X_\bl \to Y_\bl$ to be a fibration if and only if it is Kan fibration. Then the category $\Gpd_\infty(\cC)$ admits the structure of an iCFO 
with weak equivalences and fibrations as above, and with functorial path objects given by Prop.\ \ref{prop:path-obj}.  Moreover, the acyclic fibrations in $\Gpd_\infty(\cC)$ are exactly the hypercovers.
\end{proposition}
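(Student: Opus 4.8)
The plan is to verify the six axioms of Def.\ \ref{def:icfo} by transferring the Behrend--Getzler CFO structure on $\Gpd_\infty(\cD)$ (Thm.\ \ref{thm:descent-CFO}) along $F_\bl$, leaning on Prop.\ \ref{prop:geo-func} and Lemma \ref{lem:life-saver} throughout. The first thing I would do is identify the acyclic fibrations, since this is both part of the assertion and needed for axiom \eqref{it:icfo-4}. If $f$ is a hypercover, then it is in particular a Kan fibration, and $F_\bl(f)$ is a hypercover in $\cD$ by Prop.\ \ref{prop:geo-func}\eqref{it:gf-2}, hence an acyclic fibration in the CFO $\Gpd_\infty(\cD)$ and therefore a weak equivalence; so $f$ is an acyclic fibration. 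Conversely, if $f$ is a Kan fibration with $F_\bl(f)$ a weak equivalence, then $F_\bl(f)$ is simultaneously a Kan fibration (Prop.\ \ref{prop:geo-func}\eqref{it:gf-1}) and a weak equivalence, hence a hypercover; since $F$ reflects covers, $F_\bl$ reflects hypercovers (Prop.\ \ref{prop:geo-func}\eqref{it:gf-2}), so $f$ is a hypercover.

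The formal axioms are then immediate. An isomorphism is a Kan fibration whose image under $F_\bl$ is an isomorphism, hence a weak equivalence, giving axiom \eqref{it:icfo-1}; axiom \eqref{it:icfo-2} (two-out-of-three) follows from functoriality of $F_\bl$ together with two-out-of-three in $\Gpd_\infty(\cD)$; and axiom \eqref{it:icfo-6} holds because $X_\bl \to \ast$ is a Kan fibration by the definition of an $\infty$-groupoid object. For axiom \eqref{it:icfo-3}, if the pullback of a Kan fibration happens to exist in $\Gpd_\infty(\cC)$, then its base change along the other leg is a Kan fibration by Lemma \ref{lem:life-saver}\eqref{it:life-saver2}. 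For axiom \eqref{it:icfo-4}, having identified the acyclic fibrations with the hypercovers, Lemma \ref{lem:life-saver}\eqref{it:life-saver4} supplies both the existence of the pullback of a hypercover and the fact that the base change is again a hypercover, i.e.\ an acyclic fibration.

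The delicate point, and the main obstacle, is the path-object axiom \eqref{it:icfo-5}. I would take $X^I_\bl$ from Prop.\ \ref{prop:path-obj}, which functorially supplies the factorization $X_\bl \xto{s^0_X} X^I_\bl \xto{(d^0_X,d^1_X)} X_\bl \times X_\bl$ of the diagonal. The key observation is that Prop.\ \ref{prop:path-obj} presents $X^I_\bl$ level-wise as an iterated pullback of covers, and a geometric functor preserves such pullbacks; hence there is a natural isomorphism $F_\bl(X^I) \cong F_\bl(X)^I$ intertwining $s^0$ and $(d^0,d^1)$ with the path-object structure maps of $F_\bl(X)$ in $\Gpd_\infty(\cD)$ (using that $F_\bl$ also preserves the product $X_\bl \times X_\bl$). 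Consequently $F_\bl(s^0_X)$ is the weak equivalence $s^0$ of that CFO path object, so $s^0_X$ is a weak equivalence by our definition, and $F_\bl(d^0_X,d^1_X)$ is the Kan fibration $(d_0,d_1)$, so $(d^0_X,d^1_X)$ is a Kan fibration because $F_\bl$ reflects them (Prop.\ \ref{prop:geo-func}\eqref{it:gf-1}). Since the construction is functorial in $X_\bl$, this yields the functorial path objects claimed.

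The one hazard to watch is precisely the compatibility $F_\bl(X^I) \cong F_\bl(X)^I$: one must confirm that every pullback appearing in the iterated description of Prop.\ \ref{prop:path-obj} is taken along a cover and that the identification is natural, so that Prop.\ \ref{prop:geo-func}'s preservation of pullbacks of covers applies level-wise and assembles into a simplicial isomorphism compatible with the structure maps. Everything else is a formal consequence of the transfer along $F_\bl$.
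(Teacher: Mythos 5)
Your proposal is correct and follows essentially the same route as the paper: identify the acyclic fibrations with the hypercovers via Prop.\ \ref{prop:geo-func}(\ref{it:gf-1})--(\ref{it:gf-2}), dispatch the pullback axioms with Lemma \ref{lem:life-saver}(\ref{it:life-saver2}) and (\ref{it:life-saver4}), and transfer the path-object factorization by using that a geometric functor preserves the iterated pullbacks of covers in Prop.\ \ref{prop:path-obj}, so that $F_\bl(X^I)\cong F_\bl(X)^I$ and reflection of Kan fibrations finishes the argument. The ``hazard'' you flag at the end is exactly the point the paper also addresses, via the naturality of $s^0_{(-)}$ and $(d^0_{(-)},d^1_{(-)})$.
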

\begin{proof}
We begin by proving the last assertion. Statements \eqref{it:gf-1} and \eqref{it:gf-2} of Prop.\ \ref{prop:geo-func} imply that $F_\bl \maps \Gpd_\infty(\cC) \to \Gpd_\infty(\cD)$ preserves and reflects Kan fibrations and hypercovers. Hence, $f \maps X_\bl \to Y_\bl$ is both a Kan fibration and weak equivalence if and only if $F_\bl(f)$ is a hypercover if and only if $f$ is a hypercover. 

Axioms \eqref{it:icfo-3} and \eqref{it:icfo-4} of Def.\ \ref{def:icfo} follow
from statements \eqref{it:life-saver2} and \eqref{it:life-saver4} of Lemma \ref{lem:life-saver}.  It remains to verify Axiom \eqref{it:icfo-5}, the existence of path objects. 
Let $X_\bl \in \Gpd_{\infty}(\cC)$. Since $F$ preserves pullbacks along covers, statement \eqref{it:path-1} of Prop.\ \ref{prop:path-obj} implies 
\[
F\bigl(\hom(\Delta^n \times \Delta^1,X_\bl) \bigr) \cong  \hom(\Delta^n \times \Delta^1,F_\bl(X)),
\]
for all $n \geq 0$. The naturality of the morphisms $s^0_{(-)}$, $d^0_{(-)}$, and $d^1_{(-)}$ defined in statement \eqref{it:path-2} of Prop.\ \ref{prop:path-obj} imply that $F_\bl(s^0_{X})$ and $F_\bl(d^0_{X},d^1_{X})$ differ from $s^0_{F_\bl(X)}$ and $(d^0_{F_\bl(X)},d^1_{F_\bl(X)})$, respectively, via conjugation by isomorphisms. By Rmk.\ \ref{rmk:path-obj}, $F_\bl(X)^{I}$ is a path object for $F_\bl(X)$. Therefore, $s^0_{F_\bl(X)}$ is a weak equivalence, and $(d^0_{F_\bl(X)},d^1_{F_\bl(X)})$ is a Kan fibration. Hence, $s^0_{X}$ is weak equivalence in $\Gpd_\infty(\cC)$, by definition, and $(d^0_{X},d^1_{X})$ is a Kan fibration, by Prop.\ \ref{prop:geo-func}.
\end{proof}

In analogy with the descent category context, if $\cC$ is a category with covers, we call the iCFO structure featured in Prop.\ \ref{prop:icfo-exist} -- whenever it exists -- the \df{canonical iCFO structure on $\infty$-groupoids}.

\begin{remark}\label{rmk:no-weq-gpd}
The canonical iCFO structure, if it exists, is uniquely determined and independent of the geometric functor $F \maps \cC \to \cD$ used in Prop.\ \ref{prop:icfo-exist}. In more detail,
it follows from Rmk.\ \ref{rmk:icfo-good} and Rmk.\ \ref{rmk:no-weq} that
the weak equivalences for the canonical iCFO structure on $\Gpd_\infty(\cC)$ 
are completely determined by the hypercovers. Hence, the functor $F$ is no longer needed once existence has been established.
\end{remark}

\begin{proposition} \label{prop:icfo-exact-geo}
Let $(\cC,\SC)$ be a category with covers. Suppose that the category $\Gpd_\infty(\cC)$ admits the canonical iCFO structure. Let $F \maps (\cC,\SC) \to (\cD,\SD)$ be any geometric functor. 
\begin{enumerate}
\item \label{it:exact-cover} $F_\bl \maps \Gpd_\infty(\cC) \to \Gpd_\infty(\cD)$ is an exact functor with respect to the class of covering fibrations. 
Moreover, $F_\bl$ preserves all fibrations, and hence path objects.
\item\label{it:exact-reflect}  If $F$ reflects covers, then $F_\bl$ reflects all fibrations and weak equivalences.

\item \label{it:exact-exact} If $F$ preserves finite limits, then $F_\bl$ is exact. 
\end{enumerate}
\end{proposition}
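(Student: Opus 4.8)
The plan is to check, statement by statement, the conditions of Definition \ref{def:iCFO-exact}, using Proposition \ref{prop:geo-func} to control the simplicial behaviour of $F_\bl$ and Remark \ref{rmk:no-weq-gpd} to pin down the weak equivalences.

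For statement \eqref{it:exact-cover} I would take $S$ to be the class of covering fibrations. This is an admissible choice of $S$ in Definition \ref{def:iCFO-exact}, since the covering fibrations form a subcategory of the Kan fibrations and, by the Remark following Lemma \ref{lem:life-saver}, they contain all hypercovers — hence all acyclic fibrations and isomorphisms — as well as every terminal morphism $X_\bl \to \ast$. The three required properties then fall out of Proposition \ref{prop:geo-func}: $F_\bl$ preserves the terminal object because a geometric functor preserves finite products (the empty one included); it sends acyclic fibrations to acyclic fibrations because these are exactly the hypercovers and $F_\bl$ preserves hypercovers by Proposition \ref{prop:geo-func}\eqref{it:gf-2}; it sends covering fibrations to fibrations by Proposition \ref{prop:geo-func}\eqref{it:gf-1}; and it carries pullback squares along covering fibrations to pullback squares by Proposition \ref{prop:geo-func}(3). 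For the ``moreover'' clause, the first assertion of Proposition \ref{prop:geo-func}\eqref{it:gf-1} shows $F_\bl$ takes every Kan fibration to a Kan fibration, and the computation already used in the proof of Proposition \ref{prop:icfo-exist} yields a natural isomorphism $F_\bl(X^I) \cong F_\bl(X)^I$ compatible with the structure maps $s^0$ and $(d^0,d^1)$; hence $F_\bl$ carries the functorial path object of $X$ to that of $F_\bl(X)$.

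For statement \eqref{it:exact-reflect}, reflection of fibrations is immediate: when $F$ reflects covers, Proposition \ref{prop:geo-func}\eqref{it:gf-1} says $F_\bl$ reflects Kan fibrations. Reflection of weak equivalences is the one genuinely delicate point, since weak equivalences are not defined by an elementary cover condition. The clean route is Remark \ref{rmk:no-weq-gpd}: the canonical iCFO structure is independent of the cover-reflecting geometric functor used to produce it, so the given $F$ — which reflects covers — also exhibits this very structure via Proposition \ref{prop:icfo-exist}, and in that presentation $f$ is a weak equivalence if and only if $F_\bl(f)$ is one; in particular $F_\bl$ reflects weak equivalences. (One could instead argue directly: $f$ is a weak equivalence if and only if the fibration leg $p_f$ of the Brown factorization \eqref{eq:factor} is a hypercover; $F_\bl$ preserves this factorization because it preserves the functorial path object and the pullback along the acyclic fibration appearing there; and $F_\bl$ reflects hypercovers by Proposition \ref{prop:geo-func}\eqref{it:gf-2}.)

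Finally, for statement \eqref{it:exact-exact} the preservation of the terminal object, of acyclic fibrations, and of all fibrations has already been established in \eqref{it:exact-cover} and holds for any geometric functor, so the only new requirement is that $F_\bl$ send a pullback square along an \emph{arbitrary} Kan fibration to a pullback square. This is exactly where the hypothesis that $F$ preserves finite limits is needed, and it is the only place the argument could break down for a general geometric functor (which sees only pullbacks of covers). When such a pullback $X_\bl \times_{Z_\bl} Y_\bl$ exists in $\Gpd_\infty(\cC)$ it is computed levelwise (Lemma \ref{lem:life-saver} and \cite[Thm.\ 2.17(4)]{W}), so each $X_n \times_{Z_n} Y_n$ exists in $\cC$; since $F$ preserves finite limits it preserves each of these, giving $F_\bl(X_\bl \times_{Z_\bl} Y_\bl)_n \cong F(X)_n \times_{F(Z)_n} F(Y)_n$. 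Because $\cD$ has all finite limits and $F_\bl(g)$ is again a Kan fibration, this levelwise pullback is precisely the pullback in $\Gpd_\infty(\cD)$, which establishes exactness.
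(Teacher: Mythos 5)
Your proposal is correct and takes essentially the same route as the paper: parts (1) and (3) are exactly the unpacking of Proposition \ref{prop:geo-func} that the paper leaves implicit, and for part (2) your parenthetical argument (Brown factorization \eqref{eq:factor} plus reflection of hypercovers) is precisely the paper's proof, while your primary argument via the uniqueness of the canonical iCFO structure (Rmk.\ \ref{rmk:no-weq-gpd}) is an equally valid variant. Only trivial nit: the remark stating that covering fibrations contain all hypercovers and terminal morphisms \emph{precedes} Lemma \ref{lem:life-saver} rather than following it.
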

\begin{proof}
\mbox{}
\begin{enumerate}

\item All assertions follow from Prop.\ \ref{prop:geo-func}.

\item If $F$ reflects covers, then $F_\bl$ reflects hypercovers. Hence, by applying the factorization \eqref{eq:factor} to a morphism $f$ in $\Gpd_\infty(\cC)$, we conclude that $F_\bl(f)$ is a weak equivalence if and only if $f$ is.

\item Clear.   
 
\end{enumerate}
\end{proof}

\subsubsection{The canonical iCFO structure on pointed Lie $\infty$-groupoids} \label{sec:canonical-iCFO}
The above framework provides a concise construction of the canonical iCFO structure on the category of pointed finite-dimensional Lie $\infty$-groups, i.e.\ $\Gpd_{\infty}(\Mfd_\ast)$. As a byproduct, we will show that the iCFO structure on Lie $\infty$-groupoids constructed ``by hand'' in \cite[Thm.\ 7.1]{RZ} also arises  via the more conceptual approach featured in Prop.\ \ref{prop:icfo-exist}.  

Since every cover in a category with covers $(\cC,\SC)$ is an effective epimorphism, the Yoneda embedding lands in the category of sheaves $\Sh(\cC)$ with respect to the Grothendieck topology induced by $\SC$. 
Recall that the subcategory of epimorphisms $\cS_{epi}$ gives $\Sh(\cC)$ the structure of a descent category. We begin by placing a descent category structure on $\Sh(\cC)$ which is more fine-grained than $(\Sh(\cC),\cS_{epi})$.

\begin{definition} \label{def:mixed}
Let $(\cC,\SC)$ be a category with covers. A morphism $\ph \maps F \to G$ in $\Sh(\cC)$ is a \df{mixed cover} if for all $X \in \cC$ and each point $X \to G$, there exists an epimorphism of sheaves $\ti{F} \xto{f} F$ such that the morphism $\pr_X$ in the pullback diagram
\[
\begin{tikzdiag}{2}{2}
{
X \times_G \ti{F}\& \ti{F}  \\
\&   F \\
X \&   G \\
};

\path[->,font=\scriptsize]
(m-1-1) edge node[auto] {$$} (m-1-2)
(m-1-1) edge node[auto,swap] {$\pr_X$} (m-3-1)
(m-1-2) edge node[auto] {$f$} (m-2-2)
(m-2-2) edge node[auto] {$\ph$} (m-3-2)
(m-3-1) edge node[auto] {$$} (m-3-2)
;
\begin{scope}[shift=($(m-1-1)!.4!(m-2-2)$)]
\draw +(-0.25,0) -- +(0,0) -- + (0,0.25);
\end{scope}
\end{tikzdiag}
\]  
is represented by a cover in $(\cC,\SC)$. We denote by $\cS_{\mix} \sse \Sh(\cC)$
the subcategory of mixed covers.
\end{definition}

\begin{proposition} \label{prop:mix-desc}
$(\Sh(\cC),\cS_{\mix})$ is a descent category, and the Yoneda embedding defines a geometric functor $\yo \maps (\cC,\SC) \to (\Sh(\cC),\cS_{\mix})$. 
\end{proposition}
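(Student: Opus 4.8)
The plan is to verify the five axioms of a descent category (Def.~\ref{def:descent}) for the subcategory $\cS_{\mix} \sse \Sh(\cC)$, and then to check that $\yo$ satisfies the two conditions of Def.~\ref{def:geo-func}. The structural fact I would exploit throughout is that $\Sh(\cC)$, being the category of sheaves on the site $(\cC,\SC)$, is a Grothendieck topos. This immediately supplies axiom~\ref{ax1}: $\Sh(\cC)$ has all finite (indeed small) limits. It also supplies the tool needed for axiom~\ref{ax5}, since in any topos every epimorphism is an effective epimorphism. Hence the whole problem reduces to understanding mixed covers as epimorphisms and to tracking a few pullbacks.

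The first lemma I would isolate is that \emph{every mixed cover is an epimorphism of sheaves}. To prove it I would use the local-surjectivity criterion: $\ph \maps F \to G$ is epic in $\Sh(\cC)$ iff for every $X \in \cC$ and every point $X \to G$ there is a covering family of $X$ along which the section lifts to $F$. Given such a point, the defining property of a mixed cover (Def.~\ref{def:mixed}) produces an epimorphism $\ti F \to F$ for which the projection $\pr_X \maps X \times_G \ti F \to X$ is represented by a cover $c \maps U \to X$; the canonical map $X \times_G \ti F \to \ti F \to F$ then exhibits the restriction of the point along $c$ as a lift to $F$. As $\{c\}$ is a covering family, $\ph$ is locally surjective, hence epic, hence an effective epimorphism by the topos fact; this settles axiom~\ref{ax5}. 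Axiom~\ref{ax2} I would dispatch directly: for $\id_\ast \maps \ast \to \ast$ take $\ti F = \ast$ with $f = \id$, so that $\pr_X$ is $\id_{\yo(X)} = \yo(\id_X)$, and $\id_X$ is a cover in $\cC$ because it is the base change of the cover $\id_\ast$ (a cover by axiom~\ref{axC1} with $X = \ast$) along $X \to \ast$.

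For axiom~\ref{ax3} (stability under base change) I would take a mixed cover $\ph \maps F \to G$ and an arbitrary $G' \to G$, and test $F \times_G G' \to G'$ against a point $X \to G'$. Composing with $G' \to G$ gives a point of $G$; the mixed-cover datum $\ti F \to F$ pulls back to $\ti F \times_G G' \to F \times_G G'$, which is again epic because epimorphisms are stable under base change in a topos. A short computation of iterated pullbacks identifies $X \times_{G'} (\ti F \times_G G')$ with $X \times_G \ti F$, so the relevant projection is again the cover produced for $G$, whence $F \times_G G' \to G'$ is a mixed cover. For axiom~\ref{ax4}, given mixed covers $f \maps F \to G$ and $g \circ f \maps F \to H$, I would show that $g \maps G \to H$ is a mixed cover by, for each point $X \to H$, reusing the epimorphism $\ti F \to F$ supplied by $g \circ f$ but now viewing it as an epimorphism $\ti F \to F \xto{f} G$ (a composite of epimorphisms, $f$ being epic by the first lemma); since $\ti F \to G \to H$ agrees with $\ti F \to F \xto{g \circ f} H$, the same pullback $X \times_H \ti F \to X$ is a cover.

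Finally, to see that $\yo$ is geometric I would verify the two conditions of Def.~\ref{def:geo-func}. That $\yo$ preserves pullbacks of covers is immediate, since the Yoneda embedding preserves all limits existing in $\cC$ and the full subcategory $\Sh(\cC) \sse \PSh(\cC)$ is closed under limits. That $\yo$ sends a cover $c \maps X \to Y$ to a mixed cover follows by taking $\ti F = \yo(X)$ with $f = \id$: for any point $Z \to \yo(Y)$, i.e.\ a morphism $Z \to Y$ in $\cC$, the pullback $Z \times_Y \yo(X) \cong \yo(Z \times_Y X)$ exists (pullbacks of covers exist in $\cC$) and its projection to $\yo(Z)$ is $\yo$ of the base change of $c$, which is a cover. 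I expect the main obstacle to be the bookkeeping in axiom~\ref{ax3}: one must check carefully that the iterated sheaf pullback $X \times_{G'} (\ti F \times_G G')$ really collapses to $X \times_G \ti F$ and that base-changing the epimorphism $\ti F \to F$ is compatible with the point $X \to G'$. The epimorphism lemma, together with the topos-theoretic stability of epimorphisms under base change, is exactly what renders this routine rather than delicate, so establishing that lemma first is the crux of the argument.
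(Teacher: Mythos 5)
Your proposal is correct and follows essentially the same route as the paper: reduce everything to the topos structure of $\Sh(\cC)$ (finite limits, effective epimorphisms), the key observation that every mixed cover is an epimorphism, the pasting lemma for pullbacks to get stability under base change, and epimorphism cancellation for axiom \ref{ax4}; the paper merely asserts the epimorphism lemma that you spell out via local surjectivity. The verification that $\yo$ is geometric, by taking $\ti F = \yo(X)$ with $f = \id$ and using that Yoneda preserves the pullbacks that exist in $\cC$, is likewise the paper's argument.
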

\begin{proof}
Every cover in $\SC$ is a mixed cover between the corresponding representable functors. In particular, axiom \ref{ax2} of Def.\ \ref{def:descent} is satisfied. Furthermore, we observe that every mixed cover is an epimorphism. Combining this with the pasting lemma for pullback squares, we then deduce that axiom \ref{ax3} is satisfied. For axiom \ref{ax4}, if $f$ and $g\circ f$ are in $\cS_{\mix}$, then $g$ is as well, since $f$ is an epimorphism. Finally, axiom \ref{ax5} is satisfied, since every epimorphism in a topos is effective.    
\end{proof}

\begin{remark}\label{rmk:mix}
As already mentioned, every mixed cover is an epimorphism of sheaves, and every cover in $\SC$ is a mixed cover. However, in geometrically interesting cases, an epimorphism between representable sheaves need not be a cover in $\SC$, e.g.\ \cite[Rmk.\ 6.10]{RZ}. This explains our rationale for introducing the descent category $(\Sh(\cC),\cS_{\mix})$ and the terminology ``mixed''. Furthermore, every surjective submersion of sheaves in the sense of \cite[Def.\ 5.7]{Lie3} is a mixed cover, and every ``local stalkwise cover'' in the sense of \cite[Def.\ 6.4]{RZ} is a mixed cover.   
\end{remark}
\begin{question}
$\cS_\mix$ is the class of covers for a descent category which contains $\yo(\SC)$
and is contained in $\cS_{epi}$. It also satisfies left-cancellation with respect to all epimorphisms. What are the covers for the descent category that is universal with respect to these properties?
\end{question}

%(resp.\  $(\Mfd_\ast,{\cS_\ast}_{ss})$)
Recall from Sec.\ \ref{sec:geo-func-ex} that 
$(\Mfd,\cS_{ss})$  denotes the category with covers whose objects are finite-dimensional manifolds with surjective submersions as covers.

\begin{lemma}\label{lem:surj-sub}
The geometric functor $\yo \maps (\Mfd,\cS_{ss}) \to (\Sh(\Mfd),\cS_{\mix})$
reflects covers.
\end{lemma}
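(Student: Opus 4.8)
The plan is to verify that a smooth map $f \maps X \to Y$ with $\yo(f) \in \cS_{\mix}$ is a surjective submersion by feeding a single, judiciously chosen point into Definition \ref{def:mixed}: the identity $\id_Y$, regarded as a point $\yo(Y) \to \yo(Y)$ of $G = \yo(Y)$. Since $\yo$ is already known to be geometric by Prop.\ \ref{prop:mix-desc}, only the reflection direction requires proof, and I expect all the content to be extractable from this one instance of the mixed-cover condition.

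First I would unwind the definition at this point. Because we pull back along $\id_{\yo Y}$, the fiber product $\yo(Y) \times_{\yo Y} \ti{F}$ is canonically isomorphic to $\ti{F}$ itself, and under this identification the projection $\pr_X$ becomes the structure map $\ti{F} \to \yo(Y)$, which by construction is the composite $\ti{F} \to \yo(X) \xto{\yo(f)} \yo(Y)$. The mixed-cover hypothesis asserts that this projection is represented by a cover in $(\Mfd,\cS_{ss})$; hence $\ti{F} \cong \yo(Z)$ is representable for some manifold $Z$, and the structure map is $\yo(s)$ for a surjective submersion $s \maps Z \to Y$. Writing $h \maps Z \to X$ for the smooth map corresponding under Yoneda to the epimorphism $\ti{F} = \yo(Z) \to \yo(X)$, I obtain $s = f \circ h$ with $s$ a surjective submersion and $\yo(h)$ an epimorphism of sheaves.

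Next I would run the differential-geometric core. Because $s = f \circ h$ is a submersion, at every $z \in Z$ the differential $ds_z = df_{h(z)} \circ dh_z$ is surjective; since a composite of linear maps is surjective only if its final factor is, $df_{h(z)}$ is surjective for every $z$, so $f$ is a submersion at every point of $\im(h)$. To promote this to all of $X$, I would use that $\yo(h)$ is an epimorphism of sheaves for the surjective-submersion topology: this means $h$ is locally split over a cover, i.e.\ there is a surjective submersion $U \to X$ factoring through $h$, which forces $h$ to be surjective. Hence $\im(h) = X$ and $df_x$ is surjective for all $x \in X$, so $f$ is a submersion; surjectivity of $f$ is then immediate from $s = f \circ h$ being surjective, giving $f \in \cS_{ss}$.

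The main obstacle is the passage between the sheaf-theoretic and manifold-theoretic languages: concretely, correctly identifying the pullback along the identity point, reading off the representing pair $(Z,s)$ from the phrase ``represented by a cover'', and translating ``$\ti{F} \to \yo(X)$ is an epimorphism of sheaves'' into the concrete statement that $h$ admits sections over a surjective submersion of $X$ (and hence is surjective) in the induced Grothendieck topology. Once these translations are in place, the remaining argument — that a surjective composite of differentials forces the last factor to be surjective — is routine.
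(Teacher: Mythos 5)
Your proof is correct, but it takes a genuinely different route from the paper's. The paper tests the mixed-cover condition of Def.~\ref{def:mixed} at the point given by the candidate cover itself; this only produces a surjective submersion $Y \times_X F \to Y$ with $F \to Y$ an epimorphism of possibly non-representable sheaves, so the paper has to factor through $Y \times_X Y$, recognize the resulting map as a ``local stalkwise cover,'' and invoke the descent statement \cite[Prop.\ 6.12]{RZ} to pull the submersion property back to $\ph$ itself. You instead test the condition at the identity point of the codomain: since pulling back along $\id$ returns $\ti{F}$, the phrase ``represented by a cover'' forces $\ti{F} \cong \yo(Z)$ to be representable with structure map a surjective submersion $s$, and Yoneda gives the factorization $s = f \circ h$ with $\yo(h)$ an epimorphism; the endgame is then elementary linear algebra on differentials. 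This buys a self-contained argument that bypasses the external reference and the stalkwise machinery that Rmk.~\ref{rmk:RZ} is at pains to excuse. One step deserves a word of justification: the passage from ``$\yo(h)$ is an epimorphism of sheaves'' to ``a surjective submersion $U \to X$ factors through $h$.'' This is true here, but only because every covering sieve for the topology generated by the singleton pretopology $\cS_{ss}$ contains a single surjective submersion, so local liftability of $\id_X$ hands you the desired $U \to X$; if you only remember that covering sieves are jointly surjective families, you still get surjectivity of $h$, which is all your argument actually uses.
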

\begin{proof}
Suppose a morphism $\ph \maps Y \to X$ in $\Mfd$ is a mixed cover in $\Sh(\Mfd)$.
In particular, $\phi$ is an epimorphism.  Taking the pullback of $\phi$ along itself, we observe that the induced morphism of sheaves $Y \times_X Y \xto{\ph'} Y$ is an epimorphism. By definition of mixed cover, there exists an epimorphism of sheaves $f \maps F \to Y$ such that $\pr_X \maps Y \times_X F \to Y$ is a surjective submersion. Pasting vertical pullback squares gives a factorization of $\pr_X$ into two epimorphisms:
\[
Y \times_X F \xto{f'} Y \times_XY \xto{\ph'} Y.
\]
Therefore, we have exhibited a morphism $\ph'$ with representable codomain, and an epimorphism of sheaves $f'$ whose composition is a surjective submersion. In other words, $\ph'$ is a ``local stalkwise cover'', in the sense of \cite[Def.\ 6.4]{RZ}, 
that sits in a pullback square
\[
\begin{tikzdiag}{2}{2}
{
Y \times_XY\& Y \& \\
Y \& X  \& \\
};

\path[->,font=\scriptsize]
(m-1-1) edge node[auto,swap] {$\ph'$} (m-2-1)
(m-1-2) edge node[auto] {$\ph$} (m-2-2)
(m-1-1) edge node[auto] {$$} (m-1-2)
(m-2-1) edge node[auto] {$\ph$} (m-2-2)
;
\begin{scope}[shift=($(m-1-1)!.4!(m-2-2)$)]
\draw +(-0.25,0) -- +(0,0) -- + (0,0.25);
\end{scope}
\end{tikzdiag}
\]
whose horizontal arrows are epimorphisms (cf.\ \cite[Def.\ 6.5(2)]{RZ}). It then follows from \cite[Prop.\ 6.12]{RZ} that $\ph$ is a surjective submersion.  
\end{proof}

\begin{remark}\label{rmk:RZ}
Although the statement and proof of \cite[Prop.\ 6.12]{RZ} involve ``stalk-wise'' constructions and jointly conservative points for the category of sheaves $\Sh(\cat{Mfd})$, the essential ingredients needed for the proof of Lemma \ref{lem:surj-sub} can be all recast in terms of epimorphisms of sheaves. Arguments involving points or stalks are not required.  
\end{remark}

As in Example \ref{ex:CWC}, we have the pointed analogs: the category of pointed manifolds with covers $(\Mfd_\ast,{\cS_\ast}_{ss})$, and the descent category of pointed sheaves $(\Sh(\Mfd)_\ast,{\cS_\ast}_{\mix})$.
Let $\Gpd_\infty(\Sh(\Mfd))$ denote the category of $\infty$-groupoid objects in the descent category $(\Sh(\Mfd),\cS_{\mix})$ equipped with the canonical CFO structure. Let  $\Gpd_\infty(\Sh(\Mfd)_\ast)$ denote the analogous CFO of pointed $\infty$-groupoids.

We recover a strengthened version of \cite[Thm.\ 7.1]{RZ}. 

\begin{proposition}\label{prop:icfo-Mfd}
\mbox{}
\begin{enumerate}
\item The category of finite-dimensional Lie $\infty$-groupoids
$\LinfGpd:=\Gpd_\infty(\Mfd)$ and pointed Lie $\infty$-groupoids
\[
\LinfGpd_\ast:=\Gpd_\infty(\Mfd_\ast)
\]
 both admit the canonical iCFO structure. 

\item The forgetful functor 
\[
\LinfGpd_\ast \to \LinfGpd 
\]
and the Yoneda embedding 
\[
\LinfGpd \to \Gpd_\infty(\Sh(\Mfd))
\]
are exact functors.
\end{enumerate}
\end{proposition}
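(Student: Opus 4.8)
The plan is to realize both iCFO structures in statement (1) as instances of Prop.\ \ref{prop:icfo-exist}, and both functors in statement (2) as instances of the exactness criteria in Prop.\ \ref{prop:icfo-exact-geo}, by exhibiting in each case the appropriate geometric functor. For the unpointed half of (1) this is immediate: by Prop.\ \ref{prop:mix-desc} the Yoneda embedding $\yo \maps (\Mfd,\cS_{ss}) \to (\Sh(\Mfd),\cS_{\mix})$ is a geometric functor into a \emph{descent} category, and by Lemma \ref{lem:surj-sub} it reflects covers. Prop.\ \ref{prop:icfo-exist} then endows $\LinfGpd = \Gpd_\infty(\Mfd)$ with the canonical iCFO structure, in which the fibrations are the Kan fibrations and the acyclic fibrations are the hypercovers.

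For the pointed half I would pass to coslice categories via Example \ref{ex:CWC}. That example makes $(\Mfd_\ast,{\cS_\ast}_{ss})$ a category with covers and $(\Sh(\Mfd)_\ast,{\cS_\ast}_{\mix})$ a descent category, with covers in each detected by the forgetful functor down to the unpointed category. The pointed Yoneda embedding $\yo_\ast \maps (\Mfd_\ast,{\cS_\ast}_{ss}) \to (\Sh(\Mfd)_\ast,{\cS_\ast}_{\mix})$ sits in the evident square with these two forgetful functors and the unpointed $\yo$. Because both covers and the relevant finite limits in a coslice category are created on underlying objects, $\yo_\ast$ preserves covers and pullbacks of covers (since $\yo$ does), so it is geometric; and it reflects covers, since a pointed map is a cover exactly when its underlying map is, and $\yo$ reflects covers by Lemma \ref{lem:surj-sub}. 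Applying Prop.\ \ref{prop:icfo-exist} to $\yo_\ast$ then produces the canonical iCFO structure on $\LinfGpd_\ast = \Gpd_\infty(\Mfd_\ast)$.

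For statement (2), the Yoneda embedding $\LinfGpd \to \Gpd_\infty(\Sh(\Mfd))$ is precisely the induced functor $\yo_\bl$ for the geometric functor $\yo$ above, and $\Gpd_\infty(\Sh(\Mfd))$ carries the CFO structure of Thm.\ \ref{thm:descent-CFO}. Since the unpointed Yoneda embedding preserves every limit that exists in $\Mfd$ — in particular every pullback along a cover, and hence every pullback square occurring in $\Gpd_\infty(\Mfd)$ — the hypothesis of Prop.\ \ref{prop:icfo-exact-geo}\eqref{it:exact-exact} holds, upgrading the covering-fibration exactness of \eqref{it:exact-cover} to full exactness. For the forgetful functor $\LinfGpd_\ast \to \LinfGpd$ I would invoke the forgetful geometric functor $U \maps (\Mfd_\ast,{\cS_\ast}_{ss}) \to (\Mfd,\cS_{ss})$ of Cor.\ \ref{cor:geo-func-ex}(1), which reflects covers. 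The induced $U_\bl$ is the identity on underlying simplicial manifolds, so it preserves and reflects Kan fibrations, hypercovers, and weak equivalences; it preserves the terminal object; and pullbacks along Kan fibrations are computed on underlying manifolds, hence preserved. This gives exactness directly, matching the conclusion of Prop.\ \ref{prop:icfo-exact-geo}\eqref{it:exact-exact}, whose limit-preservation hypothesis for $U$ is the elementary fact that a coslice forgetful functor preserves products, the terminal object, and all existing pullbacks.

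The genuinely new content, and the step I expect to require the most care, is the pointed case: one must verify that the cover data and descent axioms on $(\Sh(\Mfd)_\ast,{\cS_\ast}_{\mix})$ really are \emph{created} by the forgetful functor to $(\Sh(\Mfd),\cS_{\mix})$, so that geometricity and the cover-reflection property of $\yo_\ast$ reduce cleanly to the already-established unpointed statements of Prop.\ \ref{prop:mix-desc} and Lemma \ref{lem:surj-sub}. A secondary point worth flagging is that for $U$ the codomain $(\Mfd,\cS_{ss})$ is a category with covers rather than a descent category, so Prop.\ \ref{prop:icfo-exact-geo} must be read in the form where source and target both carry the canonical iCFO structure established in statement (1); the direct verification above sidesteps any reliance on a descent-category codomain. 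Finally, by Remark \ref{rmk:no-weq-gpd} the resulting iCFO structures are independent of the chosen geometric functor, so no compatibility between the pointed and unpointed constructions needs to be checked separately.
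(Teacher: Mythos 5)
Your proposal is correct and follows essentially the same route as the paper: the paper's own proof is the one-line remark that everything follows from Prop.\ \ref{prop:icfo-exist} and Prop.\ \ref{prop:icfo-exact-geo}, with the real work already done in Prop.\ \ref{prop:mix-desc}, Lemma \ref{lem:surj-sub}, and the pointed analogues via Example \ref{ex:CWC} — exactly the ingredients you assemble. Your explicit attention to the coslice/cover-creation details and to the fact that the forgetful functor's codomain is only a category with covers is a fair fleshing-out of what the paper leaves implicit, not a departure from its argument.
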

\begin{proof}
All assertions follow directly from Prop.\ \ref{prop:icfo-exact-geo}.
\end{proof}

\subsection{Homotopy theory of formal $\infty$-groupoids}
We combine the results of the previous sections to construct exact functors between iCFOs of formal $\infty$-groupoids, Lie $\infty$-groupoids, and related categories.

Recall from Sec.\ \ref{sec:geo-func-ex} that $\FDVect$ admits the structure of a descent category with surjections as covers. By Thm.\ \ref{thm:descent-CFO}, $\Gpd_\infty(\FDVect)$ admits the canonical CFO structure. Clearly, we have    
an isomorphism of categories of fibrant objects 
\[
\Gpd_\infty(\FDVect) = s\FDVect. 
\]
Indeed, the canonical CFO structure is identical to the one induced by the Dold-Kan model structure on simplicial vector spaces. In particular, the weak equivalences are the usual simplicial weak homotopy equivalences. 

\begin{remark}\label{rmk:formal-tan}
For later reference, we recall that a morphism $V_\bl \to W_\bl$ in $\sFDVect$ is a weak equivalence if and only if 
$N_\ast(V) \to N_\ast(W)$ is a quasi-isomorphism in $\Chain$. It is a fibration if and only if    
$N_k(V) \to N_k(W)$ is surjective in degrees $k \geq 1$ \cite[Lem.\ III.2.11]{GJ}.
\end{remark}
% The following definition will be useful when we compare formal $\infty$-groups with Lie $\infty$-algebras.
% \begin{definition} \label{def:formal-tan}
% Let $f \maps C_\bl \to D_\bl$ in $\sconil$ be a morphism between simplicial coalgebras. Let $\prim_\bl \maps \sconil \to \sFDVect$ denote the primitives functor \eqref{eq:prim-adj} extended to simplicial objects. We say $f$ is a: 
% \begin{itemize}
% \item \df{tangential isomorphism} if $\prim_\bl(f)$ is a isomorphism,
% \item \df{tangential weak equivalence} if $\prim_\bl(f)$ is a weak equivalence,
% \item \df{tangential fibration} if $\prim_\bl(f)$ is a fibration.
% \end{itemize}
% \end{definition}

From Def.\ \ref{def:form-inf-gpd}, we recall that the category of formal manifolds admits the structure of a category with covers $(\conilsm, \cS^{\mathrm{mfd}}_{fs})$ with formal submersions as covers. 
\begin{theorem}\label{thm:formal-hmtpy}
\mbox{}
\begin{enumerate}

\item The category of formal $\infty$-groupoids $\FGpd := \Gpd_\infty(\conilsm)$ 
admits the canonical iCFO structure. 

\item A morphism $f \maps \cX_\bl \to \cY_\bl$ between formal $\infty$-groupoids is a weak equivalence (resp.\ fibration) if and only if $\prim_\bl(f)$ is a weak equivalence (resp.\ fibration) in $\sFDVect$. 

\item The primitives functor $\prim_\bl \maps \FGpd \to \sFDVect$ is an exact functor of iCFOs with respect to covering fibrations.

\item The point distributions functor \eqref{eq:dist} induces an exact functor 
with respect to covering fibrations
\begin{equation} \label{eq:Dist-Gpd}
\Dist_\bl \maps \LinfGpd_\ast \to \FGpd.
\end{equation}
Moreover, $\Dist_\bl$ preserves all fibrations and path objects.
\end{enumerate}
\end{theorem}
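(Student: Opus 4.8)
The plan is to read off statements (1)--(3) directly from the iCFO machinery of Sec.\ \ref{sec:icfo-geo-func} applied to the primitives functor, and then to obtain (4) by factoring $\Dist$ through the descent category $\conil$. For (1), I would apply Prop.\ \ref{prop:icfo-exist} to the geometric functor $\prim \maps (\conilsm,\cS^{\mathrm{mfd}}_{fs}) \to (\FDVect,\cS_{epi})$, which reflects covers by Cor.\ \ref{cor:geo-func-ex}(3). Since $(\FDVect,\cS_{epi})$ is a descent category, this equips $\FGpd=\Gpd_\infty(\conilsm)$ with the canonical iCFO structure, whose fibrations are the Kan fibrations, whose acyclic fibrations are the hypercovers, and in which, by construction, $f$ is a weak equivalence exactly when $\prim_\bl(f)$ is one in $\Gpd_\infty(\FDVect)=\sFDVect$. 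This is precisely the weak-equivalence half of (2).

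For the fibration half of (2), I would invoke Prop.\ \ref{prop:geo-func}(1): since $\prim$ reflects covers, $\prim_\bl$ both preserves and reflects Kan fibrations, and since (via Rmk.\ \ref{rmk:formal-tan}) the fibrations of $\sFDVect$ are the Kan fibrations there, this yields ``$f$ is a fibration iff $\prim_\bl(f)$ is''. Statement (3) is then immediate from Prop.\ \ref{prop:icfo-exact-geo}(1) applied to the same functor $\prim$: once $\FGpd$ carries its canonical iCFO structure, $\prim_\bl$ is automatically exact with respect to covering fibrations and preserves all fibrations and path objects.

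Statement (4) is where the real work lies, and the obstacle is that $\Dist$ takes values in the category with covers $\conilsm$ rather than in a descent category, so Prop.\ \ref{prop:icfo-exact-geo} does not apply on the nose. I would circumvent this by factoring through $\conil$. By Example \ref{ex:mfd-fsub}, $\Dist(X)$ is always a pointed formal manifold, so $\Dist$ factors through the full-subcategory inclusion $j \maps \conilsm \emb \conil$ as
\[
\Mfd_\ast \xto{\Dist} \conilsm \emb \conil,
\]
and by Cor.\ \ref{cor:geo-func-ex}(2) the composite $j\circ\Dist$ into the descent category $\conil$ is geometric. Hence Prop.\ \ref{prop:icfo-exact-geo}(1) makes $j_\bl\circ\Dist_\bl$ exact with respect to covering fibrations, preserving all fibrations and path objects. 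The inclusion $j$ is geometric and reflects covers (Cor.\ \ref{cor:geo-func-ex}(1)) and, being a full-subcategory inclusion, is fully faithful; thus $j_\bl$ reflects Kan fibrations and hypercovers (Prop.\ \ref{prop:geo-func}) and reflects isomorphisms.

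It then remains to transfer exactness from $j_\bl\circ\Dist_\bl$ down to $\Dist_\bl$ by a reflection argument. First, $\Dist_\bl$ preserves the terminal object, since $\Dist(\ast)=\kk$. Next, if $f$ is a covering fibration then $\Dist_\bl(f)$ is again one: it is a Kan fibration because $j_\bl\Dist_\bl(f)$ is and $j_\bl$ reflects Kan fibrations, while $\Dist(f_0)$ is a cover because $\Dist$ preserves covers (Example \ref{ex:mfd-fsub}). The same principle, applied to hypercovers and to arbitrary Kan fibrations, shows $\Dist_\bl$ preserves acyclic fibrations and all fibrations; path objects are preserved because they are iterated pullbacks of covers (Prop.\ \ref{prop:path-obj}), which $\Dist$ preserves. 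Finally, for a pullback square along a covering fibration $f$, Lemma \ref{lem:life-saver}(3) guarantees that the pullback $\Dist_\bl(X)\times_{\Dist_\bl(Z)}\Dist_\bl(Y)$ exists in $\FGpd$, since $\Dist_\bl(f)$ is a covering fibration. Applying the exact functor $j_\bl\circ\Dist_\bl$ identifies $j_\bl$ of $\Dist_\bl$ of the original square with the corresponding pullback; as $j_\bl$ itself preserves pullbacks along covering fibrations (Prop.\ \ref{prop:icfo-exact-geo}(1)) and reflects isomorphisms, the canonical comparison map is already an isomorphism in $\FGpd$. This establishes exactness of $\Dist_\bl$ with respect to covering fibrations, and with it statement (4).
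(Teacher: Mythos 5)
Your proposal is correct and follows essentially the same route as the paper: statements (1)--(3) are obtained by applying Prop.\ \ref{prop:icfo-exist} and Prop.\ \ref{prop:icfo-exact-geo} to the cover-reflecting geometric functor $\prim$, and statement (4) by first producing an exact functor into $\Gpd_\infty(\conil)$ and then descending along the fully faithful, cover-reflecting inclusion $\conilsm \emb \conil$. Your version of (4) merely spells out the reflection argument (Kan fibrations, hypercovers, and pullbacks reflected by $j_\bl$) that the paper compresses into a single sentence.
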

\begin{proof}
\mbox{}
\begin{enumerate}
\item By statement \eqref{it:prim} of Cor.\ \ref{cor:geo-func-ex}, 
the primitives functor $\prim \maps (\conilsm, \cS^{\mathrm{mfd}}) \to (\FDVect, \cS_{epi})$ is a geometric functor that reflects covers. Hence, the canonical iCFO structure exists by Prop.\ \ref{prop:icfo-exist}.

\item Follows from statement \eqref{it:exact-reflect} of Prop.\ \ref{prop:icfo-exact-geo}.
% By Cor.\ \ref{cor:geo-func-ex},
% the primitives functor $\prim \maps (\conilsm, \cS^{\mathrm{mfd}}) \to (\FDVect, \cS_{epi})$ is a geometric functor that reflects covers. Hence, the assertion follows from statement \eqref{it:exact-reflect} of Prop.\ \ref{prop:icfo-exact-geo}.

\item Follows from statement \eqref{it:exact-cover} of Prop.\ \ref{prop:icfo-exact-geo}.

% The primitives functor on the larger category $\conil$ admits a left adjoint \eqref{eq:prim-adj}, hence its restriction to full subcategory $\conilsm$ preserves limits. Hence, the assertion follows from statement \eqref{it:exact-exact}
% of Prop.\ \ref{prop:icfo-exact-geo}.

\item Cor.\ \ref{cor:geo-func-ex} combined with Prop.\ \ref{prop:icfo-exact-geo} yields an exact functor with respect to covering fibrations $\Dist_\bl \maps \LinfGpd_\ast \to \Gpd_\infty(\conil)$ which preserves all fibrations and path objects. Since
$(\conilsm, \cS^{\mathrm{mfd}}_{fs}) \to (\conil,\cS_{fs})$ is a fully faithful geometric functor that reflects covers, we conclude by Prop.\ \ref{prop:icfo-exact-geo} that the functor $\Dist_\bl$ factors through the full subcategory $\FGpd$ with the desired properties.   
\end{enumerate}
\end{proof}

\subsection{Formal $\infty$-groups from Lie $\infty$-groups} \label{sec:LG-FG} 
Restricting the functor \eqref{eq:Dist-Gpd} to reduced objects, yields
%\begin{equation} \label{eq:Dist-Grp}
\[
\Dist_\bl  \maps \LinfGrp \to \FG.
\]
%\end{equation}
which preserves weak equivalences and all fibrations. Since a fibration between reduced objects is a covering fibration, $\Dist_\bl$ preserves pullbacks along all fibrations in $\LinfGrp$. In this section, we characterize the image of $\Dist_\bl$ for an important class of examples: those Lie $\infty$-groups of the form $\ov{\cW}(G_\bl)$, where $G_\bl$ is a simplicial Lie group. 

We begin by expanding on Example \ref{ex:mfd-coalg}. Let $G \in \Grp(\Mfd_\ast)$ be a Lie group with Lie algebra $\g$. By Cor.\ \ref{cor:geo-func-ex},
$\Dist \maps (\Mfd_\ast,{\cS_\ast}_{ss}) \to (\conil,\cS_{fs})$ is a geometric functor, with image in the full subcategory $\conilsm$.  Hence, by statement \eqref{it:grp-obj} of Prop.\ \ref{prop:geo-func}, we have 
\[
\Dist(G)=\hom^\cont_\kk(\hA_G,\kk) \in \Grp(\conilsm),
\]
i.e.\ $\Dist(G)$ is cocommutative Hopf algebra. The action of $\g$ as derivations on $\O_{G,e}$ induces a linear map
\[
\g \to  \Dist(G) \quad \g \ni x \mapsto \Bigl(f \mapsto \frac{d}{dt}f(\exp(tx))\vert_{t=0} )\Bigr).
\] 
It follows from \cite[Thm.\ II.V.1]{Serre:Lie} that this assignment is a Lie algebra homomorphism, where the Lie bracket on $\Dist(G)$ is the usual commutator bracket. Clearly, it is natural in $G$. Let $\psi$ denote the unique lift of this homomorphism to the universal enveloping algebra of $\g$.
\begin{theorem}[Thm.\ II.V.2 \cite{Serre:Lie}]\label{thm:serre1}
The morphism $\psi \maps \cU(\g) \to \Dist(G)$ is an isomorphism of Hopf algebras.
\end{theorem}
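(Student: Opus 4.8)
My plan is to reduce the statement to an isomorphism of associated graded algebras by equipping both sides with their canonical increasing filtrations. On $\cU(\g)$ I take the Poincaré--Birkhoff--Witt filtration $\cU_{\leq 0}(\g) \subseteq \cU_{\leq 1}(\g) \subseteq \cdots$ by word length, for which the classical PBW theorem gives $\gr\cU(\g) \cong \Sym(\g)$. On $\Dist(G) = \hom^\cont_\kk(\hA_G,\kk)$ I filter by \emph{order}: let $\Dist_{\leq n}(G)$ be the distributions annihilating $\mm^{n+1}$, so that $\Dist_{\leq n}(G) \cong (\hA_G/\mm^{n+1})^\vee$ and $\Dist(G) = \bigcup_{n} \Dist_{\leq n}(G)$, since every point distribution has finite order.

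The first thing to check is that $\psi$ is filtered. Each $x \in \g$ acts as a derivation at the identity, hence annihilates the constants and $\mm^2$, so $\psi(\g) \subseteq \Dist_{\leq 1}(G)$. Because the multiplication $m \maps G \times G \to G$ fixes the basepoint, the convolution product on $\Dist(G)$ satisfies $\Dist_{\leq p}(G) \ast \Dist_{\leq q}(G) \subseteq \Dist_{\leq p+q}(G)$: dually, $m^\ast$ sends $\mm^{p+q+1}$ into the corresponding power of the maximal ideal of $\hA_G \htensor \hA_G$. It follows that $\psi(\cU_{\leq n}(\g)) \subseteq \Dist_{\leq n}(G)$, so $\psi$ induces a map $\gr\psi$ of graded algebras.

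The heart of the argument is to identify $\gr\Dist(G)$ and show $\gr\psi$ is an isomorphism. By Ex.~\ref{ex:mfd-coalg} the ring $\hA_G \cong \R[[x_1,\ldots,x_n]]$ is regular, so $\mm^k/\mm^{k+1} \cong \Sym^k(\mm/\mm^2)$ and hence $\gr\Dist(G) \cong \Sym\bigl((\mm/\mm^2)^\vee\bigr)$ as graded vector spaces; the convolution product descends to the commutative product dual to the linearization of $m$ on tangent vectors, making this an isomorphism of graded algebras $\gr\Dist(G) \cong \Sym(\g)$. Cor.~\ref{cor:prim-cot} identifies $(\mm/\mm^2)^\vee \cong \prim(\Dist(G))$, and under this identification the degree-one component of $\gr\psi$ is precisely the canonical isomorphism $\g = T_eG \xrightarrow{\cong} (\mm/\mm^2)^\vee$. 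Since both $\gr\cU(\g)$ and $\gr\Dist(G)$ are generated in degree one as algebras and have equal finite-dimensional graded pieces, the algebra map $\gr\psi$ --- being an isomorphism in degree one --- is surjective, hence an isomorphism in every degree.

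Finally, because both filtrations are exhaustive and the induced map on associated gradeds is an isomorphism, the standard filtered-isomorphism lemma shows that $\psi$ is a linear isomorphism; as it is a morphism of Hopf algebras by construction, it is an isomorphism of Hopf algebras. The step I expect to be the main obstacle is the identification of $\gr\Dist(G)$ with $\Sym(\g)$ as \emph{algebras}: one must verify both that the order filtration is multiplicative for the convolution product and that the induced product on the associated graded is the symmetric-algebra product, i.e.\ correctly compute the effect of $m^\ast$ on cotangent spaces and dualize. An alternative that bypasses this bookkeeping is to invoke the Cartier--Kostant--Milnor--Moore structure theorem: $\Dist(G)$ is a conilpotent cocommutative Hopf algebra over a field of characteristic zero, its Lie algebra of primitives is $\g$ by Cor.~\ref{cor:prim-cot} together with the cited homomorphism property, and the canonical map out of the enveloping algebra of the primitives --- which is exactly $\psi$ --- is then automatically an isomorphism.
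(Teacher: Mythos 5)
Your argument is correct, and it is worth noting that the paper itself offers no proof of this statement --- it is imported verbatim from Serre, so there is nothing internal to compare against. Your filtration argument is essentially the classical one: filter $\cU(\g)$ by PBW word length and $\Dist(G)$ by order of distributions, check $\psi$ is filtered, identify both associated graded algebras with $\Sym(\g)$ (using regularity of $\hA_G$, so $\mm^k/\mm^{k+1} \cong \Sym^k(\mm/\mm^2)$, and the fact that $\gr(m^\ast)$ is the diagonal in degree one, hence the standard coproduct on the symmetric algebra), and conclude by exhaustiveness. All the individual steps check out, including the multiplicativity of the order filtration under convolution and the finite-order claim, which follows from continuity with respect to the adic topology. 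The one point you gloss is that $\psi$ is a morphism of \emph{coalgebras}: this needs the observation that a point derivation $D$ satisfies $D(fg)=D(f)g(e)+f(e)D(g)$, i.e.\ $\psi(\g)\subseteq\prim(\Dist(G))$, after which the algebra map $\psi$ is automatically a bialgebra map since $\cU(\g)$ is generated by primitives; this is a one-line check but it is part of what the theorem asserts. Your alternative via Cartier--Kostant--Milnor--Moore is also valid and arguably more in the spirit of the paper, which already knows $\Dist(G)\in\Grp(\conilsm)$ is a conilpotent cocommutative Hopf algebra and invokes Milnor--Moore in Example \ref{ex:WG}; the only content then is the identification of $\prim(\Dist(G))$ with $\g$ via Cor.\ \ref{cor:prim-cot}, though this route trades the elementary filtration bookkeeping for a substantially heavier structure theorem.
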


As a corollary of this classical fact, we obtain our desired result.

\begin{corollary}\label{cor:WG-Ug}
Let $G_\bl$ be a simplicial Lie group with simplicial Lie algebra $\g_\bl$.  
The isomorphism of Thm.\ \ref{thm:serre1} induces a natural isomorphism of formal $\infty$-groups
\[
\psi_{G_\bl} \maps \Dist_\bl(\ov{\cW}G) \xto{\cong} \Wbar{\cU(\g)}.
\]
\end{corollary}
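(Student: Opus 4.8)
The plan is to obtain $\psi_{G_\bl}$ as a composite of two isomorphisms, each of which is already essentially available: the compatibility of the geometric functor $\Dist$ with the $\Wbar$-construction recorded in Prop.\ \ref{prop:geo-func}, and the level-wise classical Cartier--Serre isomorphism of Thm.\ \ref{thm:serre1}. Almost everything is formal; the only point requiring genuine verification is that the level-wise Serre isomorphisms respect the simplicial structure.

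First I would invoke that, by Cor.\ \ref{cor:geo-func-ex}\eqref{it:dist}, the point distributions functor $\Dist \maps (\Mfd_\ast,{\cS_\ast}_{ss}) \to (\conil,\cS_{fs})$ is geometric and lands in $\conilsm$. Since $G_\bl$ is a group object in $s\Mfd_\ast$, statement (5) of Prop.\ \ref{prop:geo-func} (the isomorphism \eqref{eq:intro-geo-Wb}) provides a natural isomorphism
\[
\alpha_{G_\bl} \maps \Dist_\bl(\Wbar{G}) \xto{\cong} \Wbar{\Dist_\bl(G)},
\]
where $\Dist_\bl(G) \in \Grp(s\conilsm)$ is the simplicial cocommutative Hopf algebra obtained by applying $\Dist$ level-wise, using Prop.\ \ref{prop:geo-func}\eqref{it:grp-obj}. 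Both sides lie in the full subcategory $\FG \sse \FGpd$, so $\alpha_{G_\bl}$ is an isomorphism of formal $\infty$-groups.

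Second I would upgrade the classical isomorphism to the simplicial level. For each $n$, Thm.\ \ref{thm:serre1} gives an isomorphism of Hopf algebras $\psi_{G_n} \maps \cU(\g_n) \xto{\cong} \Dist(G_n)$, where $\g_n$ is the Lie algebra of $G_n$. Because each face and degeneracy operator of $G_\bl$ is a homomorphism of Lie groups and $\psi$ is natural in the Lie group (as noted just before Thm.\ \ref{thm:serre1}), these level-wise isomorphisms commute with all simplicial operators and therefore assemble into an isomorphism of simplicial Hopf algebras
\[
\psi_\bl \maps \cU(\g_\bl) \xto{\cong} \Dist_\bl(G),
\]
where $\cU(\g_\bl)$ is a group object in $s\conilsm$ by Example \ref{ex:WG}. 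Applying the functorial $\Wbar$-construction then yields $\Wbar{\psi_\bl} \maps \Wbar{\cU(\g)} \xto{\cong} \Wbar{\Dist_\bl(G)}$, and I define
\[
\psi_{G_\bl} := \bigl(\Wbar{\psi_\bl}\bigr)^{-1} \cc \alpha_{G_\bl} \maps \Dist_\bl(\Wbar{G}) \xto{\cong} \Wbar{\cU(\g)}.
\]
Naturality in $G_\bl$ is inherited from the naturality of $\alpha$ (Prop.\ \ref{prop:geo-func}), of $\psi$ (hence of $\psi_\bl$ and of $\Wbar{\psi_\bl}$), and of the $\Wbar$-construction.

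The hard part, such as it is, is the second step: confirming that the purely level-wise classical isomorphisms are compatible with the simplicial structure. This reduces entirely to the naturality of $\psi$ with respect to the Lie-group homomorphisms $d_i$ and $s_i$, which is precisely what promotes $\psi_\bl$ from a family of level-wise isomorphisms to a morphism of \emph{simplicial} Hopf algebras. Once this is in hand, the remaining assembly is formal and relies only on the functoriality of $\Wbar$ and the cited compatibility of geometric functors with the $\Wbar$-construction.
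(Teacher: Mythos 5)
Your proposal is correct and follows essentially the same route as the paper: the paper's proof likewise combines the compatibility of the geometric functor $\Dist$ with group objects and the $\Wbar$-construction (Prop.\ \ref{prop:geo-func}) with the naturality of the Serre isomorphism $\psi$ to assemble the level-wise identifications into a simplicial one. Your write-up simply makes explicit the intermediate object $\Wbar{\Dist_\bl(G)}$ and the factorization through $\Wbar{\psi_\bl}$ that the paper leaves implicit.
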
 
\begin{proof}
Since $\Dist \maps (\Mfd_\ast,{\cS_\ast}_{ss}) \to (\conil,\cS_{fs})$ is a geometric functor, the assertion follows directly from statement \eqref{it:grp-obj} of Prop.\ \ref{prop:geo-func}, along with the naturality of the isomorphism $\psi$.
\end{proof}

\subsection{PBW bases for pointed formal $\infty$-groupoids}

\subsubsection{Almost simplicial objects} \label{sec:almost}
Denote by $\Del_+ \sse \Del$ the wide subcategory of ordinals whose morphisms fix $0$. Let $s^{+}\cC$ denote the category of \df{almost simplicial objects} of a category $\cC$ i.e.\  the category of functors $\Del_{+}^{\op} \to \cC$. 
From the definition, it follows that an almost simplicial object is equipped with all of the degeneracies and the ``positive'' face maps $d_i \neq d_0$ of a simplicial object, satisfying the usual identities. In particular, restriction induces a  functor 
\begin{equation} \label{eq:s+}
U^+_\bl \maps s\cC \to s^{+}\cC 
\end{equation}
that forgets the face map $d_0$.

\subsubsection{The classical and inhomogeneous $\Wbar$ functors} \label{sec:Wbar}
Let $\catGrp$ denote the category of simplicial group objects in a cartesian monoidal category $(\cC,\times,\one{\cC})$. 
We begin by recalling the definition of Moore's \cite{Moore1,Moore2} total space functor 
\[
\cW_\bl(-) \maps \catGrp \to s\cC, 
\]
for the Eilenberg-Mac Lane construction $\Wbar \maps \catGrp \to s\cC$. We follow the conventions from \cite[\Sec V.4]{GJ}. Given $G \in \catGrp$, define
\[
\cW_{-1}G:= \kk, \qquad \cW_nG:= G_n \times G_{n-1} \times \cdots \times G_0, \quad n \geq 0
\]
with face and degeneracy maps
\begin{equation} \label{eq:W}
\begin{split}
d^W_i(g_n,g_{n-1},\ldots,g_0)&=
\begin{cases}
(d^G_i g_n, d^G_{i-1} g_{n-1}, \ldots, d^G_0 g_{n-i} \ast g_{n-i-1}, g_{n-i-2},\ldots,g_0), & \text{if $i < n$},\\
(d^G_n g_n , d^G_{n-1} g_{n-1}, \ldots,d^G_1g_1\eps(g_0)), & \text{if $i=n$}.\\
\end{cases}\\
s^W_i (g_n , g_{n-1},\ldots, g_0) &= (s^G_i g^G_{n} , s^G_{i-1} g_{n-1} ... s^G_0 g_{n-i}, e_G , g_{n-i-1},\ldots, g_0)
\end{split}
\end{equation}
Above $\eps \maps G_0 \to \one{\cC}$ denotes the canonical morphism to the terminal object, while $d^{G}_1g_{1}\eps(g_0)$ is the image of the composition $G_0 \times G_{0} \to G_{0}\times \one{\cC} \cong G_{0}$. Observe that the face maps $d^{\cW}_{i \geq 1}$ satisfy the recursive identity
\begin{equation} \label{eq:W-recurse}
d^W_i(g_n,g_{n-1},\ldots,g_0)= \bigl(d^G_i g_n, d^W_{i-1}(g_n,g_{n-1},\ldots,g_0) \bigr) \quad \forall i \geq 1 
\end{equation} 

Furthermore, note that $\cW_\bl G$ is a left $G$-space with $g \cdot (g_n , g_{n-1},\ldots, g_0): = (g\ast g_n , g_{n-1},\ldots, g_0)$. The classical $\Wbar$ construction applied to $G$ is the quotient of $\cW_\bl G$ by this left action. We denote the corresponding quotient map by
\begin{equation} \label{eq:piW}
\pi^\cW \maps \cW_nG \to \Wb_{n}G, \quad \pi^{\cW}(g_n,g_{n-1},\ldots,g_0):=( \eps_{G_n}(g_n)g_{n-1},g_{n-2},\ldots,g_0).
\end{equation}

Recall that the simplicial object $\cW_\bl G$ admits an extra degeneracy \cite[\Sec III.5; Lem.\ 4.6]{GJ}. For $n \geq -1$, let $\si_n \maps \cW_{n}G \to \cW_{n+1}G$ denote the following morphisms in $\cC$:
\begin{equation} \label{eq:extra1}
\si_{-1} \maps \one{\cC} \xto{e_{G_0}} G_{0} \qquad \si_{n \geq 0}(g_{n},g_{n-1},\ldots,g_1,g_0):= (e_{G_{n+1}},g_{n},g_{n-1},\ldots,g_1,g_0).  
\end{equation}
It is straightforward to verify that following equalities are satisfied:
\begin{align} 
d^{\cW}_0 \si_n =& \id_{\cW_nG} & \quad \forall n \geq 0 \label{eq:extra2} \\
d^{\cW}_{i+1} \si_n  =& \si_{n-1}d^{\cW}_{i} &\quad \forall n \geq  1 ~ \forall i \geq 0 \label{eq:extra3}\\
s^{\cW}_{j+1} \si_n  = &\si_{n+1} s^{\cW}_{j} & \quad \forall n \geq  0 ~ \forall j \geq 0 \label{eq:extra4}\\
s^{\cW}_{0} \si_{n-1} = &\si_{n} \si_{n-1} &\quad \forall n \geq  0. \label{eq:extra5}
\end{align}
The extra degeneracy $\si_\bl$ for $\cW_\bl G$ will play an important role in Sec.\ \ref{sec:TheMap}.

A key ingredient for Prop.\ \ref{prop:Winhom}, the main result of this section,  is an ``inhomogeneous'' analog of the $\Wbar$ functor. We recall the construction from \cite[Def.\ 3.6]{Pridham:ProAHT}. 

\begin{notation}\label{note:red_diag}
Given an object $X \in \cC$ as above, let $\diag \maps X \to X \times X$ denote its diagonal. The $n$th iterated diagonal $\diag_{n} \maps X \to X^{\times(n)}$ is the morphism recursively defined as $\diag_{1}:=\id$,    
$\diag_{2}:=\diag$, and $\diag_{n} := (\diag \times \id) \cc \diag_{n-1}$. For a generalized element $x$ of $X$, we denote its image under $\diag_n$ as
\begin{equation} \label{eq:red_diag}
\diag_n(x) = \bigl(x^{[1]},x^{[2]},\ldots,x^{[n]}\bigr).
\end{equation}
\end{notation}
We define a functor 
\[
\Wbarin \maps \cat{Grp}(s\cC) \to s\cC
\]
by the following construction. Given $(G_\bl,\ast,e_\bl) \in \cat{Grp}(s\cC)$,
set $\Wbin_0(G):= \one{\cC}$, and for $n \geq 1$ define
objects
\begin{equation} \label{eq:Winh}
\Wbin_n(G):= G_{n-1} \times G_{n-2} \times \cdots \times G_1 \times G_0,
\end{equation} 
and morphisms
\begin{eqnarray} \label{eq:Winh-maps}
d_0(g_{n-1},\ldots,g_0)&:=& \Bigl((d^G_0g^{[1]}_{n-1})^{-1}\ast g_{n-2}, \ldots, \bigl((d^G_0 )^{n-1}g^{[n-1]}_{n-1}\bigr)^{-1}\ast g_0 \Bigr)\\
d_i(g_{n-1},\ldots,g_0)&:=&(d^G_{i-1}g_{n-1},\ldots, d^G_1g_{n-i+1}, g_{n-i-1},g_{n-i-2},\ldots, g_0), \quad 1 \leq i \leq n \notag\\
s_0(g_{n-1},\ldots,g_0)&:=& (e_{n},g_{n-1},\ldots,g_0) \notag\\
s_i(g_{n-1},\ldots,g_0)&:=&(s^G_{i-1}g_{n-1},\ldots, s^G_0g^{[1]}_{n-i}, g^{[2]}_{n-i},g_{n-i-1},\ldots, g_0) , \quad 1 \leq i \leq n. \notag
\end{eqnarray}
Above, in the formulae for $d_0$ and $s_0$, the elements $g^{[k]}_{n-1}$ and $g^{[k]}_{n-i}$ denote the $k$th components of $\diag_{n-1}(g_{n-1})$ and $\diag(g_{n-i})$, respectively.  

\begin{remark}\label{rmk:Wbarin}
We emphasize that the zeroth face map of $\Wbarin(G)$ is the only morphism in \eqref{eq:Winh-maps} that involves the group operation of $G_\bl$ or the zeroth face map $d^G_0$. The rest depend only on the almost simplicial structure of $U^+_\bl(G) \in s^+\cC$, the identity operations, or the diagonal map $\diag \maps G_\bl \to G_\bl \times G_\bl$. 
\end{remark}

\begin{lemma}[Lemma 6.22 \cite{JP:Dmod}]\label{lem:Wbarinh}
The morphisms \eqref{eq:Winh-maps} give $\Wbarin(G)$ the structure of a simplicial object in $\cC$, and the simplicial morphism
\[
\phi_\Wb \maps \Wbarin(G) \xto{\iso} \Wbar(G) 
\]
\[
\begin{split}
\phi_\Wb\bigl(g_{n-1},&\ldots,g_0 \bigr):= \\
&\Bigl((g^{[1]}_{n-1})^{-1}, d^G_{0}(g^{[2]}_{n-1}) \ast (g^{[1]}_{n-2})^{-1}, d^G_{0}(g^{[2]}_{n-2}) \ast (g^{[1]}_{n-3})^{-1}, \ldots,  
d^G_0(g^{[2]}_{1})\ast g^{-1}_0 \Bigr)
\end{split}
\]
induces a natural isomorphism $\Wbarin(-) \xto{\cong} \Wbar(-)$.
\end{lemma}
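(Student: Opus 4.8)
The plan is to avoid verifying the simplicial identities for the operators \eqref{eq:Winh-maps} directly — a prohibitively long computation, especially for the zeroth face $d_0$ — and instead to deduce them by transport of structure along $\phi_\Wb$ from the classical simplicial object obtained from $\cW_\bl G$. First I would regard $\phi_\Wb$ as a levelwise map $\Wbin_n(G) \to \Wb_n(G)$, both of which are canonically $G_{n-1} \times \cdots \times G_0$, and show it is an isomorphism in $\cC$ for each $n$. Its defining formula is ``triangular'': the $k$th coordinate of $\phi_\Wb$ depends on the inputs $g_{n-k+1}$ and $g_{n-k}$ only through $d^G_0$, the product $\ast$, and inversion. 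One can therefore solve for the inputs recursively, starting from the top coordinate $g_{n-1} = (\cdot)^{-1}$ and proceeding downward, to produce a two-sided inverse; since $\ast$ and inversion are morphisms of the group object $G_\bl$ and $\cC$ is cartesian, this inverse is itself a morphism in $\cC$.

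The core of the argument is to prove that $\phi_\Wb$ intertwines every face and degeneracy operator, i.e.\ that $d^{\Wb}_i \circ \phi_\Wb = \phi_\Wb \circ d_i$ and $s^{\Wb}_i \circ \phi_\Wb = \phi_\Wb \circ s_i$, where $d^{\Wb}_i, s^{\Wb}_i$ denote the operators induced on the quotient $\Wb_\bl G$ of $\cW_\bl G$ via \eqref{eq:piW}, and $d_i, s_i$ are the operators \eqref{eq:Winh-maps}. I would split this into two regimes. For the positive faces $d_i$ with $1 \leq i \leq n$, the degeneracy $s_0$, and the $s_i$ with $1 \leq i \leq n$, Remark \ref{rmk:Wbarin} guarantees that the $\Wbin$ side uses only the almost-simplicial data of $U^+_\bl(G)$, the unit $e$, and the diagonal — no essential occurrence of $d^G_0$ or of $\ast$. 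On the $\Wb$ side, the recursion \eqref{eq:W-recurse} lets one strip off one coordinate at a time, so these cases reduce to the facts that each $d^G_j$ and $s^G_j$ is a homomorphism of group objects and that $\diag$ is natural and coassociative. I expect these to be routine, if tedious.

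The main obstacle is the zeroth face. Computing the induced operator $d^{\Wb}_0$ from \eqref{eq:W} and \eqref{eq:piW} shows it amounts, after passing to the quotient, to deleting the top $\cW_\bl G$-coordinate, whereas $d_0$ in \eqref{eq:Winh-maps} assembles the iterated faces $(d^G_0)^k$ applied to the diagonal components $g^{[k]}_{n-1}$, each inverted and then multiplied against a lower entry. Establishing $d^{\Wb}_0 \circ \phi_\Wb = \phi_\Wb \circ d_0$ thus forces one to expand $\phi_\Wb$ and its inverse together and telescope the resulting tower of $d^G_0$'s. The tools I would rely on are: the homomorphism laws $d^G_j(a \ast b) = d^G_j(a) \ast d^G_j(b)$ and $d^G_j(a^{-1}) = (d^G_j a)^{-1}$; the anti-homomorphism law $(a \ast b)^{-1} = b^{-1} \ast a^{-1}$; the simplicial identity $d^G_0 d^G_1 = d^G_0 d^G_0$, needed to reconcile the iterated $d^G_0$'s in $d_0$ with the single faces produced by the inverse of $\phi_\Wb$; and coassociativity of $\diag$ to redistribute the copies $g^{[k]}$. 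Keeping the order of multiplication correct throughout — $G_\bl$ need not be abelian — is where essentially all of the bookkeeping risk lies, and I expect this single verification to consume the bulk of the proof.

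Once all intertwining relations are established, the conclusion is formal: conjugating the simplicial identities of the genuine simplicial object $\Wb_\bl G$ by the levelwise isomorphism $\phi_\Wb$ shows that the operators \eqref{eq:Winh-maps} satisfy the simplicial identities, so $\Wbarin(G)$ is a simplicial object and $\phi_\Wb$ is an isomorphism in $s\cC$. Finally, naturality in $G$ is immediate, since every morphism appearing in $\phi_\Wb$, in \eqref{eq:Winh-maps}, and in the quotient \eqref{eq:piW} — namely $\ast$, inversion, $\eps$, the $d^G_j$ and $s^G_j$, and $\diag$ — is natural in the simplicial group $G_\bl$; hence $\phi_\Wb$ assembles into a natural transformation $\Wbarin(-) \Rightarrow \Wbar(-)$ which, being a levelwise isomorphism, is a natural isomorphism of functors.
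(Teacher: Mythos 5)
The paper does not prove this lemma; it quotes it verbatim as Lemma 6.22 of \cite{JP:Dmod}, so there is no in-paper argument to compare yours against. Your strategy --- show $\phi_{\Wb}$ is a levelwise isomorphism by triangular back-substitution, verify that it intertwines every face and degeneracy with the operators induced on the quotient $\Wb_{\bl}G$ of $\cW_\bl G$ via \eqref{eq:piW}, and then transport the simplicial identities --- is the standard and correct route, and your division of labour (the positive operators are routine by Remark \ref{rmk:Wbarin}; all of the danger sits in $d_0$) is exactly right.

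One concrete warning, which vindicates your own caveat about multiplication order. If you run the $d_0$ intertwining at $n=2$ with the formulas exactly as printed, you get $d^{\Wb}_0\bigl(\phi_{\Wb}(g_1,g_0)\bigr)=d^G_0(g_1)\ast g_0^{-1}$, whereas $\phi_{\Wb}\bigl(d_0(g_1,g_0)\bigr)=\phi_{\Wb}\bigl((d^G_0g_1)^{-1}\ast g_0\bigr)=g_0^{-1}\ast d^G_0(g_1)$; these disagree for nonabelian $G_0$, even though the analogous checks for $d_1$ and $d_2$ go through. So the conventions of \eqref{eq:W}--\eqref{eq:piW}, the formula for $d_0$ in \eqref{eq:Winh-maps}, and the displayed $\phi_{\Wb}$ are not mutually consistent as written: one product must be reversed (for instance, taking the entries of $d_0(g_{n-1},\ldots,g_0)$ to be $g_{n-1-k}\ast\bigl((d^G_0)^{k}g^{[k]}_{n-1}\bigr)^{-1}$ restores the $n=2$ identity without disturbing the positive faces), or else the quotient convention must be matched to the one Pridham actually uses. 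Your plan will necessarily surface and have to resolve this; just be aware that a literal verification against the printed formulas fails at precisely the step you flagged as the risky one, so you should fix the conventions before embarking on the general telescoping computation.
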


\subsubsection{The classical PBW theorem} \label{sec:PBW-classic}
Given an almost simplicial vector space $V_\bl \in \spFDVect$, recall $\cospSym(V) \in \spconil$ denotes the cofree (conilpotent cocommutative) almost simplicial coalgebra cogenerated by $V_\bl$.  

\begin{definition} \label{def:PBW} 
A \df{PBW basis} for a pointed formal $\infty$-groupoid $\cX_\bl \in \FGpd$ is an isomorphism of almost simplicial coalgebras $U^+_{\bl}(\cX) \xto{\iso} \cospSym(V)$, where $V_\bl=U^{+}_{\bl}\prim(\cX)$.
\end{definition}

The rationale behind this definition is the classical PBW theorem for simplicial Lie algebras. More precisely:
\begin{proposition} \label{prop:Winhom} 
Let $\g_\bl \in \sLie$. The natural coalgebra isomorphisms
\begin{equation} \label{eq:pbw} 
\pbw_n \maps \coSym(\g_n) \xto{\iso} \cU(\g_n), \quad \pbw_n(x_1x_2 \cdots x_k):=
\frac{1}{k!} \sum_{\si \in S_k} x_{\si(1)} \ast x_{\si(2)} \ast \, \cdots \, \ast x_{\si(k)}
\end{equation}
yield a canonical PBW basis for the formal $\infty$-group $\Wbar \cU(\g)$.    
\end{proposition}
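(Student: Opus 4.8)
The plan is to realize the PBW basis as the level-wise tensor product of the classical symmetrization isomorphisms $\pbw_n$, to transport it along the natural isomorphism $\Wbarin{(-)} \cong \Wbar{(-)}$ of Lemma \ref{lem:Wbarinh}, and to verify compatibility with the almost simplicial structure by invoking Remark \ref{rmk:Wbarin}. First I would unwind both sides level-wise. Since the categorical product in $\conil$ is the tensor product, the inhomogeneous construction \eqref{eq:Winh-maps} gives $\Wbarin{\cU(\g)}_n \cong \cU(\g_{n-1}) \tensor \cdots \tensor \cU(\g_0)$ as coalgebras; on the other hand the exponential law $\coSym(V \oplus W) \cong \coSym(V) \tensor \coSym(W)$ for the cofree cocommutative coalgebra, combined with $\Wbar{\g}_n \cong \g_{n-1} \oplus \cdots \oplus \g_0$, gives $\coSym(\Wbar{\g})_n \cong \coSym(\g_{n-1}) \tensor \cdots \tensor \coSym(\g_0)$.

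Next I would pin down the target. Using $\prim(\cU(\g_i)) = \g_i$ together with $\prim(C \tensor D) = \prim(C) \oplus \prim(D)$ for connected coalgebras, I get $\prim(\Wbar{\cU(\g)}_n) \cong \g_{n-1} \oplus \cdots \oplus \g_0 = \Wbar{\g}_n$, so that in the sense of Def.\ \ref{def:PBW} the relevant cogenerators are $V_\bl = U^+_\bl \Wbar{\g}$ and the target coalgebra $\cospSym(V)$ is exactly $U^+_\bl \coSym(\Wbar{\g})$ (the cofree functor commutes with the restriction $U^+_\bl$). The candidate level-$n$ map is then $\Pi_n := \pbw_{n-1} \tensor \cdots \tensor \pbw_0$, which is a coalgebra isomorphism because each $\pbw_i$ is the classical symmetrization coalgebra isomorphism in characteristic zero.

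The heart of the argument is checking that $\Pi_\bl$ commutes with the positive face maps $d_1,\dots,d_n$ and the degeneracies $s_0,\dots,s_n$ — and this is precisely where passing to almost simplicial objects via $U^+_\bl$ is essential, since by Remark \ref{rmk:Wbarin} the zeroth face map $d_0$ is the unique structure map of $\Wbarin{(-)}$ built from the multiplication of $\cU(\g)$, hence the only one $\pbw$ cannot be expected to intertwine. The surviving structure maps in \eqref{eq:Winh-maps} are assembled from three ingredients, each respected by $\pbw$: (i) the positive face and degeneracy maps of $\cU(\g)_\bl$, which are $\cU$ applied to the corresponding Lie algebra maps of $\g_\bl$, so that naturality of symmetrization yields $\cU(\alpha) \cc \pbw = \pbw \cc \coSym(\alpha)$; (ii) the unit, preserved since $\pbw(1) = 1$; and (iii) the diagonal $\diag$, which in the cartesian monoidal category $(\conil, \tensor, \kk)$ is \emph{precisely} the comultiplication, so that the coalgebra-morphism property of $\pbw$ is exactly the assertion that $\pbw$ commutes with $\diag$. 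I regard this last identification as the conceptual crux, as it converts the cartesian-level formulas of \eqref{eq:Winh-maps} into coalgebra statements; combining (i)--(iii) shows $U^+_\bl \Pi$ is an isomorphism in $\spconil$.

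Finally I would compose $U^+_\bl \Pi$ with $U^+_\bl$ of the natural isomorphism $\phi_\Wb \maps \Wbarin{\cU(\g)} \xto{\iso} \Wbar{\cU(\g)}$ from Lemma \ref{lem:Wbarinh}, producing the desired isomorphism $U^+_\bl \coSym(\Wbar{\g}) \xto{\iso} U^+_\bl \Wbar{\cU(\g)}$ (whose inverse is the PBW basis of Def.\ \ref{def:PBW}); canonicity follows since every constituent — the maps $\pbw_i$, the exponential decomposition, and $\phi_\Wb$ — is natural in $\g_\bl$. The main obstacle I anticipate is bookkeeping: matching the index conventions of the tensor factors of $\Pi_n$ against the explicit formulas \eqref{eq:Winh-maps} for $d_i$ and $s_i$ — in particular tracking which factor a degeneracy duplicates via its diagonal and which a face map drops — and confirming that the level-wise exponential isomorphisms $\coSym(\Wbar{\g}_n) \cong \coSym(\g_{n-1}) \tensor \cdots \tensor \coSym(\g_0)$ are themselves compatible with these reindexings. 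Once this dictionary between the two descriptions is fixed, the three intertwining properties (i)--(iii) close the argument.
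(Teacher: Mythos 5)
Your proposal is correct and follows essentially the same route as the paper's proof: pass to the inhomogeneous construction $\Wbarin$ via Lemma \ref{lem:Wbarinh}, identify the source with $U^+_\bl\Wbin\coSym(\und{\g})$ level-wise (the paper packages your exponential-law step as the statement that $\cosSym$, being a right adjoint, preserves abelian group objects, cf.\ Rmk.\ \ref{rmk:dsum-tensor}), and use Rmk.\ \ref{rmk:Wbarin} plus naturality of $\pbw$ to see that $\pbw_{n-1}\tensor\cdots\tensor\pbw_0$ intertwines all positive face maps and degeneracies. Your explicit observation that the diagonal in $(\conil,\tensor,\kk)$ is the comultiplication, so that the coalgebra-morphism property of $\pbw$ handles the $\diag$-dependence of the degeneracies, is exactly the point the paper leaves implicit.
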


\begin{proof}
Let $\und{\g}_\bl$ denote the underlying simplicial vector space of $\g_\bl$, considered as an abelian group object in $\sVect$. Then $\cosSym(\und{\g})$ is an abelian group object in $\sconil$. Indeed, the functor 
\[
\cosSym(-) \maps \sVect \to \scoccom
\]
 is right adjoint and hence preserves group objects. This gives an identification 
\begin{equation} \label{eq:Winhom0}
\cosSym(\Wbin (\und{\g})) \cong \Wbarin \coSym(\und{\g}).
\end{equation}
Since $U^+_\bl\cc \cosSym(-) = \cospSym \cc U^+_\bl(-)$, we obtain a natural isomorphism of almost simplicial coalgebras
\begin{equation} \label{eq:Winhom1}
\cospSym(U^+ \Wbin(\und{\g})) \cong U^+_\bl\Wbin \coSym(\und{\g}).
\end{equation}
The coalgebra morphisms $\pbw_n$ from \eqref{eq:pbw} are natural isomorphisms between functors from Lie algebras to coalgebras. By Rmk.\ \ref{rmk:Wbarin}, we see that all degeneracies and positive face maps for $\Wbarin \coSym(\und{\g})$ and   
$\Wbarin \cU(\g)$ are constructed from morphisms which lie in the image of the functors $\coSym(-)$ and $\cU(-)$, respectively. From this, we conclude that the morphisms
\[
\begin{split}
\pbw_{n-1} \tensor \pbw_{n-2}\tensor  \cdots \tensor \pbw_0  \maps& \\
\Sym^{\coalg}(\g_{n-1}) \tensor \Sym^{\coalg}(\g_{n-2}) &\tensor \cdots \tensor  
\Sym^{\coalg}(\g_{0})  \longrightarrow 
\cU(\g_{n-1}) \tensor \cU(\g_{n-2}) \tensor \cdots \tensor  
\cU(\g_{0}) 
\end{split}
\]
assemble into an  isomorphism of almost simplicial coalgebras
%\begin{equation} \label{eq:Winhom2}
\[
{\pbw^\tensor} \maps U^+_\bl\Wbin \coSym(\und{\g})  \xto{\cong} U^+_\bl\Wbin \cU(\g).
\]
%\end{equation}
Combining this with \eqref{eq:Winhom1} and Lemma \ref{lem:Wbarinh} gives the desired canonical PBW basis:
%\begin{equation} \label{eq:Winhom-pbw}
\[
\cospSym(V) \xto{\cong} U^+_\bl\Wb\, \cU(\g)
\]
%\end{equation}
where $V_\bl:=U^+_\bl \Wbin(\und{\g})$.
\end{proof}a

\begin{remark}\label{rmk:dsum-tensor}
The identification \eqref{eq:Winhom0}
in the above proof involves the usual isomorphism 
%\begin{equation} \label{eq:dsum-tensor}
\[
\Sym^{\coalg}(V \dsum V') \xto{\cong} \Sym^{\coalg}(V) \tensor \Sym^{\coalg}(V'), \quad (v,v') \mapsto v \tensor 1 + 1 \tensor v'.
\]
%\end{equation}
which is natural with respect to the vector spaces $V$ and $V'$.
\end{remark}

Proposition \ref{prop:Winhom} implies that there is a natural isomorphism
of almost simplicial vector spaces $U^+_\bl \Wbin(\und{\g}) \cong 
U^+_\bl\prim (\Wb\, \cU(\g))$. In the following corollary, we show that this isomorphism lifts to simplicial vector spaces. 
 
\begin{corollary}\label{cor:prim-iso}
Let $\gb \in \sLie$ with underlying simplicial vector space $\und{\g}_\bl$. Then there is a natural isomorphism of simplicial vector spaces
\[
\xi_{\g} \maps \Wbar{(\und{\g})} \xto{\cong} \prim_\bl \Wb{\cU(\g)} \quad \text{in $\sVect$}.
\]
\end{corollary}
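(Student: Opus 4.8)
The plan is to obtain the full simplicial isomorphism in one stroke, rather than lifting the almost-simplicial PBW isomorphism of the preceding Remark by hand. The key is that the primitives functor is geometric and therefore already commutes with the classifying-space construction. By Corollary \ref{cor:geo-func-ex}(3), $\prim \maps (\conilsm,\cS^{\mathrm{mfd}}) \to (\FDVect,\cS_{epi})$ is a geometric functor, and by Example \ref{ex:WG} the simplicial Hopf algebra $\cU(\g_\bl)$ is a group object in $s\conilsm$. Hence statement (5) of Prop.\ \ref{prop:geo-func} furnishes a natural isomorphism of simplicial vector spaces
\[
\prim_\bl \Wb{\cU(\g)} \;\cong\; \Wbar{(\prim_\bl \cU(\g))}.
\]

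First I would identify the group object $\prim_\bl \cU(\g)$. By the Milnor--Moore theorem $\prim(\cU(\g_n)) = \g_n$ levelwise, and since $\prim$ is geometric it preserves finite products, sending the coalgebra product $\tensor$ to $\oplus$; consequently the multiplication it induces on $\g_n$ is the one coming from $m(x\tensor 1 + 1 \tensor y) = x+y$, i.e.\ addition. Thus $\prim_\bl \cU(\g) = \und{\g}_\bl$ as an abelian group object in $\sFDVect$, and the displayed isomorphism becomes the sought-after natural isomorphism $\xi_\g \maps \Wbar{(\und\g)} \xto{\cong} \prim_\bl \Wb{\cU(\g)}$. To justify its placement after Prop.\ \ref{prop:Winhom}, I would then observe that the almost-simplicial restriction $U^+_\bl(\xi_\g)$ agrees with the isomorphism on primitives induced by the PBW basis: this is immediate because $\pbw_n$ restricts to the identity on primitives (the $k=1$ term of \eqref{eq:pbw}), so $\xi_\g$ is exactly the promised simplicial lift.

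The only delicate point is compatibility with the zeroth face map $d_0$, and this is precisely where the conceptual route pays off: it is built into Prop.\ \ref{prop:geo-func}(5) for free, so no separate computation is needed. The main obstacle arises only if one instead lifts the PBW isomorphism directly, in which case one must compute $\prim(d_0)$ for the zeroth face map of $\Wb{\cU(\g)}$ from the explicit inhomogeneous formula \eqref{eq:Winh-maps}. Since $d_0$ is a coalgebra morphism it preserves primitives; the antipode acts as $-1$ on a primitive; the group object's diagonal is the comultiplication, so $\diag_{n-1}$ distributes a primitive $x\in\g_{n-1}$ as $\sum_k 1\tensor\cdots\tensor x\tensor\cdots\tensor 1$; and the face maps $d^G_0$ restrict to those of $\g_\bl$ on primitives. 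Feeding these into \eqref{eq:Winh-maps} shows that the multiplicative $d_0$ linearizes on primitives to exactly the additive $d_0$ of $\Wbar{(\und\g)}$. In either approach, naturality of $\xi_\g$ in $\g_\bl$ is inherited from the naturality of the isomorphism in Prop.\ \ref{prop:geo-func}(5), equivalently from naturality of $\pbw$, $\phi_\Wb$, and $\prim$.
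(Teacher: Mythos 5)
Your proof is correct but takes a genuinely different route from the paper's. The paper works with the inhomogeneous construction $\Wbarin$: it writes down the explicit levelwise maps $\ga_n(x_{n-1},\ldots,x_0) = \sum_k 1^{\tensor k-1}\tensor x_{n-k}\tensor 1^{\tensor n-k}$, notes that $\pbw^{\tensor}$ is the identity on primitives so that all degeneracies and positive face maps match automatically, and then verifies the single remaining identity $d_0^{\Wbin\cU}\cc\ga_n = \ga_{n-1}\cc d_0^{\Wbin\und{\g}}$ by a direct computation with the reduced diagonal and the antipode. You instead observe that $\prim$ is a geometric functor, invoke Prop.\ \ref{prop:geo-func}(5) to get $\prim_\bl\Wbar{\cU(\g)}\cong\Wbar{(\prim_\bl\cU(\g))}$, and identify the abelian group object $\prim_\bl\cU(\g)$ with $\und{\g}_\bl$ (your computation that $\prim$ sends $\tensor$ to $\oplus$ and hence linearizes $\ast$ to $+$ is right). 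This is cleaner and makes the $d_0$-compatibility genuinely free. Two caveats are worth recording. First, your route only literally applies when $\g_\bl$ is levelwise finite-dimensional: Cor.\ \ref{cor:geo-func-ex}(3) and Example \ref{ex:WG} live in the finite-type category $\conilsm$, whereas the corollary is stated for all of $\sLie$ and the paper's algebraic argument needs no finiteness (the underlying fact --- that $\prim$ of conilpotent coalgebras preserves tensor products and all the structure maps of $\Wbar$ --- does hold in general, but you would have to say so rather than cite the geometric-functor machinery). Second, the paper needs the \emph{explicit formula} for $\xi_\g$ downstream, in the proof of Cor.\ \ref{cor:prim-map}, where $\xi_\g\cc\phi_{\und{\g}}$ is evaluated on $\bs x$ and compared with $N_\ast\prim_\bl(\psi)$; your conceptual isomorphism does coincide with the paper's (both are the canonical identification $\prim(\prod_i C_i)\cong\bigoplus_i\prim(C_i)$ applied slotwise), but one must unwind Prop.\ \ref{prop:geo-func}(5) to see this, so the explicit description cannot be dispensed with entirely.
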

\begin{proof}
We will exhibit an isomorphism $\Wbarin{(\und{\g})} \xto{\cong} \prim_\bl\Wbarin{\cU(\g)}$. As in Rmk.\ \ref{rmk:dsum-tensor}, the linear maps
\[
\begin{split}
\ga_n \maps \g_{n-1} \dsum \g_{n-2} \dsum \cdots \dsum \g_0 &\to 
\coSym(\g_{n-1}) \tensor \coSym(\g_{n-2})\tensor \cdots \tensor \coSym(\g_0).\\
\ga_n(x_{n-1},\ldots,x_{0}) &= \sum_{k=1}^n 1^{\tensor k-1}\tensor x_{n-k} \tensor 1^{\tensor n-k} \\
\end{split}
\] 
assemble into isomorphism of simplicial vector spaces 
\[
\ga \maps \Wbarin{(\und{\g})} \to \prim_\bl(\Wbin\coSym(\g)).
\]
The isomorphism of almost simplicial coalgebras
\[
{\pbw^\tensor} \maps U^+_{\bl}\Wbin \bigl(\coSym(\und{\g}) \bigr) \xto{\cong} U^+_\bl\Wbin \bigl(\cU({\g}) \bigr)
\]  
exhibited in the proof of Prop.\ \ref{prop:Winhom} induces the identity on primitives, and hence an equality of almost simplicial vector spaces  
\[
U^{+}_{\bl}\prim \Wbin(\coSym(\g)) = U^{+}_{\bl} \prim \Wbin(\cU(\g)).
\]
Therefore, it remains to verify that the equality 
\begin{equation} \label{eq:cor-prim-pf1}
d_{0}^{\Wbin\cU} \cc \ga_{n} = \ga_{n-1} \cc d_{0}^{\Wbin \und{\g}}   
\end{equation}
holds for all $n \geq 2$. Following the definition \eqref{eq:Winh-maps} for $d_0^{\Wbin}$, we first compute the diagonals. If $x \in \g_{n-1} \sse \cU(\g_{n-1})$, then
\[
\begin{split}
\diag^{\cU(\g)}_{n-1}(x) &= \sum_{\el=1}^{n-1} 1^{\tensor \el -1} \tensor x \tensor 1^{n-1-\el} \in \cU(\g_{n-1})^{\tensor n-1}\\
\diag^{\und{\g}}_{n-1}(x) &= (x,x,\ldots,x) \in {\g}_{n-1}^{\times n-1}.
\end{split}
\]   
Therefore, we obtain for the left-hand side of \eqref{eq:cor-prim-pf1}:
\begin{equation} \label{eq:cor-prim-pf2}
\begin{split}
d_{0}^{\Wbin\cU} \bigl(\ga_{n}&(x_{n-1},x_{n-2},\ldots,x_0) \bigr) \\ 
&=d_{0}^{\Wbin\cU}(x_{n-1}\tensor 1^{\tensor n-1}) 
+ \sum_{\el=2}^n 1^{\tensor \el-2} \tensor x_{n-\el} \tensor 1^{\tensor n-\el}\\
&= - \sum_{\el=1}^{n-1} 1^{\tensor \el -1} \tensor (d^{\g}_0)^{\el} x_{n-1} \tensor 1^{\tensor n-1 -\el}  + \sum_{\el=2}^n 1^{\tensor \el-2} \tensor x_{n-\el} \tensor 1^{\tensor n-\el}.
\end{split}
\end{equation}
The right-hand side is
\[
\begin{split}
\ga_{n-1} \bigl(d_{0}^{\Wbin \und{\g}}&(x_{n-1},x_{n-2},\ldots,x_0) \bigr) \\  
&= - \sum_{\el=1}^{n-1}\ga_{n-1}\bigl(\underbrace{0,\ldots,0}_{\el-1}, (d^{\g}_0)^{\el}x_{n-1},\underbrace{0,\ldots,0}_{n-\el-1} \bigr) + \ga_{n-1}(x_{n-2},\ldots,x_0)\\
&= - \sum_{\el=1}^{n-1}1^{\tensor \el-1}\tensor (d^{\g}_0)^{\el}x_{n-1}\tensor 1^{n-\el-1} + \sum_{k=1}^{n-1} 1^{\tensor k-1}\tensor x_{n-k-1} \tensor 1^{\tensor n-k-1}.
\end{split}
\]
After reindexing the second summation by setting $k=\el -1$, we obtain
\eqref{eq:cor-prim-pf2}. Hence, the equality \eqref{eq:cor-prim-pf1} holds.
Composing with the isomorphism of Lem.\ \ref{lem:Wbarinh} gives the desired isomorphism: 
\begin{equation} \label{eq:cor-prim-map}
\xi_\g:=\phi_{\Wb \cU(\g)} \cc \ga \cc \phi^{-1}_{\Wb \und{\g}} \maps
\Wbar{(\und{\g})} \xto{\cong} \prim_\bl \Wb{\cU(\g)}.
\end{equation}
\end{proof}

\subsubsection{Existence of PBW bases} \label{sec:formal-pbw}
In this section, we prove 
\begin{theorem}\label{thm:pbw}
Every pointed formal $\infty$-groupoid $\cX_\bl$ admits a PBW basis.
\end{theorem}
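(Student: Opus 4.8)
The plan is to produce the required isomorphism $U^+_\bl(\cX) \xto{\iso} \cospSym(V)$, with $V_\bl = U^+_\bl\prim(\cX)$, as the coalgebra morphism cofreely induced by a carefully chosen retraction onto primitives, and then to verify levelwise that it is an isomorphism. The essential gain from working with almost simplicial objects --- that is, after applying $U^+_\bl$ and discarding $d_0$ --- is that the obstruction to building such a retraction disappears.

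First I would record the levelwise picture. Each $\cX_n$ is a pointed formal manifold, so Prop.\ \ref{prop:descent}(3) supplies a (non-canonical) coalgebra isomorphism $\cX_n \cong \coSym(\prim \cX_n)$; in particular the inclusion $\prim(\cX_n) \hookrightarrow \cX_n$ admits a linear splitting. Since the primitives functor is functorial, every face and degeneracy of $\cX_\bl$ restricts to primitives, so $\prim_\bl(\cX)$ is a simplicial vector space and the inclusion $\iota \maps \prim_\bl(\cX) \hookrightarrow \cX_\bl$ is a monomorphism of (almost) simplicial vector spaces. The heart of the argument is then to split $\iota$ after passing to $s^+\Vect$. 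Applying the almost simplicial Dold--Kan correspondence of Sec.\ \ref{sec:simp-DK}, which identifies $s^+\Vect$ with non-negatively graded vector spaces carrying no differential, the monomorphism $U^+_\bl(\iota)$ is carried to an injection of graded vector spaces. Such an injection splits, and transporting a splitting back through the equivalence yields a retraction $\rho_\bl \maps U^+_\bl(\cX) \to V_\bl$ in $s^+\Vect$ with $\rho_\bl \cc U^+_\bl(\iota) = \id$. This is precisely the step that fails over honest simplicial vector spaces, where Dold--Kan produces chain complexes and the differential $d_0$ obstructs splitting; discarding $d_0$ is exactly what removes the obstruction (and explains why the statement is formulated for almost simplicial coalgebras).

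Finally I would promote $\rho_\bl$ to the desired isomorphism. By the universal property of the cofree coalgebra $\coSym$, applied levelwise and naturally over $\Del_+^\op$, the retraction $\rho_\bl$ is the cogenerator projection of a unique morphism of almost simplicial coalgebras $\Phi \maps U^+_\bl(\cX) \to \cospSym(V)$. Because $\rho_\bl$ restricts to the identity on primitives, each component $\Phi_n \maps \cX_n \to \coSym(\prim\cX_n)$ induces the identity on primitives; as $\cX_n$ is conilpotent and cofree, $\Phi_n$ is filtered for the conilpotency filtration and induces the identity on the associated graded $\coSym(\prim \cX_n)$, hence is an isomorphism (injectivity and surjectivity follow by downward/upward induction along the exhaustive filtration starting from $\kk$). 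Since $\Phi$ is a levelwise isomorphism it is an isomorphism in $s^+\conil$, which is the PBW basis demanded by Def.\ \ref{def:PBW}.

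The main obstacle is the middle step: one must confirm that the almost simplicial normalization genuinely takes values in graded (rather than differential graded) vector spaces, so that the monomorphism $\iota$ splits unobstructedly in $s^+\Vect$. Everything else --- levelwise cofreeness, the cofree universal property, and the conilpotent-filtration argument that an endomorphism which is the identity on cogenerators is invertible --- is formal once this splitting is in hand.
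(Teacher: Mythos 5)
Your argument is correct, but it is genuinely different from the paper's proof, and considerably more economical. The paper dualizes to reduced cosimplicial complete local algebras and constructs the compatible linear coordinates $W^n \subseteq \mm_n$ by an explicit induction on $n$; the hard step there (Claims \ref{claim:linear} and \ref{claim:extend}) is to show that the span of the coface images of the generators chosen at stage $n-1$ meets $\mm_n^2$ trivially, and that is exactly where the formal smoothness of the horn filler $A_{\cX_{n,0}} \to A^n_{\cX}$, i.e.\ the Kan condition, enters. You replace the entire induction by a single appeal to the semisimplicity of $s^{+}\Vect \simeq \gVect$ --- the almost simplicial Dold--Kan equivalence of Sec.\ \ref{sec:al-simp-DK}, which the paper records and uses elsewhere, so the ``main obstacle'' you flag is already discharged by the paper's own setup. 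The inclusion of primitives then splits $\Delta_{+}$-equivariantly, and the cofree universal property together with the coradical-filtration argument finishes the job. Two small points should be made explicit: the retraction should be defined on the coaugmentation coideal $\ov{\cX}_{\bl}$ so that the induced map is a morphism of counital coaugmented coalgebras, and the identification of $\gr_k(\Phi_n)$ with $\Sym^k(\prim(\Phi_n))$ on the associated graded of the coradical filtration uses that $\cX_n$ is abstractly cofree, which is Prop.\ \ref{prop:descent}(3). The most striking difference is that your proof never invokes the horn-filling condition, so it establishes the stronger statement that every simplicial object of $\conilsm$, Kan or not, admits a PBW basis; equivalently, on the dual side, the levelwise surjection $\mm^{\bl} \to (\mm/\mm^{2})^{\bl}$ always splits in $\cpVect$, which is precisely what the paper's induction produces by hand. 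What the paper's route buys is a self-contained commutative-algebra argument that makes visible how the horn objects control the generators; what yours buys is brevity, added generality, and a clean isolation of the only real input, namely the $\Delta_{+}$-version of Dold--Kan.
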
 

We first setup the needed framework. It will be convenient to extend the equivalence of Prop.\ \ref{prop:dual} to simplicial objects
\begin{equation} \label{eq:simp-dual}
\kk[-]^{\bl} \maps \sconil \overset{\simeq}{\longleftrightarrow} \bigl(\cclnAlg \bigr)^\op \maps \spf_{\bl}(-),
\end{equation}
and work in dual category of cosimplicial algebras $\cclnAlg$. Also, in analogy with Sec.\ \ref{sec:almost}, let $\cpclnAlg$ denote the category of almost cosimplicial algebras with forgetful functor
\[
U^\bl_{+} \maps \cclnAlg \to \cpclnAlg.
\]
We dualize the matching objects introduced in Sec.\ \ref{sec:simpdesc}. Let $S$  be a finite set, and $A \in \clnAlg$. We denote the tensor of $S$ with $A$ by
\[
S \cast A := \bightensor_S A.  
\]
In particular, given a set of generators $\{x_1,\ldots,x_k\}$ for $A$, and a finite indexing set $\cI$ we denote by  
\begin{equation} \label{eq:gen-tensor}
\{x^\al_{i}\} \sse \bightensor_{\al \in \cI} A
\end{equation}
the corresponding set of generators for $\bightensor_{\al \in \cI} A$.

Extending this to cosimplicial objects, let $S \in \sSet$ be finitely-generated, $A^\bl \in c\clnAlg$, and define $S \cast A^\bl \in \clnAlg$ as the coequalizer of the diagram
%\begin{equation} \label{eq:coeq}
\[
\bightensor_{\substack{[m] \to [n] \\ m,n \leq d} } S_n \cast A^m \Eq  \bightensor_{m \leq d} S_m \cast A^m
\]
%\end{equation} 
where $d$ is the dimension of $S$. In particular, for $C \in \sconil$ and $A^\bl_C:= \kk[C]^{\bl}$, we have the natural isomorphism
\[
S \cast A_C^\bl \cong \hom_{\kk} \bigl( \hom(S,C_\bl),\kk). 
\]    
When $S=\Lam^n_k$ is a horn, we will use the notation
\[
A_{C_{n,k}}:= \Lam^{n}_k \cast A_C^\bl \cong \hom_\kk(C_{n,k},\kk).
\]
\begin{construction}[Presentation for $A_{\cX_{n,0}}$] \label{con:horn}
For $n \geq 1$, the zeroth horn of $\Del^n$ can be written as a coequalizer
\[
\coprod_{1 \leq i < j \leq  n} \Del^{n-2} \rightrightarrows  \coprod_{1 \leq r \leq n} \Del^{n-1} \to \Lam^n_0 
\] 
Recall, that if $\Del^{n-2}_{(i,j)}$ and $\Del^{n-1}_{(k)}$ denote, respectively, the $(i,j)$th and $k$th summands of the above coproducts, then the two parallel arrows are induced by the coface maps
\begin{equation} \label{eq:para-arrow}
\Del^{n-2}_{(i,j)} \xto{d^{j-1}} \Del^{n-1}_{(i)}, \qquad \Del^{n-2}_{(i,j)} \xto{d^{i}} \Del^{n-1}_{(j)}.
\end{equation}
Let $\cX_\bl \in \FGpd$. % Then $\cX_\bl$ is a simplicial formal manifold, and,
Using Notation \ref{note:hom}, we have $\cX_{n,0} \in \conilsm$. Hence, $A_{\cX_{n,0}}$ is smooth and is the coequalizer of the diagram
\[
\bightensor_{1 \leq i < j \leq  n} A^{n-2}_\cX \rightrightarrows  \bightensor_{1 \leq r \leq n} A^{n-1}_\cX
\] 
in which the parallel arrows are induced by the coface maps of $A^{\bl}_{\cX}$, as in \eqref{eq:para-arrow}. Let $\{x_1, \ldots, x_t\} \sse A^{n-1}_\cX$ and $\{y_1,\ldots,y_m\} \sse A^{n-2}_\cX$ denote generators such that $A^{n-1}_\cX = \kk[[\{x_k\}]]$ and $A^{n-2}_\cX = \kk[[\{y_\el\}]]$. Then, using the notation \eqref{eq:gen-tensor}, we have
%\begin{equation} \label{eq:pres-horn}
\[
A_{\cX_{n,0}} = \kk[[\{x^r_{k}\}]]  \bigm / I
\]
%\end{equation}
where $I = \gen{ \{g_{\el,(i,j)}\} }$ is the ideal generated by the elements
\[
g_{\el,(i,j)} := d^i {y}_{\el}^{(i,j)} - d^{j-1}{y}_{\el}^{(i,j)}, \quad \el=1,\ldots, m, \quad 1 \leq i < j \leq n.
\] 
\end{construction}

\myspace

\begin{proof}[{\bf Proof of Theorem \ref{thm:pbw}}]
Let $d^i$ and $s^j$ denote the coface/codegeneracy maps of $A^\bl_\cX$. Throughout, $\mm_n \ideal A^n_\cX$ denotes the unique maximal ideal in dimension $n$. 
We will construct, inductively, vector spaces $W^{n}$ spanned by
finite subsets $S^{m} \sse \mm_m$ for all $m \geq 0$ such that
\begin{equation}\label{eq:induct}
\begin{split}
d^i(S^{m-1}) \sse W^m, & \quad i=1, \ldots, m \\
s^{j}(S^m) \sse W^{m-1} & \quad j = 0, \ldots, m-1, \\
 A^m_{\cX} &= \wh{\Sym}(W^m)
\end{split}
\end{equation}
for each $m \geq 1$. From this we will obtain an isomorphism of almost cosimplicial algebras
\[
\wh{\cpSym}(W) \cong U^\bl_{+} (A_{\cX}). 
\]
with $W^{\bl} =U^\bl_{+}(\mm/\mm^2)$. Applying $\spf_\bl(-)$ will then yield the desired result 
\[
\cospSym(V) \cong U^{+}_{\bl}(\cX),
\]
with $V_\bl := (W^\bl)^{\vee}=U^{+}_{\bl}\prim(\cX)$.  

For the base case, $n=1$, observe that we have retraction in $\clnAlg$
\[
A^0_{\cX} \xto{d^1} A^1_{\cX} \xto{s^0} A^0_{\cX}.
\]
We consider the induced idempotent endomorphism on $\mm_1$ as in \cite[Thm.\ 6.7]{Chak}. From this we conclude, that there exist sets of generators 
$S^0 \sse \mm_0$ and $S^1 \sse \mm_1$ such that: 
\[
A^0_{\cX}= \kk[[S^0]], \quad  A^1_{\cX}= \kk[[S^1]], \quad d^1(S^0) \sse S^1
\]
and 
\[
s^0\bigl(S^1 \setminus  d^1(S^0) \bigr) =0. 
\]
Therefore, we set $W^0:= \spann_\kk S^0$, and $W^1:= \spann_\kk S^1$.

Let $n > 1$ and suppose for all $k < n$ we have $S^{k} \sse  A^k_{\cX} $
satisfying \eqref{eq:induct}. % Let $S^{n-1} = \{ e_1, \ldots, e_m\}$. 
We proceed in three steps.    
\begin{enumerate}
\item Since $\cX_\bl \in \FGpd$, the horn filling map $\io \maps A_{\cX_{n,0}} \to A^n_{\cX}$ is formally smooth, and so it admits a left inverse as in the verification of axiom \ref{ax5}  in the proof of Prop.\ \ref{prop:descent}. Therefore, we have a retraction in $\clnAlg$ 
\begin{equation} \label{eq:retract}
 A_{\cX_{n,0}} \xto{\io} A^n_{\cX} \xto{\si}  A_{\cX_{n,0}}, \qquad \si \cc \io = \id.  
\end{equation}
Combining this with Construction \ref{con:horn} yields the commutative diagram
\begin{equation} \label{eq:diag}
\begin{tikzdiag}{2}{3}
{
A^{n-1}_\cX \wh{\tensor} A^{n-1}_\cX \wh{\tensor} \cdots \wh{\tensor} A^{n-1}_\cX \&  \& \\
A_{\cX_{n,0}} \&  \& A^n_\cX \\
};
\path[->,font=\scriptsize]
(m-1-1) edge node[auto] {$d^1 \vee d^2 \vee \cdots \vee d^n$} (m-2-3)
(m-2-1) edge node[auto] {$\io$} (m-2-3);
;
\path[->,font=\scriptsize]
(m-1-1) edge node[auto,swap] {$\ro$} (m-2-1);
\end{tikzdiag}
\end{equation}
We emphasize that the vertical map $\ro$ is surjective. Let $S^{n-1} = \{ e_1, \ldots, e_m\}$. By the induction hypothesis $A^{n-1}_\cX \wh{\tensor}  \cdots \wh{\tensor} A^{n-1}_\cX$ is isomorphic to the algebra of formal power series in variables
\[
\{ y^i_j \st i=1,\ldots,n, \, j=1,\ldots m\},
\]
such that $\bigl(d^1 \vee d^2 \vee \cdots \vee d^n\bigr)(y^i_j) = d^i e_{j}$. Define 
\[
z^i_j: = \ro(y^i_j).
\]
Then the $z^i_j$ generate the maximal ideal of $A_{\cX_{n,0}}$. Let $z^{i_1}_{j_1}, \ldots,z^{i_r}_{j_r}$ denote a minimal generating subset. Combining this with the retraction \eqref{eq:retract} gives a splitting of the cotangent space of $A^n_\cX$:
%\begin{equation} \label{eq:split2}
\[
\mm_{n}/\mm^2_n = \ker \ba{\si} \dsum \im \ba{\io} = \ker \ba{\si} \dsum \spann_\kk \{ d^{i_1}e_{j_1} + \mm^2_n, \ldots,  d^{i_r}e_{j_r} + \mm^2_n\},  
\]
%\end{equation}   
with $\{ d^{i_1}e_{j_1} + \mm^2_n, \ldots,  d^{i_r}e_{j_r} + \mm^2_n\}$ linearly independent.
\myspace

\begin{claim} \label{claim:linear}
For each generator $e_k \in S^{n-1}$, its image $d^\ell e_k \in \mm_n$ is a $\kk$-linear combination of $d^{i_1}e_{j_1}, \ldots,  d^{i_r}e_{j_r}$ for all $\ell =1,\ldots, n$.  
\end{claim}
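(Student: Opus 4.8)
The plan is to read Claim \ref{claim:linear} as a statement in the cotangent space $\mm_n/\mm_n^2$ and to exploit the commutativity of the triangle \eqref{eq:diag}. The crucial first observation is that, since \eqref{eq:diag} commutes, the composite $d^1 \vee d^2 \vee \cdots \vee d^n$ factors as $\io \cc \ro$. Applying this to the power-series generator $y^\ell_k$ of $A^{n-1}_\cX \wh{\tensor} \cdots \wh{\tensor} A^{n-1}_\cX$ and using $\ro(y^\ell_k)=z^\ell_k$ yields the identity
\[
d^\ell e_k = \bigl(d^1 \vee \cdots \vee d^n\bigr)(y^\ell_k) = \io(z^\ell_k)
\]
for every $\ell = 1,\ldots,n$ and every generator $e_k \in S^{n-1}$. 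Thus each $d^\ell e_k$ already lies in the image of $\io$, and the claim reduces to understanding how the single generator $z^\ell_k$ decomposes in the cotangent space of $A_{\cX_{n,0}}$.

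Next I would invoke that $A_{\cX_{n,0}}$ is formally smooth over $\kk$, hence a regular complete local Noetherian algebra (as recorded in Construction \ref{con:horn}). By Nakayama, its minimal generating subset $z^{i_1}_{j_1},\ldots,z^{i_r}_{j_r}$ projects to a basis of $\mm_{\cX_{n,0}}/\mm^2_{\cX_{n,0}}$. Since $z^\ell_k = \ro(y^\ell_k)$ is one of the generators, it lies in $\mm_{\cX_{n,0}}$, so reducing modulo $\mm^2_{\cX_{n,0}}$ produces scalars $c_1,\ldots,c_r \in \kk$ with $z^\ell_k \equiv \sum_s c_s z^{i_s}_{j_s}$. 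Applying the local homomorphism $\io$, which satisfies $\io(\mm^2_{\cX_{n,0}}) \subseteq \mm_n^2$ and therefore induces the map $\ba{\io}$ on cotangent spaces used above, I obtain
\[
d^\ell e_k = \io(z^\ell_k) \equiv \sum_{s=1}^r c_s\, \io(z^{i_s}_{j_s}) = \sum_{s=1}^r c_s\, d^{i_s} e_{j_s} \pmod{\mm_n^2},
\]
which is exactly the asserted linear dependence, the linear independence of $\{d^{i_s}e_{j_s} + \mm_n^2\}$ established just before the claim guaranteeing that these coefficients are the unique ones.

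The one point genuinely worth flagging is interpretive rather than computational, and it is where I expect a careless reading to go wrong: the equality holds in the cotangent space $\mm_n/\mm_n^2$, not as an identity of elements of $\mm_n$. Indeed $z^\ell_k$ is in general a formal power series, not a linear form, in the regular parameters $z^{i_s}_{j_s}$, so $\io(z^\ell_k)$ carries higher-order corrections that only vanish after passing to $\mm_n/\mm_n^2$. Once this is understood, the argument is pure linear algebra powered by the factorization $d^\ell e_k = \io(z^\ell_k)$; no further input about the simplicial structure is required beyond the commutative diagram \eqref{eq:diag} and the regularity of $A_{\cX_{n,0}}$.
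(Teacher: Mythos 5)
Your reduction to the single identity $d^\ell e_k = \io(z^\ell_k)$ is correct and matches the opening move of the paper's proof, but the rest of your argument proves a strictly weaker statement than the claim asserts, and the weakening is fatal for the way the claim is used. You explicitly read the claim as an identity in $\mm_n/\mm_n^2$ and argue that $z^\ell_k$ is ``in general a formal power series, not a linear form, in the regular parameters $z^{i_s}_{j_s}$,'' so that higher-order corrections survive until one passes to the cotangent space. But the claim is an identity of \emph{elements} of $\mm_n$: it is invoked later to conclude $d^i(S^{n-1}) \sse W^n = \spann_\kk S^n$ in the verification of \eqref{eq:induct}, and again in the proof of Claim \ref{claim:extend} to place the linear terms $\ti{h}_i$ inside $\spann_\kk\{d^{i_1}e_{j_1},\ldots,d^{i_r}e_{j_r}\}$. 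If $d^\ell e_k$ only landed in $W^n + \mm_n^2$, the coface maps would not restrict to linear maps between the spaces $W^\bl$, and the isomorphism $\wh{\cpSym}(W) \cong U^\bl_{+}(A_\cX)$ of almost cosimplicial algebras --- the whole point of the induction --- would not follow.

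The missing ingredient is that your pessimism about higher-order corrections is unwarranted here, precisely because of the induction hypothesis. By Construction \ref{con:horn}, $\ker\ro$ is generated by the elements $g_{k,(i,j)} = d^i w^{(i,j)}_k - d^{j-1}w^{(i,j)}_k$, and the hypothesis $d^i(S^{n-2}) \sse W^{n-1} = \spann_\kk S^{n-1}$ forces each $g_{k,(i,j)}$ to be a \emph{homogeneous linear} polynomial in the variables $y^{i'}_{j'}$. Hence $A_{\cX_{n,0}}$ is the power series ring on the quotient of $\spann_\kk\{y^i_j\}$ by a linear subspace, every $z^\ell_k$ lies in the degree-one part, and a degree argument (if $y^\ell_k - h \in \ker\ro$ for some power series $h$, then $y^\ell_k - q \in \ker\ro$ for the linear part $q$ of $h$) shows that $z^\ell_k$ is an honest $\kk$-linear combination of the minimal generators $z^{i_1}_{j_1},\ldots,z^{i_r}_{j_r}$, with no quadratic remainder. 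Applying the linear map $\io$ then gives the claim on the nose. Without this step your argument cannot be repaired by working modulo $\mm_n^2$ throughout, so the gap is genuine.
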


\paragraph{{\it Proof of Claim \ref{claim:linear}}}
It suffices to show that  $z^\ell_k \in \spann_\kk\{z^{i_1}_{j_1}, \ldots,  z^{i_r}_{j_r}\} \sse A_{\cX_{n,0}}$. The surjectivity of $\ro$ in \eqref{eq:diag}, along with the induction hypothesis, implies that
$z^\ell_k = \sum^N_{t=0}f_t z^{i_t}_{j_t}$ where each $f_t$ is the image of a formal power series in the variables $y^i_j$. Therefore, it suffices to verify that if $h \in A^{n-1}_\cX \wh{\tensor} \cdots \wh{\tensor} A^{n-1}_\cX$ such that 
\begin{equation} \label{eq:claimlin1}
y^\ell_k - h \in \ker \ro,
\end{equation}
then there exists a linear polynomial $q$ such that 
\begin{equation} \label{eq:claimlin1-1}
y^\ell_k - q \in \ker \ro.
\end{equation}
Let $S^{n-2} = \{w_1, \ldots, w_{m'}\}$.  
Then $A^{n-2}_\cX = \kk[[S^{n-2}]]$. Construction \ref{con:horn}
implies that $\ker \ro$ is generated by elements
\[
g_{k,(i,j)} = d^i {w}^{(i,j)}_k - d^{j-1}{w}^{(i,j)}_k, \quad k=1,\ldots, m', \quad 1 \leq i < j \leq n,
\] 
where $\{{w}^{(i,j)}_k \}_{k=1}^{m'} \cong S^{n-2}$ is the set of generators of the $(i,j)$ factor of  $\bigotimes_{i < j} A^{n-2}_\cX$. The induction hypothesis implies that $d^i(S^{n-2}) \sse W^{n-1}=\spann_\kk S^{n-1}$
for  $i \geq 1$. Hence, each $g_{k,(i,j)}$ is a homogeneous linear polynomial in the variables $y^{i'}_{j'}$. Hence, given $y^\el_k - h$ as in \eqref{eq:claimlin1}, by degree reasons, there must exist a linear polynomial $q$ as in \eqref{eq:claimlin1-1}. This completes the proof of the claim.

\item Next, in analogy with the argument used for the base case,  we consider the retracts
\[
A^{n-1}_\cX \xto{ d^1} A^{n}_\cX \xto{s^0} A^{n-1}_\cX,  \quad   
A^{n-1}_\cX \xto{ d^2} A^{n}_\cX \xto{s^1} A^{n-1}_\cX,  \quad   
\cdots \quad
A^{n-1}_\cX \xto{ d^n} A^{n}_\cX \xto{s^{n-1}} A^{n-1}_\cX.
\]
and split $\mm_n$ as a $\kk$-linear space
\begin{equation} \label{eq:split3}
\mm_n = \bigcap_{j=0}^{n-1} \ker s^j \oplus  \spann_\kk \Bigl(\bigcup_{i=1}^n \ov{\im d^i} \Bigr),
\end{equation}
as is done in constructing the normalized cochain complex. Above, $\ov{\im d^i}$ is the augmentation ideal of the subalgebra $\im d^i \sse A^n_\cX$.  

\myspace

\begin{claim} \label{claim:extend}
There exist
$f_1,\ldots, f_t \in \bigcap_{j=0}^{n-1} \ker s^j + \spann_\kk \{ d^{i_1}e_{j_1}, \ldots,  d^{i_r}e_{j_r}\}$ 
such that 
\[
\{f_1 + \mm^2_n,  \ldots,
f_t + \mm^2_n, d^{i_1}e_{j_1} + \mm^2_n, \ldots,  d^{i_r}e_{j_r} + \mm^2_n\}
\]  
is a basis for $\mm_n/\mm_n^2$. 
\end{claim}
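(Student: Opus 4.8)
The plan is to read off the claim from the vector-space splitting \eqref{eq:split3} once Claim \ref{claim:linear} is used to pin down the degenerate part of the cotangent space exactly. Write $N := \bigcap_{j=0}^{n-1} \ker s^j$ and $D := \spann_\kk\bigl(\bigcup_{i=1}^n \ov{\im d^i}\bigr)$, so that \eqref{eq:split3} reads $\mm_n = N \oplus D$. Passing to the cotangent space gives $\mm_n/\mm_n^2 = \bar N + \bar D$, where $\bar N$ and $\bar D$ denote the images of $N$ and $D$; this sum need not be direct, but I will only need that it is all of $\mm_n/\mm_n^2$.

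First I would identify $\bar D$ precisely. Since each $d^i \maps A^{n-1}_\cX \to A^n_\cX$ is an algebra map and, by the induction hypothesis, $A^{n-1}_\cX = \wh{\Sym}(W^{n-1})$ with $W^{n-1} = \spann_\kk S^{n-1}$, the subalgebra $\im d^i$ is topologically generated by $\{d^i e_1,\ldots,d^i e_m\}$. Hence its augmentation ideal $\ov{\im d^i}$ reduces modulo $\mm_n^2$ to the linear span of the $d^i e_k$, and therefore $\bar D = \spann_\kk\{d^i e_k + \mm_n^2 : 1 \leq i \leq n,\ 1 \leq k \leq m\}$. Now Claim \ref{claim:linear} says that every $d^i e_k + \mm_n^2$ lies in $\spann_\kk\{d^{i_1}e_{j_1} + \mm_n^2,\ldots,d^{i_r}e_{j_r} + \mm_n^2\}$, so in fact $\bar D = \spann_\kk\{d^{i_l}e_{j_l} + \mm_n^2\}_{l=1}^r$; since these $r$ classes were shown to be linearly independent in the first step, they form a basis of $\bar D$.

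It then remains to extend. Because $\mm_n/\mm_n^2 = \bar N + \bar D$ and $\bar D = \spann_\kk\{d^{i_l}e_{j_l} + \mm_n^2\}$, the image of $\bar N$ in the quotient $(\mm_n/\mm_n^2)/\bar D$ is everything. I would choose classes $\bar f_1,\ldots,\bar f_t \in \bar N$ whose images form a basis of that quotient and lift them to $f_1,\ldots,f_t \in N = \bigcap_{j=0}^{n-1}\ker s^j$. Then $\{f_1 + \mm_n^2,\ldots,f_t + \mm_n^2,\ d^{i_1}e_{j_1} + \mm_n^2,\ldots,d^{i_r}e_{j_r} + \mm_n^2\}$ is a basis of $\mm_n/\mm_n^2$, and since $N \sse N + \spann_\kk\{d^{i_l}e_{j_l}\}$, the required membership holds (one may in fact take the $f_i$ in the smaller normalized subspace $\bigcap_j \ker s^j$).

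The only real content is the exact identification of $\bar D$, and the hard part of that is already dispatched by Claim \ref{claim:linear}: the point is that $\bar D$ is spanned not by the full collection of cofaces $\{d^i e_k\}$ but by the minimal generating subset $\{d^{i_l}e_{j_l}\}$. Without this reduction the extending elements $f_i$ could not be taken in the normalized subspace $\bigcap_j \ker s^j$, which is exactly what makes the degeneracy bookkeeping in the subsequent Step 3 go through. Everything downstream of the identification of $\bar D$ is routine linear algebra.
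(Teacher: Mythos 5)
Your argument is correct and follows essentially the same route as the paper's: both rest on the splitting \eqref{eq:split3} together with Claim \ref{claim:linear}, which is what guarantees that the degenerate summand contributes only $\spann_\kk\{ d^{i_\el}e_{j_\el} + \mm^2_n\}$ to the cotangent space. The only difference is organizational --- the paper starts from an \emph{arbitrary} extension $g_1,\ldots,g_t$ of $\{d^{i_\el}e_{j_\el}+\mm^2_n\}$ to a basis and corrects each $g_i$ by replacing its degenerate component with its linear term, whereas you build the extension directly by lifting a basis of $(\mm_n/\mm^2_n)\big/\bar{D}$ from the normalized summand, which has the mild bonus of placing the $f_i$ in $\bigcap_{j}\ker s^j$ itself.
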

\paragraph{{\it Proof of Claim \ref{claim:extend}}}
Let $g_1,\ldots, g_t \in \mm_n$ be {any}    
elements such that the set $\{g_i + \mm^2_n\}_{i \geq 1}$ extends $\{d^{i_\el}e_{j_\el} + \mm^2_n\}_{\el \geq 1}$ to a basis. Using the splitting \eqref{eq:split3} write $g_i = q_i + h_i$ with $q_i \in \bigcap \ker s^j$. By the induction hypothesis, each $h_i$ can be written as a linear combination of formal power series in the variables $d^\el e_k$ with trivial constant term. Let $\ti{h}_i \in \spann_\kk\{ d^\el e_k \st  1 \leq \el \leq n, \, 1 \leq k \leq m\}$  denote the linear term of $h_i$. Then by Claim \ref{claim:linear}, $\ti{h}_i \in \spann_\kk \{ d^{i_1}e_{j_1}, \ldots,  d^{i_r}e_{j_r}\}$.
Define $f_i:= q_i + \ti{h}_i$. Then, by construction, $g_i - f_i \in \mm^2_n$.
This completes the proof of the claim.

\item Finally, define
\[
S^n:=\{f_1,\ldots,f_t, d^{i_1}e_{j_1}, \ldots,  d^{i_r}e_{j_r} \}, \qquad W^{n}:=\spann_{\kk} S^n,
\]
where the $f_i$ are as in Claim \ref{claim:extend}. By construction, the set $S^n$ satisfies the three criteria listed in \eqref{eq:induct}. Indeed, Claim \ref{claim:linear} implies that $d^i(S^{n-1}) \sse W^n$ for $i=1,\ldots, n$. To verify $s^j(S^n) \sse W^{n-1}$ for each $j=0,\ldots, n-1$, it follows from Claim \ref{claim:extend} that it is sufficient to verify that $s^j(d^\el e_k) \in W^{n-1}$ for any $\el =1,\ldots,n$ and $k=1,\ldots,m$. For the cases in which $s^j d^\el \neq \id$, the cosimplicial identities imply that $s^j d^\el = d^{\el'}s^{j'}$ with $\el' \geq 1$. Hence, in these cases $s^j(d^\el e_k) \in W^{n-1}$ by the induction hypothesis. Finally, since the image of $S^n$ in $\mm_n/\mm^2_n$ is a basis, Lemma \ref{lem:gen} implies that $A^n_{X} = \kk[[S^n]]$.
\end{enumerate} 
This concludes the proof of the proposition. 
\end{proof}

\section{Differentiation of formal $\infty$-groups} \label{sec:FD}

\subsection{Simplicial and almost simplicial Dold-Kan correspondence}\label{sec:simp-DK}
Let $N_\ast \maps \sVect \adjunct \Chain \maps K_\bl$ denote the classical Dold-Kan adjoint equivalence\footnote{The functor $K_\bl$ is denoted by $\Gamma_\bl$ in \cite[\Sec 5.0]{F2}.}. We denote the counit and unit of the adjunction by
%\begin{equation} \label{eq:simp-DK}
\[
\eps_{\DK} \maps N_\ast K_\bl \xto{\cong} \id_{\Ch_\ast}, \quad  \eta_{\DK} \maps \id_{\sVect} \xto{\cong}  K_\bl N_\ast
\]
%\end{equation}

As in \cite[\Sec 5.0.5]{F2}, we define $N_0(V_\bl):=V_0$, and for $n\geq 1$
\begin{equation} \label{eq:simp-norm}
N_n(V_\bl):= \bigcap_{i=1}^n\ker d_i, \qquad d_{N_\ast(V_\bl)}:=d_0 \maps N_{n}(V_\bl) \to N_{n-1}(V_\bl). 
\end{equation} 
Given $(C,d_C) \in \Ch_\ast$, we recall the construction of the simplicial vector space $K_\bl(C)$. Define $K_0(C):=C_0$. Let $n \geq 1$. For each $\el$ such that $1 \leq \el \leq n$, and each strictly increasing sequence $0 \leq j_1 < \cdots < j_\el \leq n -1$ of length $\el$, let $s_{j_\el} s_{j_{\el -1}} \cdots s_{j_1}C_{n-\el}$ denote an isomorphic copy\footnote{Here  $s_{j_\el} s_{j_{\el -1}} \cdots s_{j_1}$ is a formal symbol.} of the vector space $C_{n-\el}$. Then define, as a vector space, 
\begin{equation} \label{eq:simp-K}
K(C)_n:= C_n \oplus \bigoplus^n_{\el =1} \Bigl (~ \bigoplus_{0 \leq j_1 < \cdots < j_\el \leq n -1} s_{j_\el} s_{j_{\el -1}} \cdots s_{j_1}C_{n-\el}    \Bigr).
\end{equation}
The face maps $d_i \maps K(C)_n \to K(C)_{n-1}$ are defined on the summand $C_n$ by
\[
d_0 \vert_{C_n}:=d_C, \quad d_{i \geq 1} \vert_{C_n}:=0.
\]  
The values of the face maps on the remaining summands, as well as the  
degeneracy operators $s_i \maps K_{n-1}(C) \to K_n(C)$  are defined in the obvious way so that the simplicial identities are satisfied. We note that this presentation of $K_\bl(C)$ exhibits the natural splitting of $K_n(C)$ into non-degenerate and degenerate simplices.

Recall that the Eilenberg-MacLane morphism, or ``shuffle map'' \cite{EM}
\begin{equation} \label{eq:EM}
\begin{split}
\EM_{V,W} &\maps N_\ast(V_\bl) \tensor N_\ast(W_\bl)  \xto{\simeq} N_\ast(V_\bl \tensor W_\bl)\\
(\EM_{V,W})_{n}:= \sum_{p+q =n} \EM_{p,q} \, , & \qquad \EM_{p,q} \maps N_p(V_\bl) \tensor N_q(W_\bl) \to N_{p+q}(V_\bl \tensor W_\bl),  
\end{split}
\end{equation}  
is a symmetric monoidal natural homotopy equivalence that exhibits $N_\ast$ as a lax symmetric monoidal functor. Here, given $v\tensor w \in N_p(V) \tensor N_q(W)$, we have
%\begin{equation} \label{eq:shuff-map}
\[
\EM_{p,q}(v \tensor w):= \sum_{I,J}\sgn(I,J) s_{j_q} s_{j_{q-1}} \cdots s_{j_1} (v) \tensor s_{i_p} s_{i_{p-1}} \cdots s_{i_1} (w),
\]
%\end{equation}
where the sum is over all partitions $(I,J)$ of the set $\{0,\ldots,p+q -1\}$, 
with $I=\{i_1 < \ldots < i_p\}$ and $J=\{j_1 < \ldots < j_q\}$, and $\sgn(I,J)$ denotes the sign of the corresponding $(p,q)$ shuffle. We refer to the proof of \cite[Thm.\ 5.2.3]{F2} for further details.

\subsubsection{Almost simplicial Dold-Kan correspondence}\label{sec:al-simp-DK}
Let $\cC$ be an abelian category. As in \eqref{eq:s+}, we have the forgetful functor $U^{+}_\bl \maps s\cC \to s^+\cC$. Analogously, denote by 
\[
U^+_\ast \maps \Chain(\cC) \to \grC
\] 
the forgetful functor from complexes to graded objects in $\cC$ that drops the differential.
As emphasized in \cite[Rmk.\ 5.11]{Pridham:DDArtin}, there is an equivalence of categories $N^+_\ast \maps s^+\cC \to \grC$ which is given by exactly the same formula as \eqref{eq:simp-norm}.  Its inverse functor is $K^+_\bl(V_\ast):=U^+_\bl K_\bl(V_\ast)$. Here we identify $\grC$ with chain complexes with trivial differential. The unit $\eta^+_{\DK}$ and counit $\eps^+_{\DK}$  are defined analogously. In particular, we have the following compatibilities
%\begin{equation} \label{eq:alm-simp-DK}
\[
\begin{split}
N^+_\ast U^+_\bl  = U^+_\ast N_\ast, &\quad K^+_\bl U^+_\ast  = U^+_\bl K_\bl \\
U^+_\bl \eta_{\DK} = \eta^+_{\DK} U^+_\bl, &\quad U^+_\ast \eps_{\DK} = \eps^+_{\DK} U^+_\ast
\end{split}
\]
%\end{equation}
 
\subsubsection{Comparing $K_\bl(-)$ with $\Wbar{(-)}$} \label{sec:K-vs-Wb}
Let us record here the following basic fact, e.g.\ \cite[\Sec V.1]{GJ}, for constructing morphisms between simplicial objects in a bicomplete category $\cC$.
We will use it frequently throughout the remainder of the paper.
\begin{lemma}\label{lem:sk}
Let $X_\bl \in s\cC$ and $f \maps \sk_n X_\bl \to Y_\bl$ a morphism.
Suppose $N \in \cC$ such that $(\sk_n X)_{n+1} \coprod N = X_{n+1}$. Then extensions of $f$ to a morphism $g \maps \sk_{n+1} X_\bl \to Y_\bl$ are in one to one correspondence with morphisms $\ti{f} \maps N \to Y_{n+1}$ in $\cC$ such that 
\begin{equation} \label{eq:sk}
d^Y_{i} \circ \ti{f} = f \circ d^{X}_{i} \vert_{N}
\end{equation}
for all $0 \leq i \leq n$.  
\end{lemma}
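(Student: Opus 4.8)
The plan is to reduce the extension problem to its only nontrivial simplicial degree and then to match the data there against the asserted face compatibilities. First I would recall the structure of the skeletal filtration: the subobjects $\sk_n X_\bl$ and $\sk_{n+1} X_\bl$ agree in every degree $\leq n$, while in degrees $> n+1$ every simplex of $\sk_{n+1} X_\bl$ is degenerate, hence forced by the degeneracy operators. Consequently an extension $g \maps \sk_{n+1} X_\bl \to Y_\bl$ of $f$ is completely determined by its single new component $g_{n+1} \maps X_{n+1} \to Y_{n+1}$, and the whole argument comes down to characterizing which morphisms $g_{n+1}$ assemble, together with $f$, into a morphism of simplicial objects.

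Next I would exploit the given decomposition $X_{n+1} = (\sk_n X)_{n+1} \coprod N$. The summand $(\sk_n X)_{n+1}$ is exactly the degenerate part of $X_{n+1}$, i.e.\ the union of the images of the degeneracy operators $s_j \maps X_n \to X_{n+1}$. On this summand the requirement that $g$ commute with degeneracies forces $g_{n+1}$ to coincide with the map determined by $f$, namely $s^Y_j \circ f_n$ on the image of $s_j$; and there the face compatibilities are automatic, being consequences of the simplicial identities together with the fact that $f$ is already simplicial in degrees $\leq n$. Thus the only free datum is the restriction $\ti{f} := g_{n+1}\vert_N \maps N \to Y_{n+1}$, and since $g_n = f_n$, the commutation of $g$ with the face operators $d^X_i \maps X_{n+1} \to X_n$ restricts on $N$ to precisely the displayed system $d^Y_i \circ \ti{f} = f \circ d^X_i\vert_N$. (Here I would flag, as a point to reconcile with the statement, that commuting with \emph{every} face operator naturally indexes these equations by the faces of an $(n+1)$-simplex, so I would double-check the exact range on $i$ against the paper's conventions.)

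For the converse I would, given $\ti{f}$ satisfying these equations, define $g_{n+1}$ on $(\sk_n X)_{n+1}$ by the degeneracy-forced formula and on $N$ by $\ti{f}$, then extend to all higher degrees by degeneracies, and verify that the result is a well-defined simplicial morphism. This verification is the step I expect to be the main obstacle: one must check that the value assigned to a degenerate simplex does not depend on the particular degeneracy operator exhibiting it — a simplex may be degenerate in several ways — which is exactly where the simplicial identities among the $s_j$ and the hypothesis that $f$ commutes with degeneracies are indispensable, and one must confirm that face and degeneracy compatibility propagate consistently into degrees $> n+1$. All of this is routine simplicial-identity bookkeeping, but it is the technical heart of the lemma. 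To package the consistency checks cleanly I would phrase the whole thing through the natural pushout presentation of $\sk_{n+1} X_\bl$ built from $\sk_n X_\bl$ and the new cells $N$ (equivalently, via the adjunction $\sk_{n+1} \dashv \cosk_{n+1}$ in the bicomplete category $\cC$), so that the universal property delivers the claimed bijection directly once the degree-$(n+1)$ analysis above is in hand.
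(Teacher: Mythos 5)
The paper does not actually prove this lemma; it records it as a known fact and defers to Goerss--Jardine \cite[\Sec V.1]{GJ}, so there is no in-paper argument to compare against. Your proposal is the standard proof and is correct: an extension to $\sk_{n+1}X_\bl$ is forced on the degenerate summand $(\sk_n X)_{n+1}$ by compatibility with degeneracies, is determined on higher degrees by degeneracies, and the only free datum is $\ti{f}=g_{n+1}\vert_N$ subject to the face compatibilities; the well-definedness of the degeneracy-forced extension (a simplex can be degenerate in several ways) is exactly what the pushout presentation $\sk_{n+1}X_\bl \cong \sk_n X_\bl \sqcup_{\partial\Delta^{n+1}\otimes N}(\Delta^{n+1}\otimes N)$ packages, and phrasing the bijection through that universal property, as you propose, is the cleanest route.

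The one substantive point is the index range, which you rightly flag rather than gloss over. The face operators out of degree $n+1$ are $d_0,\ldots,d_{n+1}$, so the compatibility conditions \eqref{eq:sk} must be imposed for all $0\leq i\leq n+1$; the top face $d_{n+1}$ is not implied by the others together with the simplicial identities. The stated range $0\leq i\leq n$ appears to be an off-by-one slip in the lemma: every application in the paper (e.g.\ the proofs of Prop.\ \ref{prop:K-Wbar-vect} and Prop.\ \ref{prop:themap}) in fact verifies compatibility with the full set of faces, checking $d_i$ for all $i\geq 1$ as well as $d_0$. With the range corrected to $0\leq i\leq n+1$, your argument goes through without further change.
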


Given a non-negatively graded vector space $V \in \gVect$, let $\bs V$ denote the graded vector space $(\bs V)_i:=V[1]_{i}=V_{i-1}$. For a complex $(V,d)$, our convention\footnote{This aligns with the standard conventions for suspension found in the literature for $L_\infty$-algebras.} for the induced differential on $\bs V$ is 
\[
d_{\bs V}(\bs v):= \bs dv.
\] 
A simplicial vector space $V_\bl$ is, in particular, a simplicial abelian group. The reduced simplicial vector space $\Wbar{(V_\bl)}$ can be characterized in terms of the suspension of the normalized complex of $V_\bl$.

\begin{proposition} \label{prop:K-Wbar-vect}
Let $V_\bl \in \sVect$. There is an isomorphism of simplicial vector spaces
\[
\ph_V \maps K_\bl( \bs N_\ast V_\bl) \xto{\cong} \Wbar{(V)}
\]
natural in $V$  whose induced isomorphism on normalized complexes is the chain map
\[
N_n(\ph_V)(\bs x) = (-1)^n(x,0,\ldots,0) + (-1)^{n-1}(0,d_0 x, 0, \ldots,0) \in 
N_n\Wbar{(V)}.
\] 
\end{proposition}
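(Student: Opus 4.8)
The plan is to reduce the statement to the construction of a natural chain isomorphism and then transport it across the Dold--Kan equivalence. Since $V_\bl$ is a simplicial vector space, its classifying space $\Wbar{V}$ is again a simplicial vector space (the $\cW$-construction \eqref{eq:W} and the quotient \eqref{eq:piW} are assembled from additive data), so both $K_\bl(\bs N_\ast V)$ and $\Wbar{V}$ lie in $\sVect$. Because $N_\ast \adj K_\bl$ is an adjoint equivalence, producing a natural isomorphism $\ph_V \maps K_\bl(\bs N_\ast V) \xto{\cong} \Wbar{V}$ is the same as producing a natural chain isomorphism $\psi_V \maps \bs N_\ast V \xto{\cong} N_\ast \Wbar{V}$; moreover, under the counit $\eps_{\DK} \maps N_\ast K_\bl(\bs N_\ast V) \cong \bs N_\ast V$ the induced map $N_\ast(\ph_V)$ is identified with $\psi_V$. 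Thus it suffices to exhibit $\psi_V$, given in degree $n$ by the stated formula $\psi_V(\bs x) = (-1)^n(x,0,\ldots,0) + (-1)^{n-1}(0,d_0 x,0,\ldots,0)$ for $x \in N_{n-1}(V)=(\bs N_\ast V)_n$, and to check that it is a well-defined natural chain isomorphism.

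First I would make the target explicit. Specializing \eqref{eq:W} and \eqref{eq:piW} to the additive group $V_\bl$ (so $\ast=+$, $e=0$, and the augmentation is zero) gives $\Wbar_n V = V_{n-1}\times \cdots \times V_0$ with induced face maps: $\bar d_0$ deletes the leading coordinate, $\bar d_1(v_{n-1},\ldots,v_0)=(d_0 v_{n-1}+v_{n-2},v_{n-3},\ldots,v_0)$, and for $1<i<n$ and for $i=n$ the analogous formulas coming from \eqref{eq:W}. I would then identify the normalized complex $N_n\Wbar{V}=\bigcap_{i=1}^n\ker \bar d_i$. The key observation is that the single equation $\bar d_1 w=0$ already forces any $w=(v_{n-1},\ldots,v_0)\in N_n\Wbar{V}$ to have the shape $(v_{n-1},-d_0 v_{n-1},0,\ldots,0)$, while the leading coordinate of each remaining equation $\bar d_i w=0$ for $2\le i\le n$ reads $d_{i-1}v_{n-1}=0$ and therefore forces $v_{n-1}\in N_{n-1}(V)$. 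Conversely, a direct check using the simplicial relation $d_j d_0 = d_0 d_{j+1}$ and $d_0^2=0$ on $N_\ast V$ shows that every such $w$ does lie in $N_n\Wbar{V}$. This yields a natural identification $N_n\Wbar{V} \cong N_{n-1}(V)$ via the leading-coordinate projection, which is exactly what makes $\psi_V$ an isomorphism.

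With the target pinned down, the verifications for $\psi_V$ are short. Well-definedness (that $\psi_V(\bs x)$ lands in $N_n\Wbar{V}$) is the computation just described, using $d_j x=0$ for $1\le j\le n-1$, the identity $d_j d_0=d_0 d_{j+1}$, and $d_0^2=0$. That $\psi_V$ is a chain map amounts to checking $\bar d_0\,\psi_V(\bs x)=\psi_V(\bs(d_0 x))$: the left side deletes the leading coordinate, giving $((-1)^{n-1}d_0 x,0,\ldots,0)$, while the right side equals $((-1)^{n-1}d_0 x,(-1)^{n-2}d_0^2 x,0,\ldots,0)=((-1)^{n-1}d_0 x,0,\ldots,0)$ since $d_0^2=0$; here I use the paper's suspension convention $d_{\bs}(\bs v)=\bs(dv)$, so no extra sign appears. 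Injectivity is immediate from the leading coordinate $(-1)^n x$, and surjectivity onto $N_n\Wbar{V}$ is the identification of the previous paragraph. Naturality in $V$ is clear since the formula involves only $d_0$ and scalars. Finally, setting $\ph_V := \eta_{\DK}^{-1}\circ K_\bl(\psi_V)$ gives the desired natural simplicial isomorphism, with $N_\ast(\ph_V)$ identified with $\psi_V$ via $\eps_{\DK}$, recovering the stated formula.

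The main obstacle is the bookkeeping of the middle two paragraphs: correctly specializing the homogeneous $\cW$-face maps to the additive case and then pinning down $N_n\Wbar{V}$ as the intersection of the kernels $\ker \bar d_i$. Once one notices that $\bar d_1$ alone determines the shape of a normalized cycle and that the remaining face maps only impose the normalization condition $v_{n-1}\in N_{n-1}(V)$, the signs in $\psi_V$ and the chain-map identity drop out mechanically from $d_0^2=0$ and the simplicial identities.
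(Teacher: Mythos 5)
Your argument is correct, and it reaches the same formula by a genuinely different route. The paper first maps into the \emph{inhomogeneous} construction $\Wbarin{(V)}$, where the normalized $n$-simplices are exactly the tuples $(v,0,\ldots,0)$ with $v\in N_{n-1}(V)$, builds $\ph'_V$ by skeletal induction via Lemma \ref{lem:sk} with the single-term formula $(-1)^{n-1}(x,0,\ldots,0)$, and only then transports through the comparison isomorphism $\phi_{\Wb}$ of Lemma \ref{lem:Wbarinh} --- which is where the two-term expression $(-1)^n(x,0,\ldots,0)+(-1)^{n-1}(0,d_0x,0,\ldots,0)$ appears. You instead work directly with the homogeneous $\Wbar{(V)}$, compute $N_n\Wbar{(V)}$ by hand (your observation that $\ker \bar d_1$ already forces the shape $(v_{n-1},-d_0v_{n-1},0,\ldots,0)$ and that the remaining $\bar d_i$ only impose $v_{n-1}\in N_{n-1}(V)$ is correct, and the converse check does reduce to $d_jd_0=d_0d_{j+1}$ and $d_0^2=0$ on $N_\ast$), and then transport the chain isomorphism $\psi_V$ across the Dold--Kan equivalence rather than inducting over skeleta. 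Your approach is more self-contained and avoids $\Wbarin$ entirely; the paper's choice is dictated by economy elsewhere, since $\Wbarin$ and the explicit $\phi_{\Wb}$ are reused in Prop.\ \ref{prop:Winhom}, Cor.\ \ref{cor:prim-iso}, and Cor.\ \ref{cor:prim-map}, so concentrating the sign bookkeeping in $\phi_{\Wb}$ pays off later. One cosmetic point: in your final identification $N_\ast(\ph_V)=\psi_V$ you should say it holds up to the canonical counit identification $\eps_{\DK}\maps N_\ast K_\bl(\bs N_\ast V)\cong \bs N_\ast V$ (which is the identity on non-degenerate simplices), as you implicitly do; with that made explicit the proof is complete.
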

\begin{proof}
We apply Lemma \ref{lem:Wbarinh} and first exhibit a natural isomorphism 
\[
\ph'_V \maps K_\bl( \bs N_\ast V) \xto{\cong} \Wbarin{(V)}
\]
into the inhomogeneous $\Wb$ construction \eqref{eq:Winh}. From Eq.\ \ref{eq:simp-K}, we see that the subspace of non-degenerate $n$-simplices of $K_\bl( \bs N_\ast V)$ is $\bs N_n(V)$. We will show that $\ph'_V$ is the unique simplicial map whose restriction is
\begin{equation} \label{eq:K-Wbar-vect1}
{\ph'_V}\vert_{\bs N_n(V)}(\bs x):= (-1)^{n-1}(x,0,\ldots,0) \in \Wbin_n(V).
\end{equation}
Proceed by induction, and assume that Eq.\ \ref{eq:K-Wbar-vect1} induces a well-defined morphism $\ph'_V$ on the $n$-skeleton of $K_\bl( \bs N_\ast V)$. We extend to $\sk_{n+1}$ by showing ${\vph_V}\vert_{\bs N_{n+1}(V)}$ satisfies the conditions \eqref{eq:sk} of Lemma \ref{lem:sk}. If $i \geq 1$, then $d_i\vert_{N_{n+1}(V)} =0$, and $d^{\Wbin}_i(x,0,\ldots,0) =0$ by definition of the face maps \eqref{eq:Winh-maps}. For the zeroth face map, writing $d^{\Wbin}_0$ additively, gives 
$d^{\Wbin}_0(x,0,\ldots,0) = -d_{\bs N(V)}x$. Hence, we have ${\ph'_V}_n(d_0 \bs x) = d^{\Wbin}_0{\ph'_V}_{n+1}(\bs x)$. The simplicial map $\ph'_V$ is an isomorphism since, by definition of the face maps \eqref{eq:Winh-maps}, we have the equality 
\[
N_n\Wbarin(V) = \{ (v,0,\ldots,0) \in \Wbin_n(V) \st \ v \in N_{n-1}(V_\bl)\}.
\]    
for all $n \geq 0$. Finally, define  $\ph_V \maps K_\bl( \bs N_\ast V) \xto{\cong} \Wbar{(V)}$ as the composition $\phi_{\Wb} \cc \ph'_V$,
where $\phi_{\Wb} \maps \Wbarin{(V)} \xto{\cong} \Wbar{(V)}$ is the natural isomorphism from Lemma \ref{lem:Wbarinh}. Let $\bs x \in \bs N_n(V_\bl)$.
Writing $\phi_{\Wb}$ additively, we conclude that
\[
\begin{split}
N_n(\ph_V)(\bs x) & = (-1)^{n-1}\phi_{\Wb}((x,0,\ldots,0))\\ 
&= (-1)^n(x,0,\ldots,0) + (-1)^{n-1}(0,d_0 x, 0, \ldots,0). 
\end{split}
\]  
\end{proof}

\subsection{Simplicial Dold-Kan adjunction for coalgebras} \label{sec:sim-dg-coalg}
Let $\dgcocom$ denote the category of homologically graded conilpotent counital cocommutative dg coalgebras concentrated in non-negative degrees. Given such a coalgebra $(C,d_C,\Delta_C)$, we construct a cocommutative comultiplication 
\[
\Delta_{K_\bl C} \maps K_\bl(C) \to K_{\bl}(C) \tensor K_\bl(C)
\]
in the following way. Keeping in mind the decomposition \eqref{eq:simp-K}, if $x \in K_n(C)=C_n$ is a non-degenerate $n$-simplex then define
\begin{equation} \label{eq:K-comult}
\Delta_{K_nC}(x):= (\EM_{KC,KC})_n \circ \Delta_C(x) \in K_n(C) \tensor K_n(C).
\end{equation}
If $x = s_{j_\el} s_{j_{\el -1}} \cdots s_{j_1} c$ is a degenerate $n$-simplex with $c \in C_{n-\el}$ then we necessarily define
\[
\Delta_{K_nC}(x):= (s_{j_\el} s_{j_{\el -1}} \cdots s_{j_1} \tensor s_{j_\el} s_{j_{\el -1}} \cdots s_{j_1}) \Delta_{K_{n-\el}C}(c). 
\]
Since $\EM_{KC,KC}$ is both a symmetric and monoidal natural transformation, $\Delta_{KC}$ is a cocommutative and coassociative comultiplication. It is clear from the explicit construction that, level-wise, each counital coalgebra $(K_nC,\Delta_{K_n C})$ is conilpotent with coaugmentation $\kk = K_n(\kk) \to K_nC$. As a result we have defined a functor
%\begin{equation} \label{eq:K-coalg}
\[
K^{\coalg}_\bl \maps \dgcocom \to \scocom.
\]
%\end{equation}  

Transporting the observations of Schwede-Shipley on the monoidal Dold-Kan correspondence \cite{SS} over to coalgebras demonstrates that $N_\ast$ is not an adjoint of $K^\coalg_\bl$, nor is the unit $\eta_\DK$ a comonoidal transformation. On the other hand, we have   
\begin{lemma}\label{lem:coalg-adjoint}
The natural isomorphism $\eps_\DK \maps N_\ast K_\bl \to \id_{\Ch_\ast}$ lifts to a comonoidal transformation $\eps^{\coalg}_{\DK} \maps N_\ast K^{\coalg}_\bl \to \id_{\cat{dgCoCom}}$, and the functor $K^{\coalg}_\bl(-)$ is fully faithful. Furthermore, 
$K^{\coalg}_\bl(-)$ preserves small colimits. 
\end{lemma}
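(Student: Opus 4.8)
The plan is to treat the three assertions in turn, the first being the crux. First I would give $N_\ast$ its \emph{oplax} (comonoidal) symmetric monoidal structure supplied by the Alexander--Whitney map $\AW_{X,Y} \maps N_\ast(X \tensor Y) \to N_\ast(X) \tensor N_\ast(Y)$, the standard fact dual to the lax structure coming from $\EM$. Because oplax monoidal functors carry comonoids to comonoids, $N_\ast$ sends the simplicial coalgebra $K^\coalg_\bl(C)$ to the dg vector space $N_\ast K_\bl(C)$ equipped with the comultiplication $\AW \cc N_\ast(\Delta_{K^\coalg C})$. I would then show that the Dold--Kan counit $\eps_\DK \maps N_\ast K_\bl(C) \xto{\cong} C$ intertwines this comultiplication with $\Delta_C$. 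The key computational input is the classical identity $\AW \cc \EM = \id$ (the shuffle map is a one-sided section of the Alexander--Whitney map). Indeed, under the canonical identification $N_n K_\bl(C) = C_n$ of non-degenerate $n$-simplices, $\Delta_{K^\coalg C}$ restricts to $(\EM_{KC,KC})_n \cc \Delta_C$ by the construction \eqref{eq:K-comult}; applying $N_\ast$ and then post-composing with $\AW$ therefore returns $\AW \cc \EM \cc \Delta_C = \Delta_C$. Hence $\eps_\DK$ is an isomorphism of dg coalgebras, which simultaneously shows that $N_\ast K^\coalg_\bl(C)$ is cocommutative, conilpotent, and counital (being isomorphic to $C \in \dgcocom$) and exhibits $\eps_\DK$ as the desired comonoidal natural transformation $\eps^\coalg_\DK \maps N_\ast K^\coalg_\bl \to \id_{\cat{dgCoCom}}$; naturality is inherited from that of $\eps_\DK$, $\EM$, and $\AW$.

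For full faithfulness I would leverage the underlying Dold--Kan equivalence together with $\eps^\coalg_\DK$. Faithfulness is immediate, since $U_\bl K^\coalg_\bl = K_\bl U'_\ast$ over the underlying (co)simplicial/dg vector spaces and $K_\bl$ is faithful, where $U_\bl$ and $U'_\ast$ denote the forgetful functors to $\sVect$ and $\Ch_\ast$. For fullness, let $g \maps K^\coalg_\bl(C) \to K^\coalg_\bl(D)$ be a morphism of simplicial coalgebras; since $K_\bl$ is full there is a unique chain map $f \maps C \to D$ with $K_\bl(f) = g$, and naturality of $\eps_\DK$ gives $f = \eps_{\DK,D} \cc N_\ast(g) \cc \eps_{\DK,C}^{-1}$. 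Now $N_\ast(g)$ is a morphism of dg coalgebras because $N_\ast$ is oplax monoidal (via $\AW$) and hence preserves comonoid morphisms, while $\eps^\coalg_{\DK}$ is a coalgebra isomorphism by the previous paragraph; therefore $f$ is a composite of dg coalgebra morphisms, hence itself one. Thus every such $g$ equals $K^\coalg_\bl(f)$ for a morphism $f$ in $\dgcocom$.

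Finally, colimit preservation I would deduce from the comonoidal bookkeeping. The forgetful functors $U'_\ast \maps \dgcocom \to \Ch_\ast$ and $U_\bl \maps \scocom \to \sVect$ create, in particular preserve, small colimits --- this being the standard fact that the forgetful functor from comonoids in a monoidal category whose tensor product commutes with colimits does so --- and moreover $U_\bl$ is conservative. Given a small diagram $X \maps I \to \dgcocom$, the canonical comparison $\colim_I K^\coalg_\bl X \to K^\coalg_\bl(\colim_I X)$ becomes, after applying $U_\bl$, the composite of isomorphisms $\colim_I U_\bl K^\coalg_\bl X = \colim_I K_\bl U'_\ast X \cong K_\bl \colim_I U'_\ast X \cong K_\bl U'_\ast \colim_I X = U_\bl K^\coalg_\bl \colim_I X$, using that $K_\bl$ (an equivalence) and $U'_\ast$ preserve colimits. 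Since $U_\bl$ is conservative, the comparison map is an isomorphism, so $K^\coalg_\bl$ preserves small colimits.

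The main obstacle is the first assertion: pinning down precisely which monoidal structure ($\EM$ versus $\AW$) plays which role, and verifying through $\AW \cc \EM = \id$ that the $\EM$-built comultiplication on $K^\coalg_\bl(C)$ is recognized by the $\AW$-induced comonoidal structure on $N_\ast$. Once $\eps^\coalg_\DK$ is in hand, the remaining two statements are essentially formal consequences of the Dold--Kan equivalence and the standard colimit-creation property of the forgetful functors from comonoids.
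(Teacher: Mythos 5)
Your proposal is correct and takes essentially the same route as the paper, whose proof simply cites ``reversing the arrows'' in Schwede--Shipley's Lemma 2.11 and Proposition 2.13 for the first two assertions and the creation of colimits by the forgetful functors for the third. Your write-up just carries out that dualization explicitly: the $\AW$ map as the comonoidal structure on $N_\ast$, the identity $\AW \cc \EM = \id$ on normalized chains to identify the induced comultiplication with $\Delta_C$, and the doctrinal-adjunction-style argument for fullness.
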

\begin{proof}
The first two assertions follow by reversing the arrows in the proofs of \cite[Lem.\ 2.11]{SS} and \cite[Prop.\ 2.13]{SS}, respectively. Colimits in $\dgcocom$ (resp.\ $\scocom$), are created by the forgetful functors to $\Chain$ (resp.\ $\sVect$). Hence, since $K_\bl \maps \Chain \to \sVect$ is an equivalence, $K^\coalg_\bl$ preserves colimits.
\end{proof}

\subsubsection{Chevalley-Eilenberg versus simplicial universal enveloping} \label{sec:TheMap}

Let $(\g_\bl,\brac_\bl)$ be a simplicial Lie algebra. Its normalized complex 
% \[
% L:=N_\ast\g_\bl,
% \]
is a dg Lie algebra $(N_\ast\g_\bl, d^\g_0, \Brac)$ where 
%\begin{equation} \label{eq:Liebrac}
\[
\bbrac{x}{y}:= \brac \cc \EM_{\g,\g}(x \tensor y).  
\]
%\end{equation}

Let $\CE_\ast(N \g_\bl):=\bigl(\codgSym(\bs N \g_\bl),\del_\CE \big) \in \dgcocom$ denote the Chevalley-Eilenberg coalgebra of $N_\ast\g_\bl$. % Here we use $S(\bs N_\ast\g_\bl)$ to denote the graded cofree cocommutative coalgebra generated by $\bs N_\ast\g_\bl$.
Recall, e.g.\ \cite[\Sec 2.5]{R}, that the codifferential is uniquely determined by its restrictions to length 1 and length 2 tensors followed by projection to primitives: $(\del_\CE)^1_1 \maps \bs N_\ast\g_\bl \to \bs N_\ast\g_\bl$, and $(\del_\CE)^1_2 \maps \Sym^2_\ast(\bs N \g_\bl) \to \bs N_\ast\g_\bl$, respectively. In this case, we have 
\begin{equation} \label{eq:CE-diff}
(\del_\CE)^1_1(\bs x) = \bs d^\g_0 x, \qquad (\del_{\CE})^1_2(\bs x \, \bs y) = (-1)^{\deg{x} - 1} \bs \bbrac{x}{y}.
\end{equation}
Let $(\cU(\Ng) ,d_{\cU})$ denote the dg cocommutative coalgebra underlying the universal enveloping algebra of $\Ng$. As in \cite[Thm. 2.3]{Quillen:RHT}, the PBW map is an isomorphism of dg coalgebras
%\begin{equation} \label{eq:pbw-dg}
\[
\begin{split}
\pbw_{\ast} \maps \codgSym(N\gb) &\xto{\iso} \cU(\Ng), \\
\pbw_{\ast}(x_1 x_2\cdots x_k) &:=  \frac{1}{k!} \sum_{\si \in S_n} \eps(\si) x_{\si(1)} \mdot x_{\si(2)} \mdot \, \cdots \, \mdot x_{\si(k)}.
\end{split}
\]
%\end{equation} 

\subsubsubsection{Comparing $\cU(\Ng)$ with $\cU(\gb)$}
We construct a morphism in $\sconil$ between the simplicial coalgebra $K^\coalg_\bl \cU(N \g)$ and the simplicial universal enveloping algebra $\cU(\g_\bl)$. Let $\pr_{N\g} \maps \codgSym(N\gb) \to \Ng$ in $\Chain$ denote the projection to primitives. Let us consider the morphism in $\sVect$:
\[
f \maps K_\bl\cU(N_\ast\g) \to K_\bl N_{\ast} \g = \g_\bl, \quad f:= K_\bl( \pr_{N\g} \cc \pbw^{-1}_{\ast}).
\] 
Let $F \maps K^\coalg_\bl\cU(N_{\ast}\g) \to \Sym^{\coalg}_{\bl}(\g)$ denote the unique simplicial coalgebra morphism such that $\pr_{\g_\bl}F =f$.  Composing this with simplicial PBW isomorphism, e.g.\ Eq.\ \ref{eq:pbw} gives the desired morphism in $\sconil$
\begin{equation} \label{eq:rho}
\rho \maps K^\coalg_\bl\cU(N_{\ast}\g) \to \cU(\g_\bl), \qquad \rho:= \pbw_\bl \cc F.
\end{equation}  

\begin{lemma}\label{lem:rho}
Let $y \in N_p(\g) \sse \g_p$ and $x \in N_q(\g) \sse \g_q$. Denote by $y \mdot \, x \in \cU(N\g)_{p+q}$ their product in the dg Hopf algebra $\cU(N_\ast\g)$. Then $\rho(y) = y  \in \cU(\g_p)$ and
\begin{equation} \label{eq:rho-xy}
\rho(y \mdot x) = \hspace{-2ex} \sum_{(I,J) \in \Sh(p,q)} \hspace{-3ex} \sgn(I,J) \, s^\g_{J} y \ast s^{\g}_I x ~ \in \cU(\g_{p+q}),
\end{equation}
where $\ast \maps \cU(\g_\bl) \tensor \cU(\g_\bl) \to \cU(\g_\bl)$ denotes the product in the simplicial Hopf algebra $\cU(\g_\bl)$. 
\end{lemma}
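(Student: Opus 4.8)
The plan is to compute $\rho$ directly from its definition $\rho = \pbw_\bl \cc F$, where $F$ is the unique simplicial coalgebra map lifting $f = K_\bl(\pr_{N\g} \cc \pbw_\ast^{-1})$. The key structural fact I would exploit is that $F$, being a morphism of cocommutative coalgebras, is determined on a group-like or, more precisely, on the cofree coalgebra structure by its projection to cogenerators; and that $K^{\coalg}_\bl$ is built using the Eilenberg--Mac Lane shuffle map $\EM$ in the comultiplication \eqref{eq:K-comult}. So the product $y \mdot x$ in $\cU(N\g)_{p+q}$, which lives in the normalized piece, must be decomposed via the comultiplication of $\cU(N\g)$, pushed through $F$ (which on primitives is essentially $\pr_{N\g}\cc\pbw^{-1}_\ast$ followed by the degeneracy-summed shuffle structure of $K_\bl$), and finally symmetrized by $\pbw_\bl$.

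First I would treat the length-one case: for $y \in N_p(\g) \sse \g_p$, the element $y$ is primitive in $\cU(N\g)$, so $F(y) = f(y)$ landing in the cogenerators $\g_p$, and since $\pbw_\bl$ is the identity on primitives (degree-one symmetric tensors), $\rho(y) = y$. This is the easy base case and fixes the normalization. Next, for the product $y \mdot x$, I would use that in the dg Hopf algebra $\cU(N\g)$ the comultiplication of a product of two primitives is $\Delta(y\mdot x) = y\mdot x \tensor 1 + y \tensor x + (-1)^{\deg{y}\deg{x}} x \tensor y + 1 \tensor y \mdot x$ (with an appropriate Koszul sign), reflecting that $y\mdot x$ is primitive modulo the symmetric square. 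The crucial computation is to trace how $F$, defined via the shuffle-based comultiplication \eqref{eq:K-comult} on $K^{\coalg}_\bl$, converts the ``diagonal'' data of $y\mdot x$ into a sum over $(p,q)$-shuffles of degeneracy operators $s^\g_J y$ and $s^\g_I x$. Here the combinatorics of the shuffle map $\EM_{p,q}$ from \eqref{eq:EM}, which inserts complementary degeneracies $s_{j_q}\cdots s_{j_1}$ and $s_{i_p}\cdots s_{i_1}$ with sign $\sgn(I,J)$, exactly produces the summation on the right-hand side of \eqref{eq:rho-xy}.

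The main obstacle I anticipate is bookkeeping the signs and verifying that the symmetrization $\pbw_\bl$ at level $p+q$ interacts correctly with the shuffle signs $\sgn(I,J)$, so that no spurious symmetrization factors survive and the simplicial product $\ast$ in $\cU(\g_{p+q})$ appears rather than the symmetrized average. The point is that $F$ lands in the cofree coalgebra $\Sym^{\coalg}_\bl(\g)$ as a sum of \emph{separated} tensors $s^\g_J y \tensor s^\g_I x$ (one primitive from each factor placed into complementary degeneracy slots), and applying $\pbw_\bl$ to a length-two symmetric tensor $u\, v$ yields $\tfrac{1}{2}(u \ast v + v \ast u)$; the shuffle sum over ordered pairs $(I,J)$ together with its complement must then collapse this averaging into the single ordered product $s^\g_J y \ast s^\g_I x$ summed over shuffles. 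I would verify this by induction on the skeleta using Lemma \ref{lem:sk} to check the face-map compatibility \eqref{eq:sk}, reducing the claim to checking that both sides agree after applying $d_0$ (which brings in the dg differential and bracket via \eqref{eq:CE-diff}) and the positive faces $d_{i\geq 1}$; the latter vanish on normalized elements, localizing all the real content in the $d_0$ and shuffle-sign computation.
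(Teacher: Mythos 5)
Your overall strategy coincides with the paper's: both compute $\rho = \pbw_\bl \cc F$ directly, using the fact that the coalgebra morphism $F$ into the cofree coalgebra is determined by its corestriction $f$, so that $F(y\mdot x)$ is computed from the (reduced, iterated) diagonal of $y \mdot x$ in $K^{\coalg}_\bl(\cU(N_\ast\g))$, whose comultiplication \eqref{eq:K-comult} is built from the shuffle map and therefore produces exactly the terms $s^{\g}_J y \tensor s^{\g}_I x$ with signs $\sgn(I,J)$. The length-one case is handled identically in both arguments, and you correctly isolate the crux: the $\tfrac{1}{2}(u\ast v + v \ast u)$ averaging that $\pbw_\bl$ applies to a length-two word must collapse to the single ordered product $s^{\g}_J y \ast s^{\g}_I x$.

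There are, however, two concrete gaps. First, the collapse is not achieved by shuffle-sign bookkeeping on the cross terms $y\tensor x$ and $x\tensor y$ alone. In the explicit formula $F(z)=\sum_k f^{(k)}\cc \ov{\diag}_k(z)$ the $k=1$ term $f(y\mdot x)=\pr_{N\g}\pbw^{-1}_{\ast}(y\mdot x)=\tfrac{1}{2}\bbrac{y}{x}$ is nonzero (since $y\mdot x$ is primitive only modulo the symmetric square), and it is precisely this length-one contribution that cancels the commutator defect left over after symmetrizing the length-two part. The paper makes this work via the identities $y\mdot x = \pbw_\ast(yx)+\tfrac{1}{2}\pbw_\ast(\bbrac{y}{x})$ and $u\ast v = \pbw_\bl(uv)+\tfrac{1}{2}\pbw_\bl([u,v])$, together with the fact that $\bbrac{y}{x}$ is itself defined through $\EM_{\g,\g}$, so the two commutator corrections match shuffle term by shuffle term; your plan never accounts for the $k=1$ term at all. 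Second, your fallback verification by induction on skeleta via Lemma \ref{lem:sk}, checking agreement after applying $d_0$ and $d_{i\geq 1}$, is not applicable here: that lemma constructs or extends simplicial \emph{morphisms}, whereas \eqref{eq:rho-xy} is a pointwise identity between two specific elements of $\cU(\g_{p+q})$, and two such elements can have identical faces without being equal. The identity has to be established by carrying out the direct computation, not by a face-map compatibility check.
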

\begin{proof}
Let $n=p+q$. We recall the explicit formula, e.g.\ \cite[Lem.\ 22.1]{FHT}, for $F$. For $z \in K^{\coalg}_n(\cU(\Nag))$ and $z \notin \kk$,  
\begin{equation} \label{eq:FHT}
F(z) = \sum_{k=1}^\infty f^{(k)} \cc \ov{\diag}_k(z),
\end{equation}
where $f^{(k)} \maps K_{n}(\cU(\Nag))^{\tensor k} \to {\Sym^k}(\g_n)$ is the linear map
\[
\begin{split}
&f^{(k)}(z_1 \tensor \cdots \tensor z_k):= \frac{1}{k!} f(z_1)f(z_2) \cdots f(z_k).
\end{split}
\]
and $\ov{\diag}_k \maps K_{\bl}(\cU(\Nag)) \to K_{\bl}(\cU(\Nag))^{\tensor k}$ is the $k$th reduced iterated diagonal defined recursively, as in \eqref{eq:red_diag}:  
\[
\ov{\diag}_1(z) := z, \quad  \ov{\diag}_2(z) := \Delta_{K_\bl(\cU(N_\ast \g))}(z) - z \tensor 1 - 1 \tensor z. \quad \text{etc.}  
\]
Using Eq.\ \ref{eq:FHT}, along with the explicit formula \eqref{eq:K-comult} for $\Delta_{K_\bl(\cU(\Nag))}$ and the identities
\[
y \mdot x = \pbw_\ast(yx) + \frac{1}{2} \pbw_\ast(\bbrac{y}{x}), \quad u \ast v = \pbw_\bl(uv) + \frac{1}{2} \pbw_\bl([u,v])
\]
for $u,v \in \g_{n}$, we obtain by direct computation the desired equality \eqref{eq:rho-xy}.% for $\rho(y \mdot x)$.
\end{proof}

\subsubsubsection{The universal $\cU(\Ng)$ bundle over $\CE(\Ng)$}
To keep the notation manageable, let $S(\bs \Ng):= \codgSym(\bs \Ng)$ from here on.   
Let $(E\cU(\Ng),\del_E) \in \dgcocom$ denote the dg cocommutative coalgebra whose underlying graded coalgebra is the product
\[
E\cU(\Ng):= \cU(\Ng) \tensor S(\bs \Ng), 
\]
with the following codifferential \cite[p.\ 302]{FHT}: Given $z \in \cU(\Ng)$ and $w=\bs x_1 \bs x_2 \cdots \bs x_k \in S(\bs  \Ng)$, define
\begin{equation} \label{eq:EU-diff}
\begin{split}
\del_{E}(z \tensor w) &:= (-1)^{\deg{z}}(z \tensor 1)\cdot  (1 \tensor \del_{\CE} w) + d_{\cU} z \tensor w + (-1)^{\deg{z}}\theta(z \otimes w),\\
\theta(z \otimes w) &:= \sum_{i=1}^k (-1)^{\deg{w_i}\deg{x_i} + n_i} z \mdot x_i \tensor w_i,
\end{split}
\end{equation}
where 
\begin{equation} \label{eq:codiff-sign}
w_i:=\bs x_1 \bs x_2 \cdots \bs x_{i-1} \cdot 1_\kk \cdot \bs x_{i+1} \cdots \bs x_k, \qquad n_i := \deg{w} + \sum_{j=i+1}^k \deg{\bs x_j}\deg{\bs x_i}.
\end{equation}
We recall that $E \cU(\Ng)$ is an acyclic $\cU(\Ng)$ module \cite[Prop.\ 22.3; 22.6]{FHT} in $\dgcocom$, which sits in a fiber sequence of dg coalgebras
\[ 
\cU(\Ng) \to E\cU(\Ng) \fib \CE(\Ng),
\]
and, as shown in \cite[Prop.\ 6.2]{Quillen:RHT}, exhibits the universal principal $\cU(\Ng)$ dg coalgebra bundle over $\CE(\Ng)$. Crucially for what follows, the sequence splits in $\gcocom$:
\begin{equation} \label{eq:CE-split}
S(\bs \Ng) \xto{i_\ast} E\cU(\Ng), \qquad i_\ast(\bs x_1 \bs x_2 \cdots \bs x_k):=1 \tensor \bs x_1 \bs x_2 \cdots \bs x_k.
\end{equation}    

\subsubsubsection{Comparing $K^{\coalg}_\bl(E\cU(\Nag))$ with $\cW_\bl \cU(\g)$} \label{sec:TheMap2}
Denote by $\pi_\cU \maps E\cU(\Ng) \to \cU(\Ng)$ and $\pi_{S} \maps E\cU(\Ng) \to S(\bs \Ng)$ the canonical projections in $\gcocom$
%\begin{equation} \label{eq:gcocom-proj}
\[
\pi_{\cU}(z\tensor w): = \eps_S(w)z, \quad \pi_{S}(z\tensor w): = \eps_\cU(z)w, \quad 
\]
%\end{equation}
where $\eps_\cU \maps \cU(\Ng) \to \kk$ and $\eps_S \maps S(\bs \Ng) \to \kk$ are the respective counits. Applying the functor $K^{\coalg, +}_\bl:=U^{+}_\bl K^{\coalg}_\bl$ to these maps induces a morphism in $\spconil$ into the product of almost simplicial coalgebras
%\begin{equation} \label{eq:alp1}
\[
\begin{split}
\alpha \maps K^{\coalg, +}_\bl(E\cU(\Nag)) \to K^{\coalg, +}_\bl\cU(\Nag) \tensor K^{\coalg, +}_\bl (S(\bs \Nag)  )\\
\al(z \tensor w):= \bigl( K^{\coalg, +}_\bl(\pi_{\cU}) \tensor K^{\coalg, +}_\bl(\pi_{S}) \bigr) \Delta_{K_\bl E\cU(\Nag)}.
\end{split}
\]
%\end{equation}
Let $z \tensor w \in K^{\coalg, +}_{p+q}(E\cU(\Nag))$ denote a non-degenerate element with $z \in \cU(\Ng)_p$ and $w \in S(\bs \Ng)_q$. A direct computation using formula \eqref{eq:K-comult} for the comultiplication $\Delta_{K_\bl E\cU(\Nag)}$ implies that
%\begin{equation} \label{eq:alp2}
\[
\al(z \tensor w) = \hspace{-2ex} \sum_{(I,J) \in \Sh(p,q)} \hspace{-3ex} \sgn(I,J) s_{J} z \tensor s_{I} w.
\]
%\end{equation}   
Denote by
%\begin{equation} \label{eq:inc}
\[
\iota \maps \Kcoalgp_{\bl}(S(\bs \Nag)) \to \Kcoalgp_{\bl}(E\cU(\Nag))
\]
%\end{equation}
the inclusion of almost simplicial coalgebras induced by the inclusion $i_\ast$ in \eqref{eq:CE-split}. Also, in what follows, denote by
\[
\dE_{0} \maps \Kcoalg_{n}(E\cU(\Nag)) \to \Kcoalg_{n-1}(E\cU(\Nag))
\]
the zeroth face map of the simplicial coalgebra $\Kcoalg_{n}(E\cU(\Nag))$. 

Now we will iteratively construct a collection of coalgebra morphisms 
\begin{equation} \label{eq:vph-def}
\vph_n \maps \Kcoalg_{n}(E\cU(\Nag)) \to \cW_n \cU(\g) = \cU(\g_n) \tensor \cW_{n-1} \cU(\g) \quad n \geq 0,
\end{equation} 
where $\cW_\bl \cU(\g) \in \sconil$ denotes the total space for the universal $\cU(\gb)$-bundle described in Sec.\ \ref{sec:Wbar}. 
Our construction is similar to the one used by Cartan \cite[\Sec 5, Prop.\ 1]{Cartan} to compare the bar construction with normalized chains on the $\Wb$-construction. See also the exposition \cite[Thm.\ A.20]{Gug-May} by Gugenheim and May. To begin, since $\Kcoalg_{0}(E\cU(\Nag)) = E\cU(\Ng)_0 = \cU(\g_0) \tensor \kk$, define $\vph_0(z \tensor 1) = z$, and for $n \geq 1$:
\begin{equation} \label{eq:vph}
\begin{split}
\vph_n &:= \bigl( \ro_n \tensor (\vph_{n-1} \cc \dE_{0} \cc \iota_n) \bigr) \cc \al_n\\
\vph_n(z \tensor w) &= \hspace{-2ex} \sum_{(I,J) \in \Sh(p,q)} \hspace{-3ex} \sgn(I,J) ( \rho_n(s_J z) \tensor 1) \cdot \si_{n-1}
\vph_{n-1}\dE_{0}\iota_n(s_I w).
\end{split}
\end{equation}
Above, $\rho$ is the morphism of simplicial coalgebras \eqref{eq:rho}, $\si_{n-1} \maps \cW_{n-1}\cU(\g) \to \cW_{n}\cU(\g)$ 
is the extra degeneracy \eqref{eq:extra1}, and ``$\cdot$'' denotes the multiplication in the tensor product of associative algebras $\cU(\g_n) \tensor \cW_{n-1} \cU(\g)$.

For technical reasons, it will be useful to also consider the graded subcoalgebra $(\kk \semiop \Ng) \tensor S(\bs \Ng)$
which lies in between $S(\bs \Ng) = \kk \tensor S(\bs \Ng)$ and $\cU(\Ng) \tensor S(\bs \Ng)$. As in \eqref{eq:prim-adj}, $\kk \semiop \Ng$ is the graded vector space $\kk \dsum \Ng$ equipped with the obvious comultiplication. Let 
\begin{equation} \label{eq:j-inc}
\jm \maps \Kcoalgp_\bl\bigl( (\kk \semiop \Nag) \tensor S(\bs \Nag) \bigr) \to \Kcoalgp_\bl(E\cU(\Nag))
\end{equation}
denote the inclusion of almost simplicial coalgebras. For each $n \geq 0$, composition with \eqref{eq:vph-def} gives the coalgebra morphism
\begin{equation} \label{eq:tvph}
\tvp_{n} \maps \Kcoalg_n\bigl( (\kk \semiop \Nag) \tensor S(\bs \Nag) \bigr) \to \cW_n\cU(\g), \qquad \tvp_n:=\vphi_{n} \cc \jm.
\end{equation}

\begin{proposition} \label{prop:themap}
The collection of coalgebra morphisms $\{\tvp_n\}$ assembles into a morphism of almost simplicial coalgebras
\[
\tvp \maps \Kcoalgp_\bl\bigl( (\kk \semiop \Nag) \tensor S(\bs \Nag) \bigr) \to U^{+}_\bl\cW\cU(\g)
\]
which has the following property:
\begin{equation} \label{eq:map-prop}
d^{\cW}_0 \tvp_m = \vph_{m-1}\dE_0 \jm \qquad \forall m \geq 0.
\end{equation}
\end{proposition}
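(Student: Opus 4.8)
The plan is to verify two things: that each $\tvp_n$ is a morphism of coalgebras, that the collection commutes with the positive face maps $d_i$ ($i\geq 1$) and all degeneracies $s_j$ (the structure maps of $\Del_+^{\op}$), and separately to establish the identity \eqref{eq:map-prop} governing the zeroth face map $d^{\cW}_0$. The coalgebra-morphism property is immediate: each $\vph_n$ is built in \eqref{eq:vph} as a composite and tensor product of coalgebra morphisms ($\al_n$, $\ro_n$, $\iota_n$, $\dE_0$, $\si_{n-1}$, together with $\vph_{n-1}$, the last by induction), and $\jm$ is a coalgebra morphism, so $\tvp_n = \vph_n\cc \jm$ is one as well. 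The three remaining assertions will all be proven by a simultaneous induction on the simplicial degree $n$, feeding the recursion \eqref{eq:vph} into the extra-degeneracy identities \eqref{eq:extra2}--\eqref{eq:extra5} and the face recursion \eqref{eq:W-recurse}.

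I would establish \eqref{eq:map-prop} first, as it is the most direct consequence of the recursion. Unwinding \eqref{eq:vph} and using that $\si_{n-1}$ inserts the identity in the top slot (see \eqref{eq:extra1}) while $\ro_n(s_J z)\tensor 1$ is supported in the top slot, one checks that $\vph_n$ factors as $\sum_{(I,J)}\sgn(I,J)\,\ro_n(s_J z)\tensor \vph_{n-1}\dE_0\iota_n(s_I w)$ inside $\cU(\g_n)\tensor \cW_{n-1}\cU(\g)$. On such products the map $d^{\cW}_0$ is multiplicative — the two factors occupy disjoint slots, so no reordering occurs — and $d^{\cW}_0\si_{n-1}=\id$ by \eqref{eq:extra2}, which collapses the extra degeneracy. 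Combined with the fact that $d^{\g}_0$ is an algebra map of the simplicial Hopf algebra $\cU(\g_\bl)$, and with the explicit form \eqref{eq:EU-diff} of the differential $\dE_0=\del_E$ restricted to the subcoalgebra $(\kk\semiop\Nag)\tensor S(\bs\Nag)$ — where the first tensor factor is primitive, so that $\ro$ acts as the identity by Lemma \ref{lem:rho} — this reduces $d^{\cW}_0\tvp_m$ to exactly $\vph_{m-1}\dE_0\jm$.

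For the almost simplicial structure I would argue analogously. The degeneracy compatibility $s^{\cW}_j\tvp_n=\tvp_{n+1}s_j$ follows from pushing $s_j$ through \eqref{eq:vph} via the simplicial naturality of $\al$, $\ro$, $\iota$ together with the identities \eqref{eq:extra4} and \eqref{eq:extra5} relating the ordinary degeneracies of $\cW$ to the extra degeneracy $\si_\bl$. The positive face compatibility $d^{\cW}_i\tvp_n=\tvp_{n-1}d_i$ for $1\leq i\leq n$ is the heart of the matter: one pushes $d_i$ through the recursion using the $\cW$ face recursion \eqref{eq:W-recurse}, which splits $d^{\cW}_i$ into its action $d^{\g}_i$ on the top slot and the shifted face $d^{\cW}_{i-1}$ on the remainder, commutes $d^{\cW}$ past $\si_{n-1}$ using the identity \eqref{eq:extra3}, and then invokes the induction hypothesis for $\vph_{n-1}$.

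The main obstacle will be this last compatibility, because the shuffle decomposition $\al_n$ does not commute with the simplicial face operators on the nose: the interaction of the $(p,q)$-shuffle sum with $d_i$ requires the same Eilenberg--Zilber bookkeeping used in Lemma \ref{lem:rho}, and one must track the shuffle signs $\sgn(I,J)$ carefully through the recursion. The restriction to the subcoalgebra $(\kk\semiop\Nag)\tensor S(\bs\Nag)$ built into $\jm$ is precisely what tames this: confining the $\cU(\Nag)$-factor to $\kk\dsum\Nag$ forces $\ro$ to act as the identity on that factor (Lemma \ref{lem:rho}) and makes the potentially obstructing higher-order product terms in $\cU(\g_n)$ vanish, so that the remaining shuffle sums telescope to the desired identities.
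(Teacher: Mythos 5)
Your overall architecture --- a simultaneous induction on simplicial degree, driven by the recursion \eqref{eq:vph} together with the extra-degeneracy identities and the face recursion \eqref{eq:W-recurse} --- matches the paper's, and your observations about the coalgebra property and the degeneracy compatibilities are fine. But you have inverted where the difficulty sits, and in doing so you assert something false at the crux. The identity \eqref{eq:map-prop} is not ``the most direct consequence of the recursion'': on a degenerate simplex or on $1\tensor w$ it does collapse via $d^{\cW}_0\si_{n-1}=\id$ exactly as you say, but on a mixed normalized simplex $u\tensor w$ with $u\in N_p\gb$, $p\geq 1$, and $w=\bs x_1\cdots\bs x_k\in S(\bs\Nag)_q$, $q\geq 1$, the right-hand side $\vph_{m-1}\dE_0\jm(u\tensor w)$ contains the term $\theta(u\tensor w)=\sum_i\pm\, u\mdot x_i\tensor w_i$ from \eqref{eq:EU-diff}. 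These products $u\mdot x_i$ do \emph{not} vanish and do \emph{not} stay inside $(\kk\semiop\Nag)\tensor S(\bs\Nag)$; by Lemma \ref{lem:rho}, $\ro(u\mdot x_i)$ expands into a $(p,r_i)$-shuffle sum of $\ast$-products in $\cU(\g_\bl)$. Meanwhile the left-hand side $d^{\cW}_0\tvp_m(u\tensor w)$ produces, through the $\ast$-product built into $d^{\cW}_0$ and the $(p,q)$-shuffle sum in \eqref{eq:vph}, a doubly indexed shuffle sum of its own. Matching the two requires a genuine ``associativity of shuffles'' statement --- the sign-preserving bijection $\Sh(p+r_i,q_i)\times\Sh(p,r_i)\cong\Sh(p,q)_{0\in J}\times\Sh(r_i,q_i)$ of Prop.\ \ref{prop:shuf}, together with its codegeneracy compatibilities and the refinements of Cor.\ \ref{cor:shuf} --- and this is the bulk of the paper's proof (Claim \ref{lemclaim:BigMap2}, Sec.\ \ref{sec:LJB=RC}). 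Your proposal replaces this with the claim that ``the potentially obstructing higher-order product terms in $\cU(\g_n)$ vanish,'' which is not the case; without Prop.\ \ref{prop:shuf} or an equivalent, the argument does not close.

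By contrast, the positive-face compatibility that you single out as ``the heart of the matter'' is the easier half. The paper does not verify $d^{\cW}_i\tvp_n=\tvp_{n-1}d_i$ on all simplices directly: by the skeletal extension lemma (Lemma \ref{lem:sk}, in its almost-simplicial form) it suffices to treat normalized simplices of the domain, where $d_{i\geq 1}$ vanishes, so the condition reduces to showing $d^{\cW}_{i\geq 1}\tvp_{n+1}(u\tensor w)=0$ (Claim \ref{lemclaim:BigMap1}). That vanishing follows from a pairwise cancellation of shuffles $(I,J)$ with $r-1\in I$, $r\in J$ against their transposes, plus the normalization of $u$ and $w$ --- no deep combinatorics needed. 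I would encourage you to adopt this reduction, and then to confront the $\theta$-term honestly: the entire reason the paper develops Lemma \ref{lem:rho} and Appendix \ref{sec:shuf} is that those product terms survive and must be reorganized, not discarded.
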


We defer the proof of the proposition to Appendix \ref{sec:map-app}. 

\subsubsubsection{The comparison of $K^{\coalg}_\bl(\CE(\Nag))$ with $\Wbar \cU(\g)$}
Let 
%\begin{equation} \label{eq:pskel}
\[
\sk^{+}_nK_\bl \bigl( (\kk \semiop \Nag) \tensor S(\bs \Nag)\bigr) \in \spVect
\]
%\end{equation}
denote the $n$-skeleton of the almost simplicial vector space
$K^{+}_\bl \bigl( (\kk \semiop \Nag) \tensor S(\bs \Nag) \bigr)$. We arrive at a key theorem.
\begin{theorem}\label{thm:themap}
The composition of morphisms of almost simplicial coalgebras
%\begin{equation} \label{eq:themap-psi}
\[
\Kcoalgp_\bl(S(\bs \Nag)) \xto{\iota} \Kcoalgp_\bl(E\cU(\Nag)) \xto{\vph} U^{+}_\bl\cW \cU(\g) \xto{U^{+}(\pi^{\cW})} U^{+}_\bl\Wb{\cU(\g)},
\]
%\end{equation}
where $\pi^\cW$ denotes the quotient map \eqref{eq:piW}, lifts to a morphism of simplicial coalgebras 
\[
\psi \maps \Kcoalg_\bl(\CE(\Nag)) \to \Wbar{\cU(\g)}
\]
in $\sconil$.
\end{theorem}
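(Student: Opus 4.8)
The plan is to reduce the statement to a single compatibility with the zeroth face maps, and then to dispatch that using Proposition \ref{prop:themap} together with two structural observations about the universal bundle. First I would set $\psi_m := \pi^{\cW}_m \cc \vph_m \cc \iota_m$ and observe that $\psi$ is automatically a morphism of \emph{almost} simplicial coalgebras: $\iota$ and $\vph$ are almost simplicial (the latter by Prop.\ \ref{prop:themap}), while $\pi^{\cW}$ is even simplicial. Since the differential of a dg coalgebra enters the functor $\Kcoalg_\bl$ only through the zeroth face map (the comultiplication \eqref{eq:K-comult} does not see $d_C$), the almost simplicial coalgebra underlying $\Kcoalg_\bl(\CE(\Nag))$ is precisely $\Kcoalgp_\bl(S(\bs\Nag)) = U^+_\bl\Kcoalg_\bl(\CE(\Nag))$, and likewise on the target. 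Hence the whole theorem reduces to the one remaining identity $d^{\Wb}_0 \psi_m = \psi_{m-1} d^{\CE}_0$ for all $m$, where $d^{\Wb}_0$ is the zeroth face map of $\Wbar{\cU(\g)}$ and $d^{\CE}_0$ that of $\Kcoalg_\bl(\CE(\Nag))$ (induced by $\del_\CE$).

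Next I would compute the left-hand side. Factoring $\iota = \jm \cc \kappa$, where $\kappa$ is induced by the graded coalgebra inclusion $\kk \tensor S(\bs\Nag) \emb (\kk \semiop \Nag) \tensor S(\bs\Nag)$ and $\jm$ is \eqref{eq:j-inc}, we have $\vph \cc \iota = \tvp \cc \kappa$. Using that $\pi^{\cW}$ is simplicial and then the defining property \eqref{eq:map-prop} of $\tvp$,
\[
d^{\Wb}_0 \psi_m = \pi^{\cW}_{m-1}\, d^{\cW}_0 \tvp_m \kappa_m = \pi^{\cW}_{m-1} \vph_{m-1} \dE_0 \jm_m \kappa_m = \pi^{\cW}_{m-1} \vph_{m-1} \dE_0 \iota_m.
\]
Since $\psi_{m-1} d^{\CE}_0 = \pi^{\cW}_{m-1} \vph_{m-1} \iota_{m-1} d^{\CE}_0$, it remains to prove that $\pi^{\cW}_{m-1}\vph_{m-1}$ annihilates the defect $\Delta_m := \dE_0 \iota_m - \iota_{m-1} d^{\CE}_0$.

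I would then isolate two facts. (A) The map $\pi^{\cW}_n \cc \vph_n$ vanishes on the subspace $\Kcoalg_n(\cU(\Nag)_{\geq 1} \tensor S(\bs\Nag))$ of positive enveloping degree: in the recursion \eqref{eq:vph} the leading $\cU(\g_n)$-tensor factor of every summand of $\vph_n(z \tensor w)$ is $\rho_n(s_J z)$ (the extra degeneracy $\si_{n-1}$ places the unit in that slot), and $\rho$, being a morphism of coalgebras, preserves counits; so for $z$ in the augmentation ideal the leading factor has zero counit and is killed by $\pi^{\cW}$, which applies $\eps_{\cU(\g_n)}$ to exactly that slot. Degenerate simplices follow by the same computation together with $\pi^{\cW} s_i = s_i \pi^{\cW}$. (B) The defect $\Delta_m$ takes values in precisely this subspace, because the projection $\pi_S \maps E\cU(\Nag) \to S(\bs\Nag)$ is in fact a \emph{chain} map: applying $\eps_\cU \tensor \id$ to \eqref{eq:EU-diff} kills the $d_\cU$ term (counits are chain maps) and the connection term $\theta$ (whose enveloping components lie in the augmentation ideal), leaving $\del_\CE \cc \pi_S$. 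Thus $\Kcoalg_\bl(\pi_S)$ is simplicial, and since $\pi_S i_\ast = \id$ one gets $\Kcoalg(\pi_S)\Delta_m = d^{\CE}_0\Kcoalg(\pi_S)\iota_m - \Kcoalg(\pi_S)\iota_{m-1}d^{\CE}_0 = 0$, i.e.\ $\im \Delta_m \sse \ker \Kcoalg(\pi_S) = \Kcoalg_{m-1}(\cU(\Nag)_{\geq 1}\tensor S(\bs\Nag))$. Combining (A) and (B) yields $\pi^{\cW}_{m-1}\vph_{m-1}\Delta_m = 0$, finishing the proof.

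The main obstacle I anticipate is fact (A): it is the only place where the intricate recursive formula \eqref{eq:vph} for $\vph$ must be confronted directly, and one must track carefully that the quotient $\pi^{\cW}$ by the left $\cU(\g)$-action sees only the leading tensor factor and that $\rho$ respects the augmentation. Fact (B) is the conceptual key that makes the argument uniform over degenerate simplices — recognizing that $\pi_S$ is a chain map (equivalently, that the connection term $\theta$ is invisible to the bundle projection) is what avoids any case-by-case analysis of the degenerate/non-degenerate decomposition of $\Kcoalg_\bl$.
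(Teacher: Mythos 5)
Your proof is correct, and its skeleton is the paper's: reduce to compatibility with the zeroth face maps, invoke Prop.\ \ref{prop:themap} to rewrite $d^{\Wb}_0\psi$ as $\pi^{\cW}\vph\,\dE_0\,\iota$, and kill the remaining discrepancy using the fact that $\pi^{\cW}$ applies the counit to the leading $\cU(\g_n)$-factor $\rho_n(s_Jz)$ produced by the recursion \eqref{eq:vph}. Where you genuinely differ is in how that discrepancy is identified. The paper first restricts to normalized simplices via Lemma \ref{lem:sk} (so that $\dE_0=\del_E$ there), expands $\del_E(1\tensor w)=1\tensor\del_\CE w+\theta(1\tensor w)$, and computes $\vph_n\theta(1\tensor w)$ explicitly to see that each summand has leading factor $s^{\g}_Jx_i$ in the kernel of the counit. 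You instead use that the bundle projection $\pi_S=\eps_\cU\tensor\id$ is a chain map split by $i_\ast$ --- which the paper already asserts when it exhibits the fiber sequence $\cU(\Nag)\to E\cU(\Nag)\fib\CE(\Nag)$ --- so the defect $\dE_0\iota-\iota\, d^{K\CE}_0$ automatically lands in $\ker\Kcoalg_\bl(\pi_S)=\Kcoalg_\bl\bigl(\ov{\cU(\Nag)}\tensor S(\bs\Nag)\bigr)$, and you prove once and for all that $\pi^{\cW}\vph$ vanishes on that subobject (your Fact (A), which generalizes the paper's computation from the image of $\theta$ to the entire augmentation-ideal part). This buys a uniform treatment of degenerate simplices and avoids expanding $\vph_n\theta$; the cost is that Fact (A) must also be checked on degenerate elements, which quietly uses that $\vph$ itself commutes with degeneracies (this is part of the theorem's hypotheses, not of Prop.\ \ref{prop:themap}, so it should be cited as such) and that $\pi^{\cW}$ is simplicial. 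One cosmetic point: the subspace in Fact (A) should be written as $\ker\eps_\cU\tensor S(\bs\Nag)$ rather than ``$\cU(\Nag)_{\geq1}\tensor S(\bs\Nag)$'', since the kernel of $\pi_S$ also contains $\ov{\cU(\g_0)}\tensor S(\bs\Nag)$ in homological degree zero --- your own parenthetical (``$z$ in the augmentation ideal'') already uses the correct condition.
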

\begin{proof}
Since $\psi:= \pi^{\cW} \cc \vph \cc \iota$ is a level-wise coalgebra morphism which is compatible with all degeneracies and positive face maps, it suffices to show that $\psi$ lifts from $\spVect$ to $\sVect$. Since the simplicial vector space $K_\bl(S(\bs \Ng))$ decomposes dimension-wise into a direct sum of normalized and degenerate simplices, it suffices, by Lemma \ref{lem:sk}, to verify for all $n \geq 0$
\[
d^{\Wb}_0 \psi_{n+1}(w) = \psi_{n} (d^{K\CE}_0 w),   
\]       
for all $w \in N_{n+1}K_\bl(S(\bs \Nag) = S(\bs \Ng)_{n+1}$. Let us factor the inclusion $\iota$ using \eqref{eq:j-inc}:
\[
\Kcoalgp_\bl(S(\bs \Nag)) \to\Kcoalgp_\bl\bigl( (\kk \semiop \Nag) \tensor S(\bs \Nag) \bigr) \xto{\jm} \Kcoalgp_\bl(E\cU(\Nag)),
\] 
so that we have $\psi_{n+1}(w) = \pi^{\cW}_{n+1}\tvp_{n+1}(1 \tensor w)$, where $\tvp$ is the almost simplicial map \eqref{eq:tvph}.
Let $w=\bs x_1 \bs x_2 \cdots \bs x_m \in S(\bs  \Ng)_{n+1}$, and for each $i=1, \ldots,m$, define $w_i:=
\bs x_1 \bs x_2 \cdots \wh{\bs x_i} \cdots \bs x_m$. Then, we have the equalities
\[
d^{\Wb}_0 \psi_{n+1}(w) = \pi^{\cW}_{n} d^{\cW}_0 \tvp_{n+1}(1 \tensor w) = \pi^{\cW}_{n} \vph_{n}\dE_0(1 \tensor w),
\] 
where the last equality follows from \eqref{eq:map-prop} in Prop.\ \ref{prop:themap}. As in \eqref{eq:simp-norm}, the zeroth face map $\dE_0$ equals the differential $\delta_E$ on $E\cU(\Ng)$. Therefore, we have     
\[
d^{\Wb}_0 \psi_{n+1}(w) = \pi^{\cW}_{n} \vph_{n} (1 \tensor \del_\CE(w)) + \pi^{\cW}_{n} \vph_{n}\theta(1 \tensor w).
\]
where, as in \eqref{eq:EU-diff}, $\theta(1 \otimes w)= \sum_{i=1}^m (-1)^{\deg{w_i}\deg{x_i} + n_i} x_i \tensor w_i$.
From the recursive definition \eqref{eq:vph} of $\vph_n$, we obtain
\[
\vph_{n}\theta(1 \tensor w) = \sum_{i=1}^m \sum_{(I,J) \in \Sh( \deg{x_i},\deg{w_i})}\hspace{-3ex} (-1)^{\deg{w_i}\deg{x_i} + n_i}\,  \sgn(I,J) ~ s^\g_{J} x_i \tensor \vph_{n-1}\dE_0(1 \tensor s_{I} w_i).
\]
Since $s^\g_{J} x_i$ lies in the kernel of the counit $\cU(\g_n) \to \kk$, it follows from the definition \eqref{eq:piW} of $\pi^{\cW}$ that
\[
\pi^{\cW}_n\vph_{n}\theta(1 \tensor w) = 0.
\]
Therefore, we conclude
\[
d^{\Wb}_0 \psi_{n+1}(w) = \pi^{\cW}_{n} \vph_{n} (1 \tensor \del_\CE(w)) = \pi^{\cW}_{n} \vph_{n} \iota(\del_\CE(w))
= \psi_n(d^{K\CE}_0 w).
\]
\end{proof}

\subsubsection{Complexes of primitives}
Recall that the adjunction \eqref{eq:prim-adj} for the primitives functor lifts to an adjunction of simplicial objects
%\begin{equation} \label{eq:simp-prim}
\[
\kk \semiop^{\coalg}_{\bl}(-) \maps s\Vect \adjunct \scocom \maps \prim_\bl. 
\]
%\end{equation}
There is an analogous adjunction for dg coalgebras
%\begin{equation} \label{eq:dg-prim}
\[
\kk \semiop^{\coalg}_{\ast}(-) \maps \Chain \adjunct \dgcocom \maps \prim_\ast.
\]
%\end{equation}
Given a chain complex $(V,d)$, $\kk \semiop^{\coalg}_{\ast}(V)$ is the dg cocommutative coalgebra 
whose underlying chain complex is $\kk \oplus V$, with comultiplication $\Del(v):= v \tensor 1 + 1 \tensor v$ for all $v \in V$. By using the construction of the comultiplication \eqref{eq:K-comult} for $K^\coalg_\bl(C)$, there is an isomorphism
\begin{equation} \label{eq:prim-iso}
K^\coalg_\bl\bigl(\kk \semiop^{\coalg}_{\ast}(V) \bigr) \cong  \kk \semiop^{\coalg}_{\bl}(K_\bl(V))
\end{equation} 
which is natural with respect to $(V,d)$.

\begin{proposition} \label{prop:K-prims}
Let $C \in \dgcocom$. There is an isomorphism of chain complexes
\[
\prim_\ast(C) \xto{\cong}    N_\ast \prim_\bl K^{\coalg}(C) 
\] 
natural in $C$.
\end{proposition}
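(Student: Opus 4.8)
The plan is to prove the stronger statement that there is a natural isomorphism of \emph{simplicial} vector spaces $K_\bl(\prim_\ast C) \xrightarrow{\cong} \prim_\bl K^{\coalg}_\bl(C)$, realized as a pair of subspaces of $K_\bl(C)$, and then apply the classical Dold-Kan equivalence. Granting this, since $N_\ast K_\bl \cong \id$ via the counit $\eps_\DK$, applying $N_\ast$ gives $\prim_\ast(C) \cong N_\ast K_\bl(\prim_\ast C) \cong N_\ast \prim_\bl K^{\coalg}_\bl(C)$, which is the desired chain isomorphism; naturality in $C$ is inherited from each step, and it is automatically a chain map since $\eps_\DK$ is an isomorphism in $\Chain$. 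Write $P_\ast := \prim_\ast(C)$.

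First I would construct the forward map. The counit of the adjunction $\kk \semiop^{\coalg}_\ast(-) \dashv \prim_\ast$ is a coalgebra monomorphism $\kk \semiop^{\coalg}_\ast(P_\ast) \to C$. Applying $K^{\coalg}_\bl$, invoking the natural isomorphism \eqref{eq:prim-iso}, namely $K^{\coalg}_\bl(\kk \semiop^{\coalg}_\ast V) \cong \kk \semiop^{\coalg}_\bl K_\bl(V)$, and then $\prim_\bl$, produces a natural map $K_\bl(P_\ast) \to \prim_\bl K^{\coalg}_\bl(C)$. Because $K_\bl$ is exact and $\prim$ carries injective coalgebra maps to injections (being a restriction), this map is realized simply as the inclusion $K_\bl(P_\ast) \hookrightarrow K_\bl(C)$, whose image lands degreewise in the primitives. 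Concretely, the structure maps of $K^{\coalg}_\bl(C)$ are coalgebra maps by Lemma \ref{lem:coalg-adjoint}, hence preserve primitives, so every degenerate element $s_J p$ with $p \in P_\ast$ is primitive, as one also sees directly from the shuffle formula \eqref{eq:K-comult}.

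Both $K_\bl(P_\ast)$ and $\prim_\bl K^{\coalg}_\bl(C)$ are now simplicial subspaces of $K_\bl(C)$ with $K_\bl(P_\ast) \subseteq \prim_\bl K^{\coalg}_\bl(C)$. By the Dold-Kan splitting, two nested simplicial subspaces $A_\bl \subseteq B_\bl$ coincide as soon as $N_n(A) = N_n(B)$ for all $n$. Since $N_n K_\bl(P_\ast) = P_n$, it remains to identify $N_n \prim_\bl K^{\coalg}_\bl(C)$. Using \eqref{eq:simp-norm} together with the fact that the face maps preserve primitives, one has
\[
N_n \prim_\bl K^{\coalg}_\bl(C) = \prim\bigl(K^{\coalg}_n C\bigr) \cap \textstyle\bigcap_{i \geq 1}\ker d_i = \prim\bigl(K^{\coalg}_n C\bigr) \cap C_n,
\]
the intersection with the nondegenerate summand $C_n = N_n K_\bl(C)$.

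The hard part will be the reverse inclusion $\prim(K^{\coalg}_n C) \cap C_n \subseteq P_n$. Let $\xi \in C_n$ satisfy $\Delta_{K_n C}(\xi) = \xi \otimes 1 + 1 \otimes \xi$, where $1 = s_{n-1}\cdots s_0(1) \in K_n(C)$ is the coaugmentation. By \eqref{eq:K-comult} the left side equals $(\EM_{KC,KC})_n \Delta_C(\xi)$, while by the shuffle formula \eqref{eq:EM} the two unit terms are $\xi \otimes 1 = \EM_{n,0}(\xi \otimes 1)$ and $1 \otimes \xi = \EM_{0,n}(1 \otimes \xi)$. Thus $(\EM_{KC,KC})_n$ sends both $\Delta_C(\xi)$ and $\xi \otimes 1 + 1 \otimes \xi$ to the same element of $N_n(K_\bl C \otimes K_\bl C)$. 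Since the shuffle map admits the Alexander-Whitney map as a strict one-sided inverse, $\AW \circ \EM = \id$ on normalized chains, the map $(\EM_{KC,KC})_n$ is injective; hence $\Delta_C(\xi) = \xi \otimes 1 + 1 \otimes \xi$, i.e.\ $\xi \in P_n$. This gives $N_n \prim_\bl K^{\coalg}_\bl(C) = P_n$ and therefore $K_\bl(P_\ast) = \prim_\bl K^{\coalg}_\bl(C)$. This injectivity step, where Eilenberg--Zilber theory enters, is the only non-formal ingredient; the rest is manipulation of the adjunction \eqref{eq:prim-iso} and the Dold-Kan splitting.
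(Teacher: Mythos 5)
Your proof is correct, and it takes a genuinely different route from the one in the paper. The paper's argument is purely formal: it invokes local presentability of $\dgcocom$ and $\scocom$ and the colimit-preservation and full faithfulness of $K^{\coalg}_\bl$ (Lemma \ref{lem:coalg-adjoint}) to produce a right adjoint $R_\ast$ with invertible unit, and then runs a Yoneda-style chain of natural hom-set isomorphisms through the adjunctions $\kk \semiop^{\coalg}(-) \dashv \prim$ and the isomorphism \eqref{eq:prim-iso}; the shuffle map never appears explicitly. You instead realize both $K_\bl(\prim_\ast C)$ and $\prim_\bl K^{\coalg}_\bl(C)$ as simplicial subspaces of $K_\bl(C)$ and prove they coincide by comparing normalized parts, with the one non-formal input being injectivity of $\EM$ via the Eilenberg--Zilber identity $\AW \circ \EM = \id$ on normalized complexes. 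Your computation of $N_n\prim_\bl K^{\coalg}_\bl(C) = \prim(K^{\coalg}_n C)\cap C_n$ and the reduction of primitivity in $K_n(C)$ to primitivity in $C$ are both sound, as is the observation that degeneracies preserve primitives because the structure maps are coalgebra morphisms. What each approach buys: the paper's is shorter once the adjoint-functor machinery is granted and transfers verbatim to the dual statement for cotangent spaces (Prop.\ \ref{prop:cot}, which the paper proves ``exactly the same'' way); yours is more elementary, avoids local presentability and the existence of $R_\ast$ altogether, and yields the sharper on-the-nose statement that $K_\bl(\prim_\ast C)$ and $\prim_\bl K^{\coalg}_\bl(C)$ are literally the same subspace of $K_\bl(C)$, not merely naturally isomorphic after normalization.
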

\begin{proof}
The categories $\dgcocom$ and $\scocom$ are locally presentable\footnote{
The reference \cite{Pos} concerns categories of all cocommutative coalgebras, but a full bicomplete subcategory of a locally presentable category is locally presentable \cite{AR:2015}.},  e.g.\ \cite[Thm. 3.2]{Pos}, and $K^\coalg_\bl(-)$ preserves colimits by Lemma \ref{lem:coalg-adjoint}. Hence, $K^\coalg_\bl(-)$ has a right adjoint
$R_\ast \maps \scocom \to \dgcocom$, e.g., by the proof of \cite[Thm.\ 2.2.]{Nlab:AFT}. Since $K^\coalg_{\bl}$ is full and faithful, the unit of this adjunction is an isomorphism $\ti{\eta} \maps \id_{\dgcocom} \xto{\cong} R_\ast K^\coalg_\bl$. Using this, along with \eqref{eq:prim-iso}, we obtain a sequence of natural isomorphisms for each $V \in \Chain$:
\[
\begin{split}
\hom_{\Chain}(V,\prim_\ast C) & \cong  \hom_{\Chain}(V,\prim_\ast R_\ast K^{\coalg}_\bl(C)) \\ 
&\cong \hom_{\dgcocom}( \kk \semiop^{\coalg}_{\ast}(V), R_\ast K^{\coalg}_\bl(C))\\
& \cong \hom_{\scocom}( K^\coalg_\bl(\kk \semiop^{\coalg}_{\ast}(V)), K^{\coalg}_\bl(C))\\
& \cong \hom_{\scocom}(\kk \semiop^{\coalg}_{\bl}(K_\bl(V)), K^{\coalg}_\bl(C))\\
& \cong \hom_{\sVect}(K_\bl(V), \prim_\bl K^{\coalg}(C))\\
& \cong \hom_{\Chain}(V, N_\ast \prim_\bl K^{\coalg}(C)).
\end{split}
\]   
Hence, $\prim_\ast C \cong N_\ast \prim_\bl K^{\coalg}(C)$.
\end{proof}

\begin{corollary}\label{cor:prim-map}
Let $\gb$ be a simplicial Lie algebra, and $\psi \maps \Kcoalg_\bl(\CE(\Nag)) \to \Wbar{\cU(\g)}$ the morphism of simplicial coalgebras constructed in Thm.\ \ref{thm:themap}. Then 
\[
\bs \Ng \cong N_\ast \prim_\bl\Kcoalg(\CE(\Nag)),
\] 
and the induced morphism on complexes of primitives
\[
N_\ast \prim_\bl(\psi) \maps \bs \Ng \to N_\ast \prim_\bl \Wb{\cU(\g)}
\]
is an isomorphism.
\end{corollary}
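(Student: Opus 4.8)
The first isomorphism is immediate from Prop.\ \ref{prop:K-prims}. Applying that natural isomorphism to the dg coalgebra $C = \CE(\Nag) = \codgSym(\bs \Nag)$ gives $N_\ast\prim_\bl\Kcoalg(\CE(\Nag)) \cong \prim_\ast\CE(\Nag)$, and since the Chevalley--Eilenberg coalgebra is cofree on $\bs \Nag$, its space of primitives is exactly $\bs \Nag$. This already yields $\bs \Nag \cong N_\ast\prim_\bl\Kcoalg(\CE(\Nag))$, and identifies the source of $N_\ast\prim_\bl(\psi)$ with $\bs\Nag$.

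For the second assertion I would first pin down the target of $N_\ast\prim_\bl(\psi)$ as a copy of $\bs\Nag$ as well. The composite of isomorphisms
\[
K_\bl(\bs N_\ast\und{\g}) \xto{\ph_{\und{\g}}} \Wbar{(\und{\g})} \xto{\xi_\g} \prim_\bl\Wbar{\cU(\g)},
\]
coming from Prop.\ \ref{prop:K-Wbar-vect} and Cor.\ \ref{cor:prim-iso}, followed by the Dold--Kan counit $\eps_\DK \maps N_\ast K_\bl \xto{\cong}\id$, identifies $N_\ast\prim_\bl\Wbar{\cU(\g)}$ with $\bs N_\ast\und{\g} = \bs\Nag$. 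Under these two identifications $N_\ast\prim_\bl(\psi)$ becomes a degree-preserving endomorphism of $\bs\Nag$, so it suffices to check that in each simplicial degree $n$ the resulting linear map $N_{n-1}(\gb)\to N_{n-1}(\gb)$ is invertible.

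The plan for this last step is to evaluate $\psi = \pi^{\cW}\cc\vph\cc\iota$ on the normalized primitive in degree $n$, namely a cogenerator $\bs x$ with $x \in N_{n-1}(\gb)$, and to read off the resulting element of $N_n\prim_\bl\Wbar{\cU(\g)}$ under the identifications above. The crucial input is Lemma \ref{lem:rho}, which gives $\rho(x)=x$ for normalized $x$. Unwinding the recursion \eqref{eq:vph} for $\vph_n(1\tensor\bs x)$, the $\theta$-part of $\del_E$ in \eqref{eq:CE-diff}/\eqref{eq:EU-diff} contributes the term $\pm\,x\tensor 1$, whose image under $\rho$ is a genuine copy of $x$ in the top tensor factor; the remaining $1\tensor\del_\CE(\bs x)$ contribution is exactly what the chain-map property established in Thm.\ \ref{thm:themap} absorbs into the differential and therefore does not alter the degree-$n$ coefficient. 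I expect the outcome to be that $N_n\prim_\bl(\psi)$ equals $\pm$ the identity on $N_{n-1}(\gb)$, where the sign is dictated by the suspension and shuffle conventions, so that the map is in particular an isomorphism.

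The main obstacle will be precisely this bookkeeping: tracking the signs and the shuffle/extra-degeneracy combinatorics in the recursive formula for $\vph_n$, and verifying that the quotient $\pi^{\cW}$ of \eqref{eq:piW} kills all the auxiliary terms (those whose leading tensor factor lies in the augmentation ideal of $\cU(\g_n)$, as already exploited in the proof of Thm.\ \ref{thm:themap}), while confirming that the sign conventions built into $\ph_{\und{\g}}$, $\xi_\g$, and $\eps_\DK$ are mutually compatible so that the diagonal coefficient is genuinely nonzero.
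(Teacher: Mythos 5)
Your proposal follows essentially the same route as the paper: Prop.\ \ref{prop:K-prims} plus cofreeness of $\CE(\Nag)$ identifies the source with $\bs\Nag$, the composite $\xi_\g \cc \phi_{\und{\g}}$ from Cor.\ \ref{cor:prim-iso} and Prop.\ \ref{prop:K-Wbar-vect} identifies the target, and the paper then carries out exactly the evaluation you sketch, proving by induction the closed formula $\vph_n(1\tensor\bs x) = (-1)^{n-1}1^{\tensor 2}\tensor d^\g_0x\tensor 1^{\tensor n-2} + (-1)^n 1\tensor x\tensor 1^{\tensor n-1}$ (the induction closes because $d_0^2=0$ on normalized chains) and matching it term-for-term with $\xi_\g\cc\phi_{\und{\g}}(\bs x)$, so the endomorphism is exactly the identity. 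One small correction to your bookkeeping: in this particular computation $\pi^{\cW}$ kills nothing — both surviving terms have leading factor $1$ — so the argument is a direct comparison of the two two-term expressions rather than a cancellation.
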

\begin{proof}
We will need an explicit description of the evaluation of $\psi$ on non-degenerate primitives.
Let $\vph_n \maps \Kcoalg_{n}(E\cU(\Nag)) \to \cW_n \cU(\g)$ be the coalgebra morphism \eqref{eq:vph-def}.
First, if $y\tensor 1 \in \cU(\Ng) \tensor S(\bs \Ng)$, with $y \in N_n\gb$, then by Eq.\ \ref{eq:vph}:
\begin{equation} \label{eq:prim-map-1}
\begin{split}
\vph_n(y \tensor 1) &= \bigl( \ro_n \tensor (\vph_{n-1} \cc \dE_{0} \cc \iota_n) \bigr) \cc \al_n(y \tensor 1)\\
&= \bigl (\ro_n \tensor (\vph_{n-1} \cc \dE_{0} \cc \iota_n) \bigr)(y \tensor s^{n}_0(1))\\
&= \rho_n(y) \tensor \vph_{n-1}\dE_0s^{n}_0(1 \tensor 1)\\
&= y \tensor (s^{\cW}_0)^{n-1}\vph_0(1 \tensor 1) \\
& = y \tensor 1^{\tensor n}.
\end{split}
\end{equation}
Next, if $\bs x \in (\bs \Ng)_n$, then, again via Eq.\ \ref{eq:vph}: 
\begin{equation} \label{eq:prim-map-2}
\vph_n(1 \tensor \bs x)= 1 \tensor \vph_{n-1}\dE_0(1 \tensor \bs x).
\end{equation}
Furthermore, by Eqs.\ \ref{eq:EU-diff} and \ref{eq:CE-diff}, we have
\begin{equation} \label{eq:prim-map-3}
\begin{split}
\dE_0(1 \tensor \bs x) & = \del_E(1 \tensor \bs x)\\
& = 1 \tensor \del_{\CE}(\bs x) + \tha(1 \tensor \bs x) \\
& = 1\tensor \bs d^{\g}_0 x + (-1)^{\deg{1_\kk} \deg{x} + \deg{\bs x}} x \tensor 1\\
& = 1 \tensor \bs d^{\g}_0 x + (-1)^n x \tensor 1. 
\end{split}
\end{equation}
From this, we claim that for all $n \geq 1 $ and  $\bs x \in (\bs \Ng)_n$:
\begin{equation} \label{eq:prim-claim}
\vph_n(1 \tensor \bs x) = (-1)^{n-1} 1^{\tensor 2} \tensor d^\g_0 x \tensor 1^{\tensor n-2}
+ (-1)^{n} 1 \tensor x \tensor 1^{\tensor n-1}.
\end{equation}
Proceeding by induction, and assuming \eqref{eq:prim-claim} holds for $n \geq 1$, let $\bs y \in (\bs \Ng)_{n+1}$.
From Eq.\ \ref{eq:prim-map-3}, we have
\[
\vph_{n+1} ( 1 \tensor \bs y) = 1 \tensor \vph_{n}\bigl( 1 \tensor \bs d^{\g}_0 y + (-1)^{n+1} y \tensor 1 \bigr).
\]
Eq.\ \ref{eq:prim-map-1} implies that $(-1)^{n+1} \vph_{n} ( y \tensor 1)$ = $(-1)^{n+1} y \tensor 1^{\tensor n}$, while the induction hypothesis \eqref{eq:prim-claim} gives $\vph_{n}( 1 \tensor \bs d^{\g}_0 y ) = (-1)^n 1 \tensor d^{\g}_0 y \tensor 1^{\tensor n-1}$. This proves the claim.

Now, let $\bs x \in (\bs \Ng)_n$, and evaluate:
\[
\begin{split}
N_\ast\prim_\bl(\psi)(\bs x) & = \psi_n(\bs x) = \pi^{\cW}_n \vph_n(1 \tensor \bs x)   \\
& = (\eps_{\cU(\g_n)} \tensor \id_{\cW_{n-1} \cU(\g)})\bigl(
(-1)^{n-1} 1^{\tensor 2} \tensor d^\g_0 x \tensor 1^{\tensor n-2}
+ (-1)^{n} 1 \tensor x \tensor 1^{\tensor n-1} \bigr)\\
&=(-1)^{n-1} 1 \tensor d^\g_0 x \tensor 1^{\tensor n-2} + (-1)^n x \tensor 1^{\tensor n-1}.
\end{split}
\] 
On the other hand, by Prop.\ \ref{prop:K-Wbar-vect} and Cor.\ \ref{cor:prim-iso},
we have isomorphisms of chain complexes
\[
\begin{split}
N_\ast\phi_{\und{\g}} \maps \bs N_\ast (\und{\g}) &\xto{\cong} N_\ast \Wbar{(\und{\g})}\\
N_\ast \xi_{\g} \maps N_\ast\Wbar{(\und{\g})} &\xto{\cong} N_\ast \prim_\bl \Wb{\cU(\g)}. 
\end{split}
\]
As described in Eqs.\ \ref{eq:K-Wbar-vect1} and \ref{eq:cor-prim-map}, respectively, these isomorphisms are constructed from morphisms
$\phi'_{\und{\g}} \maps \bs N_\ast (\und{\g}) \to \Wbar{(\und{\g})}$, and 
$\ga \maps \Wbarin{(\und{\g})} \to \prim_\bl(\Wbin\cU(\g))$, along with the natural isomorphism $\Wbarin(-) \xto{\cong} \Wbar(-)$ of Lemma \ref{lem:Wbarinh}.
Therefore, the evaluation of the composition $N_\ast(\xi_{\g} \cc \phi_{\und{\g}})$ with
$\bs x \in (\bs \Ng)_n$ yields
\[
\begin{split}
\xi_{\g}\cc \phi_{\und{\g}}( \bs x) &= \phi_{\Wb \cU(\g)} \ga \bigl( \phi'_{\und{\g}}(\bs x) \bigr)\\
&=(-1)^{n-1}\phi_{\Wb \cU(\g)} \bigl(\ga(x,0,\ldots,0)\bigr)\\
&=(-1)^{n-1}\phi_{\Wb \cU(\g)}(x \tensor 1^{\tensor n-1}) \\
&=(-1)^{n}x \tensor 1^{\tensor n-1} + (-1)^{n-1}1 \tensor d^\g_0x \tensor 1^{\tensor n-2}.
\end{split}
\] 
We conclude that $N_\ast \prim_\bl(\psi) =  N_\ast\bigl( \phi_{\und{\g}} \cc \xi_{\g} \bigr)$, and hence $N_\ast \prim_\bl(\psi)$ is an isomorphism.
\end{proof}

\subsection{Cosimplicial and almost cosimplicial Dold-Kan correspondence} \label{sec:cosimDK}
Applying the classical Dold-Kan correspondence from Sec.\ \ref{sec:simp-DK} to the abelian category $\Vect^{\op}$ yields the cosimplicial Dold-Kan correspondence. This is the adjoint equivalence
\begin{equation} \label{eq:cDK}
\Nas \maps \cVect \adjunct \cCh \maps \Kb, \qquad \eps_{\DK}^\vee \maps \Nas \Kb \xto{\cong} \id_{\cCh}, \quad  \eta_{\DK}^\vee \maps \id_{\cVect} \xto{\cong}  \Kb \Nas.
\end{equation}
We follow the exposition for the cosimplicial Dold-Kan correspondence given in \cite{F2} and \cite{BGill}. In particular, $\Nas(-)$ denotes the normalized cochains functor constructed by intersecting the kernels of co-degeneracy maps. Explicitly,  
$N^0(A^\bul):= A^0$ and for $n \geq 1$:
%\begin{equation} \label{eq:norm}
\[
N^n(A^\bl):= \bigcap^{n-1}_{i=0} \ker \bigl( s^{i} \maps A^{n} \to A^{n-1} \bigr),
\]
%\end{equation}
The differential $\del \maps N^{n}(A^\bl) \to N^{n+1}(A^\bl)$  is defined using the coface maps
$\delta := \sum_{i=0}^{n+1} (-1)^{n+1 +i} d^i$. 

The cosimplicial Eilenberg-Mac Lane shuffle map \cite[\Sec 7.1]{BGill} gives a symmetric comonoidal natural quasi-isomorphism on the normalized cochains functor
\begin{equation} \label{eq:EM-dual}
\EM^\bl_{AB} \maps \Nas(A^\bl \tensor B^{\bl}) \weq \Nas(A^\bl) \tensor \Nas(B^\bl). 
\end{equation}
Since the shuffle map is symmetric comonoidal, if $C^{\bl}$ is a cosimplicial cocommutative coalgebra over $\kk$, then $\Nas(C^{\bl})$ is naturally a dg cocommutative coalgebra. For example, the diagonal map $\Del_n \to \Del_n \times \Del_n$ gives $\kk\Del_n$ the structure of a cocommutative cosimplicial coalgebra. Thus $\Nas(\Del_n)$ is a dg cocommutative coalgebra.

The inverse functor $\Kb(-)$ to normalized cochains can be defined by dualizing the simplicial formulas \eqref{eq:simp-K}. On the other hand, the equivalence \eqref{eq:cDK} yields the identification
\begin{equation} \label{eq:cosimp-K}
K^{n}(C^\ast)  \cong \Hom_{\cCh}\bigl(\Nas(\kk\Del_n),C^\ast \bigr) \quad \forall n \geq 0.
\end{equation}

\subsubsection{Almost cosimplicial Dold-Kan correspondence} \label{sec:alm-cosim}
The almost simplicial constructions of Sec.\ \ref{sec:almost} transfer over to the opposite category. Let $c^+\cC$ denote the almost cosimplicial objects in a category $\cC$. We have the forgetful functor 
\[
U^\bl_+ \maps c\cC \to c^+\cC,
\]
which drops the zeroth coface map $d^0$. (This functor already appeared in Sec.\ \ref{sec:formal-pbw} for $\cC = \clnAlg$.) Analogously, for $\cC$ abelian, denote by
\[
U^\bl_{+} \maps \cCh(\cC) \to \grC^\ast,
\]
which sends a non-negatively graded cochain complex to its underlying graded object. Applying Sec.\ \ref{sec:al-simp-DK} to the category $\cC=\Vect^{\op}$, we obtain the almost cosimplicial Dold-Kan correspondence $N^\ast_{+} \maps \cVect \adjunct \cCh \maps K^\bl_+$ 
and the respective compatibilities:
%\begin{equation} \label{eq:al-cosimp-DK}
\[
\begin{split}
N^\ast_+ U^\bl_+  = U^\ast_+ N^\ast, &\quad K^\bl_+ U^\ast_+  = U^\bl_+ K^\bl \\
U^\bl_+ \eta^{\vee}_{\DK} = \eta^{\vee +}_{\DK} U^\bl_+, &\quad U^\ast_+ \eps^{\vee}_{\DK} = \eps^{\vee +}_{\DK} U^\ast_+.
\end{split}
\]
%\end{equation}  

\subsection{Cosimplicial Dold-Kan adjunction for algebras}
\label{sec:cosimpDKalgebras}

\subsubsection{Monadic constructions} \label{sec:monad}
Let $\cCom$ and $\cpCom$ denote the categories of cosimplicial and almost cosimplicial commutative $\kk$-algebras, respectively. Denote by $\cdga$ and $\cga$ the categories of commutative dg algebras, and graded commutative $\kk$-algebras, respectively. All four categories are bicomplete, and 
we have adjunctions
\[
\begin{split}
\Sym^\bl \maps \cVect \adjunct \cCom \maps U^\bl_{\Com}, & \quad 
\Sym^\bl_{+} \maps \cpVect \adjunct \cpCom \maps U^\bl_{\Com^+}\\
\Sym^\ast \maps \cCh \adjunct \cdga \maps U^\ast_{\Com}, & \quad 
\Sym^\ast_{+} \maps \gVect \adjunct \cga \maps U^\ast_{\Com^+}.
\end{split}
\]  
Note that all of the above right adjoints are monadic. As in the simplicial/coalgebra case (Sec.\ \ref{sec:sim-dg-coalg}), the functor $K^\bl$ is compatible with multiplication, and extends to functors
\[
K^\bl_{\Com} \maps \cdga \to \cCom, \quad K^\bl_{\Com^+} \maps \cga \to \cpCom.
\]  
This can readily be seen from Eq.\ \ref{eq:cosimp-K}. Indeed, if $C^*$ is a cdga/cga, then 
\[
\Hom_{\cCh}\bigl(\Nas(\kk\Del_n),C^\ast \bigr)
\]
is equipped with a commutative convolution product.

On the other hand, the functor $K^\bl_{\Com}$ has an alternative description as explained in E.\ Getzler's work \cite{EG:DMC}. For $n \geq 0$, let  
\begin{equation} \label{eq:lam-def}
\Lam^n:=\kk[e_0,e_1,\ldots,e_n] 
\end{equation}
denote the free graded commutative algebra with generators $\{e_0,e_1,\ldots,e_n\}$ all in degree $-1$. The differential is the unique derivation on $\Lam^n$ which sends each generator to the unit $1$. To each map  $\al \maps [m] \to [n]$ in $\Del$, there exists a unique morphism of cdgas $\al_\ast \maps \Lam^m \to \Lam^n$ which sends the generator $e_i$ to $e_{\al(i)} \in \Lam^n$. Hence, $\Lam^\bl$ is a cosimplicial cdga. It follows from \cite[Prop.\ 3.3]{EG:DMC} that the dual of the dg cocommutative coalgebra $\Nas(\kk\Del_n)$ is isomorphic, as a cdga, to $\Lam^n$, and so we obtain an isomorphism of cosimplicial algebras natural in $C^\ast \in \cdga$: 
\begin{equation} \label{eq:K-lam}
K^\bl_{\Com}(C^\ast)  \cong  Z^0 ( C^\ast \tensor \Lam^\bl). 
\end{equation}

\begin{theorem} \label{thm:D-exists}
The functors $\Kb_{\Com} \maps \cdga \to \cCom$ and $\Kb_{\Compp} \maps \cga \to \cpCom$ admit left adjoints 
%\begin{equation} \label{eq:D}
\[
\Da \maps \cCom \to \cdga, \quad  \Dap \maps \cpCom \to \cga
\]
%\end{equation}
with the following properties
\begin{enumerate}

\item If $\Vb \in \cVect$ and $\VVb \in \cpVect$, then we have natural isomorphisms
\begin{equation} \label{eq:D-exists-Sym}
\Da(\cSym(V)) \cong \dgSym(N(\Vb)), \quad \Dap(\cpSym(W)) \cong \gSym(N_+(\VVb)).
\end{equation}

\item There is a natural isomorphism of functors
\[
U^*_{+, \Com} \Da  \cong \Dap U^\bl_{+,\Com},  
\]
where $U^\bl_{+,\Com} \maps \cCom \to \cpCom$ and $U^\ast_{+,\Com} \maps \cdga \to \cga$ are the forgetful functors that drop the zeroth coface map and differential, respectively.

\item The counits 
\[
\eps_{\Com} \maps \Da \Kb_{\Com} \to \id_{\cdga}, \qquad  \eps_{\Compp} \maps \Dap \Kb_{\Compp} \to \id_{\cga} 
\]
are isomorphisms. 

\end{enumerate}
\end{theorem}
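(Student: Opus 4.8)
The plan is to construct the left adjoints $\Da$ and $\Dap$ via the adjoint lifting theorem for monadic functors, and then establish properties (1)--(3) by reducing everything to the free case via the monadic structure. First I would set up the square of adjunctions underlying the problem. On the ``linear'' level we have the cosimplicial Dold-Kan equivalence $\Nas \dashv \Kb$ from \eqref{eq:cDK}, and on the ``algebra'' level we have the free-forgetful monadic adjunctions $\Sym^\bl \dashv U^\bl_{\Com}$ and $\Sym^\ast \dashv U^\ast_{\Com}$. The key compatibility is that $\Kb_{\Com}$ lifts $\Kb$ along the forgetful functors, i.e.\ $U^\bl_{\Com} \Kb_{\Com} \cong \Kb\, U^\ast_{\Com}$ (visible from \eqref{eq:K-lam} or \eqref{eq:cosimp-K}, since the convolution product is computed level-wise on the underlying complex). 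Because $U^\ast_{\Com}$ and $U^\bl_{\Com}$ are monadic, hence in particular preserve and reflect the relevant (co)limits, the adjoint lifting theorem (in the Johnstone/Dubuc formulation) produces a left adjoint $\Da$ to $\Kb_{\Com}$ precisely when $\cdga$ has enough colimits (reflexive coequalizers suffice), which holds since $\cdga$ is bicomplete. The same argument gives $\Dap \dashv \Kb_{\Compp}$.

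Next I would prove property (1), the formula on free objects. This is where the explicit content lies. The left adjoint built by adjoint lifting is computed as a reflexive coequalizer of free algebras, but on a free cosimplicial algebra $\cSym(V)$ the adjunction chain collapses: for any $B^\ast \in \cdga$,
\[
\Hom_{\cdga}\bigl(\Da \cSym(V), B^\ast\bigr) \cong \Hom_{\cCom}\bigl(\cSym(V), \Kb_{\Com}B^\ast\bigr) \cong \Hom_{\cVect}\bigl(V, U^\bl_{\Com}\Kb_{\Com}B^\ast\bigr),
\]
and then using $U^\bl_{\Com}\Kb_{\Com} \cong \Kb\, U^\ast_{\Com}$ together with the Dold-Kan adjunction $\Nas \dashv \Kb$ this becomes $\Hom_{\cCh}(\Nas V, U^\ast_{\Com} B^\ast) \cong \Hom_{\cdga}(\dgSym(\Nas V), B^\ast)$. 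By Yoneda this yields the natural isomorphism $\Da(\cSym(V)) \cong \dgSym(\Nas V)$, and the almost-cosimplicial statement follows identically using the compatibilities $N^\ast_+ U^\bl_+ = U^\ast_+ N^\ast$ from Sec.\ \ref{sec:alm-cosim}.

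For property (2), the compatibility $U^*_{+,\Com}\Da \cong \Dap U^\bl_{+,\Com}$, I would first verify it on free objects, where both sides reduce via (1) to $\gSym$ applied to the normalized graded object, using $N^\ast_+ U^\bl_+ = U^\ast_+ N^\ast$ once more; then extend to all of $\cCom$ by observing that both composites are left adjoints (each is a composite of a left adjoint with a colimit-preserving forgetful functor, and the forgetful functors dropping the zeroth coface/differential preserve the reflexive coequalizers presenting arbitrary algebras) and that they agree, naturally, on the free objects through which every object is presented. For property (3), that the counits $\eps_{\Com}$ and $\eps_{\Compp}$ are isomorphisms, the clean route is to show $\Kb_{\Com}$ is fully faithful, which is equivalent to the counit being an isomorphism. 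Full faithfulness of $\Kb_{\Com}$ follows from full faithfulness of the underlying $\Kb$ (an equivalence) by a monadic descent argument: a morphism of cosimplicial algebras $\Kb_{\Com}B^\ast \to \Kb_{\Com}B'^\ast$ is determined by its underlying morphism of cosimplicial vector spaces, which comes uniquely from a cochain map $B^\ast \to B'^\ast$ that one checks is multiplicative because $\Kb_{\Com}$ is defined by a lax (co)monoidal structure making $\eps^\vee_\DK$ compatible with products.

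The main obstacle I anticipate is verifying the hypotheses of the adjoint lifting theorem carefully enough to guarantee existence — specifically, confirming that the relevant categories admit the reflexive coequalizers used in the construction and that the monadic forgetful functors $U^\ast_{\Com}, U^\ast_{\Com^+}$ interact correctly with them. A secondary delicate point is property (3): establishing that the counit is an isomorphism genuinely requires that $K^\bl$ be \emph{strictly} compatible with the algebra structure (not merely up to coherent homotopy), so the argument must lean on the explicit convolution-product description \eqref{eq:K-lam}--\eqref{eq:cosimp-K} rather than on abstract formal nonsense, and one must check that the comonoidal structure on $\Nas$ dualizes to a genuine monoidal structure on $\Kb$ at the point-set level.
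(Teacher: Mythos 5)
Your proposal is correct and follows essentially the same route as the paper: existence of $\Da$ and $\Dap$ via the adjoint lifting theorem for the monadic forgetful functors, property (1) by uniqueness of adjoints applied to the identity $\Kb_{\PP}U^\ast_{\Com^{\PP}} = U^\bl_{\Com^{\PP}}\Kb_{\Com^{\PP}}$, property (2) by applying the colimit-preserving forgetful functor to the coequalizer presentation coming from the lifting theorem, and property (3) by adapting the Schwede--Shipley argument that the monoidal counit $\eps^{\vee}_{\DK}$ forces $\Kb_{\Com^{\PP}}$ to be fully faithful. The only detail you elide that the paper carries out explicitly is the verification, via the identities in Figure \ref{fig:cube}, that the two parallel arrows $f_{A}$ and $g_{A}$ in the defining fork for $\Da(A^\bl)$ are strictly compatible with the forgetful functors, but this is routine given your setup.
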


\begin{remark}\label{rmk:JP}
We learned about the functor $D^\ast(-)$ and its applicability towards ``higher differentiation'' from the work of J.\ Pridham. In particular, \cite[Def.\ 4.20]{JP:Def}, and \cite[Rmk 4.51]{JP:Mixed}, \cite[Ex.\ 3.6]{JP:Poisson}. 
\end{remark}

The functors involved in the proof of Thm.\ \ref{thm:D-exists} are described in Figure \ref{fig:cube} and satisfy the following identities:
\begin{subequations} \label{eq:compat}
\begin{align}
N^\ast_{+}U^{\bl}_{+} = U^{\ast}_{+}N^\ast,& \quad \Kb_{+} U^\ast_{+} = U^\bl_{+}\Kb \label{KN}\\
U^\ast_{+,\Com} \, \Sym^\ast = \Sym^\ast_{+} U^\ast_{+}, &\quad U^\bl_{+,\Com}\, \Sym^\bl = \Sym^{\bl}_{+} U^{\bl}_+ \label{Sym}\\
\Kb_{\PP} U^\ast_{\Com^{\PP}} = U^\bl_{\Com^{\PP}}\Kb_{\Com^{\PP}}, &\quad  \Kb_{\Com^+}U^\ast_{+,\Com} = U^\bl_{+,\Com}K^\bl_{\Com} \label{Kcom}\\
U^\bl_{\Com^+}\, U^\bl_{+,\Com} = U^{\bl}_{+}\, U^{\bl}_{\Com}, & \quad U^\ast_{\Compp}\, U^{\ast}_{+,\Com} = U^\ast_{+} \, U^\ast_{\Com}\label{UU}\\
U^\bl_{+} \cc \eta^{\vee}_{\DK} = \eta^{\vee +}_{\DK}\cc  U^\bl_{+}, &\quad U^\ast_{+} \cc \eps^{\vee}_{\DK} = \eps^{\vee +}_{\DK} \cc U^\ast_{+} \label{Uadj}
\end{align}
\end{subequations}
Note that only certain faces in Figure \ref{fig:cube} commute.

The key ingredient in the proof of Thm.\ \ref{thm:D-exists} is a careful application of the adjoint lifting theorem for monadic functors. % We recall the statement and a few details of its proof.
Our main reference is \cite[Lem.\ 1; Thm.\ 2]{Johnstone}, and we will adopt a compatible notation with this reference when it is reasonable to do so. (See also \cite[\Sec 4.5]{Borceux2}, and the exposition \cite{nlab:ALT}.)

We will use the following notation for our monads and for one of the counits: 
%\begin{equation} \label{eq:monad-def}
\[
\cH_{\PP}:= U^\ast_{\Com^{\PP}} \Sym^{\ast}_{\PP} , \qquad \cK_{\PP}:= U^\bl_{\Com^{\PP}} \Sym^{\bl}_{\PP}, \qquad  \eps^{\cH} \maps \Sym^\ast_{\PP} U^{\ast}_{\Com^{\PP}} \to \id_{\cat{c(d)ga}}.
\]
%\end{equation}

Let $V^\ast_{\PP}$ be either a cochain complex or a graded vector space. Apply $\Kb_{\PP}$ to the canonical inclusion $i \maps V^\ast_{\PP} \to U^{\ast}_{\Com^{\PP}} \Sym^\ast_{\PP}(V^\ast_{\PP})$. The universal property of $\Sym^{\bl}_{\PP}$, along with the identities \eqref{Kcom}, yield a morphism of (almost) cosimplicial vector spaces
\[
{\lam_{\PP}}_{V} \maps U^\bl_{\Com^{\PP}} \Sym^\bl_{\PP}\Kb_{\PP}(V) \to 
\Kb_{\PP}U^\ast_{\Com^{\PP}}\Sym^\ast_{\PP}(V).
\]
This defines a natural transformation 
\begin{equation} \label{eq:lam}
\lam_{\PP} \maps  \cK_{\PP} \Kb_{\PP} \to \Kb_{\PP} \cH_{\PP},
\end{equation}
which, as in \cite[Lem.\ 1]{Johnstone}, encodes the fact that $\Kb_{\PP}(-)$ lifts through the forgetful functors to a functor $\Kb_{\Com^{\PP}}(-)$ between algebras.  

It will be convenient to slightly abuse notation, and consider algebras $A^\bl_{\PP} \in c^{\PP}\cat{Com}$ as pairs $A^\bl_{\PP}:=(U^\bl_{\PP}A^\bl_{\PP},\xi)$, where $A^\bl_{\PP}=U^\bl_{\PP}A^\bl_{\PP} \in c^{\PP}\Vect$, and $\xi \maps \Sym^\bl_{\PP}(A^\bl) \xto{\xi} A^\bl$ is the linear morphism in $c^{\PP}\Vect$ that encodes the multiplication\footnote{In other words, we are tacitly identifying $c^{\PP}\cat{Com}$ with the category of algebras over the monad $\cK_{\PP}$.}.     

\begin{proof}[{\bf Proof of Theorem \ref{thm:D-exists}}]
Eq.\ \ref{Kcom} implies that the functor $\Kb_{\Com^{\PP}}$ lifts $\Kb_{\PP}$ through monadic forgetful functors. Therefore, since $\Kb_{\PP}$ has a left adjoint and $\cqga$ is cocomplete, \cite[Thm.\ 2]{Johnstone} implies that $\Kb_{\Com^{\PP}}$ has a left adjoint 
\[D^\ast_{\PP} \adj \Kb_{\Com^{\PP}}
\]
via an explicit construction. Given $A^\bl_{\PP}=(A^\bl_{\PP}, \xi)$, define $\DPP\bigl(A^\bl_{\PP}\bigr)$ as the coequalizer
\begin{equation} \label{eq:Dcoeq}
\DPP\bigl(A^\bl_{\PP}\bigr):= \coeq \Bigl( 
 \xymatrix{
 \Sym^\ast_{\PP} \bigl (N^\ast_{\PP}\cK_{\PP}(A) \bigr)  \ar^*++{f_{A_{\PP}}}@<-.7ex>[rr]
 \ar@<+.7ex>[rr]_*++{g_{A_{\PP}}} && \Sym^\ast_{\PP}\bigl(N^\ast_{\PP}(A) \bigr)
} \Bigr).
\end{equation}
in the category $\cqga$. Above, $f_{A_{\PP}}:= \Sym^{\ast}_{\PP}(N^\ast_{\PP}(\xi))$, and $g_{A_{\PP}}$ is constructed by the following procedure:\\
\begin{enumerate}[label=(\roman*),leftmargin=65pt]
\item Define $g^{\prime}_{A_{\PP}} \maps \cK_{\PP}(A) \to \Kb_{\PP}\cH_{\PP}N^\ast_{\PP}(A)$ to be the composition 
\[
{\lam_{\PP}}_{\Nas_{\PP}A} \cc \cK_{\PP}\bigl(\eta^{\vee \PP}_{\DK}\bigr),
\]
where $\lam_{\PP}$ is the natural transformation \eqref{eq:lam}.\\

\item Define $g^{\prime \prime}_{A_{\PP}} \maps \Nas_{\PP}\cK_{\PP}(A) \to \cH_{\PP} \Nas_{\PP}(A)$ to be the composition
\[
\bigl(\eps^{\vee \PP}_{\DK}\bigr)_{\cH_{\PP}\Nas_{\PP}(A)} \cc \Nas_{\PP}(g^{\prime}_{A_{\PP}}).
\]
\\

\item Finally, define $g_{A_{\PP}} \maps  \Sym^\ast_{\PP} \bigl (N^\ast_{\PP}\cK_{\PP}(A) \bigr)  \to \Sym^\ast_{\PP}\bigl(N^\ast_{\PP}(A) \bigr)$ 
as the composition
\[
{\eps^{\cH}}_{\Sym^\ast_{\PP}\Nas_{\PP}(A)} \, \circ \Sym^\ast_{\PP}(g^{\prime \prime}_{A_{\PP}}).
\]
\end{enumerate}
We next verify the three assertions in the statement of the theorem.
\begin{enumerate}[leftmargin=15pt]

\item We have an equality between compositions of right adjoints 
$\Kb_{\PP} U^\ast_{\Com^{\PP}} = U^\bl_{\Com^{\PP}}\Kb_{\Com^{\PP}}$. 
Hence, the compositions of left adjoints 
$\DPP \cc \Sym^\bl_{\PP}$ and $\Sym^\ast_{\PP} \cc \Nas_{\PP}$ are isomorphic, by uniqueness of adjoints.

\item Let $A^\bl \in \cCom$, and let $A^\bl_{+}:= U^\bl_{+ \Com}(A^\bl)$. 
Using the identities \eqref{eq:compat}, along with the explicit description of the fork in Eq.\ \ref{eq:Dcoeq}, we obtain an equality of diagrams
\[
\begin{split}
U^\ast_{+,\Com}\Bigl( 
 \xymatrix{
 \Sym^\ast \bigl (N^\ast\cK(A) \bigr)  \ar^*++{f_{A}}@<-.7ex>[r]
 \ar@<+.7ex>[r]_*++{g_{A}} & \Sym^\ast\bigl(N^\ast(A) \bigr)
} \Bigr)
\\
=
\xymatrix{
 \Sym^\ast_{+} \bigl (N^\ast_{+}\cK_{+}(A_{+}) \bigr)  \ar^*++{f_{A_{+}}}@<-.7ex>[r]
 \ar@<+.7ex>[r]_*++{g_{A_{+}}} & \Sym^\ast_{+}\bigl(N^\ast_{+}(A_{+}) \bigr)
} 
\end{split}
\] 
Since $U^\ast_{+,\Com} \maps \cdga \to \cga$ preserves colimits, we conclude that 
\[
U^*_{+, \Com} \Da(A^\bl)  \cong \Dap U^\bl_{+,\Com}(A^\bl).
\]

\item The proof of \cite[Prop.\ 2.13]{SS} for simplicial algebras easily adapts to the context of (almost) cosimplicial algebras. In particular, the counit for the (almost) cosimplicial Dold-Kan correspondence $\eps^{\op \PP}_{\DK} \maps N^\ast_{\PP} \Kb_{\PP} \xto{\cong} \id$ lifts to a monoidal natural transformation. Therefore, in analogy with \cite[Prop.\ 2.13]{SS} and Lemma \ref{lem:coalg-adjoint}, we deduce that $\Kb_{\Com^{\PP}}$ is full and faithful. Hence, the counits 
$\eps_{\cat{Com}^{\PP}} \maps \DPP \Kb_{\Com^{\PP}} \to \id$ are isomorphisms.    
\end{enumerate}
\end{proof}
\newpage
\mbox{}

\begin{figure}[H]
\centering
\[
\begin{tikzpicture}[back line/.style={densely dotted},
  cross line/.style={preaction={draw=white, -,line width=6pt}}
]
  \matrix (m)[
    matrix of math nodes,
    row sep=6em, column sep=7em,
    text height=1.5ex, text depth=0.25ex
  ]{
     \& \cCom    \&   \& \cdga   \\   % row 1
    \cpCom    \&   \& \cga    \&       \\   % row 2
     \& \cVect   \&   \& \cCh    \\   % row 3
    \cpVect    \&   \& \gVect  \&       \\   % row 4
  };

  %— front face (solid) —
  \AdjRLh{m-1-2}{m-1-4}{\tc{D^{*}}}{\tc{K^{\bl}_{\Com}}}
  \AdjRLh[above left][below right]{m-1-2}{m-2-1}{U^\bl_{+,\Com}}{}
  \AdjRLh[above left][below right]{m-1-4}{m-2-3}{U^\ast_{+,\Com}}{} % cga→cdga
  \AdjRLv{m-1-4}{m-3-4}{\tc{U^{\ast}_{\Com}}}{\tc{\Sym^{\ast}}}
  \AdjRLv{m-2-1}{m-4-1}{U^{\bl}_{\Com^{+}}}{\Sym^{\bl}_{+}}
  \AdjRLh{m-4-1}{m-4-3}{N^{\ast}_{+}}{K^\bl_{+}}
  \AdjRLh{m-3-4}{m-4-3}{U^\ast_{+}}{}

  %— “back” edges (dotted) —
  \begin{scope}[back line]
    \AdjRLv[above left, near end][above right,near end]{m-1-2}{m-3-2}{\tc{U^{\bl}_{\Com}}}{\tc{\Sym^{\bl}}}  % vertical
    \AdjRLh[above,near start][below,near start]{m-3-2}{m-3-4}{\tc{N^{\ast}}}{\tc{K^{\bl}}}  % horizontal
    \AdjRLh{m-3-2}{m-4-1}{U^\bl_{+}}{} % diagonal‐ish
  \end{scope}

  %— “crossing” edges (white under‐draw) —
  \begin{scope}[cross line]
    \AdjRLh[above,near end][below,near end]{m-2-1}{m-2-3}{D^*_{+}}{K^{\bl}_{\Com^{+}}}
    \AdjRLv[above left,near start][above right,near start]{m-2-3}{m-4-3}{U^\ast_{\Com^{+}}}{\Sym^{*}_{+}}
  \end{scope}
\end{tikzpicture}
\]
\caption{A cube of adjoints with the following identities:}
\[
\begin{split}
N^\ast_{+}U^{\bl}_{+} = U^{\ast}_{+}N^\ast,& \quad \Kb_{+} U^\ast_{+} = U^\bl_{+}\Kb\\
U^\ast_{+,\Com} \, \Sym^\ast = \Sym^\ast_{+} U^\ast_{+}, &\quad U^\bl_{+,\Com}\, \Sym^\bl = \Sym^{\bl}_{+} U^{\bl}_+ \\
\Kb_{\PP} U^\ast_{\Com^{\PP}} = U^\bl_{\Com^{\PP}}\Kb_{\Com^{\PP}}, &\quad  \Kb_{\Com^+}U^\ast_{+,\Com} = U^\bl_{+,\Com}K^\bl_{\Com}\\
U^\bl_{\Com^+}\, U^\bl_{+,\Com} = U^{\bl}_{+}\, U^{\bl}_{\Com}, & \quad U^\ast_{\Compp}\, U^{\ast}_{+,\Com} = U^\ast_{+} \, U^\ast_{\Com}\\
U^\bl_{+} \cc \eta^{\vee}_{\DK} = \eta^{\vee +}_{\DK}\cc  U^\bl_{+}, &\quad U^\ast_{+} \cc \eps^{\vee}_{\DK} = \eps^{\vee +}_{\DK} \cc U^\ast_{+}
\end{split}
\]
\label{fig:cube}
\end{figure}
\mbox{}

\newpage
\subsubsection{Reduced finitely-generated algebras, completions, and cotangent complexes} \label{sec:reduced} % \todo{Add something about finite-generated to finite-generated. due to $\mm$ being positive and $\mm/\mm^2$ finite}
% We begin tying together results from the previous section on $\cclnAlg = (\sconil)^{\op}$ with the results in this section on $\cCom$. 
Denote by $\crComfg \sse \cCom$ the subcategory of (finitely-generated) {\bf reduced} unital augmented cosimplicial commutative algebras. Such an algebra $A^\bl$ satisfies $A^0=\kk$, and hence has a canonical augmentation $s^0 \maps A^\bl \to \kk$.
Define $\crpComfg \sse \cpCom$ to be the analogous category of (finitely-generated) reduced almost cosimplicial algebras. 

In what follows, if $A^\bl$ is in $\crComfg$ or $\crpComfg$, we denote by $\mm_A \ideal A^\bl$ the augmentation ideal and $\hA^\bl$ the corresponding $\mm$-adic completion. Since the completion with respect to a maximal ideal is always local, we obtain a functor
\begin{equation} \label{eq:comp-func}
(-)^{\wh{}} \maps \crCom \to c\rclnAlg,
\end{equation}
where $c\rclnAlg$ denotes the full subcategory of reduced objects in $c\clnAlg$. There are the analogous definitions for the subcategories $\rcdgafg$ and $\rcgafg$ of (finitely-generated) {\bf reduced} cdgas and cgas, respectively. In particular, a reduced c(d)ga $C^*$ satisfies $C^0 =\kk$ and is finitely-generated if $C^{i}$ is finite-dimensional over $\kk$ for all $i >1$.   

We have a functor
$\cot^\bl \maps \cCom_0 \to \cVect$ which sends $A^\bl$ to the Zariski cotangent space  
$\cot^\bl(A):= \mm^\bl_A/{\mm_A^{\bl}}^2$. The \df{cotangent complex} is the normalized cochain complex 
$\Nas(\cot^\bl(A))$ of the cotangent space. 

Similarly, if $C^\ast \in \cdga_0$ with augmentation ideal $\ov{C} \ideal C$, then its \df{cotangent complex} is the space of indecomposables $\ov{C}/\ov{C}^2$, which is naturally a cochain complex, and this gives a functor
%\begin{equation} \label{eq:dgcot}
\[
\ccot{\cdga} \maps \cdga_0 \to \cCh, \qquad  \ccot{\cdga}(C^\ast):=\ov{C}/\ov{C}^2.  
\]
%\end{equation}   
These functors are part of the usual adjunctions e.g.\ \cite[\Sec 2]{Quillen:Com}
%\begin{equation} \label{eq:cot-adj}
\[
\cot^\bl \maps \cCom_0 \adjunct \cVect \maps (\kk \semiop -)^\bl, \qquad  
\ccot{\cdga} \maps \cdga_0 \adjunct \cCh \maps (\kk \semiop -)^\ast.
\]
%\end{equation}  
Above, the right adjoints send a cosimplicial vector space $V^\bl$ (resp.\ chain complex $C^\ast$) to the 
reduced cosimplicial (resp.\ dg) commutative algebra corresponding to the 
trivial extension of $\kk$ by $V^\bl$ (resp.\ $C^\ast$).

The next proposition is the dual of Prop.\ \ref{prop:K-prims} concerning primitives of simplicial coalgebras. The proof is exactly the same.
\begin{proposition}\label{prop:cot}
Let $A^\bl \in \cCom_0$. There is a natural isomorphism of cochain complexes
\[
\ccot{\cdga}(D(A^{\bl})) \cong \Nas(\cot^\bl(A)).
\]
\end{proposition}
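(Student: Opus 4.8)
The plan is to dualize the proof of Proposition~\ref{prop:K-prims} essentially verbatim, trading each coalgebra-theoretic functor for its algebraic counterpart and reversing the variance of the Yoneda argument. Since $\ccot{\cdga}$ is a \emph{left} adjoint (to the square-zero extension $(\kk \semiop -)^\ast$), whereas $\prim_\ast$ was a right adjoint, I would compute morphisms \emph{out of} $\ccot{\cdga}(D(A^\bl))$ rather than into it, and conclude by the covariant Yoneda lemma. One simplification relative to the coalgebra case is worth noting: there, the right adjoint $R_\ast$ of $K^\coalg_\bl$ had to be produced by invoking local presentability and the adjoint functor theorem, but here the adjunction $\Da \dashv \Kb_{\Com}$ is furnished directly by Theorem~\ref{thm:D-exists} (together with its restriction to the reduced subcategories), so no such construction is needed.

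Concretely, for a test object $W \in \cCh$ I would assemble the following chain of natural isomorphisms:
\[
\begin{split}
\hom_{\cCh}\bigl(\ccot{\cdga}(D(A^\bl)), W\bigr)
&\cong \hom_{\cdga_0}\bigl(D(A^\bl), (\kk \semiop W)^\ast \bigr)\\
&\cong \hom_{\cCom_0}\bigl(A^\bl, \Kb_{\Com}\bigl((\kk \semiop W)^\ast\bigr) \bigr)\\
&\cong \hom_{\cCom_0}\bigl(A^\bl, (\kk \semiop \Kb(W))^\bl \bigr)\\
&\cong \hom_{\cVect}\bigl(\cot^\bl(A), \Kb(W) \bigr)\\
&\cong \hom_{\cCh}\bigl(\Nas(\cot^\bl(A)), W \bigr),
\end{split}
\]
where the first and fourth isomorphisms are the adjunctions $\ccot{\cdga} \dashv (\kk \semiop -)^\ast$ and $\cot^\bl \dashv (\kk \semiop -)^\bl$; the second is the adjunction $\Da \dashv \Kb_{\Com}$ of Theorem~\ref{thm:D-exists}; and the last is the cosimplicial Dold--Kan adjunction $\Nas \dashv \Kb$ from \eqref{eq:cDK}, which holds since that correspondence is an adjoint equivalence. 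The covariant Yoneda lemma then yields the asserted natural isomorphism $\ccot{\cdga}(D(A^\bl)) \cong \Nas(\cot^\bl(A))$.

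The crux of the argument is the third isomorphism, which is the algebraic dual of \eqref{eq:prim-iso}: namely that $\Kb_{\Com}$ sends the square-zero extension of $\kk$ by a cochain complex to the square-zero extension of $\kk$ by $\Kb$ of that complex. I would deduce this from the explicit multiplicative description of $\Kb_{\Com}$ --- either the convolution product on $\Hom_{\cCh}(\Nas(\kk\Del_n), -)$ or, equivalently, the presentation $\Kb_{\Com}(C^\ast) \cong Z^0(C^\ast \tensor \Lam^\bl)$ of \eqref{eq:K-lam}. Since $\Kb_{\Com}$ lifts $\Kb$ through the forgetful functors, the underlying cosimplicial vector space is $\kk \oplus \Kb(W)$, and because the multiplication is computed componentwise the ideal $\Kb(W)$ remains square-zero; this exhibits the trivial extension $(\kk \semiop \Kb(W))^\bl$. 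The only other point requiring care is the reducedness and augmentation bookkeeping --- checking that $D$ preserves reduced objects so that $\ccot{\cdga}(D(A^\bl))$ is defined, and that the intermediate Hom-sets live in the stated (reduced) categories in view of the vanishing of the degree-zero pieces of $\cot^\bl(A)$ --- which is handled exactly as in the reduced refinement of Theorem~\ref{thm:D-exists}.
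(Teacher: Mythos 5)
Your proof is correct and is exactly what the paper intends: the paper dismisses Prop.~\ref{prop:cot} as the dual of Prop.~\ref{prop:K-prims} with ``exactly the same'' proof, and your chain of adjunction isomorphisms (square-zero extension, $\Da \dashv \Kb_{\Com}$, the identification $\Kb_{\Com}\bigl((\kk \semiop W)^\ast\bigr) \cong (\kk \semiop \Kb(W))^\bl$, cotangent adjunction, Dold--Kan) followed by the covariant Yoneda lemma is precisely that dualization. Your two side remarks are also accurate: the appeal to local presentability and the adjoint functor theorem is indeed unnecessary here since the left adjoint $\Da$ is already supplied by Thm.~\ref{thm:D-exists} and Prop.~\ref{prop:reduced}, and the key square-zero identification is correctly justified from \eqref{eq:K-lam}.
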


The main result of this subsection is:
\begin{proposition} \label{prop:reduced}
\mbox{}
\begin{enumerate}
\item The adjunction $\Da \maps \cCom \adjunct \cdga \maps \Kb_{\Com}$
restricts to an adjunction between finitely generated reduced algebras
\begin{equation} \label{eq:prop-reduced}
\Da \maps \crCom \adjunct \rcdga \maps \Kb_{\Com}
\end{equation}

\item If $A^\bl \in \cCom_0 $, then there is a natural isomorphism of reduced cdga
\begin{equation} \label{eq:prop-reduced2}
\Da(\wh{A}^\bl) \cong \Da(A^\bl)
\end{equation}
and the adjunction \eqref{eq:prop-reduced} extends along \eqref{eq:comp-func}:
\begin{equation} \label{eq:prop-reduced3}
\Da \maps c\rclnAlg \adjunct \rcdga \maps \Kb_{\Com}
\end{equation}
\end{enumerate}
\end{proposition}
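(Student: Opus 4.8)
The plan is to prove part (1) by showing that each of the two functors preserves the relevant full subcategories; once this is done the restricted adjunction \eqref{eq:prop-reduced} is automatic, since for full subcategories $\crCom \sse \cCom$ and $\rcdga \sse \cdga$ the adjunction bijection $\Hom_{\cdga}(\Da A, C) \cong \Hom_{\cCom}(A, \Kb_{\Com} C)$ of Thm.\ \ref{thm:D-exists} restricts verbatim together with its naturality. For $\Kb_{\Com}$, reducedness is immediate from \eqref{eq:cosimp-K}: evaluating at $n=0$ gives $K^0(C) \cong C^0 = \kk$. For $\Da$ the first task is to show $\Da(A)^0 = \kk$ whenever $A^0 = \kk$. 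By Prop.\ \ref{prop:cot} the cotangent complex of $\Da(A)$ is $\Nas(\cot^\bl A)$, whose degree-zero term is $\cot^0(A) = \mm_A^0/(\mm_A^0)^2 = 0$, because $A^0 = \kk$ forces the augmentation ideal $\mm_A^0$ to vanish. Hence the degree-zero indecomposables of $\Da(A)$ vanish, i.e.\ $\ov{B} = \ov{B}^2$ for $B := \Da(A)^0$. Since the coequalizer \eqref{eq:Dcoeq} presents $B$ as a quotient of $\bigl(\Sym^\ast N^\ast(A)\bigr)^0 = \Sym(\kk) = \kk[u]$, the algebra $B$ is Noetherian, so Nakayama's lemma gives $\ov{B} = 0$ and $\Da(A)^0 = \kk$.

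Next I would handle finite generation in both directions. For $\Kb_{\Com}$ I would use the description \eqref{eq:K-lam}, $K^n_{\Com}(C) \cong Z^0(C \tensor \Lam^\bl)$: each $\Lam^n$ of \eqref{eq:lam-def} is finite-dimensional (a free graded commutative algebra on the degree $-1$ generators $e_0, \ldots, e_n$, hence an exterior algebra), so when $C$ is reduced and finitely generated each $K^n_{\Com}(C)$ is finite-dimensional and $\Kb_{\Com}(C) \in \crCom$. For $\Da$, having shown $\Da(A)$ is connected, I would invoke Prop.\ \ref{prop:cot} once more: the indecomposables of $\Da(A)$ are $\Nas(\cot^\bl A)$, which is degree-wise finite-dimensional when $A$ is reduced and finitely generated. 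A connected non-negatively graded commutative algebra is generated by its indecomposables, which here lie in strictly positive degrees, so each graded piece of $\Da(A)$ is finite-dimensional and $\Da(A) \in \rcdga$. This completes part (1).

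For the completion-invariance \eqref{eq:prop-reduced2} the cleanest route is Yoneda, and the key sub-step is that for $C \in \rcdga$ the cosimplicial algebra $\Kb_{\Com}(C)$ is \emph{level-wise Artinian local} with residue field $\kk$: each $K^n_{\Com}(C) = Z^0(C \tensor \Lam^n)$ is a finite-dimensional augmented $\kk$-algebra whose augmentation ideal is nilpotent, since every augmentation-ideal element carries a factor from the positively graded part of $C$ or from the nilpotent $\Lam^n$, and these cannot be multiplied indefinitely inside cohomological degree $0$. Consequently, for any reduced $A^\bl \in \cCom_0$, every morphism $A \to \Kb_{\Com}(C)$ commutes with $s^0$ and hence preserves augmentation ideals, so it factors in each cosimplicial degree through a nilpotent quotient; as $A^n/\mm_{A^n}^k \cong \wh A^n/\wh\mm^k$, precomposition with the completion map is a bijection $\Hom_{\cCom}(\wh A, \Kb_{\Com} C) \xto{\cong} \Hom_{\cCom}(A, \Kb_{\Com} C)$, natural in $C$. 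Passing through the adjunction of Thm.\ \ref{thm:D-exists} and applying the Yoneda lemma in $\rcdga$—which is legitimate once $A$ is finitely generated, so that $\Da A, \Da \wh A \in \rcdga$—yields $\Da(\wh A) \cong \Da(A)$; this is exactly the case needed for the differentiation functor, where $A = \cpSym(W)$ and $\wh A = \cphSym(W)$. As a consistency check, Prop.\ \ref{prop:cot} together with the standard isomorphism $\mm_A/\mm_A^2 \cong \wh\mm/\wh\mm^2$ shows the map is an isomorphism on cotangent complexes.

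Finally, part \eqref{eq:prop-reduced3} is a formal consequence. The Artinian-local computation above shows $\Kb_{\Com}$ sends $\rcdga$ into $c\rclnAlg$, while for $B \in c\rclnAlg$ (so $B = \wh B$ is reduced and level-wise Noetherian) the cotangent space $\cot^\bl(B) = \mm_B/\mm_B^2$ is degree-wise finite-dimensional, whence $\Da(B) \in \rcdga$ by the finiteness argument of part (1). The adjunction \eqref{eq:prop-reduced3} then follows from \eqref{eq:prop-reduced} because every $\kk$-algebra morphism into the level-wise Artinian algebra $\Kb_{\Com}(C)$ is automatically continuous and local, giving $\Hom_{c\rclnAlg}(B, \Kb_{\Com} C) = \Hom_{\cCom}(B, \Kb_{\Com} C)$. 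I expect the main obstacle to be the two structural facts underpinning everything—the degree-zero collapse $\Da(A)^0 = \kk$ and the level-wise Artinian-locality of $\Kb_{\Com}(C)$—since both require extracting concrete ring-theoretic consequences from the abstract coequalizer \eqref{eq:Dcoeq} and the convolution description \eqref{eq:K-lam}.
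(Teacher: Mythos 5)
Your treatment of the finite‑generation statements, of the completion invariance \eqref{eq:prop-reduced2} via the level-wise nilpotence of the augmentation ideal of $\Kb_{\Com}(C)$, and of the extension \eqref{eq:prop-reduced3} all match the paper's proof in substance. However, your argument that $\Da(A)^0 = \kk$ has a genuine gap at the Nakayama step. From $\ov{B} = \ov{B}^2$ with $\ov{B}$ finitely generated, the determinant-trick form of Nakayama only yields an element $e \in \ov{B}$ with $b = eb$ for all $b \in \ov{B}$, i.e.\ $\ov{B} = (e)$ for an idempotent $e$; you cannot conclude $\ov{B}=0$ unless you also know that $B$ has no nontrivial idempotents (equivalently, that $\ov{B}$ lies in the Jacobson radical). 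Being a quotient of $\kk[u]$ does not provide this: $B = \kk[u]/(u^2 - u) \cong \kk \times \kk$ is a Noetherian quotient of $\kk[u]$ whose augmentation ideal $(\bar u)$ satisfies $(\bar u) = (\bar u)^2 \neq 0$. The paper sidesteps this entirely: writing $A^\bl = \kk \oplus \mm^\bl_A$ with $\mm^0_A = 0$, the induced map $\Sym^\bl(\mm_A) \to A^\bl$ is a level-wise surjection, hence a regular epimorphism, hence is sent by the left adjoint $\Da$ to a (degree-wise surjective) regular epimorphism $\Sym^\ast(N^\ast(\mm^\bl_A)) \to \Da(A^\bl)$; since $N^0(\mm^\bl_A) = 0$, the degree-zero part of the source is already $\kk$, so $\Da(A)^0 = \kk$ with no ring-theoretic input needed. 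To repair your route you would need exactly this kind of presentation (showing $\Da(A)^0$ is a quotient of $\kk$, not merely of $\kk[u]$), at which point the cotangent computation and Nakayama become superfluous.

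Two smaller points. First, invoking Prop.\ \ref{prop:cot} to \emph{establish} reducedness of $\Da(A)$ is delicate, since $\ccot{\cdga}$ is a functor on $\cdga_0$; you should at least record that $\Da(A)$ is canonically augmented (via $\Da(A) \to \Da(\kk) = \kk$) before speaking of its indecomposables. Second, your proof of \eqref{eq:prop-reduced2} restricts to finitely generated $A$ so as to run Yoneda inside $\rcdga$, but the statement is for arbitrary $A^\bl \in \cCom_0$; the Yoneda argument works equally well in $\cdga_0$ (or $\cdga$) without any finiteness hypothesis, which is how the paper phrases it and which is needed for the order of the paper's own argument (it deduces finite generation of $\Da(A)$ \emph{from} $\Da(A) \cong \Da(\hA)$, not the other way around).
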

\begin{proof}
Let $A^\bl \in \crCom$. Since $A^{\bl}$ is unital and augmented, we have $A^\bl =\kk \dsum \mm^{\bl}_A$ as simplicial vector spaces, with $\mm^{0}_A = 0$. The inclusion $\mm^\bl_A \sse A^{\bl}$ induces a epimorphism $\Sym^{\bl}(\mm_A) \to A^{\bl}$ in $\cCom$.
Since $\Da \maps \cCom \to \cdga$ is left adjoint, we obtain, via statement (1) of Thm.\ \ref{thm:D-exists}, a degree-wise surjection 
between cdgas $\Sym^{\ast}(N(\mm^\bl_A)) \to \Da(A^\bl).$
Since $N^0(\mm^\bl_A) = \mm^0_A=0$, we conclude that $\Da(A^\bl)$ is reduced.
Next, we verify that the restriction $K^{\bl}_{\Com} \maps \cdga_0 \to \cCom_0$ is well-defined, and that the isomorphism 
\eqref{eq:prop-reduced2} in the second statement exists. If $C \in \cdga_0$, then it is easy to see from the identification
introduced in \eqref{eq:K-lam}:
\begin{equation} \label{eq:reduced-pf1}
K^\bl_{\Com}(C) \cong Z^0 ( C^\ast \tensor \Lam^\bl),
\end{equation}
that $K^\bl_{\Com}(C)$ is reduced. Therefore, the adjunction restricts to reduced algebras $\Da \maps \cCom_0 \adjunct \cdga_0 \maps \Kb_{\Com}$. Moreover, from \eqref{eq:reduced-pf1} and the definition (Def.\ \ref{eq:lam-def}) of $\Lam^\bl$, it is clear that for each $C\in \cdga_0$, the augmentation ideal of $K^\bl_{\Com}(C)$ is dimension-wise nilpotent. Hence, ${\wh{K}^\bl}_{\Com}(C) = K^\bl_{\Com}(C)$, and therefore, by the universal property of completion
\[
\hom_{\cCom_0}({A}^{\bl}, K^\bl_{\Com}(C)) = \hom_{\cCom_0}(\hA^{\bl}, K^\bl_{\Com}(C)).
\]
The isomorphism $\Da(\wh{A}^\bl) \cong \Da(A^\bl)$ then follows.

Finally, we show that the restriction \eqref{prop:reduced} of the adjunction to finitely-generated reduced algebras is well-defined. If $C \in \rcdga$, then $K^n_{\Com}(C) \sse \bigoplus_{i=0}^{n+1} C^{i} \tensor \Lam^{n}_{-i}$ implies that $K^n_{\Com}(C)$ is finite-dimensional. Hence, $K^\bl_{\Com}(C) \in \crCom$. On the other hand, let $A^\bl \in \crCom$. Then $\hA^\bl$ is local with $\mm^\bl_{\hA}$ finitely-generated, and therefore, by Prop.\ \ref{prop:cot}
\[
\ov{\Da}(\hA^\bl)/\ov{\Da}(\hA^\bl)^2 \cong \ccot{\cdga}(D^\ast(\hA^{\bl})) \cong \Nas(\mm^\bl_{\hA}/(\mm^\bl_{\hA})^2)
\]
is degree-wise finite-dimensional over $\kk$. Combining this along with the fact that the graded vector space $\ov{\Da}(\hA^\bl)$ is concentrated in positive degrees, a degree-wise induction argument shows $\ov{\Da}(\hA^\bl)$ is degree-wise finite-dimensional over $\kk$. We conclude that $\Da(\hA^{\bl})$ is finitely-generated, and hence, by \eqref{eq:prop-reduced2}, $\Da(A^{\bl})$ is as well.      
\end{proof}

\begin{remark}\label{rmk:reduced}
Both statements in Prop.\ \ref{prop:reduced} also hold {\it mutatis mutandis}
in the almost cosimplicial/graded context. We obtain an adjunction $\Dap \maps \cprCom \adjunct \rcga \maps \Kb_{\Comp}$, and the extension $\Dap \maps c^{+}\rclnAlg \to \rcga$ with $\Dap(\wh{A}^\bl) \cong \Dap(A^\bl)$. 
\end{remark}

\subsection{The formal differentiation functor} \label{sec:FDiff}
In this section, we construct a differentiation functor for formal $\infty$-groups.
In analogy with Sec.\ \ref{sec:reduced}, let $s\rconil$ denote the full subcategory of $s\conil$ consisting of those coalgebras $C_\bl$ which are reduced as simplicial objects, i.e.\ $C_0=\kk$. Similarly, let $\rdgcocom$ denote the full subcategory of $\dgcocom$ consisting of those coalgebras $C_\ast$ satisfying $C_0=\kk$, and such that $\prim_\ast(C)$ is degree-wise finite-dimensional. In either case, we say such coalgebras are \df{reduced}. We have the analogous categories of reduced almost simplicial coalgebras $s^{+}\rconil$ and reduced graded coalgebras $\rgcocom$.

In analogy with the simplicial case \eqref{eq:simp-dual}, the equivalence of categories $\conil \simeq \bigl(\clnAlg\bigr)^\op$ recorded in Prop.\ \ref{eq:dual} also extends to the reduced differential graded context
%\begin{equation} \label{eq:dg-dual}
\[
\kk[-]^\ast \maps \rdgcocom \overset{\simeq}{\longleftrightarrow} \bigl(\rcdga)^\op \maps \dgspf(-).
\]
%\end{equation}  
Here, as in \cite[Rmk.\ A.12]{LH:2009} and \cite[Thm.\ 2.4]{Lazarev}, $\kk[C]^\ast := \Hom_{\gVect}(C,\kk)$ is the usual graded dual, and $\dgspf(A):=\Hom^{\cont}_{\gVect}(A,\kk)$ denotes continuous graded vector space homomorphisms. As usual, $A \in \rcdga$ is equipped with its adic topology induced by the canonical augmentation ideal $\ov{A}$, and $\kk$ is discrete concentrated in degree zero. Note that since $A$ is reduced, $\ov{A}$ is concentrated in positive degrees. Hence, $A$ is complete with respect to the adic topology. We also have the analogous equivalence between reduced graded coalgebras and reduced graded algebras:
%\begin{equation} \label{eq:gr-dual1}
\[
\kk[-]_{+}^\ast \maps \rgcocom \overset{\simeq}{\longleftrightarrow} \bigl(\cga_0)^\op \maps \gspf(-).
\]
%\end{equation}  
In particular, if $V \in \gFDVect$ and $V = V_{\ast \geq 1}$, then there is a natural isomorphism of coalgebras
%\begin{equation} \label{eq:gr-dual2}
\[
\cogrSym(V) \xto{\cong} \gspf \bigl(\Sym_{+}(V^\vee) \bigr).
\]
%\end{equation} 
\subsubsection{Comparisons between $s\rconil$ and $\rdgcocom$}
Leading up to our analysis of the formal differentiation functor, we first record a few lemmas. 
\begin{lemma}\label{lem:G1}
Let $C \in s\rconil$. There is a natural isomorphism of chain complexes
\[
N_\ast \prim_\bl(C) \xto{\cong} \prim_\ast\bigl( \spf( D (\kk[C]^{\bl})) \bigr), 
\]
where $\kk[-]^{\bl} \maps s\rconil \xto{\simeq} \bigl(c\rclnAlg \bigr)^\op$ is the equivalence \eqref{eq:simp-dual}, and
$\Da \maps c\rclnAlg \to \rcdga$ is the left adjoint functor \eqref{eq:prop-reduced3}. 
\end{lemma}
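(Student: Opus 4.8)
The plan is to build the isomorphism as a composite of natural isomorphisms that transports the computation to the dual algebra side, invokes Prop.~\ref{prop:cot}, and transports back. Write $V_\bl := \prim_\bl(C)$, a degree-wise finite-dimensional simplicial vector space, and $A^\bl := \kk[C]^\bl \in c\rclnAlg$ via the equivalence \eqref{eq:simp-dual}. First I would dualize Cor.~\ref{cor:prim-cot} level-wise: for each $n$ it gives $\mmm{n}/\mmm{n}^2 \cong \prim(C_n)^\vee = V_n^\vee$, and naturality in $C$ assembles these into an isomorphism of cosimplicial vector spaces $\cot^\bl(A) \cong (V^\vee)^\bl$. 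Applying normalized cochains and then Prop.~\ref{prop:cot} (using that $\Da$ here is the functor \eqref{eq:prop-reduced3}) yields
\[
\ccot{\cdga}\bigl(\Da(A^\bl)\bigr) \cong \Nas\bigl(\cot^\bl(A)\bigr) \cong \Nas\bigl((V^\vee)^\bl\bigr).
\]

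The second ingredient is the differential-graded analogue of Cor.~\ref{cor:prim-cot}, transported through the equivalence $\rdgcocom \simeq (\rcdga)^\op$. For a reduced finite-type cdga $B$, a continuous functional on $B$ is primitive in $\spf(B)$ precisely when it vanishes on the unit and on $\ov{B}^2$, so that $\prim_\ast(\spf B) \cong (\ov{B}/\ov{B}^2)^\vee = \ccot{\cdga}(B)^\vee$ as chain complexes, naturally in $B$ (here the homological grading on the left matches the grading of the dual of the cochain complex $\ccot{\cdga}(B)$). Taking $B = \Da(A^\bl)$ and combining with the display above gives
\[
\prim_\ast\bigl(\spf \Da(A^\bl)\bigr) \cong \ccot{\cdga}\bigl(\Da A^\bl\bigr)^\vee \cong \Nas\bigl((V^\vee)^\bl\bigr)^\vee.
\]
Finally I would invoke the standard duality between the two normalization functors: for a finite-type simplicial vector space $V_\bl$ there is a natural isomorphism $\Nas\bigl((V^\vee)^\bl\bigr) \cong (N_\ast V_\bl)^\vee$ of cochain complexes. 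Since $V_\bl = \prim_\bl C$ is degree-wise finite-dimensional, $N_\ast V_\bl$ is as well, so the biduality map $N_\ast V_\bl \xto{\cong} (N_\ast V_\bl)^{\vee\vee}$ is an isomorphism, and composing all of the above produces the asserted natural isomorphism $N_\ast \prim_\bl(C) \xto{\cong} \prim_\ast\bigl(\spf \Da(\kk[C]^\bl)\bigr)$.

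The hard part will be the sign- and differential-bookkeeping through the two dualizations. The paper normalizes simplicial objects by $\bigcap_{i\ge 1}\ker d_i$ with differential $d_0$ but normalizes cosimplicial objects by $\bigcap_i \ker s^i$ with the alternating-sum differential, so I must check that $\Nas\bigl((V^\vee)^\bl\bigr) \cong (N_\ast V_\bl)^\vee$ is genuinely an isomorphism of cochain complexes for these specific conventions and that it is compatible with the dual-differential rule $\delta(f)(x) = (-1)^{\deg{f}} f(dx)$ fixed in the Notation section. Once this single identification is pinned down, the remaining pieces — the level-wise use of Cor.~\ref{cor:prim-cot}, Prop.~\ref{prop:cot}, and the dg analogue of Cor.~\ref{cor:prim-cot} — are each natural by construction, so naturality of the composite isomorphism in $C$ is automatic.
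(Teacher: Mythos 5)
Your proposal is correct and follows essentially the same route as the paper's proof: both transport the computation to the dual algebra side via Cor.\ \ref{cor:prim-cot}, apply Prop.\ \ref{prop:cot} to identify $\ccot{\cdga}(\Da A^\bl)$ with $\Nas(\cot^\bl(A))$, use the dg analogue of Cor.\ \ref{cor:prim-cot} together with the compatibility of normalization with $\kk$-linear duals (the paper cites \cite[\Sec 5.0.13]{F2} for this), and conclude by finite-dimensionality. The only cosmetic difference is that you make the biduality step and the convention-checking explicit, whereas the paper dualizes the composite isomorphism in one stroke.
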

\begin{proof}
Let $A^\bl = \kk[C]^\bl$, and $B^\ast = D^\ast A^\bl$. By Cor.\ \ref{cor:prim-cot}, we have a natural isomorphism of finite-dimensional cosimplicial vector spaces
\begin{equation} \label{eq:G1-1}
(\tha^\bl)^{\vee} \maps \mm^\bl_A/(\mm^\bl_A)^2  \xto{\cong}  \prim_\bl(C)^\vee,
\end{equation}
where $\mm^\bl_A \ideal A^\bl$ is the augmentation ideal. As shown in \cite[\Sec 5.0.13]{F2}, taking the $\kk$-linear simplicial/graded dual commutes with taking normalized chains/cochains. That is, for any $V_\bl \in \sVect$, there is a natural isomorphism of cochain complexes
\[
\Nas(V^{\vee}_\bl) \cong N_\ast(V_\bl)^{\vee}.
\]
Combining this with \eqref{eq:G1-1}, we obtain natural isomorphisms in $\cCh$
\begin{equation} \label{eq:G1-2}
\Nas \bigl(\mm^\bl_A/(\mm^\bl_A)^2 \bigr) \xto{\cong} \Nas \bigl(\prim_\bl(C)^\vee \bigr) \xto{\cong} N_\ast \bigl( \prim_\bl(C) \bigr)^{\vee}.
\end{equation}
On the other hand, Prop.\ \ref{prop:cot} provides another natural isomorphism in $\cCh$ between degree-wise finite-dimensional complexes
%\begin{equation} \label{eq:G1-3}
\[
\ov{B^\ast}/\ov{B^\ast}^2 \xto{\cong} \Nas\bigl ( \mm^\bl_A/(\mm^\bl_A)^2 \bigr),
\]
%\end{equation}
where $\ov{B^\ast}$ is the augmentation ideal of $B^\ast$. Furthermore, the dg analogue of Cor.\ \ref{cor:prim-cot} is a natural isomorphism of chain complexes
%\begin{equation} \label{eq:G1-4}
\[
\tha^\ast \maps \bigl( \ov{B^\ast}/\ov{B^\ast}^2 \bigr)^{\vee}  \xto{\cong}  \prim_\ast(\spf(B)).
\]
%\end{equation}
Combining this with \eqref{eq:G1-2} and dualizing gives the desired isomorphism.
\end{proof}

\newcommand{\KC}{\mathrm{K}C}  
\begin{lemma}\label{lem:G2}
Let $C_\ast \in \rdgcocom$, and $A^{\bl}_{\KC}:=\kk[\Kcoalg(C_\ast)]^\bl \in c\rclnAlg$. There is an isomorphism of dg coalgebras
%\begin{equation} \label{eq:lem:G2}
\[
C_\ast \xto{\cong} \dgspf(D(A^\bl_{\KC})) 
\]
%\end{equation}
which is natural in $C_\ast$.
\end{lemma}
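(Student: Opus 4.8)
The plan is to dualize the entire situation to the algebra side, where the functor $\Da \dashv \Kb_{\Com}$ and its counit have already been analyzed, and then transport the resulting isomorphism back across the two Cartier-type equivalences. Write $C^\ast := \kk[C_\ast]^\ast \in \rcdga$ for the dg dual of $C_\ast$, so that $C_\ast \cong \dgspf(C^\ast)$ via the equivalence $\rdgcocom \simeq (\rcdga)^\op$ recorded in Sec.~\ref{sec:FDiff}. I would then factor the composite $\dgspf \cc \Da \cc \kk[-]^\bl \cc \Kcoalg_\bl$ into three natural isomorphisms: first an identification $A^\bl_{\KC} \cong \Kb_{\Com}(C^\ast)$ in $c\rclnAlg$; second the counit isomorphism $\Da\,\Kb_{\Com}(C^\ast) \cong C^\ast$; and third the equivalence $\dgspf(C^\ast)\cong C_\ast$.

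The heart of the argument, and the step I expect to be the main obstacle, is the first identification
\[
\kk\bigl[\Kcoalg_\bl(C_\ast)\bigr]^\bl \;\cong\; \Kb_{\Com}(C^\ast),
\]
which asserts that $\kk$-linear duality intertwines the simplicial coalgebra functor $\Kcoalg_\bl$ of Lemma~\ref{lem:coalg-adjoint} with the cosimplicial algebra functor $\Kb_{\Com}$ of Sec.~\ref{sec:cosimpDKalgebras}. Since $C_\ast$ is reduced and finite type, each $C_n$, and hence each $K_n(C_\ast)$, is finite-dimensional, so the full and continuous duals agree and $A^\bl_{\KC}$ is levelwise a finite-dimensional (thus complete local) algebra. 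On underlying (co)simplicial vector spaces the isomorphism is immediate: the cosimplicial Dold--Kan functor $\Kb$ was defined in Sec.~\ref{sec:cosimDK} by applying the simplicial correspondence to $\Vect^\op$, so duality commutes with $K$ (cf.~\cite[\Sec 5.0.13]{F2}, as already used in Lemma~\ref{lem:G1}), giving $\kk[K_\bl(C_\ast)]^\bl \cong \Kb\bigl((C_\ast)^\vee\bigr)$ naturally. It remains to match the algebraic structures: the comultiplication on $\Kcoalg_\bl(C_\ast)$ is built in \eqref{eq:K-comult} from $\Delta_C$ and the simplicial Eilenberg--Mac~Lane map \eqref{eq:EM}, whereas the multiplication on $\Kb_{\Com}(C^\ast)$ is the convolution product arising from \eqref{eq:cosimp-K} and the comonoidal structure \eqref{eq:EM-dual} on $\Nas$. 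The key point to verify is that \eqref{eq:EM-dual} is the linear dual of \eqref{eq:EM}; granting this, the convolution multiplication on $\Kb_{\Com}(C^\ast)$ is precisely the dual of the comultiplication \eqref{eq:K-comult}, so $\kk[-]^\bl$ carries the coalgebra $\Kcoalg_\bl(C_\ast)$ to the algebra $\Kb_{\Com}(C^\ast)$, as claimed.

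With the first isomorphism in hand the remaining steps are formal. Applying $\Da$ and invoking the counit isomorphism $\eps_{\Com}\maps \Da\,\Kb_{\Com}\xto{\cong}\id$ of Thm.~\ref{thm:D-exists}(3), valid on the reduced subcategory $\rcdga$ by Prop.~\ref{prop:reduced}, yields $\Da(A^\bl_{\KC}) \cong \Da\,\Kb_{\Com}(C^\ast) \cong C^\ast$. Finally $\dgspf$ carries this to $\dgspf\bigl(\Da(A^\bl_{\KC})\bigr) \cong \dgspf(C^\ast) \cong C_\ast$, where the last isomorphism is the unit of the equivalence $\rdgcocom \simeq (\rcdga)^\op$. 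Each of the three isomorphisms is natural in $C_\ast$ — the first by naturality of the Dold--Kan and shuffle-map data, the second by naturality of the counit, the third by naturality of the equivalence — so their composite is the asserted natural isomorphism of dg coalgebras. The only genuine verification is the duality between the simplicial and cosimplicial shuffle maps underlying the first step; everything else is bookkeeping with the adjunctions and equivalences already established.
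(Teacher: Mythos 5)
Your proposal is correct and follows essentially the same route as the paper: dualize to identify $A^\bl_{\KC}=\Kcoalg_\bl(C)^{\vee}\cong \Kb_{\Com}(C^{\vee})$ via \cite[\Sec 5.0.13]{F2} together with the compatibility of the simplicial shuffle map \eqref{eq:EM} with its cosimplicial dual \eqref{eq:EM-dual}, then apply the counit isomorphism of Thm.\ \ref{thm:D-exists}(3) and transport back with $\dgspf$. Your additional remark that finite-typeness makes the levelwise duals finite-dimensional (so continuous and full duals agree) is a harmless elaboration of what the paper leaves implicit.
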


\begin{proof}
By \cite[\Sec 5.0.13]{F2}, there is a natural isomorphism of $\kk$-linear duals $K_\bl(C)^{\vee} \cong K^\bl(C^{\vee})$.
Comparison of the simplicial shuffle map \eqref{eq:EM} with its dual \eqref{eq:EM-dual} shows that this isomorphism is compatible with the multiplicative structures on either side. % \todo{Check me!}
Therefore, we have
\[
D^\ast(A^{\bl}_{\KC}) = D^{\ast}\bigl(\Kcoalg_\bl(C)^{\vee} \bigr) \cong D^{\ast}\bigl(K^\bl_{\Com}(C^{\vee}) \bigr).
\]
By statement (3) of Thm.\ \ref{thm:D-exists}, there is a natural isomorphism $D^{\ast}\bigl(K^\bl_{\Com}(C^{\vee}) \bigr) \cong  C^{\vee}$ of reduced cdga. Hence, $C_\ast \cong \dgspf(C^{\vee}) \cong \dgspf \bigl( D\bigl(K^\bl_{\Com}(C^{\vee}) \bigr) \bigr)$.
\end{proof}

\subsubsection{Lie $\infty$-algebras} \label{sec:Linf}
Our conventions for Lie $\infty$-algebras follow \cite{R}, with some minor changes. 
In particular, denote by
\[
\linf \sse \rdgcocom 
\]
the full subcategory of reduced finite-type dg coalgebras $(C_\ast,\del)$ such that there exists an isomorphism of graded coalgebras
\[
\cogrSym(L[1]) \cong C_\ast
\]
for some $L \in \gFDVect$. We call objects of $\linf$ {\bf finite-type Lie $\infty$-algebras}. As discussed in Sec.\ \ref{sec:intro-sec3} of the introduction, there is a canonical equivalence $\linf \simeq \LnA{\infty}^{\ft}$
where $\LnA{\infty}^{\ft}$ is the category of finite-type Lie $\infty$-algebras considered in \cite{R}. Hence, every object $(C_\ast,\del)$ in $\linf$ is non-canonically isomorphic to the Chevalley-Eilenberg coalgebra of a Lie $\infty$-algebra of the form $(L[-1],\el_1,\el_2,\el_3, \cdots)$ where
\[
(L_\ast,\bs \el_1 \bs^{-1}) \cong \prim_\ast(C) \quad \text{in $\Chain$}.
\] 
Formalizing the above, define the \df{tangent complex} or ``underlying complex'' of a Lie $\infty$-algebra via the functor
%\begin{equation} \label{eq:tan}
\[
\tan_\ast \maps \linf \to \Chain, \qquad \tan_\ast(C,\del):= \prim_{\ast}(C)[-1].
\]
%\end{equation}
In what follows, we tacitly identify Lie $\infty$-algebras as objects of $\linf$ with their Chevalley-Eilenberg coalgebras.

\begin{theorem}[\cite{R}]\label{thm:Linf-CFO}
The category $\linf$ inherits a category of fibrant objects structure
in which a morphism $F \maps (C,\del) \to (C',\del')$ is:
\begin{itemize}
\item a weak equivalence if and only if $\tan_\ast(F)$ is a quasi-isomorphism.
\item a fibration if and only if $\tan_\ast(F)$ is a surjection in positive degrees.
\end{itemize}
\end{theorem}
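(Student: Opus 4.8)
The plan is to transport the category of fibrants objects (CFO) structure on finite-type Lie $\infty$-algebras $\LnA{\infty}^{\ft}$ established in \cite{R} across the equivalence $\linf \simeq \LnA{\infty}^{\ft}$ discussed in Sec.\ \ref{sec:intro-sec3}. The essential point is that the definitions of weak equivalence and fibration given in the statement are phrased entirely in terms of the tangent complex functor $\tan_\ast$, so I must verify that these match the corresponding notions in \cite{R} and that the CFO axioms transfer. Since an equivalence of categories preserves and reflects all limits, isomorphisms, and the factorization properties at issue, once the three classes (weak equivalences, fibrations, acyclic fibrations) are shown to correspond under the equivalence, all five CFO axioms of Sec.\ \ref{sec:hmtpy} follow automatically.

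First I would unwind the relationship between $\tan_\ast$ and the ``underlying complex'' functor of \cite{R}. For an object $(C_\ast,\del)$ of $\linf$, the primitives $\prim_\ast(C)$ carry the differential $(\del_\CE)^1_1$, and under the (non-canonical) identification $C_\ast \cong \cogrSym(L[1])$ one has $\bigl(L_\ast, \bs\el_1\bs^{-1}\bigr)\cong\prim_\ast(C)$, so that $\tan_\ast(C,\del)=\prim_\ast(C)[-1]\cong(L[-1],\el_1)$. A morphism $F$ in $\linf\sse\rdgcocom$ is precisely an $L_\infty$-morphism in the sense of \cite{R}, and its linear component induces $\tan_\ast(F)$ on tangent complexes; this is exactly the chain map that \cite{R} uses to define weak equivalences ($L_\infty$-quasi-isomorphisms) and fibrations (surjections in positive degrees). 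Thus the two sets of definitions coincide. The statement that $\tan_\ast(F)$ is a quasi-isomorphism is independent of the chosen isomorphism $C_\ast\cong\cogrSym(L[1])$, since $\prim_\ast$ is a functor on $\rdgcocom$, so the classes are well-defined on $\linf$ itself.

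Next I would verify the axioms. Axioms (1), (2), (5) and (6) of the CFO definition are immediate: isomorphisms induce isomorphisms on tangent complexes (hence are acyclic fibrations), the ``2 out of 3'' property for weak equivalences follows from ``2 out of 3'' for quasi-isomorphisms of chain complexes, and every object admits the trivial map to the terminal coalgebra $\kk$, which is a fibration since the map on tangent complexes is the zero map to the zero complex, trivially surjective in positive degrees. The content is in the pullback axiom (3) and the path-object axiom (4), and here I would appeal directly to the corresponding constructions in \cite{R}: the fact that $\linf\simeq\LnA{\infty}^{\ft}$ is an equivalence means pullbacks of fibrations computed in $\LnA{\infty}^{\ft}$ transport to pullbacks in $\linf$, and a path object for $(C,\del)$ in $\LnA{\infty}^{\ft}$ transports to one in $\linf$, with the weak-equivalence and fibration conditions preserved because $\tan_\ast$ factors through the equivalence.

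The main obstacle, I expect, is the pullback axiom (3), and more precisely checking that the relevant pullbacks exist and are computed correctly in the coalgebra model $\linf$. Although finite limits are easy to describe in $\rdgcocom$, fibrations in $\linf$ need not be strict surjections of coalgebras, so the pullback of a fibration must be produced via the equivalence with $\LnA{\infty}^{\ft}$ (where \cite{R} constructs the required pullbacks explicitly, typically by realizing a fibration as a quasi-free extension and forming a fiber product of $L_\infty$-algebras), rather than naively in $\rdgcocom$. The care needed is to confirm that the tangent complex of the transported pullback agrees with the pullback of tangent complexes and that the projection remains a fibration; both follow from \emph{left-exactness} of $\tan_\ast=\prim_\ast(-)[-1]$ on the diagrams in question, together with the fact that a pullback of a surjection in positive degrees along any chain map is again surjective in positive degrees. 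Granting the results of \cite{R}, this reduces to routine verification, so the theorem follows.
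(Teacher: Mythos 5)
Your proposal is correct and follows essentially the same route as the paper, which simply invokes the equivalence $\linf \simeq \LnA{\infty}^{\ft}$ (noting that $\linf$ is the essential image of the fully faithful embedding $\LnA{\infty}^{\ft} \emb \dgcocom$) and transports the CFO structure established in \cite{R}, whose weak equivalences and fibrations are defined via the same tangent-complex conditions. The additional detail you supply about verifying the individual axioms and the pullback of fibrations is consistent with, but not spelled out in, the paper, which treats the transfer as immediate.
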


\subsubsection{The functor $\FDiff(-)$ and its properties}

\begin{theorem}\label{thm:FDiff}
The composition
\[
s\rconil \xto{\kk[-]^{\bl}} \bigl(c\rclnAlg \bigr)^{\op} \xto{{D^\ast}^{\op}} \bigl(\rcdga \bigr)^{\op} \xto{\dgspf} \rdgcocom 
\]
restricts to a functor
%\begin{equation} \label{eq:FDiff}
\[
\FDiff(-) \maps \FG \to \linf
\]
%\end{equation}
with the following properties:
\begin{enumerate}

\item The tangent complex of the finite-type Lie $\infty$-algebra $\FDiff(\fG_\bl)$ is isomorphic to the shifted normalized complex of primitives of $\fG_\bl$, i.e.,
\[
\tan_\ast \bigl( \FDiff(\fG_\bl) \bigr) \cong N_\ast\bigl( \prim_\bl(\fG)\bigr)[-1]. 
\]

\item If $\gb$ is a simplicial Lie algebra that is level-wise finite-dimensional, then there is a \und{canonical} natural isomorphism of $L_\infty$-algebras
\[
\Phi_{\gb} \maps \Ng \xto{\cong} \FDiff( \Wbar \cU(\g)).
\]

\item $\FDiff(-)$ preserves limits, preserves and reflects weak equivalences, and preserves and reflects fibrations.
\end{enumerate}
\end{theorem}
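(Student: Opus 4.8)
The plan is to establish the three assertions of Theorem~\ref{thm:FDiff} by leveraging the exactness properties of the primitives functor \eqref{eq:intro-prim-exact} together with the naturality of the isomorphism \eqref{eq:intro-prim} (Lemma~\ref{lem:G1}). The key observation is that $\FDiff(-)$ is defined as a composition of functors, and the behavior on tangent complexes is completely controlled by Lemma~\ref{lem:G1}, which gives a \emph{natural} isomorphism $N_\ast\prim_\bl(\fG) \xto{\cong} \prim_\ast(\FDiff(\fG_\bl))$. Unwinding the definition of $\tan_\ast$ from Sec.~\ref{sec:Linf}, statement~(1) then follows immediately by desuspending both sides.

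First I would verify statement~(1): by definition $\tan_\ast(\FDiff(\fG_\bl)) = \prim_\ast(\FDiff(\fG_\bl))[-1]$, and substituting the natural isomorphism of Lemma~\ref{lem:G1} yields $\tan_\ast(\FDiff(\fG_\bl)) \cong N_\ast(\prim_\bl(\fG))[-1]$. One must separately confirm that $\FDiff(\fG_\bl)$ actually lands in $\linf$, i.e.\ that the underlying graded coalgebra is cofree symmetric; this uses the existence of a PBW basis \eqref{eq:intro-PBW} (Thm.~\ref{thm:pbw}) for $\fG_\bl$ together with the isomorphisms \eqref{eq:intro-sym} and \eqref{eq:intro-underlying} from Thm.~\ref{thm:D-exists}, exactly as sketched in Sec.~\ref{sec:intro-sec3}. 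For statement~(2), I would invoke Cor.~\ref{cor:WG-Ug} to identify $\Dist_\bl(\Wb G)$ with $\Wbar{\cU(\g)}$, then apply the morphism $\psi$ of Thm.~\ref{thm:themap} and Cor.~\ref{cor:prim-map}, which shows that $\psi$ induces an isomorphism on complexes of primitives; since the tangent complex functor $\tan_\ast$ for $L_\infty$-algebras reflects isomorphisms, the induced map $\Phi_{\gb}$ is an isomorphism, and its canonicity is inherited from the canonicity of the simplicial coalgebra morphism \eqref{eq:intro-CE1}.

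For statement~(3), the strategy is to reduce every claim to the corresponding property of the primitives functor $\prim_\bl \maps \FGpd \to \sFDVect$, which by Thm.~\ref{thm:formal-hmtpy}(2)--(3) preserves and reflects weak equivalences and fibrations. The recipe for weak equivalences and fibrations in $\linf$ (Thm.~\ref{thm:Linf-CFO}) is stated purely in terms of the induced map on tangent complexes, and the recipe in $\sFDVect$ (Rmk.~\ref{rmk:formal-tan}) is stated in terms of the normalized complex $N_\ast$. The natural isomorphism $N_\ast\prim_\bl(\fG) \cong \tan_\ast(\FDiff(\fG_\bl))[1]$ of Lemma~\ref{lem:G1} intertwines these two descriptions: a morphism $f$ in $\FG$ satisfies $\tan_\ast(\FDiff(f))$ a quasi-isomorphism (resp.\ a positive-degree surjection) if and only if $N_\ast(\prim_\bl(f))$ is, which by Thm.~\ref{thm:formal-hmtpy}(2) holds if and only if $f$ itself is a weak equivalence (resp.\ fibration). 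This gives both preservation and reflection simultaneously. Preservation of limits is the most structural point: here I would argue that $\kk[-]^\bl$ and $\dgspf$ are equivalences (hence preserve all limits), that $D^\ast$ is a left adjoint on the relevant reduced subcategories by Prop.~\ref{prop:reduced}, so its \emph{opposite} preserves limits, and that the fully faithful inclusions $\FG \emb s\rconil$ and $\linf \emb \rdgcocom$ create the relevant limits.

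The main obstacle I anticipate is statement~(3)'s claim about preservation of limits, since $D^\ast$ is naturally a left adjoint (preserving colimits in $\cdga$), so its contribution to $\FDiff$ appears on the opposite side via ${D^\ast}^{\op}$; one must be careful to track that the passage through the two dualizing equivalences correctly converts the adjunction direction, and that the completed tensor products defining limits in $c\rclnAlg$ are handled correctly under $D^\ast$ rather than assumed to commute naively. A secondary subtlety is that the PBW-basis argument underlying membership in $\linf$ (needed before statements (1)--(3) are even well-posed) relies on \eqref{eq:intro-underlying}, which only identifies the \emph{underlying graded} coalgebra and not the differential; care is required to ensure this suffices to conclude that $\FDiff(\fG_\bl)$ is a genuine object of $\linf$ rather than merely an abstract reduced dg coalgebra.
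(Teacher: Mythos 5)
Your proposal is correct and follows essentially the same route as the paper: membership in $\linf$ via the PBW basis of Thm.~\ref{thm:pbw} combined with Thm.~\ref{thm:D-exists} and Prop.~\ref{prop:reduced}, the tangent-complex identification via Lemma~\ref{lem:G1}, statement~(2) via $\psi$ from Thm.~\ref{thm:themap}, Cor.~\ref{cor:prim-map} and the fact that $\tan_\ast$ reflects isomorphisms, and statement~(3) by reducing to the primitives functor and noting that $({D^\ast})^{\op}$ is a right adjoint. The two subtleties you flag are both resolved exactly as you anticipate: the definition of $\linf$ only requires an isomorphism of \emph{underlying graded} coalgebras (the paper then transports the codifferential along $\Psi$), and the invocation of Cor.~\ref{cor:WG-Ug} is actually unnecessary for statement~(2), which concerns $\FDiff(\Wbar{\cU(\g)})$ directly rather than $\Dist_\bl$ of a simplicial Lie group.
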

\begin{proof}
\mbox{}

\begin{enumerate}[leftmargin=15pt]

\item By Thm.\ref{thm:pbw}, $\fG_\bl$ admits a PBW basis, i.e.\ there exists an isomorphism of almost simplicial coalgebras $U^{+}_\bl \fG \xto{\cong} \cospSym(U^{+}\prim(\fG))$. This yields an isomorphism of complete reduced almost cosimplicial algebras
\[
U^{\bl}_{+}\kk[\fG] \xto{\cong} U^{\bl}_{+}\kk[\Sym^{\coalg}(\prim(\fG))] = \cphSym(\prim(\fG)^{\vee}). 
\]  
We apply the functor $\Dap \maps c^{+}\rclnAlg \to \rcga$.  Statement (2) of Thm.\ \ref{thm:D-exists}, along with statement (2) of Prop.\ \ref{prop:reduced} and Rmk.\ \ref{rmk:reduced}, yield the isomorphisms
\[
U^\ast_{+} D(\kk[\fG]^\bl) \cong \Dap(U^\bl_{+}\kk[\fG]) \xto{\cong} \Dap  
\bigl(\cphSym(\prim(\fG)^{\vee}) \bigr) \cong \Dap\bigl(\cpSym(\prim(\fG)^{\vee})\bigr).
\] 
Combining this with statement (1) of Thm.\ \ref{thm:D-exists}, we obtain a not necessarily canonical isomorphism of reduced finitely generated graded algebras 
\begin{equation} \label{eq:FDiffpf1}
U^\ast_{+} D(\kk[\fG]^\bl) \xto{\cong} U_{+}^\ast \dgSym(\Nas(\prim_\bl(\fG)^{\vee})).
\end{equation}
As in the proof of Lem.\ \ref{lem:G1}, it follows from \cite[\Sec 5.0.13]{F2} that we have an isomorphism of cochain complexes $\Nas(\prim_\bl(\fG)^{\vee}) \cong N_\ast(\prim_{\bl}(\fG))^{\vee}$. 
Therefore, applying the functor $\gspf(-)$ to \eqref{eq:FDiffpf1} gives an isomorphism of graded coalgebras
\[
\Psi \maps U^{+}_\ast \FDiff(\fG_\bl) \xto{\cong} \cogrSym\bigl(U^{+} N(\prim_\bl(\fG))\bigr)
\]
Let $\del$ denote the codifferential on $\FDiff(\fG_\bl) \in \rdgcocom$. Then the composition $\del^{\Psi}:= \Psi \cc \del \cc \Psi^{-1}$ defines a codifferential on the graded coalgebra $\cogrSym\bigl(U^{+}_\ast N_\ast(\prim(\fG))\bigr)$ which allows us to lift $\Psi$ to an isomorphism in $\rdgcocom$:
\[
\Psi \maps \FDiff(\fG_\bl) \xto{\cong} \Big(\codgSym\bigl(U^{+} N(\prim_\bl(\fG))\bigr), \del^{\Psi} \Bigr)
\]
We conclude $\FDiff(\fG_\bl) \in \linf$, and that the underlying graded vector spaces of the complexes $\tan_\ast \FDiff(\fG_\bl)$ and $N_\ast(\prim_\bl(\fG)[-1]$ are isomorphic. By Lemma \ref{lem:G1}, it follows that $\tan_\ast \FDiff(\fG_\bl) \cong N_\ast(\prim_\bl(\fG)[-1]$ as chain complexes.

\item  Let $\gb \in \sLie^{\fd}$ and denote by $\psi \maps \Kcoalg_\bl(\CE(N \g)) \to \Wbar{\cU(\g)}$ the morphism of simplicial coalgebras introduced in Thm.\ \ref{thm:themap}. Let $\Psi:= \dgspf D(\kk[\psi])$. By Lemma \ref{lem:G1}, we have a commutative diagram in $\Chain$.
\[
\begin{tikzdiag}{2}{2}
{
N_\ast\prim_\bl \Kcoalg(\CE(N \g)) \&  \& N_\ast \prim_{\bl}\Wb{\cU(\g)}\\
\prim_\ast\spf(D(A)) \&  \& \prim_\ast \FDiff(\Wbar{\cU(\g)}) \\
};

\path[->,font=\scriptsize]
(m-1-1) edge node[auto] {$N_\ast(\prim(\psi)) $} (m-1-3)
(m-1-1) edge node[auto,sloped] {$\cong$} (m-2-1)
(m-1-3) edge node[auto,sloped] {$\cong$} (m-2-3)
(m-2-1) edge node[auto] {$\prim_\ast(\Psi)$} (m-2-3)
;
\end{tikzdiag}
\] 
where $A^\bl=\kk[\Kcoalg(\CE(N \g))]^{\bl}$. Corollary \ref{cor:prim-map} implies that $N_\ast(\prim(\psi))$ is an isomorphism. Hence, $\prim_\ast(\Psi)$ is as well.
By Lemma \ref{lem:G2}, we have a natural isomorphism $F \maps \CE_{\ast}(N \g) \xto{\cong} \dgspf(\Da(A))$ in $\rdgcocom$. Hence, composition with $\Psi$ defines a morphism of Lie $\infty$-algebras
\[
\Phi_{\gb} \maps \Ng \to \FDiff(\Wbar{\cU(\g)})
\]
whose induced map on tangent complexes 
\[
\tan_\ast(\Phi_{\gb}) = \bigl(\prim_\ast(\Psi) \cc \prim_\ast(F)\bigr)[-1]
\]
is an isomorphism. Since the functor $\tan_\ast \maps \linf \to \Chain$
reflects isomorphisms (e.g., \cite[\Sec 3.1]{R}), we conclude that $\Phi_{\gb}$ is an isomorphism.   

\item The functors $\kk[-]^{\bl}$ and $\dgspf(-)$ are equivalences while $\Da(-)^{\op}$ is a right adjoint by statement (1) of Prop.\ \ref{prop:reduced}. Hence, $\FDiff(-)$ preserves limits. Given $\phi \maps \fG \to \fG'$ in $\FG$,  Lemma \ref{lem:G1} yields the commutative diagram
\[
\begin{tikzdiag}{2}{2}
{
N_\ast(\prim_\bl \fG) \& \prim_\ast(\FDiff(\fG))  \& \\
N_\ast(\prim_\bl \fG') \& \prim_\ast(\FDiff(\fG')) \& \\
};

\path[->,font=\scriptsize]
(m-1-1) edge node[auto] {$\cong$} (m-1-2)
(m-2-1) edge node[auto] {$\cong$} (m-2-2)
(m-1-1) edge node[auto,swap] {$N_\ast(\prim_\bl(\phi))$} (m-2-1)
(m-1-2) edge node[auto] {$\prim_\ast\FDiff(\phi)$} (m-2-2)
;
\end{tikzdiag}
\]
Hence, by Rmk.\ \ref{rmk:formal-tan} and statement (2) of Thm.\ \ref{thm:formal-hmtpy}, $\phi$ is a weak equivalence if and only if $N_\ast(\prim_\bl(\phi))$ is a quasi-isomorphism, if and only $\FDiff(\phi)$ is a weak equivalence of Lie $\infty$-algebras by Thm.\ \ref{thm:Linf-CFO}. The same argument implies the analogous statement for fibrations.  
\end{enumerate}
\end{proof}

\begin{remark}\label{rmk:conj}
We conjecture that the functor $\FDiff \maps \FG \to \linf$ induces an equivalence
between the underlying $\infty$-categories
\[
\mathbf{FmlGrp_{\infty}}[W^{-1}] \xto{\simeq} \mathbf{Lie_{\infty}Alg^{fin}}[W^{-1}],
\]
and, in particular, an equivalence between homotopy categories 
$\Ho (\FG) \simeq \Ho (\linf)$ of the iCFO and CFO structures on  $\FGpd$ and $\linf$, respectively. 
\end{remark}

\section{Differentiation of Lie $\infty$-groups} \label{sec:LieDiff}
Given a Lie $\infty$-group $\cG_\bl$, let $T_{n}\cG$ denote the tangent space of the manifold $\cG_n$ at the canonical basepoint. This induces the \df{tangent functor} 
%\begin{equation} \label{eq:Tan}
\[
T_{\bl}(-) \maps \LinfGrp \to s\FDVect
\]
%\end{equation}
It follows from the definition of the point distribution functor \eqref{eq:dist} that we have a natural isomorphism
\begin{equation} \label{eq:Tan-prim}
T_{\bl}(\cG) \cong \prim_\bl \bigl(\Dist(\cG)\bigr).
\end{equation}
The \df{differentiation functor} $\Diff(-) \maps \LinfGrp \to \linf$
is the composition
%\begin{equation} \label{eq:Diff}
\[
\LinfGrp \xto{\Dist_\bl} \FG \xto{\FDiff} \linf.
\]
%\end{equation}
We conclude with the main result of the paper.

\begin{theorem}\label{thm:Diff}
The differentiation functor $\Diff(-) \maps \LinfGrp \to \linf$ has the following properties:
\begin{enumerate}
\item The underlying complex of the Lie $\infty$-algebra $\Diff(\cG_\bl)$ is isomorphic to the shifted tangent complex of $\cG_{\bl}$ i.e.,
\[
\tan_\ast \Diff(\cG_\bl) \cong N_{\ast}(T_{\bl}\cG)[-1].
\]
In particular, if $\cG_\bl$ is a Lie $n$-group, then $\Diff(\cG_\bl)$ is a Lie $n$-algebra.

\item Let $G_\bl$ be a simplicial Lie group with Lie algebra $\gb$. There is a \und{canonical} natural isomorphism of $L_\infty$-algebras
\[
\Phi_{G_\bl} \maps N_\ast \gb \xto{\cong} \Diff(\Wbar{G}).
\]

\item $\Diff(-)$ preserves weak equivalences, fibrations, and pullbacks along fibrations. 

\end{enumerate}
\end{theorem}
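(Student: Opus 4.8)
The plan is to prove Theorem \ref{thm:Diff} by reducing each of its three statements to the corresponding properties of the two functors being composed, namely $\Dist_\bl \maps \LinfGrp \to \FG$ and $\FDiff \maps \FG \to \linf$, both of which have already been analyzed in detail. Since $\Diff(-) = \FDiff \cc \Dist_\bl$, the entire proof is a matter of chaining together established facts; no new constructions are required.

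For statement (1), I would first invoke statement (1) of Theorem \ref{thm:FDiff}, which gives
\[
\tan_\ast \bigl(\FDiff(\Dist(\cG))\bigr) \cong N_\ast\bigl(\prim_\bl(\Dist(\cG))\bigr)[-1].
\]
Then I would substitute the natural isomorphism \eqref{eq:Tan-prim}, i.e.\ $T_\bl(\cG) \cong \prim_\bl(\Dist(\cG))$, to conclude $\tan_\ast \Diff(\cG_\bl) \cong N_\ast(T_\bl\cG)[-1]$. For the final clause about Lie $n$-groups, I would argue that $\Dist_\bl$ preserves $n$-groupoids: this follows from Corollary \ref{cor:n-grpd}, since $\Dist_\bl$ is induced by a geometric functor (Corollary \ref{cor:geo-func-ex}\eqref{it:dist}). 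An $n$-groupoid structure forces the primitives complex $T_\bl\cG$, hence its normalized complex, to be concentrated in the appropriate range of degrees, so that the shifted tangent complex of $\Diff(\cG_\bl)$ is concentrated in degrees $0,\ldots,n$, making it a Lie $n$-algebra.

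For statement (2), I would combine Corollary \ref{cor:WG-Ug}, which gives the canonical natural isomorphism $\psi_{G_\bl} \maps \Dist_\bl(\Wbar{G}) \xto{\cong} \Wbar{\cU(\g)}$, with statement (2) of Theorem \ref{thm:FDiff}, which gives the canonical isomorphism $\Phi_{\gb} \maps \Ng \xto{\cong} \FDiff(\Wbar{\cU(\g)})$. Applying $\FDiff$ to $\psi_{G_\bl}$ and composing yields
\[
\Phi_{G_\bl} \maps N_\ast\gb \xrightarrow{\Phi_{\gb}} \FDiff(\Wbar{\cU(\g)}) \xrightarrow{\FDiff(\psi_{G_\bl})^{-1}} \FDiff(\Dist_\bl(\Wbar{G})) = \Diff(\Wbar{G}),
\]
which is an isomorphism of $L_\infty$-algebras, canonical and natural because both constituent isomorphisms are. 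Statement (3) I would deduce by tracking exactness through the composite: $\FDiff(-)$ preserves and reflects weak equivalences and fibrations and preserves all limits by statement (3) of Theorem \ref{thm:FDiff}, while $\Dist_\bl \maps \LinfGrp \to \FG$ preserves weak equivalences, all fibrations, and pullbacks along fibrations, as noted at the start of Sec.\ \ref{sec:LG-FG} (every fibration between reduced objects is a covering fibration, so the exactness of Theorem \ref{thm:formal-hmtpy}(4) applies). Composing a functor that preserves fibrations and pullbacks along fibrations with one that preserves fibrations and the relevant limits gives the claim.

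I do not anticipate a serious obstacle, as this is essentially a bookkeeping argument assembling previously proven results. The one point requiring mild care is the $n$-group clause in statement (1): one must confirm that $\Dist_\bl$ sends Lie $n$-groups to $n$-groupoids via Corollary \ref{cor:n-grpd}, and then translate the $n$-groupoid condition on $\Dist_\bl(\cG)$ into a degree bound on $N_\ast(\prim_\bl \Dist(\cG)) = N_\ast(T_\bl\cG)$ that matches the definition of a Lie $n$-algebra. A second point worth stating explicitly is that the isomorphism $\Phi_{G_\bl}$ in (2) is genuinely canonical and not merely existential; this rests on the canonicity already established for both $\psi_{G_\bl}$ (Corollary \ref{cor:WG-Ug}) and $\Phi_{\gb}$ (Theorem \ref{thm:FDiff}(2)), so no choices are introduced in the composition.
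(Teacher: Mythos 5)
Your proposal is correct and follows essentially the same route as the paper: statement (1) from Theorem \ref{thm:FDiff}(1) together with \eqref{eq:Tan-prim} and the $n$-groupoid preservation of Corollaries \ref{cor:geo-func-ex} and \ref{cor:n-grpd}; statement (2) by composing $\Phi_{\gb}$ with the image under $\FDiff$ of the Cartier isomorphism $\psi_{G_\bl}$ of Corollary \ref{cor:WG-Ug}; and statement (3) by chaining the exactness of $\Dist_\bl$ (Theorem \ref{thm:formal-hmtpy}(4), using that fibrations between reduced objects are covering fibrations) with Theorem \ref{thm:FDiff}(3). The only nitpick is the degree count in the Lie $n$-group clause: the shifted complex $N_\ast(T_\bl\cG)[-1]$ is concentrated in degrees $0,\ldots,n-1$ (not $0,\ldots,n$), which is what the definition of a Lie $n$-algebra requires.
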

\begin{proof}
\mbox{}
\begin{enumerate}[leftmargin=15pt]
\item The first statement follows from statement (1) of Thm.\ \ref{thm:FDiff} and \eqref{eq:Tan-prim}. Suppose $\cG_\bl$ is a Lie $n$-group. By Cor.\ \ref{cor:geo-func-ex}, the functors $\Dist \maps \Mfd_\ast \to \conil$, 
and $\prim \maps \conilsm \to \FDVect$ are geometric. Hence, by Cor.\ \ref{cor:n-grpd} and \eqref{eq:Tan-prim}, $T_{\bl}(\cG)$ is an $n$-groupoid in $\sFDVect$. The uniqueness of the horn filling condition in dimension $\geq n$ implies that $N_\ast T_{\bl}(\cG)[-1]$ is concentrated in degrees $\leq n-1$. 
Hence, $\Dist(\cG)$ is a Lie $n$-algebra.

\item Let $G_\bl$ be a simplicial Lie group with Lie algebra $\gb$. By Cor.\ \ref{cor:WG-Ug}, there is an isomorphism of formal $\infty$-groups $\Dist_\bl(\Wb{G}) \cong \Wbar{\cU(\g)}$. The result then follows from statement (2) of Thm.\ \ref{thm:FDiff}.

\item Statement (4) of Thm.\ \ref{thm:formal-hmtpy} implies that 
the functor $\Dist_\bl \maps \LinfGrp \to \FG$ preserves weak equivalences, fibrations, and pullbacks along fibrations. Combining this with statement (3) of Thm.\ \ref{thm:FDiff} gives the desired result. 
\end{enumerate}
\end{proof}

\newpage
\appendix

\section{Recollections on formal commutative algebra} \label{sec:comalg}
For convenience, we recall some basic facts about complete local $\kk$-algebras $({R},{\mm})$
with residue field $\kk$.  Unless otherwise specified, by ``complete'' we mean that ${R}$ is isomorphic as a local ring to its $\mm$-adic completion.    
We denote by $\clnAlg$ the category of such algebras and local morphisms between them. In particular, $\clnAlg$ is a not necessarily full subcategory of topological $\kk$-algebras (e.g.\ \cite[\href{https://stacks.math.columbia.edu/tag/07E9}{Tag 07E9}]{stacks-project}).

% \begin{lemma}\label{lem:quot}
% If ${R}$ is a complete local Noetherian $\kk$-algebra, then any quotient of ${R}$ is local, complete, and Noetherian. 
% \end{lemma}
% \begin{proof}
% See \cite[\href{https://stacks.math.columbia.edu/tag/0325}{Tag 0325}]{stacks-project}.
% \end{proof}

Given continuous morphisms of complete local Noetherian $\kk$-algebras $C \to {A}$, and $C \to B$, denote by ${A} \wh{\tensor}_{{C}} {B}$ the complete tensor product as 
in \cite[0\textsubscript{I}, 7.7]{EGA}. As explained there, we have an isomorphism of complete topological $\kk$-algebras
\begin{equation} \label{eq:ideal-of-def}
\begin{split}
{A} \wh{\tensor}_{{C}} {B} &= \plim_n (A/\mm^n_A) \tensor_{C/\mm^n_C} (B/\mm^n_B) \\
& \cong \plim_n A \tensor_C B / (\mm_A, \mm_B)^n,
\end{split}
\end{equation}
where $(\mm_A, \mm_B) = \mm_A \tensor_C B + A \tensor_C \mm_B$. In particular, $\wh{(\mm_A, \mm_B)}$ is an ideal of definition for ${A} \wh{\tensor}_{{C}} {B}$. 

The tensor product ${A} \wh{\tensor}_{{C}} {B}$ satisfies the usual universal property for pushouts in complete topological $\kk$-algebras.
\begin{example}\label{ex:powerseries}
There is an isomorphism of complete topological $\kk$-algebras 
\[
\kk[[x_1,\ldots,x_n]] \wh{\tensor} \kk[[y_1,\ldots,y_m]] \cong \kk[[x_1,\ldots,x_n,y_1,\ldots,y_m]].
\]
Let $\mm_X = \gen{x_1,\ldots,x_n}$, and $\mm_Y = \gen{y_1,\ldots,y_m}$.
Under this identification, the completion of the ideal $\bigl( \mm_X,\mm_Y  \bigr)$ is the ideal $\gen{x_1,\ldots,x_n,y_1,\ldots,y_m} \ideal \kk[[x_1,\ldots,x_n,y_1,\ldots,y_m]]$.
\end{example}

\begin{proposition} \label{prop:colimits}
The category $\clnAlg$ admits: (1) coequalizers, (2) finite coproducts,
and hence, pushouts. (3) In particular,  the pushout of a diagram ${A} \leftarrow {C} \to {B}$ in $\clnAlg$ is the tensor product ${A} \wh{\tensor}_{{C}} {B}$.
\end{proposition}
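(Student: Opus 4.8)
The plan is to prove Proposition \ref{prop:colimits} by establishing the three claims in a convenient order, reducing finite coproducts and general pushouts to the construction of the complete tensor product already recalled in \eqref{eq:ideal-of-def}, and treating coequalizers separately. First I would dispose of the coproduct and pushout claims together. The key observation is that $\clnAlg$ has a terminal coproduct factor, namely $\kk$ itself, which is initial in $\clnAlg$ since every algebra in this category has residue field $\kk$ and the structure map $\kk \to R$ is the unique local morphism. Hence the binary coproduct of $A$ and $B$ should be computed as the pushout of $A \leftarrow \kk \to B$, which by the discussion surrounding \eqref{eq:ideal-of-def} is $A \htensor_\kk B$. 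So I would first verify (3), the pushout statement, and then deduce (2) as the special case $C = \kk$, with the empty coproduct given by the initial object $\kk$.

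For the pushout claim itself, the work is to check that $A \htensor_C B$, equipped with its canonical maps from $A$ and $B$, satisfies the universal property for pushouts \emph{within the category} $\clnAlg$. The excerpt already states that $A \htensor_C B$ satisfies the universal property for pushouts in complete topological $\kk$-algebras, so the main point to verify is that $A \htensor_C B$ actually lies in $\clnAlg$ — that is, that it is complete local Noetherian with residue field $\kk$ — and that the maps involved are local morphisms. Completeness is immediate from the limit presentation in \eqref{eq:ideal-of-def}. For the local and Noetherian properties, I would use that $\wh{(\mm_A,\mm_B)}$ is an ideal of definition with residue field $(A/\mm_A)\tensor_{C/\mm_C}(B/\mm_B) \cong \kk \tensor_\kk \kk = \kk$, so the quotient is a field and the ideal is maximal, giving locality with the correct residue field; Noetherianness of a complete local ring with finitely generated maximal ideal over $\kk$ follows from the standard structure theory recalled in this appendix (the ring is a quotient of a power series ring). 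Finally, since any test algebra $R \in \clnAlg$ receiving compatible local maps from $A$ and $B$ over $C$ is itself a complete topological $\kk$-algebra, the topological universal property furnishes a unique continuous morphism $A \htensor_C B \to R$, and one checks it is local because it sends the ideal of definition into $\mm_R$.

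For the coequalizer claim (1), I would take two parallel morphisms $f, g \maps A \rightrightarrows B$ in $\clnAlg$ and form the coequalizer by dividing out by the closed ideal $\wh{I}$ generated by $\{f(a) - g(a) : a \in A\}$, i.e. setting $Q := B/\wh{I}$ where $\wh{I}$ is the closure of the ideal $I = (f(a)-g(a))$ in the $\mm_B$-adic topology. The quotient of a complete local Noetherian $\kk$-algebra by a closed ideal is again complete local Noetherian with residue field $\kk$ (the image of $\mm_B$ remains the unique maximal ideal, since $I \sse \mm_B$ as $f,g$ are local and agree after reduction to $\kk$). The universal property is then the standard one: any morphism $h \maps B \to R$ in $\clnAlg$ with $hf = hg$ kills each $f(a)-g(a)$, hence kills $I$, and by continuity kills $\wh{I}$, so it factors uniquely through $Q$.

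I expect the main obstacle to be the careful handling of the topology in the coequalizer construction — specifically, verifying that passing to the \emph{closure} $\wh{I}$ of the relation ideal is both necessary (to stay within complete algebras and to match the universal property against continuous morphisms) and harmless (that $B/\wh{I}$ is still Noetherian and that the reduction modulo the maximal ideal is unaffected). The pushout/coproduct parts, by contrast, are essentially bookkeeping on top of the already-cited properties of $\htensor$ from \cite{EGA}, so the genuine content lies in confirming that the coequalizer ideal must be closed and that this closure operation is compatible with the $\mm$-adic structure.
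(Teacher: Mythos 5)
Your proposal is correct in outline but organizes the proof differently from the paper, and the two routes distribute the technical burden in different places. You derive (2) from (3) by observing that $\kk$ is initial in $\clnAlg$, and you attack the pushout head-on by verifying that $A \htensor_C B$ lies in $\clnAlg$ (complete by the limit presentation, local with residue field $\kk$ by reducing mod the ideal of definition, Noetherian from finite generation of the maximal ideal). The paper instead proves (1) and (2) first and obtains (3) as a formal consequence: for (1) it quotes the fact that a quotient of a complete Noetherian local ring by \emph{any} ideal is again complete (Stacks, Tag 0325) — so the closure $\wh{I}$ you introduce in your coequalizer construction is automatically equal to $I$, and the "main obstacle" you anticipate dissolves; for (2) it uses the Cohen structure theorem to present $A \htensor_\kk B$ as a quotient of $\kk[[x_1,\ldots,x_n,y_1,\ldots,y_m]]$, which yields completeness, locality, and Noetherianness all at once. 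Your closure-based coequalizer is nonetheless valid as written (morphisms in $\clnAlg$ are continuous with closed kernels, so the universal property goes through for $B/\wh{I}$), it just carries redundant baggage.

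The one step you should shore up is the Noetherianness of $A \htensor_C B$ in your direct approach. The criterion "complete with finitely generated maximal ideal $\Rightarrow$ Noetherian" requires completeness with respect to the \emph{adic} topology of $\wh{(\mm_A,\mm_B)}$, whereas the limit presentation \eqref{eq:ideal-of-def} only gives completeness with respect to the filtration by the closures of the powers $(\mm_A,\mm_B)^n$; identifying these two filtrations is exactly the kind of statement that is automatic in the Noetherian setting but circular to assume here. The paper's detour through the Cohen structure theorem and the surjectivity of completions of surjections (Stacks, Tag 0315) is designed precisely to sidestep this: exhibiting the completed tensor product as a quotient of a power series ring settles all three properties without ever comparing topologies. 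Either patch your argument with that presentation, or invoke a version of the Noetherianness criterion stated for filtered-complete rings rather than adically complete ones.
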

\begin{proof}
\mbox{}
\begin{enumerate}[leftmargin=15pt]
\item Let ${A} \in \clnAlg$ with maximal ideal $\mm$, and $I \ideal A$. Then $A$ complete Noetherian implies that $A/I$ is complete with respect to the $\mm/I$-adic topology, e.g.\  \cite[\href{https://stacks.math.columbia.edu/tag/0325}{Tag 0325}]{stacks-project}.

\item If ${A}, {B} \in \clnAlg$, then via the Cohen structure theorem \cite[\href{https://stacks.math.columbia.edu/tag/032A}{Tag 032A}]{stacks-project}, there are natural numbers $n$ and $m$ along with surjections of local rings $\kk[[x_1,\ldots,x_n]] \to {A}$, 
and $\kk[[y_1,\ldots,y_m]] \to {B}$. Hence, we have a surjection
\begin{equation} \label{eq:coprod-durj}
\kk[[x_1,\ldots,x_n]] \tensor \kk[[y_1,\ldots,y_m]] \to A \tensor B
\end{equation}
which maps the maximal ideal $\bigl( \mm_X,\mm_Y  \bigr)$ onto $(\mm_A, \mm_B)$. Therefore, the $(\mm_A, \mm_B)$-adic filtration equals the $\bigl( \mm_X,\mm_Y  \bigr)$-adic filtration on
$A \tensor B$ as a module over $\kk[[x_1,\ldots,x_n]] \tensor \kk[[y_1,\ldots,y_m]]$. Hence, statement 2 of 
\cite[\href{https://stacks.math.columbia.edu/tag/0315}{Tag 0315}]{stacks-project} implies that the morphism
\eqref{eq:coprod-durj} induces a surjection on the completions
\[
\kk[[x_1,\ldots,x_n,y_1,\ldots,y_m]] =
\kk[[x_1,\ldots,x_n]] \wh{\tensor}_\kk \kk[[y_1,\ldots,y_m]] \to {A}\wh{\tensor}_\kk {B} 
\]  
Since $\kk[[x_1,\ldots,x_n,y_1,\ldots,y_m]] \in \clnAlg$, we conclude $A \wh{\tensor}_\kk {B} \in \clnAlg$ by (1). 

\item As ${A} \wh{\tensor}_{{C}} {B}$ is the pushout in topological $\kk$-algebras, it suffices to verify that the universal morphisms are homomorphisms of local algebras, which is straightforward. 
\end{enumerate}

\end{proof}

\begin{lemma} \label{lem:gen}
Let $\kk$ be a field, $(A,\mm)$ a regular complete local Noetherian $\kk$-algebra of dimension $n$ such that the unit induces an isomorphism $\kk \cong A/\mm$. Choose any $z_1,\ldots,z_n \in \mm$ such that $\{z_1 + \mm^2 , \ldots,z_n+ \mm^2\}$ is a basis for $\mm/\mm^2$. Then the continuous $\kk$-algebra morphism
\[
\kk[[x_,\ldots,x_n]] \to A, \qquad x_i \mapsto z_i
\]
is an isomorphism. 
\end{lemma}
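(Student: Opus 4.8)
The plan is to prove that $\varphi \maps \kk[[x_1,\ldots,x_n]] \to A$, $x_i \mapsto z_i$, is an isomorphism by passing to the associated graded rings for the maximal-ideal filtrations on both sides. First I would check that $\varphi$ is well-defined and continuous: since each $z_i \in \mm$ and $A$ is $\mm$-adically complete, the partial sums of $\sum_\alpha c_\alpha z^\alpha$ form a Cauchy sequence (as $z^\alpha \in \mm^{\abs{\alpha}}$) and converge in $A$, so the assignment on the polynomial subring extends uniquely to a continuous local $\kk$-algebra homomorphism on the completion. By construction $\varphi$ carries the maximal ideal $\mathfrak{n} := \gen{x_1,\ldots,x_n}$ into $\mm$, hence it is filtered and induces a map of graded $\kk$-algebras $\gr(\varphi) \maps \gr_{\mathfrak{n}}\kk[[x]] \to \gr_{\mm} A$.

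Next I would identify both graded rings explicitly. On the source, $\gr_{\mathfrak{n}}\kk[[x_1,\ldots,x_n]] \cong \kk[\xi_1,\ldots,\xi_n]$, the polynomial ring on the degree-one classes $\xi_i$ of the $x_i$. On the target, regularity of $A$ is precisely the statement that the canonical surjection $\Sym_\kk(\mm/\mm^2) \to \gr_{\mm} A$ is an isomorphism (a standard characterization of regular local rings, \cite{stacks-project}), so that $\gr_{\mm} A \cong \kk[\bar z_1,\ldots,\bar z_n]$, where $\bar z_i$ is the image of $z_i$ in $\mm/\mm^2$. Under these identifications $\gr(\varphi)$ is the unique graded algebra map with $\xi_i \mapsto \bar z_i$. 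Since $\{\bar z_1+\mm^2,\ldots,\bar z_n+\mm^2\}$ is a basis of $\mm/\mm^2$ by hypothesis, $\gr(\varphi)$ is the symmetric-algebra functor applied to the linear isomorphism $\spann_\kk\{\xi_i\} \xto{\cong} \mm/\mm^2$, and is therefore an isomorphism of graded rings.

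Finally I would upgrade ``$\gr(\varphi)$ is an isomorphism'' to ``$\varphi$ is an isomorphism'' using that both rings are separated and complete for their adic filtrations. For injectivity: if $0 \neq u \in \ker\varphi$, then by the Krull intersection theorem $\bigcap_k \mathfrak{n}^k = 0$, so $u$ has a nonzero leading form in some $\mathfrak{n}^k/\mathfrak{n}^{k+1}$; its image under $\gr(\varphi)$ is the leading form of $\varphi(u)=0$, contradicting injectivity of $\gr(\varphi)$. For surjectivity I would run a successive-approximation argument: given $a \in A$, lift its leading form through $\gr(\varphi)$ to correct $a$ modulo a higher power of $\mm$, and iterate; completeness of $\kk[[x]]$ guarantees the resulting series converges to a preimage. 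This is the standard principle that a continuous filtered homomorphism between complete separated filtered rings inducing an isomorphism on associated graded is itself an isomorphism.

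The one genuine subtlety, and the step I would treat most carefully, is invoking regularity in exactly the right form: the hypothesis ``$A$ regular of dimension $n$'' must be used precisely to guarantee that $\Sym_\kk(\mm/\mm^2) \to \gr_{\mm} A$ is \emph{injective} (equivalently that $\dim_\kk \mm/\mm^2 = n = \dim A$). Without it one obtains only surjectivity of $\varphi$, since a minimal generating set of $\mm$ always generates by Nakayama; regularity is what forces $\gr(\varphi)$ to be injective and hence $\varphi$ to be an isomorphism rather than merely a surjection. Everything else is formal bookkeeping with the $\mm$-adic and $\mathfrak{n}$-adic filtrations.
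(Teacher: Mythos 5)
Your proof is correct, but it takes a different route from the one the paper relies on. The paper does not argue the lemma itself; it defers to \cite[Tag 0C0S]{stacks-project}, where the structure of the argument is: (i) the map $\vphi$ is surjective because $z_1,\ldots,z_n$ generate $\mm$ (Nakayama) and $A$ is $\mm$-adically complete, and (ii) $\vphi$ is injective by a dimension count --- $\kk[[x_1,\ldots,x_n]]$ is a domain of dimension $n$, so a nonzero kernel would force $\dim A < n$, contradicting $\dim A = n$. Your argument instead passes to associated graded rings: you use the characterization of regularity as the injectivity of the canonical surjection $\Sym_\kk(\mm/\mm^2) \to \gr_{\mm}A$ to show $\gr(\vphi)$ is an isomorphism, and then invoke the standard principle that a continuous filtered map between complete separated filtered rings inducing an isomorphism on associated graded is itself an isomorphism. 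Both are standard and both are complete; the Stacks route is shorter if one grants the dimension theory of Noetherian local rings, while yours is more self-contained, avoids needing that $\kk[[x_1,\ldots,x_n]]$ is a domain of dimension $n$ as an input, and --- as you correctly emphasize --- pinpoints exactly where regularity enters (it is what makes $\gr(\vphi)$ injective; without it one only gets surjectivity of $\vphi$). One small remark: your well-definedness step is where the hypothesis that the unit induces $\kk \cong A/\mm$ is silently doing work --- it guarantees that $\kk$ itself is a coefficient field inside $A$, so no appeal to the Cohen structure theorem is needed to define $\vphi$ as a $\kk$-algebra map.
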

\begin{proof}
% Surjectivity follows from the fact that $A$ is $\gen{x_1,\ldots,x_n}$-complete $\kk[[x_,\ldots,x_n]]$ module, and injectivity follows from dimension counting.
See the proof of \cite[\href{https://stacks.math.columbia.edu/tag/0C0S}{Tag 0C0S}]{stacks-project}.
\end{proof}

\section{Shuffles and codegeneracies} \label{sec:shuf}
Our conventions for shuffle permutations follow \cite{EM}. In particular,
we denote by $\Sh(p,q)$ the set of \df{$(p,q)$-shuffles} whose elements are ordered partitions $(I,J)$ of $[p+q-1]$ with $I=\{i_1 < \ldots < i_p\}$ and $J=\{j_1 < \ldots < j_q\}$. We denote by $\sgn(I,J)$ the \df{sign} of the shuffle $(I,J)$.
Recall the standard formula e.g., \cite[p.\ 64]{EM} for the sign of a shuffle in terms of its \df{signature} $\sig(I,J)$: 
\begin{equation} \label{eq:sgn-sh}
\sgn(I,J)=(-1)^{\sig(I,J)}, \qquad \sig(I,J):=\sum_{k=1}^{p} ( i_k - (k-1)). 
\end{equation}
Note that for any $k=1,\ldots, p$, there are exactly $i_k - (k-1)$ elements of $J$ that are less than $i_k$, i.e.
\begin{equation} \label{eq:sig}
i_k - (k-1) = \abs{J \cap [i_{k} -1]}.
\end{equation}
Next, we fix some notation which will be used throughout this section and Sec.\ \ref{sec:map-app}.
\begin{notation}\label{note:ord}
Let $I=\{i_1 < i_2 < \cdots < i_{p}\} \sse \N$ be an ordered subset. For $r \in \N$ define $I^{> r} \sse I$ to be the ordered subset of $I$
\begin{equation} \label{eq:super-ord}
I^{> r}=\{ i \in I \st i >r \}.
\end{equation}
We define $I^{\geq r}$, $I^{< r}$, and $I^{\leq r}$ analogously. Let $t \in \Z$ be an integer such that $i_1 \geq -t$. Denote by $I_t \sse \N$ the ordered set
\begin{equation} \label{eq:sub-ord}
I_{t}:= \{i_{1} + t <    i_{2} + t <  \cdots < i_{p} + t \}.       
\end{equation}
For denoting ordered sets constructed by combining operations 
\eqref{eq:super-ord} and \eqref{eq:sub-ord}, we declare that {\it superscripts precede subscripts}. That is, given $I$, $r$, and $t$ as above, if $k$ is the smallest integer such that $r < i_{k}$, then
%\begin{equation} \label{eq:mix-ord}
\[
I^{> r}_{t}= \{i_{k}+ t <  i_{k+1}+ t < \cdots < i_{p} + t \}.
\]
%\end{equation}
Finally, we denote by 
%\begin{equation} \label{eq:shuff-0}
\[
\Sh(p,q)_{0 \in J} \sse \Sh(p,q)
\]
%\end{equation}
the subset consisting of those $(p,q)$-shuffles $(I,J)$ satisfying  $0 \in J$. 
\end{notation}

Next, we recall some basic facts concerning codegeneracy maps in the category of ordinals $\Delta$. We shall use the following notation throughout this section.
\begin{notation}
Given $n \geq 1$ and an ordered subset $B=\{\be_{1} < \be_{2} < \cdots < \be_{q} \} \sse [n-1]$ denote by 
\[
s^{B} \maps [n] \to [n-q]
\]
the epimorphism $s^{B}:= s^{\be_1}s^{\be_2} \cdots s^{\be_q}$ in $\Del$.
\end{notation}

Next, if $f \maps [n] \to [m]$ in $\Delta$ is an epimorphism, then recall that 
\begin{equation} \label{eq:epi-set}
f= s^{\sur(f)}, 
\end{equation} 
where $\sur(f):=\{ t \in [n-1] \st f(t) = f(t+1)\}$. The following lemma is elementary.
\begin{lemma}\label{lem:surj}
Let $B=\{\be_{1} < \be_{2} < \cdots < \be_{q} \} \sse [n-1]$ as above, and $A:=[n-1]\setminus B$. Then 
\[
s^{B}(t)= \abs{\{ \al \in A \st \al < t\}}. 
%= t - \abs{\{ \be \in B \st \be < t\}},
\]
Furthermore  
\begin{equation} \label{eq:lem-surj}
s^{B}(t+1) = s^{B}(t) +1
\end{equation}
if and only if $t \in A$.
%$s^{I}(t) = s^{I}(t) +1$ if and only if $t \in J$.
\end{lemma}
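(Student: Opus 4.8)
The statement to prove is Lemma~\ref{lem:surj}: for $B=\{\be_1<\cdots<\be_q\}\sse[n-1]$ with complement $A:=[n-1]\setminus B$, one has $s^B(t)=\abs{\{\al\in A\st \al<t\}}$, and moreover $s^B(t+1)=s^B(t)+1$ if and only if $t\in A$. The plan is to unwind the definition of $s^B=s^{\be_1}s^{\be_2}\cdots s^{\be_q}$ as a composite of elementary codegeneracies and track the effect on an arbitrary $t\in[n]$. Recall that the elementary codegeneracy $s^\be\maps[m]\to[m-1]$ is the order-preserving surjection that repeats the value $\be$, i.e.\ $s^\be(t)=t$ for $t\le\be$ and $s^\be(t)=t-1$ for $t>\be$; equivalently it counts how many of the indices it has already ``passed.'' I would first establish the explicit count formula $s^B(t)=\abs{A^{<t}}$, and then read off the second assertion as an immediate corollary.

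\textbf{Key steps.} First I would prove the count formula by induction on $q=\abs{B}$. For $q=0$ we have $B=\emptyset$, $A=[n-1]$, and $s^B=\id$, so $s^B(t)=t=\abs{\{\al\in[n-1]\st\al<t\}}$, which matches. For the inductive step, write $s^B=s^{\be_1}\cc s^{B'}$ where $B'=\{\be_2<\cdots<\be_q\}$ is viewed as a subset of $[n-2]$ after applying $s^{\be_1}$ appropriately; alternatively, and more cleanly, I would instead use the characterization \eqref{eq:epi-set}, namely that any epimorphism $f$ in $\Del$ satisfies $f=s^{\sur(f)}$ where $\sur(f)=\{t\in[n-1]\st f(t)=f(t+1)\}$. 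Since $s^B$ is the unique epimorphism $[n]\to[n-q]$ with $\sur(s^B)=B$, it suffices to identify the order-preserving surjection $g\maps[n]\to[n-q]$ defined by $g(t):=\abs{A^{<t}}$ and check that $\sur(g)=B$; by uniqueness this forces $g=s^B$. Thus the real content is the computation of $\sur(g)$.

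\textbf{The computation of $\sur(g)$.} This is the heart of the argument and where I would spend care, though it is elementary. For the map $g(t)=\abs{\{\al\in A\st\al<t\}}$, a direct comparison gives
\[
g(t+1)-g(t)=\abs{\{\al\in A\st \al<t+1\}}-\abs{\{\al\in A\st\al<t\}}=\abs{\{\al\in A\st\al=t\}},
\]
which equals $1$ if $t\in A$ and $0$ if $t\notin A$, i.e.\ if $t\in B$ (for $t\in[n-1]$). Hence $g(t+1)=g(t)$ exactly when $t\in B$, so $\sur(g)=B$, and $g$ is order-preserving and surjective onto $[n-q]$ since every value in $\{0,\ldots,n-q\}$ is attained. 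By \eqref{eq:epi-set} and uniqueness of the epimorphism with prescribed surjectivity set, $g=s^B$, establishing the first claim. The second claim \eqref{eq:lem-surj} is then immediate: the displayed difference computation shows $s^B(t+1)=s^B(t)+1$ precisely when $t\in A$.

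\textbf{Main obstacle.} There is no serious obstacle here; the lemma is combinatorially routine. The only point requiring mild attention is bookkeeping of the index ranges (ensuring $g$ indeed lands in $[n-q]$ and that the indexing in \eqref{eq:epi-set} matches the convention $\sur(f)\sse[n-1]$), and confirming that the uniqueness statement ``an epimorphism in $\Del$ is determined by its surjectivity set'' is being applied correctly. I would verify that $g$ is genuinely surjective by noting that consecutive values differ by at most $1$ and that $g(0)=0$, $g(n)=\abs{A}=n-q$, so all intermediate values are hit. Everything else follows formally.
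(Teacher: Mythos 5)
Your proof is correct. Note that the paper itself supplies no argument here — it simply declares the lemma elementary — so there is nothing to compare against; your write-up is a clean way to fill that gap. The one step worth making explicit is the uniqueness you invoke: it is exactly the recalled identity \eqref{eq:epi-set}, $f=s^{\sur(f)}$, applied to the map $g(t):=\abs{\{\al\in A\st \al<t\}}$ once you have checked that $g$ is an order-preserving surjection $[n]\to[n-q]$ with $\sur(g)=B$; since $\sur(g)=B$ forces $g=s^{\sur(g)}=s^{B}$, both assertions of the lemma follow from your difference computation $g(t+1)-g(t)=\abs{A\cap\{t\}}$. Your bookkeeping (that $g(0)=0$, $g(n)=\abs{A}=n-q$, and consecutive values differ by at most one, so $g$ is surjective, and that $t\notin A$ is equivalent to $t\in B$ for $t\in[n-1]$) is exactly what is needed, and the composition-order conventions for $s^{B}=s^{\be_1}\cdots s^{\be_q}$ cause no trouble because you never unwind the composite directly.
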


The next proposition and its corollary are technical results that will be used in the next section for the proof of Prop.\ \ref{prop:themap}.

% \begin{proposition} \label{prop:shuf}
% Let $n=p_1+p_2+p_3 >0$. Let $(A,B) \in \Sh(p_1+p_2,p_3)$ be a shuffle with $A=\{ \al_1 < \cdots < \al_{p_1 + p_2}\} \sse [n-1]$, and $(C,D) \in \Sh(p_1,p_2)$ be a shuffle with $D=\{ \del_1 < \ldots < \del_{p_2}\} \sse [p_1+p_2-1]$. Let $E=\{ \al_{\del_{i} +1} \in [n-1] \st i=1,\ldots, p_2\}$. Then 
% \[
% s^{B \cup E} = s^{D}s^{B} \maps [p_1+ p_2 +p_3 ] \to [p_1].
% \]
% % Let $n=a+b+c >0$. Let $(I,J) \in \Sh(a+b,c)$ be a shuffle with $I=\{ \al_1 < \cdots < \al_{a+b}\} \sse [n-1]$, and $(I',J') \in \Sh(a,b)$ be a shuffle with $J'=\{ \be'_1 < \ldots < \be'_b\} \sse [a+b-1]$. Let $K=\{ \al_{\be'_{i} +1} \in [n-1] \st i=1,\ldots, b\}$. Then 
% % \[
% % s^{J \cup K} = s^{J'}s^{J} \maps [a+ b +c ] \to [a].
% % \]
% \end{proposition}
% \begin{proof}
% \end{proof}

\begin{proposition} \label{prop:shuf}
Let $n=p_1 + p_2 + p_3 >0$. There is a bijection of sets
\[
\begin{split}
\vtha \maps \Sh(p_1+p_2,p_3) & \times \Sh(p_1,p_2) \xto{\cong} \Sh(p_1,p_2+p_3+1)_{0 \in J} \times \Sh(p_2,p_3)\\
&\vtha \bigl((A,B),(C,D) \bigr):= \bigl( (I,J), (M,N) \bigr) 
\end{split}
\]
such that the following equalities hold: 
\begin{equation} \label{eq:propshuf1}
\sgn(A,B)\, \sgn(C,D) = (-1)^{p_1}\sgn(I,J)\, \sgn(M,N),
\end{equation}
\begin{equation} \label{eq:propshuf2}
s^{N}s^{I_{-1}} = s^{C}s^{B} \maps [n] \to [p_2],
\end{equation}
\begin{equation} \label{eq:propshuf3}
s^{J^{> 0}_{-1}} = s^{D}s^{B} \maps [n] \to [p_1],
\end{equation}
\begin{equation} \label{eq:propshuf4}
s^{A}= s^{M}s^{I_{-1}} \maps [n] \to [p_3].
\end{equation}
\end{proposition}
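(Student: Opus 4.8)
The plan is to construct the bijection $\vtha$ explicitly by ``unbundling'' the iterated shuffle data on the left into reassembled shuffle data on the right, and then to verify the four identities by direct combinatorial bookkeeping using the signature formula \eqref{eq:sgn-sh}--\eqref{eq:sig} and the codegeneracy description in Lemma \ref{lem:surj}. The conceptual mechanism is the associativity-type coherence for shuffles: a $(p_1+p_2,p_3)$-shuffle together with a $(p_1,p_2)$-shuffle encodes a single interleaving of three blocks of sizes $p_1,p_2,p_3$, and the same total interleaving can be re-bracketed as a $(p_2,p_3)$-shuffle together with a $(p_1,p_2+p_3)$-type shuffle. The extra ``$+1$'' and the constraint $0 \in J$ on the right-hand factor reflect a bookkeeping shift coming from the desuspension/extra-degeneracy that this proposition feeds into (via Prop.\ \ref{prop:themap}); I would track this shift by indexing the middle block against $[n] = [p_1+p_2+p_3]$ consistently with the subscript/superscript conventions of Notation \ref{note:ord}.

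First I would set up the total interleaving. Given $(A,B) \in \Sh(p_1+p_2,p_3)$ and $(C,D)\in\Sh(p_1,p_2)$, the set $A$ records the positions (inside $[n-1]$) of the first $p_1+p_2$ strands, and $(C,D)$ subdivides those strands into the first $p_1$ and the next $p_2$. Composing, I obtain an ordered partition of $[n-1]$ into three blocks $P_1 \sqcup P_2 \sqcup P_3$ with $|P_i| = p_i$. I would then define the right-hand data by re-grouping: $(M,N) \in \Sh(p_2,p_3)$ is the shuffle of $P_2$ against $P_3$ (read off from $A$ and the $D$-part of $(C,D)$), and $(I,J) \in \Sh(p_1,p_2+p_3+1)$ is the shuffle of $P_1$ against the union $P_2\sqcup P_3$, shifted so that $0 \in J$. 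The explicit formulas should be $N, I, J$ expressed via the operations $(-)^{>r}$ and $(-)_t$ of Notation \ref{note:ord}; I would write down $\vtha$ and its inverse simultaneously so that bijectivity is immediate from the fact that both sides reconstruct the same three-block interleaving of $[n-1]$.

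Next I would verify the codegeneracy identities \eqref{eq:propshuf2}--\eqref{eq:propshuf4}. Here the key tool is Lemma \ref{lem:surj}: a composite $s^B$ is determined by its complementary set $A = [n-1]\setminus B$, and $s^B(t)$ counts elements of $A$ below $t$. So each of the three identities reduces to checking that the complementary sets of the two composites agree as subsets of the appropriate $[\,\cdot\,]$, after applying the index shifts $(-)_{-1}$ and the truncations $(-)^{>0}$. For instance, \eqref{eq:propshuf3} amounts to showing that $J^{>0}_{-1}$ and the complement governing $s^D s^B$ both pick out exactly the $P_2\sqcup P_3$ positions relative to the $P_1$ positions; this is a direct translation of the re-grouping defining $\vtha$. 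I expect each identity to follow by a short counting argument once $\vtha$ is pinned down, with the only subtlety being careful handling of the shift by $1$ and the removal of the element $0 \in J$.

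The main obstacle will be the sign identity \eqref{eq:propshuf1}. The signs $\sgn(A,B)$ and $\sgn(C,D)$ count inversions relative to the two-step bracketing, while $\sgn(I,J)$ and $\sgn(M,N)$ count inversions relative to the regrouped bracketing; these differ by the number of inversions that get ``re-attributed'' when the middle block $P_2$ is moved from being bundled with $P_1$ (on the left) to being bundled with $P_3$ (on the right), together with the contribution of the inserted index $0$. The plan is to compute all four signatures via \eqref{eq:sig} as $\sum_k |\{\text{larger-block elements below the } k\text{th strand}\}|$, and to show that the total count of inversions between $P_1$ and $P_2$ appears with opposite parities on the two sides, leaving a net discrepancy of exactly $p_1$ modulo $2$. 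Concretely I would prove that
\[
\sig(A,B) + \sig(C,D) \equiv p_1 + \sig(I,J) + \sig(M,N) \pmod 2,
\]
by partitioning all inversions into the six ordered block-pairs and matching them one pair at a time; the $P_1$--$P_2$ inversions are the only ones whose sign-attribution changes, and the offset is absorbed by the $0\in J$ convention and the $(-)_{-1}$ shift, producing the factor $(-1)^{p_1}$. This parity bookkeeping is routine but error-prone, so I would organize it as a single lemma counting block-wise inversions before assembling the final congruence.
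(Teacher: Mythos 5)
Your proposal is correct and follows essentially the same route as the paper: the paper also constructs $\vtha$ by re-bracketing the three-block interleaving of $[n-1]$ (via an intermediate shuffle $(K,L) \in \Sh(p_1,p_2+p_3)$ which is then shifted to $(I,J)$ with $0 \in J$), verifies \eqref{eq:propshuf2}--\eqref{eq:propshuf4} by comparing the sets $\sur(\cdot)$ using Lemma \ref{lem:surj}, and checks \eqref{eq:propshuf1} by direct manipulation of the signature formula \eqref{eq:sgn-sh}. One small correction to your anticipated sign bookkeeping: the block-pair inversion counts match \emph{exactly} on the two sides (the $P_1$--$P_2$ inversions appear once in $\sig(C,D)$ and once in $\sig(K,L)$, with no change of parity), so the entire factor $(-1)^{p_1}$ comes from the index shift $I = K_{+1}$ and the insertion of $0$ into $J$, which adds $p_1$ to the signature.
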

\begin{proof}
Let $(A,B) \in \Sh(p_1 + p_2,p_3)$ with $A=\{\al_1 < \cdots < \al_{p_1 +p_2}\}$ and $B=\{\be_1 < \cdots < \be_{p_3}\}.$
Let $(C,D) \in \Sh(p_1,p_2)$ with $C=\{\ga_1 < \cdots < \ga_{p_1} \}$, and $D=\{\del_1  < \cdots < \del_{p_2}\}$. Denote by $(K,L) \in \Sh(p_1,p_2+p_3)$ the unique shuffle such that
\[
K=\{ \al_{\ga_{1}+1} < \cdots < \al_{\ga_{p_1}+1}  \}, \quad L = [n-1]\setminus K.
\]
Write $L= \{ \lam_1 < \cdots < \lam_{p_2 + p_3}\}$. Then there exists a unique shuffle $(M,N) \in \Sh(p_2,p_3)$, with $M=\{ \mu_1 < \cdots < \mu_{p_2}\}$, and $N=\{\nu_1 < \cdots < \nu_{p_3}\}$ such that 
\begin{equation} \label{eq:MN-shuf}
\begin{split}
\al_{\del_{\el} +1 } &= \lam_{\mu_{\el} +1}, \quad \el=1,\ldots,p_2 \\
\be_{\el} &= \lam_{\nu_{\el} +1}, \quad \el=1,\ldots,p_3 \\
\end{split}
\end{equation}
Now define $(I,J) \in \Sh(p_1, p_2+p_3 +1)_{0\in J}$ by
\begin{equation} \label{eq:IJ-shuf}
I:=K_{+1}, \quad J:= L_{+1} \cup \{0\},
\end{equation}
and set $\vtha \bigl((A,B),(C,D) \bigr):= \bigl( (I,J), (M,N) \bigr)$.
By construction, $\vtha$ is a bijection. 

\begin{itemize}[leftmargin=15pt]
\item\und{Verification of Eq.\ \ref{eq:propshuf1}}:  We use the formula \eqref{eq:sgn-sh} to compute the signs of the shuffles:
\begin{equation} \label{eq:propshuf-pf1}
\begin{split}
\sig(A,B) + \sig(C,D) &= \Bigl(\sum_{i=1}^{p_1} (\al_{\ga_{i} + 1} - \ga_{i}) +  \sum_{j=1}^{p_2} (\al_{\del_{j} +1} -\del_{j}) \Bigr) + \sum_{i=1}^{p_1}( \ga_{i} - (i -1)) \\
&= \sum_{i=1}^{p_1} (\al_{\ga_{i} + 1} - (i-1)) + \sum_{j=1}^{p_2} (\al_{\del_{j} +1} -\del_{j}).
\end{split}
\end{equation}
Now consider $\mu_\el \in M$. Then
%\begin{equation} \label{eq:propshuf-pf2}
\[
\mu_\el = \el -1 + \abs{N \cap [\mu_{\el} -1]}.
\]
%\end{equation}
From Eq.\ \ref{eq:MN-shuf}, we observe that an element $\nu_k \in N$ satisfies
the inequality $\nu_k < \mu_{\el}$ if and only if $\be_{k} < \al_{\del_{\el} +1}$. Hence,   
\[
\begin{split}
\mu_\el &= \el -1 + \abs{ B \cap [ \al_{\del_{\el}+1} -1]}\\
&= \el -1 + \al_{\del_{\el}+1} -  \del_{\el},
\end{split}
\]
where the last equality above follows from Eq.\ \ref{eq:sig}. Therefore, we have
\[
\begin{split}
\sig(K,L) + \sig(M,N) &= \sum_{i=1}^{p_1} (\al_{\ga_{i} + 1} - (i-1)) + \sum_{\el=1}^{p_2} (\mu_{\el} - (\el-1))\\
&=  \sum_{i=1}^{p_1} (\al_{\ga_{i} + 1} - (i-1)) + \sum_{\el=1}^{p_2} (\al_{\del_{\el}+1} -  \del_{\el})\\
&=\sig(A,B) + \sig(C,D).
\end{split}
\]
It then follows that $\sgn(A,B)\, \sgn(C,D) = \sgn(K,L)\, \sgn(M,N)$, and since \eqref{eq:IJ-shuf} implies
\[
\sig(I,J) = \sum_{i=1}^{p_1} ( \al_{\ga_{i} +1} + 1 - (i-1))= p_1 + \sig(K,L),
\]
we conclude that $\sgn(A,B)\, \sgn(C,D) =(-1)^{p_1}\sgn(I,J)\, \sgn(M,N)$.

\item \und{Verification of Eq.\ \ref{eq:propshuf2}}: We show that $s^Ns^K = s^Cs^B$. By \eqref{eq:epi-set}, it suffices to verify that 
$\sur(s^Ns^K) =  \sur(s^Cs^B)$. Eq.\ \ref{eq:lem-surj} implies that
\newcommand{\DO}{\sur(s^Ns^K)}
\newcommand{\DT}{\sur(s^Cs^B)}
\[
\begin{split}
\DO  &= K \cup \{ t \in L \st s^{N}s^{K}(t) = s^{N}(s^{K}(t) + 1) \},\\
\DT &= B \cup \{ t \in A \st s^{C}s^{B}(t) = s^{C}(s^{B}(t) + 1) \}.
\end{split}
\]
Let $\al_r \in K=\DO \cap K$, where $r = \ga_i +1$ for some $\ga_i \in C$.
Then Lemma \ref{lem:surj} implies that $s^{B}(\al_r) = r-1 = \ga_i$. Since $\ga_i$ in $C$, we have $s^{C}(\ga_{i})=s^{C}(\ga_{i}+1)$. Hence, $\al_r \in \DT \cap A$. Conversely, if $\al_\el \in  \DT \cap A$, then 
\[
s^{C}(\el-1) = s^{C}(s^{B}(\al_\el))= s^{C}(s^{B}(\al_{\el}) + 1)= s^{C}(\el).
\]
Hence, there exists $\ga_i \in C$ such that $\el -1 = \ga_i$, and so $\al_{\el}= \al_{\ga_i +1} \in K$. The same argument {\it mutatis mutandis} shows that $\DT \cap B = \DO \cap L$. Therefore, we conclude that $\DO=\DT$. 
\release{\DO}
\release{\DT}

\item \und{Verification of Eq.\ \ref{eq:propshuf3}}: Observe that $J^{> 0}_{-1} = L = B \cup E$, where $E:=
\{ \al_{\del_{i} +1}  \st i=1,\ldots, p_2\}$. By \eqref{eq:epi-set}, it suffices to show that $B \cup E = \sur(s^{D}s^{B})$. By Eq.\ \ref{eq:lem-surj},
\[
B \sse \sur(s^{D}s^{B}) = B \cup \{ t \in A \st s^{D}s^{B}(t)= s^{D}\bigl(s^{B}(t) + 1 \bigr)\}.
\]  
Hence, it remains to verify that $\sur(s^{D}s^{B}) \cap A \sse E$. If $\al_\el \in  \sur(s^{D}s^{B}) \cap A$, then $s^{B}(\al_\el) = \el-1$ by Lemma \ref{lem:surj}, and so $\el-1 \in D$. Therefore, there exists $\del_k$ such that $\del_k = \el -1$, which implies that $\al_\el = \al_{\del_k + 1} \in E$. 

\item\und{Verification of Eq.\ \ref{eq:propshuf4}}: We prove $s^A = s^Ms^K$ by showing $A= \sur(s^{M}s^{K})$. Since $K \sse A$, it remains to verify 
\begin{equation} \label{eq:propshufpf2}
L \cap A = \{ \lam \in L \st s^{M}s^{K}(\lam) = s^{M}(s^{K}(\lam) +1)\}.
\end{equation}
Elements in $L \cap A$ are of the form $\lam_{\mu_{\el} +1}$ for some $\mu_{\el} \in M$. As $s^{K}(\lam_{\mu_{\el} +1}) = \mu_\el$, by Lem.\ \ref{lem:surj}, we have $s^{M}s^{K}(\lam_{\mu_{\el} +1}) = s^{M}(\mu_{\el}) = s^{M}(\mu_{\el} +1 )$. Hence, 
$L \cap A$ is contained in the right-hand side of \eqref{eq:propshufpf2}. Conversely, let $\lam \in L = L \cap A \cup L \cap B$. If $\lam \in B$, then  $\lam = \lam_{\nu_{i} +1}$ for some $\nu_i \in N$, and so $s^{K}(\lam_{\nu_{i} +1}) = \nu_i$. By Eq. \ref{eq:lem-surj}, $s^{M}(\nu_{i} +1) =  s^{M}(\nu_{i}) + 1$. Therefore, $s^{M}s^{K}(\lam) \neq s^{M}(s^{K}(\lam) +1)$, and we conclude that
every element of the right-hand side of \eqref{eq:propshufpf2} is contained in $A$. 
\end{itemize}
\end{proof}

\begin{corollary}\label{cor:shuf}
Let $n=p_1 + p_2 + p_3 >0$. Let $(A,B) \in \Sh(p_1 + p_2,p_3)$, $(C,D) \in \Sh(p_1, p_2+p_3)$, and 
$\vtha \bigl((A,B),(C,D) \bigr) = \bigl( (I,J), (M,N) \bigr) \in \Sh(p_1,p_2+p_3+1)_{0 \in J} \times \Sh(p_2,p_3)$ as in  
in Prop.\ \ref{prop:shuf}.
\begin{enumerate}

\item If $0 \in A$ and $1 \in I$, then $s^{A^{> 0}_{-1}}=s^{M} s^{I^{> 1}_{-2}}$.

\item If $0 \in A$, $1 \in J$, then $0 \in M$ and $s^{A^{>0}_{-1}}=s^{M^{>0}_{-1}}s^{I_{-2}}$.

\item If $0 \in B$, then $1 \in J$, $0 \in N$, and $s^{A_{-1}}=s^{M_{-1}}s^{I_{-2}}$.

\end{enumerate}
\end{corollary}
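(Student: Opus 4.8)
The plan is to prove all three assertions of statement (3) by unwinding the explicit construction of $\vtha$ from the proof of Prop.\ \ref{prop:shuf}, and then to reduce the codegeneracy identity to Eq.\ \eqref{eq:propshuf4} by a pointwise computation built on Lemma \ref{lem:surj}.

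First I would dispatch the two membership claims. Since $0 \in B$ forces $\be_1 = 0$ and hence $\al_1 \geq 1$, every element of $K=\{\al_{\ga_i+1}\}$ is $\geq 1$, so $0 \in L$ and therefore $1 \in L_{+1} \sse J$ by \eqref{eq:IJ-shuf}; this gives $1 \in J$. Since $0$ is the minimum of $[n-1]$ and lies in $L$, we have $\lam_1 = 0$, so the relation $\be_1 = \lam_{\nu_1+1}$ from \eqref{eq:MN-shuf} forces $\nu_1 = 0$, i.e.\ $0 \in N$. These memberships are exactly what guarantees that $I_{-2}$, $M_{-1}$, and $A_{-1}$ are all legitimately defined (their degeneracy sets have the required gaps at the bottom).

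Second, I would isolate a shift relation. For an ordered degeneracy set $X$ whose complement $Y$ contains $0$, Lemma \ref{lem:surj} (which expresses $s^X$ through its complement) yields the pointwise identity $s^{X_{-1}}(t) = s^X(t+1) - 1$ on the domain of $s^{X_{-1}}$, since $|\{c \in Y : c \leq t\}| = 1 + |\{y \in Y : 0 < y \leq t\}|$ when $0 \in Y$. Applying this to $X = A$ (complement $B \ni 0$) and to $X = M$ (complement $N \ni 0$, just established) gives $s^{A_{-1}}(t) = s^A(t+1)-1$ and $s^{M_{-1}}(u) = s^M(u+1)-1$. A companion count, using that both $0$ and $1$ lie in $J$ and hence in the complement of $I_{-1}$, produces the matching identity $s^{I_{-1}}(t+1) = s^{I_{-2}}(t) + 1$.

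Finally I would chain these together: for $t$ in the domain, setting $u = s^{I_{-2}}(t)$,
\[
s^{M_{-1}}s^{I_{-2}}(t) = s^M(u+1) - 1 = s^M\bigl(s^{I_{-1}}(t+1)\bigr) - 1 = s^A(t+1) - 1 = s^{A_{-1}}(t),
\]
where the third equality is Eq.\ \eqref{eq:propshuf4}. As both sides are epimorphisms, equality of the underlying functions (equivalently, of the surjection sets, via \eqref{eq:epi-set}) gives $s^{A_{-1}} = s^{M_{-1}}s^{I_{-2}}$. I expect the main obstacle to be the index bookkeeping in the shift relation, namely checking that the relevant complements really do contain $0$ (resp.\ $0$ and $1$) so that shifting down is valid and the $\pm 1$ boundary terms are tracked correctly; once the shift relation is isolated, the rest is a short chain off Eq.\ \eqref{eq:propshuf4}, exactly parallel to statements (1) and (2).
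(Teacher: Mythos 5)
Your proposal is correct and follows essentially the same route as the paper: both arguments first read off the memberships $1\in J$, $0\in N$ from the explicit construction of $\vtha$ in the proof of Prop.\ \ref{prop:shuf}, and then deduce the codegeneracy identity from Eq.\ \eqref{eq:propshuf4} by pre/post-composing with $d^0$ and $s^0$ — your pointwise ``shift relation'' $s^{X_{-1}}(t)=s^{X}(t+1)-1$ is exactly the cosimplicial identity $s^{X}d^0=d^0s^{X_{-1}}$ (valid when $0\notin X$) written out via Lemma \ref{lem:surj}. The only caveat is that you detail case (3) alone; cases (1) and (2) do follow the same pattern, but with the roles of the memberships ($0\in A$, $1\in I$ or $1\in J$) changing which codegeneracies absorb the $d^0$, so they deserve at least a line each.
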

\begin{proof}
Let $d^0 \maps [n-1] \to [n]$ denote the zeroth coface map in $\Delta$.
\begin{enumerate}[leftmargin=15pt]
\item By Eq.\ \ref{eq:propshuf4}, we have $s^{A}d^0= s^{M}s^{I_{-1}}d^0$. Since $0 \in A$ and $0 \in I_{-1}$, the desired equality holds. 

\item Let $L \sse [n]$ as in the proof of Prop.\ \ref{prop:shuf}.  Since $1 \in J$, we deduce from \eqref{eq:IJ-shuf} that $0 \in L \cap A$. Hence, $0 \in M$ by \eqref{eq:MN-shuf}. Again, by Eq.\ \ref{eq:propshuf4}, we have $s^{A}d^0= s^{M}s^{I_{-1}}d^0$. This establishes the desired equality, since $0 \notin I_{-1}$. 

\item Since $0 \in B$, \eqref{eq:MN-shuf} implies $0 \in L \cap B$, and therefore $0 \in N$. It then follows from \eqref{eq:IJ-shuf} that $1 \in J$. Let $s^0 \maps [p_3] \to [p_3-1]$ be the zeroth codegeneracy operator. Eq.\ \ref{eq:propshuf4} implies that $s^0s^{A}d^0= s^0s^{M}s^{I_{-1}}d^0$. The desired equality now follows, since $0 \notin A$, $0 \notin M$, and $1 \notin I$.    
\end{enumerate}
\end{proof}

\section{Technical Proposition \ref{prop:themap}} \label{sec:map-app}
We recall the statement:
\begin{proposition*} 
The collection \eqref{eq:tvph} of coalgebra morphisms $\{\tvp_n\}$ assemble into a morphism of almost simplicial coalgebras
\[
\tvp \maps \Kcoalgp_\bl\bigl( (\kk \semiop \Ng) \tensor S(\bs \Ng) \bigr) \to U^{+}_\bl\cW_\bl\cU(\gb)
\]
which has the following property:
\begin{equation} \label{eq:App-map-prop}
d^{\cW}_0 \tvp_m = \vph_{m-1}\dE_0 \jm \qquad \forall m \geq 0.
\end{equation}
\end{proposition*}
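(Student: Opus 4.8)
The plan is to verify the two assertions separately: first that the collection $\{\tvp_n\}$ is compatible with all the degeneracies and the positive face maps (so that it defines a morphism of almost simplicial coalgebras), and second that it satisfies the zeroth-face identity \eqref{eq:App-map-prop}. Since each $\tvp_n$ is a coalgebra morphism by construction, and the target is built from cofree coalgebras, both assertions can be checked on primitives and more generally reduced to computations on the generating elements $z \tensor w$ with $z \in \cU(\Nag)_p$ and $w \in S(\bs\Nag)_q$. The recursive definition \eqref{eq:vph} of $\vph_n$, together with the explicit shuffle formula for $\al_n$ and the product formula \eqref{eq:rho-xy} for $\rho$ from Lemma \ref{lem:rho}, gives a closed-form expression for $\vph_n(z\tensor w)$ as a sum over iterated shuffles, which I would unwind.

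First I would establish compatibility with the positive face maps $d^{\cW}_{i}$ for $i \geq 1$ and with all degeneracies. By Remark \ref{rmk:Wbarin} and the recursive identity \eqref{eq:W-recurse}, these structure maps on $\cW_\bl\cU(\g)$ depend only on the almost simplicial structure and the extra degeneracy, never on the group multiplication or $d^{\cW}_0$. The key point is that $\rho$ is already a morphism of \emph{simplicial} coalgebras by \eqref{eq:rho}, so it commutes with all faces and degeneracies, and the extra degeneracy $\si_\bl$ satisfies the identities \eqref{eq:extra3}--\eqref{eq:extra5}. Feeding these into the recursion \eqref{eq:vph} and proceeding by induction on $n$, the positive simplicial identities for $\tvp$ follow from those for $\dE_0\circ\iota$, for $\rho$, and for $\si$, after tracking signs through the shuffle decomposition via Corollary \ref{cor:shuf}.

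The main obstacle is the zeroth-face identity \eqref{eq:App-map-prop}, which relates $d^{\cW}_0\tvp_m$ to $\vph_{m-1}\dE_0\jm$. Here the left side involves the genuinely nontrivial zeroth face map of $\cW_\bl\cU(\g)$ from \eqref{eq:W}, which uses the group operation $\ast$ and the augmentation, while the right side involves the codifferential $\delta_E$ (since $\dE_0=\delta_E$ on the normalized part) with its three terms from \eqref{eq:EU-diff}. I would prove this by induction on $m$, applying $d^{\cW}_0$ to the explicit shuffle expansion of $\tvp_m(z\tensor w)$ and using \eqref{eq:extra2} and \eqref{eq:extra3} to absorb the extra degeneracy $\si_{m-1}$. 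The essential combinatorial input is Proposition \ref{prop:shuf}: the bijection $\vtha$ on shuffles reorganizes the iterated $(p,q)$-shuffle sums so that the action of $d^{\cW}_0$ on the first tensor factor (which multiplies via $\ast$) matches the $\rho$-image of the product term $z\mdot x_i$ appearing in $\theta(z\otimes w)$, with signs governed by \eqref{eq:propshuf1}. The codegeneracy identities \eqref{eq:propshuf2}--\eqref{eq:propshuf4} and Corollary \ref{cor:shuf} ensure that the iterated applications of $\si$ and the face $d^{\cW}_0$ recombine correctly across the recursion.

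I expect the sign bookkeeping to be the most delicate part: the signs $\sgn(I,J)$ from the shuffles, the Koszul signs $(-1)^{\deg{w_i}\deg{x_i}+n_i}$ in $\theta$ from \eqref{eq:codiff-sign}, and the sign $(-1)^{p_1}$ introduced by $\vtha$ in \eqref{eq:propshuf1} must all cancel against one another. My strategy is to first verify the identity in low degrees ($m=0,1$) directly from the definitions $\vph_0(z\tensor 1)=z$ and \eqref{eq:prim-map-3}, to fix conventions, and then to carry out the inductive step by matching terms indexed by the two sides of the bijection $\vtha$. Once the shuffle reindexing is in place, each summand on the left produced by $d^{\cW}_0$ corresponds to a unique summand on the right produced by $\theta$, and the identity reduces to the sign equality \eqref{eq:propshuf1} together with the compatibilities already recorded in Corollary \ref{cor:shuf}.
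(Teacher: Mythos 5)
Your overall architecture --- induction on skeleta with an explicit low-degree base case, reduction to the generating elements $z\tensor w$, and the use of the shuffle bijection of Prop.~\ref{prop:shuf} together with Cor.~\ref{cor:shuf} to match the $d^{\cW}_0$-contributions against the $\theta$-terms of $\delta_E$ --- is essentially that of the paper, and your identification of the sign bookkeeping as the delicate point is accurate. There is, however, a genuine structural gap in the opening move: the two assertions of the proposition cannot be ``verified separately.''

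The error enters when you invoke Rmk.~\ref{rmk:Wbarin} to claim that the positive face maps of the target ``never [involve] the group multiplication or $d^{\cW}_0$.'' That remark concerns the inhomogeneous construction $\Wbarin(G)$, not Moore's total space $\cW_\bl G$: for the latter, the recursion \eqref{eq:W-recurse} shows that every positive face map recursively bottoms out at $d^{\cW}_0$, which by \eqref{eq:W} uses the product $\ast$. Concretely, for $w$ normalized one has
\[
d^{\cW}_1\tvp_{n+1}(1\tensor w) \;=\; d^{\cW}_1\,\si_n\,\vph_n\,\dE_0(1\tensor w) \;=\; \si_{n-1}\, d^{\cW}_0\, \vph_n\,\dE_0(1\tensor w)
\]
by \eqref{eq:extra3}, and the only way to evaluate the right-hand side is to already know \eqref{eq:App-map-prop} in dimension $n$, which converts it to $\si_{n-1}\vph_{n-1}\dE_0\dE_0(1\tensor w)=0$. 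Conversely, \eqref{eq:App-map-prop} in dimension $n+1$ on \emph{degenerate} simplices is deduced from the fact that $\tvp_{\leq n+1}$ is already a morphism of almost simplicial vector spaces. So the two statements are entangled and must be proved by a single interleaved induction: at each stage one first extends the almost simplicial morphism to $\sk^{+}_{n+1}$ via Lemma~\ref{lem:sk}, using \eqref{eq:App-map-prop} for $m\leq n$, and only then establishes \eqref{eq:App-map-prop} for $m=n+1$. With that reorganization --- and with the vanishing $d^{\cW}_{r\geq 1}\tvp_{n+1}(u\tensor w)=0$ proved by the shuffle cancellation you describe rather than by appeal to Rmk.~\ref{rmk:Wbarin} --- the rest of your plan goes through as in the paper.
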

Note that it suffices to verify that the collection of maps $\{\tvp_n\}$ defines a morphism of almost simplicial vector spaces
%\begin{equation} \label{eq:tvph-vect}
\[
K^{+}_\bl \bigl( (\kk \semiop \Ng) \tensor S(\bs \Ng) \bigr) \xto{\tvp} U^{+}_\bl\cW_\bl\cU(\gb) \quad \text{in $\spVect$}
\]
%\end{equation}
satisfying Eq.\ \ref{eq:App-map-prop}.

\myspace
\subsection{Proof of Prop.\ \ref{prop:themap}}
We proceed by induction on ``almost $n$-skeleta'' in order to verify that Eq.\ \ref{eq:App-map-prop} holds for all $m \leq n$.

\subsubsection{The $n=1$ base case} \label{sec:prop-base-case}
In dimension $0$, by \eqref{eq:vph}, the morphism $\tvp_0$ is
\[
\tvp_0 \maps \g_0 \tensor \kk \dsum \kk \tensor \kk \to \cU(\g_0), \qquad \tvp_0(u \tensor 1 + 1\tensor 1) = u + 1.
\] 
In dimension $1$, the domain of $\tvp_1$ is
\[
K_{1} \bigl( (\kk \semiop \Ng) \tensor S(\bs \Ng) \bigr)=
N_1\gb \tensor \kk \, \oplus \, \kk \tensor \bs \g_0 \,  \oplus 
\g_0 \tensor \bs \g_0 \, \oplus \, s_0(\g_0 \tensor \kk). 
\]
We first consider elements of the subspace $N_1\gb \tensor \kk  \oplus  \kk \tensor \bs \g_0$. Using \eqref{eq:vph} and the definition \eqref{eq:EU-diff} of the differential $\dE_{0}=\del_E$, a direct calculation shows\footnote{For further details, see \eqref{eq:prim-map-1} and \eqref{eq:prim-map-2} in the proof of Cor.\ \ref{cor:prim-map}.}
\[
\tvp_1(u \tensor 1 + 1 \tensor \bs x) = u \tensor 1 - 1 \tensor x \in \cU(\g_1) \tensor \cU(\g_0) \quad \forall u \in N_1\gb, ~ \forall \bs x \in \bs \g_0. 
\]
In this particular case, the face maps \eqref{eq:W} of $\cW_\bl \cU(\gb)$ simplify to:
\begin{equation} \label{eq:dw-basecase}
\begin{aligned}[t]
d^{\cW}_0,d^{\cW}_1 \maps \cU(\g_1) \tensor \cU(\g_0) \to \cU(\g_0),
\end{aligned}
\qquad
\begin{aligned}[t]
d^{\cW}_0(z_1 \tensor z_0) &= (d^{\, \cU(\gb)}_0z_1) \ast z_0 \\ 
d^{\cW}_1(z_1 \tensor z_0) &= \eps(z_0)d^{\, \cU(\gb)}_1z_1.
\end{aligned}
\end{equation}
We first verify $d^{\cW}_0 \tvp_1 = \vph_0 \dE_0 \jm$. Evaluation of the left hand side yields:
\[
d^{\cW}_0 \tvp_1( u \tensor 1 + 1 \tensor \bs x) = 
d^{\, \cU(\gb)}_0u - d^{\, \cU(\gb)}_0(1) \ast x = d^{\g}_0u - x.
\]
Since $\dE_0 = \del_E$ on normalized simplices, \eqref{eq:EU-diff} implies:
\begin{equation} \label{eq:diff-basecase}
\begin{aligned}[t]
\dE_0(u \tensor 1) = d^\g_0u \tensor 1,
\end{aligned}
\qquad
\begin{aligned}[t]
\dE_0(1 \tensor \bs x ) &= 1 \tensor \bs d^\g_0x + (-1)^{\deg{1_\kk}\deg{x} + \deg{\bs x}}x \tensor 1 \\
&= - x \tensor 1.
\end{aligned}
\end{equation}
Evaluation of the right hand side recovers the desired expression:
\[
\vph \dE_0(u \tensor 1 + 1 \tensor \bs x) = d^\g_0u - x.
\] 
Next we verify that $d^{\cW}_1 \tvp_1 = \vph_0 \dE_1 \jm$, or equivalently
$d^{\cW}_1 \tvp_1(u \tensor 1 + 1 \tensor \bs x)=0$, since we are restricting to normalized simplices. Eq.\ \ref{eq:dw-basecase} implies that
\[
d^{\cW}_1 \tvp_1(u \tensor 1) = \eps(1)d^{\g}_1u =0,
\]
since $u \in N_1\gb$. Furthermore, 
\[
d^{\cW}_1 \tvp_1(1 \tensor \bs x) = \eps(x) \cdot 1 =0,
\]
since $\g_0 \sse \ker \eps$. 

Lastly, we check that the relevant equalities
hold for elements $u \tensor \bs x$ of the subspace $\g_0 \tensor \bs \g_0$. From \eqref{eq:vph} and the computation \eqref{eq:diff-basecase} above: 
\[
\begin{split}
\tvp_1(u \tensor \bs x) &= \rho_1(s^{\g}_0u)\tensor \vph_0\dE_0(1 \tensor \bs x)\\
&= s^{\g}_0u \tensor \vph_0(- x \tensor 1) = -s^{\g}_0u \tensor x.
\end{split}
\]   
It then follows from the above formulae \eqref{eq:dw-basecase} for the face maps $d^{\cW}_0,d^{\cW}_1$ that 
\[
d^{\cW}_0\tvp_1(u \tensor \bs x) = - u \ast x,
\] 
and $d^{\cW}_1(\tvp_1(u \tensor \bs x))=0$, as required. It remains to evaluate:
\[
\begin{split}
\vph_0 \dE_0(u \tensor \bs x) &= \vph_0(\del_E(u \tensor \bs x)) \\ 
&= \vph_0( \tha(u \tensor \bs x)) = -\vph_0(u \mdot x \tensor 1)\\
&= - u \mdot x.
\end{split}
\]
Note $\cU(\Ng)_0= \cU(N_0\g_\bl)= \cU(\g_0)$ as associative algebras, and so
$u \mdot x = u \ast x$. Therefore, we conclude that
%\begin{equation} \label{eq:base-case-lem}
\[
d^{\cW}_0\tvp_1(u \tensor \bs x)  =  \vph_0 \dE_0(u \tensor \bs x).
\]
%\end{equation} 
This completes the proof of the base case.

\subsubsection{The inductive step} \label{sec:prop-induct-step}
Now let $n \geq 1$ and assume the maps $\tvp_1,\ldots, \tvp_{n}$ lift to a morphism of almost simplicial vector spaces
\begin{equation} \label{eq:almost-simp-induct}
\sk^+_n K_\bl \bigl( (\kk \semiop \Ng) \tensor S(\bs \Ng) \bigr) \to  
U^{+}_\bl\cW_\bl\cU(\gb)
\end{equation}
such that 
\begin{equation} \label{eq:map-prop-pf0}
d^{\cW}_0 \tvp_m = \vph_{m-1}\dE_0 \jm \qquad \forall m \leq n.
\end{equation}
First, we extend this morphism to $\sk^{+}_{n+1}$. Decompose $K^{+}_{n+1} \bigl( (\kk \semiop \Ng) \tensor S(\bs \Ng) \bigr)$ into the sum of linear subspaces consisting of normalized and degenerate simplices, respectively. By the almost simplicial analog of Lemma \ref{lem:sk}, it suffices to show
\begin{equation} \label{eq:map-prop-pf1}
d^{\cW}_{i \geq 1} \tvp_{n+1}(1 \tensor w)=0 \quad \forall w \in S(\bs \Ng)_{n+1},
\end{equation}  
and, for all $p+q =n+1$:
%\begin{equation} \label{eq:map-prop-pf2}
\[
d^{\cW}_{i \geq 1} \tvp_{n+1}(u \tensor w)=0 \quad \forall u \in N_{p}\gb, ~ \forall  w \in S(\bs \Ng)_{q}.
\]
%\end{equation}  

We first verify Eq.\ \ref{eq:map-prop-pf1}. Let $w \in S(\bs \Ng)_{n+1}$ and $i \geq 1$. Then, by \eqref{eq:vph}:
\[
d^{\cW}_i \tvp_{n+1}(1 \tensor w) = d^{\cW}_i \si_{n}\vph_{n} \dE_{0}(1 \tensor w).
\]  
The identities \eqref{eq:extra1} encoding the compatibility between the extra degeneracy $\si_n$ and the face maps $d^{\cW}_{i}$ yield the equality
\[
d^{\cW}_i \si_{n}\vph_{n} \dE_{0}(1\tensor w) = \si_{n-1}d^{\cW}_{i-1} \vph_{n} \dE_{0}(1 \tensor w).
\]
From Eq.\ \ref{eq:EU-diff} for the differential $\dE_{0}=\del_E$, we see that
\[
\dE_{0}(1 \tensor w) \in \kk \tensor S(\bs \Ng) \oplus \Ng \tensor S(\bs \Ng).
\]
Therefore, for $i > 1$, the induction hypothesis along with the fact that $w$ is normalized, imply that  $d^{\cW}_{i-1}\vph_{n} \dE_{0}(1 \tensor w) = \vph_{n-1} \dE_{i-1}\dE_{0}(1 \tensor w) =0$. For $i=1$, from induction hypothesis -- specifically Eq.\ \ref{eq:map-prop-pf0} -- we obtain $d^{\cW}_{0}\vph_{n} \dE_{0}(1\tensor w) = \vph_{n-1}\dE_{0}\dE_{0}(1 \tensor w)=0$. Therefore, the desired equality
\begin{equation} \label{eq:map-prop-pf1-check}
d^{\cW}_{i \geq 1} \tvp_{n+1}(1 \tensor w)=0 \quad \forall w \in S(\bs \Ng)_{n+1},
\end{equation}  
is satisfied. Next, we claim:
\vspace{1.5em}
\begin{adjustwidth}{0em}{0em}
\begin{lemclaim}\label{lemclaim:BigMap1}
Let $p,q \geq 0$ such that $p+q = n+1$, with $u \in N_{p}\gb$ and $w \in S(\bs \Ng)_{q}$. Then
\begin{equation} \label{eq:lemclaim1}
d^{\cW}_{r \geq 1} \tvp_{n+1}(u \tensor w)=0.
\end{equation}
\end{lemclaim}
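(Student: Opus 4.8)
The plan is to prove Claim \ref{lemclaim:BigMap1} by induction on $n$, running it in tandem with the inductive hypothesis \eqref{eq:map-prop-pf0} already in force in Section \ref{sec:prop-induct-step}, and exploiting the recursive structure of both $\vph_{n+1}$ and the $\cW$-face maps. First I would simplify the recursion \eqref{eq:vph}. Since the extra degeneracy $\si_n$ inserts the unit of $\cU(\g_{n+1})$ into the leading slot, the algebra product appearing in \eqref{eq:vph} collapses, so that, writing $\cW_{n+1}\cU(\g) = \cU(\g_{n+1}) \tensor \cW_n\cU(\g)$,
\[
\vph_{n+1}(u \tensor w) = \sum_{(I,J) \in \Sh(p,q)} \sgn(I,J)\, \rho_{n+1}(s_J u) \tensor \vph_n \dE_0 \iota_{n+1}(s_I w),
\]
for $u \in N_p\g$ and $w \in S(\bs\Ng)_q$ with $p+q=n+1$. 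This is the form on which the face maps act transparently.

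Next I would apply $d^\cW_r$ with $r \geq 1$ via the recursion \eqref{eq:W-recurse}, which peels off the leading slot: the leading factor becomes $d^{\cU(\g)}_r \rho_{n+1}(s_J u)$ and the remaining $\cW_n$-data becomes $d^\cW_{r-1}\bigl(\vph_n \dE_0 \iota_{n+1}(s_I w)\bigr)$. For the leading factor I would use that $\rho$ is a morphism of simplicial coalgebras, so $d^{\cU(\g)}_r \rho_{n+1} = \rho_n d_r^{K}$, together with the simplicial identities governing $d_r s_J$ and the normalization $u \in N_p\g$, for which $d_i u = 0$ whenever $1 \leq i \leq p$; this forces most leading-slot contributions either to vanish or to reduce to a strictly lower degeneracy $s_{J'}u$. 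For the tail I would invoke the induction hypothesis: when $r=1$ the relation $d^\cW_0\si_n=\id$ of \eqref{eq:extra2} combined with \eqref{eq:map-prop-pf0} rewrites the tail as $\vph_{n-1}\dE_0(\cdots)$, whereas for $r \geq 2$ the relation $d^\cW_{r-1}\si_n = \si_{n-1}d^\cW_{r-2}$ of \eqref{eq:extra3} lets me commute the face past the extra degeneracy and apply the level-$n$ instances of Claim \ref{lemclaim:BigMap1} and \eqref{eq:map-prop-pf0}.

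The crux is then to match the surviving leading-slot terms against the surviving tail terms across different shuffles, and this is exactly what the combinatorial apparatus of Appendix \ref{sec:shuf} was built for. Concretely, I would use the bijection $\vtha$ of Prop.\ \ref{prop:shuf} to reindex the double shuffle sum produced by the recursion, and the degeneracy identities \eqref{eq:propshuf2}, \eqref{eq:propshuf3}, \eqref{eq:propshuf4}, refined by Cor.\ \ref{cor:shuf} in its three cases, to identify the $s_J u$-factors and $s_I w$-factors coming from paired shuffles. The sign identity \eqref{eq:propshuf1} then ensures that the paired contributions enter with opposite signs, so that the surviving terms cancel telescopically and the total is $0$.

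The main obstacle, as anticipated, is the sign and shuffle-index bookkeeping in this last step: one must track how the internal face $d_r$ interacts with the iterated degeneracies $s_I,s_J$ under $\vtha$, and confirm that every term either dies by normalization or is annihilated by a partner of opposite sign. The homological and coalgebraic content is light; the difficulty is entirely combinatorial, which is why the technical lemmas on shuffles and codegeneracies are isolated in Appendix \ref{sec:shuf}. I would organize the verification by a case split according to whether the face index $r$ lands in the ``$u$-block'' or the ``$w$-block'' of the shuffle, mirroring the trichotomy of Cor.\ \ref{cor:shuf}.
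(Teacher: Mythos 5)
Your opening moves match the paper's: expand $\tvp_{n+1}(u\tensor w)$ as a single sum over $\Sh(p,q)$ (the extra degeneracy does collapse the product in \eqref{eq:vph} exactly as you say), peel off the leading slot with \eqref{eq:W-recurse}, and use the level-$n$ induction hypothesis --- the almost-simplicial property of $\tvp_{\leq n}$ for $r-1\geq 1$ and \eqref{eq:map-prop-pf0} for $r-1=0$ --- to rewrite the tail as $\vph_{n-1}\dE_{r-1}\dE_0 s_I(1\tensor w)$. (Your appeal to the extra-degeneracy relations \eqref{eq:extra2}--\eqref{eq:extra3} at this point is a red herring: once the product has collapsed there is no $\si_n$ left in the tail, and the induction hypothesis alone does the job.)

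The gap is in your crux step. There is no ``double shuffle sum'' to reindex here: the recursion produces a \emph{single} sum over $\Sh(p,q)$, each summand being $\sgn(I,J)\,d_r s_J u\tensor \vph_{n-1}\dE_0 s_{I^{>r}_{-1}}\dE_r s_{I^{\leq r}}(1\tensor w)$ after the simplicial identities are applied. The bijection $\vtha$ of Prop.\ \ref{prop:shuf} and the trichotomy of Cor.\ \ref{cor:shuf} are built to compare the \emph{nested} shuffle sums arising from $\rho(u\mdot x_i)$ against those arising from the recursion of $\vph$; they are used only in the $\LJB=\RC$ portion of Claim \ref{lemclaim:BigMap2} and have no role in Claim \ref{lemclaim:BigMap1}. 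The cancellation you need is far more elementary and is governed by the positions of $r-1$ and $r$ relative to the partition $(I,J)$: if both lie in $J$ then $d_r s_{J^{\leq r}}u$ reduces to $s_{J'}d_{r-k}u$ with $r-k\geq 1$ and vanishes because $u$ is normalized; if both lie in $I$ the $w$-factor vanishes for the same reason; and if exactly one of $r-1,r$ lies in $I$, the summand cancels against the summand for the shuffle obtained by transposing $r-1$ and $r$, which carries the opposite sign. Your proposed sign bookkeeping via \eqref{eq:propshuf1}, which compares products of signs across shuffles of \emph{different} types, does not produce this pairing; if you tried to run it you would find the index sets do not line up. The fix is to discard the Appendix \ref{sec:shuf} machinery for this claim and substitute the normalization-plus-transposition argument above.
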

\end{adjustwidth}
\vspace{1.75em}
We postpone the proof of Claim \ref{lemclaim:BigMap1} to Sec.\ \ref{sec:lemclaim2} below. We then conclude that the maps $\tvp_1,\ldots, \tvp_{n}, \tvp_{n+1}$ induce a morphism of almost simplicial vector spaces
\[
\sk^+_{n+1} K_\bl \bigl( (\kk \semiop \Ng) \tensor S(\bs \Ng) \bigr) \to  
U^{+}_\bl\cW_\bl\cU(\gb). 
\]

It remains to show that 
\[
d^{\cW}_0 \tvp_{n+1} = \vph_{n}\dE_0 \jm.
\]
If $ z \in K_{n+1} \bigl( (\kk \semiop \Ng) \tensor S(\bs \Ng) \bigr)$ is of the form 
$z = s_{\el_{k}} \cdots s_{\el_{1}}(z')$, with $z'$ a normalized simplex of dimension $m < n+1$ and $0 \leq \el_1 < \el_2 < \cdots < \el_{k}$, and $\el_1=0$, then clearly $d^{\cW}_0 \tvp_{n+1}(z) = \vph_{n}\dE_0(z)$, as $\tvp_{\leq n+1}$ is a map of almost simplicial vector spaces.  Otherwise, if $\el_1> 0$, then by the inductive hypothesis
\[
d^{\cW}_0 \tvp_{n+1}(z) = s^{\cW}_{\el_{k}-1} \cdots s^{\cW}_{\el_{1}-1} d^{\cW}_0\ph_{m}(z') =  
s^{\cW}_{\el_{k}-1} \cdots s^{\cW}_{\el_{1}-1} \ph_{m-1}\dE_0\jm(z') = \ph_{n}\dE_0 \jm(z),
\] 
Observe the last equality follows since $\jm$ is an almost simplicial morphism. On the other hand, if $z$ is normalized, and of the form $z=u \tensor w$, then we claim:
\vspace{1.5em}
\begin{adjustwidth}{0em}{0em}
\begin{lemclaim}\label{lemclaim:BigMap2}
Let $p,q \geq 0$ such that $p+q = n+1$, with $u \in N_{p}\gb$ and $w \in S(\bs \Ng)_{q}$. Then
\begin{equation} \label{eq:lemclaim2}
d^{\cW}_0 \tvp_{n+1}(u \tensor w) =\vph_{n} \dE_0(u \tensor w).
\end{equation}
\end{lemclaim}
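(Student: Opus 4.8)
The plan is to expand both sides of \eqref{eq:lemclaim2} via the recursive definition \eqref{eq:vph} of $\vph$ and then match the two resulting shuffle sums term-by-term, the combinatorics being governed by the bijection of Prop.~\ref{prop:shuf} together with the codegeneracy identities of Cor.~\ref{cor:shuf}. First I would unwind the left-hand side. By \eqref{eq:vph}, $\tvp_{n+1}(u\tensor w)=\vph_{n+1}(u\tensor w)$ is a sum over $(I,J)\in\Sh(p,q)$ of terms $\sgn(I,J)\,\bigl(\rho_{n+1}(s_Ju)\tensor 1\bigr)\cdot\si_n\vph_n\dE_0\iota_{n+1}(s_Iw)$. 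Since the extra degeneracy $\si_n$ of \eqref{eq:extra1} inserts the unit into the top factor, each summand collapses to $\rho_{n+1}(s_Ju)\tensor\vph_n\dE_0\iota_{n+1}(s_Iw)$ inside $\cU(\g_{n+1})\tensor\cW_n\cU(\g)$. Applying the zeroth face map \eqref{eq:W}, which acts as $d^{\cW}_0(z\tensor V)=(d^{\cU}_0z)\cdot V$, and using that $\rho$ is a simplicial map (so $d^{\cU}_0s^\g_Ju$ is controlled by the simplicial identities relating $d_0$ with the iterated degeneracy $s_J$), reduces the left-hand side to a shuffle sum of $\ast$-products of $d^\g_0$-shifted degeneracies of $u$ with the recursively computed $\vph_n$-terms.

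Second I would expand the right-hand side. On the normalized element $u\tensor w$, the differential $\dE_0=\del_E$ decomposes per \eqref{eq:EU-diff} into the piece $d_\cU u\tensor w$, the piece $(u\tensor 1)(1\tensor\del_\CE w)$, and the connecting term $\theta(u\tensor w)=\sum_i\pm\,u\mdot x_i\tensor w_i$, where the products $u\mdot x_i$ live in $\cU(\Ng)$. Applying $\vph_n$ and invoking Lem.~\ref{lem:rho}, each product $u\mdot x_i$ is rewritten, via formula \eqref{eq:rho-xy}, as a shuffle sum of $\ast$-products $s^\g_{J'}u\ast s^\g_{I'}x_i$ in $\cU(\g_n)$, while the $d_\cU$ and $\del_\CE$ contributions supply the $d^\g_0$-shifted pieces. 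This converts the right-hand side into a shuffle sum of precisely the same shape as the left-hand side, but now indexed by a product of shuffle sets of the form $\Sh(p_1+p_2,p_3)\times\Sh(p_1,p_2)$.

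The crux, and the main obstacle, is reconciling these two shuffle decompositions. Here I would apply the bijection $\vtha$ of Prop.~\ref{prop:shuf}, which identifies $\Sh(p_1+p_2,p_3)\times\Sh(p_1,p_2)$ with $\Sh(p_1,p_2+p_3+1)_{0\in J}\times\Sh(p_2,p_3)$: the sign identity \eqref{eq:propshuf1} absorbs the $(-1)^{p_1}$ discrepancy between the two sides, and the codegeneracy identities \eqref{eq:propshuf2}--\eqref{eq:propshuf4} guarantee that the iterated degeneracies applied to $u$ and to the $x_i$ agree after the index-shift introduced by $d^{\cW}_0$. The placement of the index $0$ among the shuffle sets splits the verification into the three cases of Cor.~\ref{cor:shuf}, which correspond to the three summands of $\del_E$: the $\theta$-term (producing an honest $\ast$-multiplication), the $d_\cU$-term, and the $\del_\CE$-term. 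In each case the relevant codegeneracy composition is supplied by the corresponding clause of Cor.~\ref{cor:shuf}.

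Finally, throughout the matching I would invoke the induction hypothesis \eqref{eq:map-prop-pf0} and Claim~\ref{lemclaim:BigMap1} to evaluate the lower-dimensional factors $\vph_n\dE_0\iota_{n+1}$ appearing inside the recursion, thereby closing the loop. The edge cases $p=0$ and $q=0$ degenerate to direct computations in the spirit of the base case of \S\ref{sec:prop-base-case}. Assembling the three families of matched terms yields the asserted equality \eqref{eq:lemclaim2}, completing the inductive step and hence the proof of Prop.~\ref{prop:themap}. I expect essentially all the genuine difficulty to be concentrated in the sign-and-index bookkeeping of this last matching, which is exactly what Prop.~\ref{prop:shuf} and Cor.~\ref{cor:shuf} were engineered to handle.
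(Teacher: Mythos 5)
Your proposal is correct and follows essentially the same route as the paper: expand both sides, decompose $\dE_0=\del_E$ into its $d_\cU$-, $\del_\CE$-, and $\theta$-summands, rewrite $u\mdot x_i$ via Lemma \ref{lem:rho}, and match the resulting shuffle sums using the bijection of Prop.\ \ref{prop:shuf} and the codegeneracy identities of Cor.\ \ref{cor:shuf}. One small correction of emphasis: the three clauses of Cor.\ \ref{cor:shuf} do not correspond to the three summands of $\del_E$ but rather to sub-cases (placement of $0$ and $1$ among the shuffle sets) arising inside the single matching of the $\theta$-term; the $d_\cU$- and $\del_\CE$-terms are matched by more elementary re-indexings $\Sh(p,q)_{0\in I}\cong\Sh(p-1,q)$ and $\Sh(p,q)_{0\in J}\cong\Sh(p,q-1)$ that do not require Prop.\ \ref{prop:shuf} at all.
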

\end{adjustwidth}
\vspace{1.75em}
%d^{\cW}_0 \tvp_{n+1}(u \tensor w) =\vph_{n} \dE_0(u \tensor w), \quad
We postpone the proof of Claim \ref{lemclaim:BigMap2} to Sec.\ \ref{sec:lemclaim2} below. Finally, if $z=1 \tensor w$, then it follows from the definition \eqref{eq:vph} of $\tvp_{n+1}$ and the identities \eqref{eq:extra1} that
\[
d^{\cW}_{0}\tvp_{n+1}(1 \tensor w) =  d^{\cW}_{0} \si_{n}\vph_{n} \dE_{0}(1\tensor w)=\vph_{n} \dE_{0}(1 \tensor w).
\]
This completes the verification of the inductive step, and the proof of the proposition. \hfill \qed

\subsection{Proof of Claim \ref{lemclaim:BigMap1}} \label{sec:lemclaim1}
We use throughout the proof the notation established in \eqref{note:ord}.
Recall that in Sec.\ \ref{sec:prop-induct-step}, prior to asserting Claim \ref{lemclaim:BigMap1}, we assumed an inductive hypothesis, namely: for $n \geq 1$ \eqref{eq:almost-simp-induct} is a morphism of almost simplicial vector spaces, such that Eq.\ \ref{eq:map-prop-pf0} is satisfied for all $m \leq n$. 
We also verified, again prior to stating the claim, that Eq.\ \ref{eq:map-prop-pf1-check} was satisfied. 

Now let $p,q \geq 0$ such that $p+q = n+1$, with $u \in N_{p}\gb$ and $w \in S(\bs \Ng)_{q}$. We check by direct computation across multiple cases that  Eq.\ \ref{eq:lemclaim1} holds. If $q=0$, then $w=1$, and via Eq.\ \ref{eq:prim-map-1}: 
\[
\vph_{n+1}(u \tensor 1) = u \tensor 1^{\tensor n+1}.
\] 
We apply the $r \geq 1$ face maps \eqref{eq:W} for $\cW_\bl \cU(\gb)$ and obtain
\[
d^{\cW}_{r \geq 1}\vph_{n+1}(u \tensor 1) = d^\g_r u \tensor 1^{\tensor n}=0,
\]
since $u$ is normalized. 

From here on, we assume $q > 0$. The recursive identity \eqref{eq:W-recurse} for $d^{\cW}_r$ along with the definition of $\tvp_{n+1}$ gives the equality
\begin{equation} \label{eq:B1-pf1}
d^{\cW}_{r}(u \tensor w) = \hspace{-2ex} \sum_{(I,J) \in \Sh(p,q)} \hspace{-3ex}
\sgn(I,J) \, d_{r}s_{J} u \tensor  d^{\cW}_{r-1}\vph_{n}\dE_{0}s_I(1 \tensor w)
\end{equation}   
From the definition of the differential $\dE_0 = \del_{E}$, we deduce that 
\[
\dE_{0}s_I(1 \tensor w) \in K_{n} \bigl( (\kk \semiop \Ng) \tensor S(\bs \Ng) \bigr),
\]
and therefore, the inductive hypotheses \eqref{eq:almost-simp-induct} and 
\eqref{eq:map-prop-pf0} imply that 
\[
d^{\cW}_{r-1}\vph_{n}\dE_{0}s_I(1 \tensor w) = \vph_{n-1} \dE_{r-1}\dE_{0}s_I(1 \tensor w).
\]
Therefore the right hand side of Eq.\ \ref{eq:B1-pf1} becomes
\begin{equation} \label{eq:B1-pf2}
\begin{split}
\hspace{-2ex} \sum_{(I,J) \in \Sh(p,q)} \hspace{-3ex}
\sgn(I,J) \, &d_{r}s_{J} u \tensor  \vph_{n-1} \dE_{0}\dE_{r}s_I(1 \tensor w)  = \\
&= \sum_{(I,J) \in \Sh(p,q)} \hspace{-3ex}
\sgn(I,J) \, s_{J^{> r}_{-1}} \, d_{r}s_{J^{ \leq r}} u \, \tensor
\vph_{n-1}\dE_{0} s_{I^{>r}_{-1}}\,  \dE_{r}s_{I^{ \leq r}}(1 \tensor w).
\end{split}
\end{equation}
Consider the summand of \eqref{eq:B1-pf2} corresponding to a fixed shuffle $(I,J) \in \Sh(p,q)$. Suppose both  $r-1\in I$ and $r \in I$. Then 
$J^{ \leq r} = J^{ \leq r-2}$, and therefore 
\[
d_{r}s_{J^{ \leq r}} u = s_{J^{ \leq r-2}} d_{r- k}  u,
\]
where $k:= \abs{J^{ \leq r-2}} \leq r-1$. Hence, since $u$ is normalized, and $r-k \geq 1$, we conclude
\[
d_{r}s_{J^{ \leq r}} u =0.
\]
If instead both  $r-1\in J$ and $r \in J$, then since $1 \tensor w$ is also normalized, the same argument implies that  
\[
\dE_{r}s_{I^{ \leq r}}(1 \tensor w) =0.
\]
On the other hand, suppose $r-1 \in I$ and $r \in J$. Let $(I',J') \in \Sh(p,q)$ be the unique shuffle such that
\[
I' = I^{< r-1} \cup \{r\} \cup I^{ > r-1}.
\] 
Then $\sgn(I,J)= -\sgn(I',J')$; hence
\[
\begin{split}
\sgn(I,J) \, s_{J^{> r}_{-1}} \, d_{r}s_{J^{ \leq r}} u \, &\tensor
\vph_{n-1}\dE_{0} s_{I^{>r}_{-1}}\,  \dE_{r}s_{I^{ \leq r}}(1 \tensor w)\\
&+\sgn(I',J') \, s_{J'^{> r}_{-1}} \, d_{r}s_{J'^{ \leq r}} u \, \tensor
\vph_{n-1}\dE_{0} s_{I'^{>r}_{-1}}\,  \dE_{r}s_{I'^{ \leq r}}(1 \tensor w) \\
&=0.
\end{split}
\]
Therefore we conclude that the sum \eqref{eq:B1-pf2} is zero, and hence
$d^{\cW}_{r \geq 1}(u \tensor w) =0$. This completes the proof of the claim \hfill \qed

\subsection{Proof of Claim \ref{lemclaim:BigMap2}} \label{sec:lemclaim2}
Let $p,q \geq 0$ such that $p+q = n+1$, with $u \in N_{p}\gb$ and $w \in S(\bs \Ng)_{q}$. As before, we verify casewise by direct computation that Eq.\ \ref{eq:lemclaim2} holds. If $q=0$, then $w=1$, and via Eq.\ \ref{eq:prim-map-1}, we obtain
\[
\vph_{n+1}(u \tensor 1) = u \tensor 1^{\tensor n+1}
\] 
We apply the face map $d^{\cW}_0$ for $\cW_\bl \cU(\gb)$:
\[
\begin{split}
d^{\cW}_0\vph_{n+1}(u \tensor 1) &= d^\g_0 u \tensor 1^{\tensor n}\\
\end{split}
\]
Since $\dE_0 = \del_E$ on normalized simplices, \eqref{eq:EU-diff} implies that $\dE_0(u \tensor 1) = d^\g_0u \tensor 1$. Hence, by Eq.\ \ref{eq:prim-map-1} again, we conclude
\[
d^{\cW}_0\vph_{n+1}(u \tensor 1) = \vph_n\dE_0(u \tensor 1).
\]
From here on, we assume $q > 0$. We begin by expanding out the left-hand side of the desired equality \eqref{eq:lemclaim2}
\newcommand{\LI}{\mathscr{L}_{0\in I}}
\newcommand{\LJ}{\mathscr{L}_{0\in J}}
\newcommand{\LJA}{\mathscr{L}^{(1)}_{0\in J}}
\newcommand{\LJB}{\mathscr{L}^{(2)}_{0\in J}}
\newcommand{\RA}{\mathscr{R}_{1}}
\newcommand{\RB}{\mathscr{R}_{2}}
\newcommand{\RC}{\mathscr{R}_{3}}

%\begin{equation} \label{eq:B2-LHS0}
\[
\begin{split}
d^{\cW}_0 \tvp_{n+1}(u \tensor w) &= 
\hspace{-2ex} \sum_{(I,J) \in \Sh(p,q)} \hspace{-3ex}
\sgn(I,J) \, d^{\cW}_0 \bigl(s_{J} u \tensor  \vph_{n}\dE_{0}s_I(1 \tensor w) \bigr)\\
&= \LI + \LJ.
\end{split}
\]
%\end{equation}
where
%\begin{equation} \label{eq:B2-LHS1}
\[
\begin{split}
\LI &:= \hspace{-2ex} \sum_{\substack{(I,J) \in \Sh(p,q)\\ 0 \in I } } \hspace{-3ex}
\sgn(I,J) \, d^{\cW}_0 \bigl(s_{J} u \tensor  \vph_{n}s_{I^{>0}_{-1}}(1 \tensor w) \bigr)\\
\LJ&:=  \hspace{-2ex} \sum_{\substack{(I,J) \in \Sh(p,q)\\ 0 \in J } } \hspace{-3ex}
\sgn(I,J) \, d^{\cW}_0 \bigl(s_{J} u \tensor  \vph_{n}\dE_{0}s_I(1 \tensor w) \bigr)
\end{split}
\]
%\end{equation}
It will be useful to expand the term $\LJ$ one step further. Fix a shuffle $(I,J)$ with $0 \in J$. Recall that on normalized simplices, $\dE_0=\del_E$, where $\del_E$ is the differential \eqref{eq:EU-diff}. Using the notation introduced in \eqref{eq:EU-diff}, we have the equality
\[
\dE_0 s_I(1 \tensor w) = s_{I_{-1}} (1 \tensor \del_{\CE}w + \tha(1 \tensor w) ).
\]
Therefore, we write
\[
\LJ = \LJA + \LJB, 
\]
where
\begin{equation} \label{eq:B2-LHS2}
\begin{split}
\LJA&:= \hspace{-2ex} \sum_{\substack{(I,J) \in \Sh(p,q)\\ 0 \in J } } \hspace{-3ex}
\sgn(I,J) \, d^{\cW}_0 \bigl(s_{J} u \tensor  s^{\cW}_{I_{-1}}\vph_{q-1}(1 \tensor \del_{\CE}w) \bigr), \\
\LJB&:= \hspace{-2ex} \sum_{\substack{(I,J) \in \Sh(p,q)\\ 0 \in J } } \hspace{-3ex}
\sgn(I,J) \, d^{\cW}_0 \bigl(s_{J} u \tensor  s^{\cW}_{I_{-1}}\vph_{q-1}\tha(1 \tensor w) \bigr).
\end{split}
\end{equation}
Next, we expand the right-hand side of \eqref{eq:lemclaim2} as
%\begin{equation} \label{eq:B2-RHS0}
\[
\vph_{n} \dE_0(u \tensor w) = \RA + \RB + \RC
\]
%\end{equation}
where
\begin{equation} \label{eq:B2-RHS1}
\begin{split}
\RA&:= (-1)^{p}\vph_n(u \tensor \del_{\CE}w)\\
&=\hspace{-2ex} \sum_{(I',J') \in \Sh(p,q-1)} \hspace{-3ex}
(-1)^{p}\, \sgn(I',J') \, s_{J'} u \tensor \vph_{n-1} \dE_0 s_{I'}\del_{\CE}(1 \tensor w)\\
\RB&:= \vph_n(d_0u \tensor w)\\
&=\hspace{-2ex} \sum_{(I',J') \in \Sh(p-1,q)} \hspace{-3ex}
\sgn(I',J') \, s_{J'} d_0u \tensor \vph_{n-1} \dE_0 s_{I'}(1 \tensor w)\\
\RC&:=(-1)^{p} \vph_n\tha(u \tensor w) \\
&= (-1)^{p}\sum_{i=1}^k(-1)^{\eps_i} \vph_{n}(u \mdot x_{i} \tensor w_{i}).
\end{split}
\end{equation}
In the last line above, if $w=\bs x_1 \bs x_2 \cdots \bs x_k$, then $w_i=\bs x_1 \bs x_2 \cdots \bs x_{i-1} \cdot 1_\kk \cdot \bs x_{i+1} \cdots \bs x_k$, and $\eps_i= \deg{w_i}\deg{x_i} + n_i$ with $n_i$ defined as in \eqref{eq:codiff-sign}. 
\subsubsection{The equality $\LI = \RB$} \label{sec:LI=RB}
Applying the recursive definition \eqref{eq:vph} to $\vph_{n-1}$ yields
\[
\begin{split}
\vph_{n}s_{I^{>0}_{-1}}(1 \tensor w) &= s^{\cW}_{I^{>0}_{-1}} \vph_{q}(1 \tensor w)\\
&=s^{\cW}_{I^{>0}_{-1}} \si_{q-1}\vph_{q-1}\dE_0(1 \tensor w)\\
&=\begin{cases}
\si_{n-1} s^{\cW}_{I^{>1}_{-2}} \si_{q-1} \vph_{q-1} \dE_0(1 \tensor w), & \text{if $1 \in I$}\\
\si_{n-1} s^{\cW}_{I^{>1}_{-2}} \vph_{q-1} \dE_0(1 \tensor w), & \text{if $1 \notin I$}\\
\end{cases}
\end{split}
\]
where the last equality above follows from the identities \eqref{eq:extra2} which exhibit the compatibility of the degeneracies $s^{\cW}_\bl$ with the extra degeneracy operator $\si_\bl$. Using this, along with the fact that $\vph_{q}(1 \tensor w)=\si_{q-1}\vph_{q-1}\dE_0(1 \tensor w)$, we rewrite  $\LI$ as
\begin{equation} \label{eq:LIR2-1}
\begin{split}
\LI &= \hspace{-2ex} \sum_{\substack{(I,J) \in \Sh(p,q)\\ 0,1 \in I } } \hspace{-3ex}
\sgn(I,J) \, d^{\cW}_0 \bigl(s_{J} u \tensor  \si_{n-1} s^{\cW}_{I^{>1}_{-2}} \vph_{q}(1 \tensor w) \bigr)\\
& \quad + \hspace{-2ex} \sum_{\substack{(I,J) \in \Sh(p,q)\\ 0 \in I,\, 1 \notin I} } \hspace{-3ex}
\sgn(I,J) \, d^{\cW}_0 \bigl(s_{J} u \tensor  \si_{n-1} s^{\cW}_{I^{>1}_{-2}} \vph_{q-1} \dE_0(1 \tensor w) \bigr).\\
\end{split}
\end{equation}
On the other hand, given $(I',J') \in \Sh(p-1,q)$, the corresponding summand
in the defining expression \eqref{eq:B2-RHS1} for $\RB$ can be rewritten as
\[
\begin{split}
s_{J'} d_0u \tensor \vph_{n-1} \dE_0 s_{I'}(1 \tensor w)
&= d_0 s_{J'_{1}} u \ast 1 \tensor \vph_{n-1} \dE_0 s_{I'}(1 \tensor w)\\
&=d^{\cW}_0 \bigl(s_{J'_{1}} u \tensor \si_{n-1}\vph_{n-1} \dE_0 s_{I'}(1 \tensor w) \bigr).
\end{split}
\] 
Therefore:
\begin{equation} \label{eq:LIR2-2}
\begin{split}
\RB &=\hspace{-2ex} \sum_{\substack{(I',J') \in \Sh(p-1,q)\\ 0 \in I'}} \hspace{-3ex}
\sgn(I',J') \, d^{\cW}_0 \bigl(s_{J'_{1}} u \tensor \si_{n-1}s^{\cW}_{I^{\prime >0}_{-1}}\vph_{q}(1 \tensor w) \bigr)\\
& \quad +  
\hspace{-2ex} \sum_{\substack{(I',J') \in \Sh(p-1,q)\\ 0 \notin I'}} \hspace{-3ex}
\sgn(I',J') \, d^{\cW}_0 \bigl(s_{J'_{1}} u \tensor \si_{n-1}s^{\cW}_{I^{\prime }_{-1}}\vph_{q-1}\dE_0(1 \tensor w) \bigr).\\
\end{split}
\end{equation}
The comparison between Eqs.\ \ref{eq:LIR2-1} and \ref{eq:LIR2-2} is now clear. Indeed, the subset of shuffles $(I,J) \in \Sh(p,q)$ for which $0 \in I$ is in bijection with shuffles
$(I',J') \in \Sh(p-1,q)$ via the assignment $I':=I^{>0}_{-1}$, $J':=[n-1]\setminus I'$. It then follows that $J=J'_{1}$. Furthermore, if $1 \in I$, then $I^{\prime >0}_{-1} = I^{>1}_{-2}$; otherwise, $I'_{-1} = I^{>1}_{-2}$. Finally, we verify that this assignment preserves signs. Given $(I,J)$, let $i_1< \cdots < i_{p}$ and  
$i'_1< \cdots < i'_{p-1}$ denote the elements of $I$ and $I'$, respectively. 
We compute via Eq.\ \ref{eq:sgn-sh}:
\[
\begin{split}
\sig(I',J') = \sum_{k=1}^{p-1} ( i'_k - (k-1)) 
= \sum_{k=1}^{p-1} ( i_{k+1}-1  - (k-1)) 
=\sum_{k=2}^{p} ( i_{k}  - (k-1)).
\end{split}
\]
Therefore, since $i_1=0$, we obtain the equality $\sgn(I,J)= \sgn(I',J')$, and we conclude that 
\[
\LI =\RB.
\]
\subsubsection{The equality $\LJA = \RA$} \label{sec:LJA=RA}
First, note that if $q=1$, then $w \in S(\bs \Ng)_1 = \bs(\g_0)$. Hence, $\del_{\CE}w =0$ and therefore it follows from Eqs.\ \ref{eq:B2-LHS2}
and \ref{eq:B2-RHS1} that $\LJA = 0 = \RA$.
For the $q > 1$ case, we apply the exact same approach that we used in the previous section to show $\LI =\RB$. We rewrite the defining equation \eqref{eq:B2-LHS2} for $\LJA$ as two sums:
\begin{equation} \label{eq:LJARA-1}
\begin{split}
\LJA &= \hspace{-2ex} \sum_{\substack{(I,J) \in \Sh(p,q)\\ 0 \in J, \, 1 \in I } } \hspace{-3ex}
\sgn(I,J) \, d^{\cW}_0 \bigl(s_{J} u \tensor  \si_{n-1} s^{\cW}_{I^{>1}_{-2}} \vph_{q-1}(1 \tensor \del_{\CE}w) \bigr) \\ 
& \quad + \hspace{-2ex} \sum_{\substack{(I,J) \in \Sh(p,q)\\ 0 \in J, \, 1 \notin I } } \hspace{-3ex}
\sgn(I,J) \, d^{\cW}_0 \bigl(s_{J} u \tensor  
\si_{n-1} s^{\cW}_{I_{-2}} \vph_{q-2}\dE_0(1 \tensor \del_{\CE}w) \bigr).
\end{split}
\end{equation}
Terms of the form $s_{J'} u \tensor \vph_{n-1} \dE_0 s_{I'}\del_{\CE}(1 \tensor w)$
appearing in the defining equation \eqref{eq:B2-RHS1} for $\RA$ can be rewritten as
\[
\begin{split}
s_{J'} u \tensor \vph_{n-1} \dE_0 s_{I'}\del_{\CE}(1 \tensor w) &= 
d_0 s_{J'_{+1}} s_0 u \tensor \vph_{n-1} \dE_0 s_{I'}\del_{\CE}(1 \tensor w) \\
&=d^{\cW}_0 \bigl(s_{J'_{+1}} s_0 u \tensor \si_{n-1}\vph_{n-1} \dE_0 s_{I'}\del_{\CE}(1 \tensor w) \bigr).
\end{split}
\]
Applying this, along with the definition \eqref{eq:vph} for $\vph_{n-1}$ and
the identities \eqref{eq:extra1} involving the extra degeneracy, yields
\begin{equation} \label{eq:LJARA-2}
\begin{split}
\RA &=  
\hspace{-2ex} \sum_{\substack{(I',J') \in \Sh(p,q-1)\\ 0 \in I'}} \hspace{-3ex}
(-1)^{p}\, \sgn(I',J')\, d^{\cW}_0 \bigl( s_{J'_{+1}} s_0 u \tensor \si_{n-1}
s^{\cW}_{I^{\prime >0}_{-1}} \vph_{q-1} \del_{\CE}(1 \tensor w) \bigr)\\
& \quad +
\sum_{\substack{(I',J') \in \Sh(p,q-1)\\ 0 \notin I'}} \hspace{-3ex}
(-1)^{p}\, \sgn(I',J')\, d^{\cW}_0 \bigl(  s_{J'_{+1}} s_0 u \tensor \si_{n-1}
s^{\cW}_{I^{\prime}_{-1}} \vph_{q-2} \del_{\CE}(1 \tensor w) \bigr).
\end{split}
\end{equation}
We compare the terms in Eqs.\ \ref{eq:LJARA-1} and \ref{eq:LJARA-2}. Shuffles $(I',J') \in \Sh(p,q-1)$ are in bijection with the subset of shuffles $(I,J) \in \Sh(p,q)$ satisfying $0 \in J$ via the assignment $I:=I'_{+1}$, $J:=[n]\setminus I$.
Clearly, this implies that $J=\{0\} \cup J'_{+1}$. If $1 \in I$, then $0 \in I'$, and hence $I^{>1}_{-2} = I^{\prime > 0}_{-1}$. If not, then $I^{\prime}_{-1}$ is well-defined and equal to $I_{-2}$. It remains to check signs. Given $(I',J')$, let $i'_1< \cdots < i'_{p}$ and  $i_1< \cdots < i_{p}$ denote the elements of $I'$ and $I$, respectively. We apply Eq.\ \ref{eq:sgn-sh} and compute
\[
\sig(I,J) = \sum_{k=1}^p (i_k - (k-1) ) = \sum_{k=1}^p (i'_k + 1 - (k-1) ) = \sig(I',J') +p.
\]
Hence, $\sgn(I,J)=(-1)^{p} \sgn(I',J')$, and we conclude that
\[
\LJA = \RA.
\]
\subsubsection{The equality $\LJB = \RC$} \label{sec:LJB=RC}
To begin, consider terms appearing in the defining equation \eqref{eq:B2-LHS2} of $\LJB$ of the form $s_{J} u \tensor  s^{\cW}_{I_{-1}}\vph_{q-1}\tha(1 \tensor w)$. From the definition \eqref{eq:EU-diff} for $\tha$, we obtain
\[
\tha(1 \tensor w) = \sum_{i=1}^k(-1)^{\eps_i} x_i \tensor w_i
\] 
with $\eps_i$, $x_i$ and $w_i$ defined as in Eq.\ \ref{eq:B2-RHS1}. Applying the definition of $\vph_{q-1}$ yields
\[
\begin{split}
s_{J} u \tensor s^{\cW}_{I_{-1}}\vph_{q-1}&\tha(1 \tensor w)  \\
 &=\sum_{i=1}^k(-1)^{\eps_i} \hspace{-5ex} \sum_{(M,N) \in \Sh(r_i,q_i)} \hspace{-4ex} \sgn(M,N) s_{J} u \tensor s^{\cW}_{I_{-1}} \bigl( s_N x_i \tensor \vph_{q-2}\dE_0 s_{M}(1 \tensor w_i) \bigr).\\
\end{split}
\]  
where $r_i = \abs{x_i}$, and $q_i = \abs{w_i}$. To keep the notation under control, define
\begin{equation} \label{eq:f-def}
f_{i}(I,J,M,N):=s_{J} u \tensor s^{\cW}_{I_{-1}} \bigl( s_N x_i \tensor \vph_{q-2}\dE_0 s_{M}(1 \tensor w_i) \bigr).
\end{equation}
so that
\begin{equation} \label{eq:LJB-new}
\LJB = \sum_{i=1}^k(-1)^{\eps_i} \hspace{-3ex}  \sum_{\substack{(I,J) \in \Sh(p,q)\\ 0 \in J}}\hspace{-3ex} \sgn(I,J)\hspace{-4ex}  \sum_{(M,N) \in \Sh(r_i,q_i)} \hspace{-4ex} \sgn(M,N)\,  d^{\cW}_{0} f_{i}(I,J,M,N)
\end{equation}
Let us turn to the defining expression \eqref{eq:B2-RHS1} for $\RC$. We consider terms of the form $\vph_n(u \mdot x_i \tensor w_{i})$. Applying the definition of $\vph_n$ yields
\[
\vph_n(u \mdot x_i \tensor w_{i}) = 
\hspace{-3ex}\sum_{(A,B) \in \Sh(p+r_i,q_i)}\hspace{-3ex} \sgn(A,B)\,
s^{\cU(\gb)}_B \ro(u \mdot x_i) \tensor \vph_{n-1}\dE_0 s_A(1\tensor w_i).
\]
Lemma \ref{lem:rho} implies that
\[
\begin{split}
s^{\cU(\gb)}_B\ro(u \mdot x_i) &= \hspace{-3ex} \sum_{(C,D) \in \Sh(p,r_i)}\hspace{-3ex} \sgn(C,D)\,
s^{\cU(\gb)}_B \bigl( s_D u \ast s_C x_i \bigr) \\
&= \hspace{-3ex} \sum_{(C,D) \in \Sh(p,r_i)}\hspace{-3ex} \sgn(C,D)\,
 s_Bs_D u \ast s_Bs_C x_i.
\end{split}
\] 
Using this, along with the definition of $d^{\cW}_0$, we obtain 
\[
\begin{split}
s^{\cU(\gb)}_B \ro(u \mdot x_i) \tensor &\vph_{n-1}\dE_0 s_A(1\tensor w_i)\\
&=\hspace{-3ex} \sum_{(C,D) \in \Sh(p,r_i)}\hspace{-3ex} \sgn(C,D)\,
d^{\cW}_0 \bigl( s_{B_{+1}}s_{D_{+1}}s_0 u \tensor s_Bs_C x_i
\tensor \vph_{n-1}\dE_0 s_A(1\tensor w_i) \bigr).
\end{split}
\]
Therefore, we can rewrite $\RC$ as
\begin{equation} \label{eq:RC-new}
\begin{split}
\RC= (-1)^p\sum_{i=1}^k(-1)^{\eps_i} \hspace{-3ex}  \sum_{(A,B) \in \Sh(p+r_i,q_i)}\hspace{-3ex} \sgn(A,B)\hspace{-4ex}  \sum_{(C,D) \in \Sh(p,r_i)} \hspace{-4ex} \sgn(C,D)\, d^{\cW}_0 g_{i}(A,B,C,D),
\end{split}
\end{equation}  
where
\begin{equation} \label{eq:g-def}
g_{i}(A,B,C,D):= s_{B_{+1}}s_{D_{+1}}s_0 u \tensor s_Bs_C x_i
\tensor \vph_{n-1}\dE_0 s_A(1\tensor w_i).
\end{equation}

We now identify, term-by-term, \eqref{eq:RC-new} with \eqref{eq:LJB-new}. We first apply Prop.\ \ref{prop:shuf} with $p_1 = p$, $p_2=r_i$, and $p_3 =q_i$. This gives a bijection 
\begin{equation} \label{eq:bij}
\begin{split}
\Sh(p+r_i,q_i)  \times \Sh(p,r_i) &\xto{\cong} \Sh(p,q)_{0 \in J} \times \Sh(r_i,q_i)\\
\bigl((A,B),(C,D) \bigr) &\mapsto \bigl( (I,J), (M,N) \bigr)
\end{split}
\end{equation}
such that $\sgn(A,B)\sgn(C,D) = (-1)^{p}\sgn(I,J) \sgn(M,N)$. Therefore, the signs and sets of shuffles indexing the summations in \eqref{eq:RC-new} are equal to those of \eqref{eq:LJB-new}. Fix $i=1,\ldots,k$ and shuffles $(A,B) \in \Sh(p+r_i,q_i)$, and $(C,D) \in \Sh(p,r_i +q_i)$. Let $(I,J)$ and $(M,N)$ be the corresponding shuffles via \eqref{eq:bij}. To complete the proof, it suffices to verify that $g_{i}(A,B,C,D) = f_{i}(I,J,M,N)$. 

First, we observe that, for any ordered subset $T\sse [n]$, the defining formulae for the degeneracies \eqref{eq:W} and their compatibility with the extra degeneracy \eqref{eq:extra2} yield the expression
%\begin{equation} \label{eq:rec-degen}
\[
s^{\cW}_{T} = 
\begin{cases}
s^\g_T \tensor s^{\cW}_{T^{>0}_{-1}} \si_{n-1}, & 0 \in T,\\
s^\g_T \tensor s^{\cW}_{T_{-1}}, & 0 \notin T.
\end{cases}
\]
%\end{equation}  
Applying this to \eqref{eq:f-def} gives
\begin{equation} \label{eq:f-def-new}
f_{i}(I,J,M,N)=
\begin{cases}
s_{J} u \tensor  s_{I_{-1}}s_N x_i \tensor s^{\cW}_{I^{>1}_{-2}}\si_{q-2}\vph_{q-2}\dE_0 s_{M}(1 \tensor w_i),  & 1 \in I \\
s_{J} u \tensor  s_{I_{-1}}s_N x_i \tensor s^{\cW}_{I_{-2}}\vph_{q-2} \dE_0 s_{M}(1 \tensor w_i),  & 1 \notin I.
\end{cases}
\end{equation}
Comparing the above to the defining expression \eqref{eq:g-def} for $g_i(A,B,C,D)$, we see that there are three factors in the tensor product to consider: one involving either $u$, $x_i$, or $w_i$.

The factors involving $x_i$ are equal. Indeed, the identity \eqref{eq:propshuf2} from Prop.\ \ref{prop:shuf} immediately implies that
\[
s_Bs_Cx_i = s_{I_{-1}}s_Nx_i.
\]
Turning to ``$u$ factors'', the identity \eqref{eq:propshuf3} from Prop.\ \ref{prop:shuf} implies that $s_{J^{>0}_{-1}} = s_B s_D$. Therefore, since $0 \in J$, we obtain the desired equality:
\[
s_Ju = s_0 s_{J^{>0}_{-1}}u = s_0s_B s_Du = s_{B_{+1}} s_{D_{+1}}s_0u.  
\]   

Let $f^{w_i}$ and $g^{w_i}$ denote the factors involving $w_i$ in $f_{i}(I,J,M,N)$ and $g_i(A,B,C,D)$, respectively. We consider cases. First, we have
\begin{equation} \label{eq:g-cases}
g^{w_i}:=\vph_{n-1}\dE_0 s_A(1\tensor w_i)=
\begin{cases}
s^{\cW}_{A^{>0}_{-1}} \vph_{q_i}(1 \tensor w_i), & 0 \in A\\
s^{\cW}_{A_{-1}} \vph_{q_{i}-1}\dE_0(1 \tensor w_i), & 0 \notin A
\end{cases}
\end{equation}
\begin{itemize}[leftmargin=15pt]
\item {\bf Case 1} ($0 \in A$, $1 \in I$, $0 \in M$):

\mind By Eq.\ \ref{eq:f-def-new}, we have
\begin{equation} \label{eq:fwork1}
\begin{split}
f^{w_i}=s^{\cW}_{I^{>1}_{-2}}\si_{q-2}\vph_{q-2}\dE_0 s_{M}(1 \tensor w_i) &=
s^{\cW}_{I^{>1}_{-2}}\si_{q-2}s^{\cW}_{M^{>0}_{-1}}\vph_{q_i}(1 \tensor w_i)\\
&= s^{\cW}_{I^{>1}_{-2}}s^{\cW}_{M^{>0}}\si_{q_i}\vph_{q_i}(1 \tensor w_i).
\end{split}
\end{equation}
Note that the last equality above was obtained via the identity \eqref{eq:extra4}.
The definition of $\vph_{q_i}$ gives  $\vph_{q_i}(1 \tensor w_i) = \si_{q_i - 1} \vph_{q_{i} -1} \dE_0(1 \tensor w_i)$. Combining this with \eqref{eq:fwork1} and the identity \eqref{eq:extra5} yields
\[
\begin{split}
f^{w_i} &= s^{\cW}_{I^{>1}_{-2}}s^{\cW}_{M^{>0}}s_0\si_{q_i - 1} \vph_{q_{i} -1} \dE_0(1 \tensor w_i) \\
&= s^{\cW}_{I^{>1}_{-2}}s^{\cW}_{M}\vph_{q_{i}}(1 \tensor w_i) \\
\end{split}
\]  
Statement 1 of Cor.\ \ref{cor:shuf} implies that $s^{\cW}_{A^{>0}_{-1}} = s^{\cW}_{I^{>1}_{-2}} s^{\cW}_M$. Hence, by Eq.\ \ref{eq:g-cases}, $g^{w_{i}} = f^{w_{i}}$.
\item {\bf Case 2} ($0 \in A$, $1 \in I$, $0 \in N$):

\mind Eq.\ \ref{eq:f-def-new} and the identity \eqref{eq:extra4} imply that
\[
\begin{split}
f^{w_{i}}&= s^{\cW}_{I^{>1}_{-2}}\si_{q-2}s^{\cW}_{M_{-1}}\vph_{q_{i}-1}\dE_0(1 \tensor w_i)\\
&= s^{\cW}_{I^{>1}_{-2}}s^{\cW}_{M}\si_{q_{i}-1}\vph_{q_{i}-1}\dE_0(1 \tensor w_i)\\
&= s^{\cW}_{I^{>1}_{-2}}s^{\cW}_{M}\vph_{q_i}(1 \tensor w_i). 
\end{split}
\]
It then follows, as in the previous case, by statement 1 of Cor.\ \ref{cor:shuf} and Eq.\ \ref{eq:g-cases} that $g^{w_{i}} = f^{w_{i}}$. 

\item {\bf Case 3} ($0 \in A$, $1 \in J$):

\mind Statement 2 of Cor.\ \ref{cor:shuf} implies that $0 \in M$ and 
$s^{\cW}_{A^{>0}_{-1}}=s^{\cW}_{I_{-2}} s^{\cW}_{M^{>0}_{-1}}$. Therefore, since $1 \notin I$, Eq.\ \ref{eq:f-def-new} yields
\[
f^{w_i} = s^{\cW}_{I_{-2}}\vph_{q-2} \dE_0 s_{M}(1 \tensor w_i)= s^{\cW}_{I_{-2}}s_{M^{>0}_{-1}}\vph_{q_i}(1 \tensor w_i).
\]
Hence, we deduce from Eq.\ \ref{eq:g-cases} that $g^{w_{i}} = f^{w_{i}}$.

\item {\bf Case 4} ($0 \in B$):

\mind Eq.\ \ref{eq:g-cases} implies that 
\[
g^{w_{i}} = s^{\cW}_{A_{-1}} \vph_{q_{i}-1}\dE_0(1 \tensor w_i).
\] 

From statement 3 of Cor.\ \ref{cor:shuf}, we deduce that $1 \in J$, $0 \notin M$, and $s^{\cW}_{A_{-1}}=s^{\cW}_{I_{-2}}s^{\cW}_{M_{-1}}$. Combining this with  Eq.\ \ref{eq:f-def-new}, we obtain
\[
\begin{split}
f^{w_{i}} &= s^{\cW}_{I_{-2}}\vph_{q-2} \dE_0 s_{M}(1 \tensor w_i)\\
& = s^{\cW}_{I_{-2}}s^{\cW}_{M_{-1}} \vph_{q_{i}-1} \dE_0 (1 \tensor w_i)\\
&=s^{\cW}_{A_{-1}}\vph_{q_{i}-1} \dE_0 (1 \tensor w_i)\\
&=g^{w_i}.
\end{split}
\]  
\end{itemize}

We have exhausted all possible cases, and so we conclude that $\LJB = \RC$.

This completes the proof of Claim \ref{lemclaim:BigMap2}. \hfill \qed
\bibliographystyle{amsplain}
\bibliography{lie2}

% \section{Reminder: cosimplicial identities} \label{sec:cosimp}
% For convenience during error checking. For $C^\bl \in c\cat{C}$, have coface maps
% \[
% d^{i} \maps X^{n-1} \to X^n, \quad i=0,\ldots. n
% \]
% and codegeneracy maps
% \[
% s^{j} \maps X^{n} \to X^{n-1}, \quad j=0,\ldots. n-1
% \]
% such that
% \begin{align*}
% d^j d^i &= d^i d^{j-1}, \quad i <j\\
% s^j s^i &= s^i s^{j+1}, \quad i \leq j\\
%  s^j d^i &= d^i s^{j-1}, \quad  i < j\\
%  s^j d^i &= \id, \quad i=j\\ 
%  s^j d^i &= \id, \quad i=j+1\\  
%  s^j d^i &= d^{i-1} s^j, \quad i > j+1
% \end{align*}

\end{document}